\DeclareFontFamily{U}{wncy}{}
\DeclareFontShape{U}{wncy}{m}{n}{<->wncyr10}{}
\DeclareSymbolFont{mcy}{U}{wncy}{m}{n}
\DeclareMathSymbol{\Sha}{\mathord}{mcy}{"58}
\newcommand{\Z}{{\mathbb Z}}
\newcommand{\isi}{{\mathcal I}}
\newcommand{\Q}{{\mathbb Q}}
\newcommand{\R}{{\mathbb R}}
\newcommand{\C}{\mathcal{C}}
\newcommand{\HH}{\mathcal{H}}
\newcommand{\K}{\mathcal{K}}
\newcommand{\Kum}{\mathcal{K}}
\newcommand{\OO}{\mathcal{O}}
\newcommand{\E}{\mathcal{E}}
\newcommand{\A}{\mathbb{A}}
\newcommand{\F}{\mathcal{F}}
\renewcommand{\P}{\mathbb{P}}
\newcommand{\dO}{{\mathfrak{d}}_\OO}
\newcommand{\p}{{\mathfrak{p}}}
\DeclareMathOperator{\Jac}{Jac}
\DeclareMathOperator{\Hom}{Hom}
\DeclareMathOperator{\dia}{dia}
\DeclareMathOperator{\hh}{h}
\DeclareMathOperator{\Proj}{Proj}
\newcommand{\JacC}{{\hbox{Jac}_{\lower.5pt\hbox{$_\C$}}}}
\newcommand{\JacF}{{\hbox{Jac}_{\lower.5pt\hbox{$_\F$}}}}
\newcommand{\etale}{{\mathrm{\acute{e}t}}}
\DeclareMathOperator{\HHH}{H}
\DeclareMathOperator{\Gal}{Gal}
\newcommand{\elllmfdb}[3]{\href{https://www.lmfdb.org/EllipticCurve/#1/#2/#3}{#1/#2-#3}}
\theoremstyle{plain}
\newtheorem{thm}{Theorem}[section]
\newtheorem{conj}[thm]{Conjecture}
\newtheorem{claim}[thm]{Claim}
\newtheorem{lemma}[thm]{Lemma}
\newtheorem{alg}[thm]{Algorithm}
\newtheorem{prop}[thm]{Proposition}
\newtheorem{cor}[thm]{Corollary}
\theoremstyle{definition}
\newtheorem{notation}[thm]{Notation}
\newtheorem{example}[thm]{Example}
\newtheorem{remark}[thm]{Remark}
\newtheorem{defn}[thm]{Definition}
\theoremstyle{remark}
\def\R{{\mathbb R}}
\def\Q{{\mathbb Q}}
\def\N{{\mathbb N}}
\def\F{{\mathbb F}}
\def\Z{{\mathbb Z}}
\def\C{{\mathbb C}}
\def\O{{\mathcal O}}
\DeclareMathOperator{\Pic}{Pic}
\DeclareMathOperator{\GL}{GL}
\DeclareMathOperator{\SL}{SL}
\begin{document}
\bibliographystyle{plain}
\bibstyle{plain}

\title[Computations on Hilbert modular surfaces]{Rings of Hilbert modular forms, computations on Hilbert modular surfaces, and the Oda-Hamahata conjecture}

\author{Adam Logan}
\address{The Tutte Institute for Mathematics and Computation,
P.O. Box 9703, Terminal, Ottawa, ON K1G 3Z4, Canada}
\address{School of Mathematics and Statistics, 4302 Herzberg Laboratories, 1125 Colonel By Drive, Carleton University, Ottawa, ON K1S 5B6, Canada}
\address{Department of Pure Mathematics, University of Waterloo, 200 University Ave.~W., Waterloo, ON N2L 3G1, Canada}
\email{adam.m.logan@gmail.com}

\keywords{Hilbert modular surfaces, elliptic curves, real quadratic fields, modularity}
\subjclass[2020]{14G35, 14Q10, 11G05, 11F41, 14Q25}
\date{\today}

\maketitle

\begin{abstract}
  The modularity of an elliptic curve $E/\Q$ can be expressed either as an
  analytic statement that the $L$-function is the Mellin transform of
  a modular form, or as a geometric statement that $E$ is a quotient of a
  modular curve $X_0(N)$.  For elliptic curves over number fields
  these notions diverge; a conjecture of Hamahata
  asserts that for every elliptic
  curve $E$ over a totally real number field there is a correspondence between
  a Hilbert modular variety and the product of the conjugates of $E$.
  In this paper we prove the conjecture by explicit computation
  for many cases where $E$ is defined over a real quadratic field and the
  geometric genus of the Hilbert modular variety is $1$.
\end{abstract}

\section{Introduction}
One of the most fundamental discoveries in the arithmetic of elliptic curves
over $\Q$ is the modularity theorem, previously the Shimura-Taniyama-Weil
conjecture and later proved in \cite{BCDT}, building on work of Wiles,
Taylor-Wiles, and many other mathematicians.  We may state it briefly in
two ways:

\begin{thm}\label{thm:modularity}
  Let $E$ be an elliptic curve over $\Q$ and let $L(s,E)$ be its
  $L$-function as defined in \cite[Ex.~8.19]{silverman}.
  Let $N = N_E$ be the conductor
  of $E$ and let $X_0(N)$ be the modular curve parametrizing elliptic curves
  with a cyclic subgroup of order $N$.  Then:
  \begin{enumerate}
  \item[(analytic)] $L(s,E)$ is the Mellin transform of a Hecke eigenform
    of weight~$2$ and level $N$.  (Concretely, the coefficients of the
    Dirichlet series of $L$ are the same as those of the Fourier expansion
    of $f$.)
  \item[(geometric)] There is a dominant map $X_0(N) \to E$.
  \end{enumerate}
\end{thm}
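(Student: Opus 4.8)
The plan is to prove the two assertions in turn, deriving the geometric statement from the analytic one. For the analytic part I would follow the strategy of Wiles and Taylor--Wiles and pass from modular forms to $\ell$-adic Galois representations: attached to $E$ is the compatible family $\rho_{E,\ell}\colon \Gal(\overline{\Q}/\Q) \to \GL_2(\Z_\ell)$ acting on Tate modules, and the analytic statement is equivalent to saying that $\rho_{E,\ell}$ arises from a weight-$2$ Hecke eigenform of level $N$. The first step is to establish modularity of the residual representation $\bar\rho_{E,3}\colon \Gal(\overline{\Q}/\Q) \to \GL_2(\F_3)$: since its image sits in a group closely tied to $S_4$, which is solvable, the Langlands--Tunnell theorem supplies an automorphic form realizing $\bar\rho_{E,3}$, after adjusting the weight by a congruence argument (multiplying a weight-$1$ form by an Eisenstein series that is $\equiv 1 \pmod 3$).

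The second and central step is a modularity lifting theorem: if $\bar\rho_{E,3}$ is modular, irreducible after restriction to $\Gal(\overline{\Q}/\Q(\sqrt{-3}))$, and satisfies the expected local conditions, then $\rho_{E,3}$ is itself modular. This is proved by the Taylor--Wiles patching method: one introduces a universal deformation ring $R$ parametrizing lifts of $\bar\rho_{E,3}$ with prescribed ramification, a Hecke algebra $T$ acting on the corresponding space of modular forms, a surjection $R \twoheadrightarrow T$, and then shows $R \cong T$ by patching over auxiliary sets of primes, reducing to a numerical criterion in commutative algebra (complete intersections and congruence modules). The residually degenerate cases, where $\bar\rho_{E,3}$ is reducible on $\Gal(\overline{\Q}/\Q(\sqrt{-3}))$, are handled by the ``$3$--$5$ switch'': one finds an auxiliary elliptic curve $E'$ with $\bar\rho_{E',5}\cong\bar\rho_{E,5}$ and $\bar\rho_{E',3}$ irreducible, deduces modularity of $E'$ and hence of $\bar\rho_{E,5}$, and lifts in the $5$-adic tower instead; the remaining cases, where $E$ has additive, potentially good reduction at $3$, require the Barsotti--Tate deformation rings of Breuil--Conrad--Diamond--Taylor built on Fontaine's integral $p$-adic Hodge theory. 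Assembling all of this yields modularity of $\rho_{E,3}$, hence an eigenform $f$ of weight $2$ and level $N_E$ with $L(s,E)=L(s,f)$.

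For the geometric statement I would appeal to Eichler--Shimura theory together with Faltings' isogeny theorem. The eigenform $f$ obtained above has rational Hecke eigenvalues, so the abelian-variety quotient $A_f$ of $J_0(N)=\Jac(X_0(N))$ attached to the Hecke ideal annihilating $f$ is one-dimensional, i.e.\ an elliptic curve, and $L(s,A_f)=L(s,f)=L(s,E)$. By Faltings, two elliptic curves over $\Q$ with equal $L$-functions have isomorphic semisimplified $\ell$-adic representations and are therefore isogenous, so there is an isogeny $A_f\to E$. Composing the Abel--Jacobi embedding $X_0(N)\hookrightarrow J_0(N)$ (sending a point to its class minus the cusp) with the projection $J_0(N)\to A_f$ and this isogeny produces a non-constant, hence dominant, morphism $X_0(N)\to E$.

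The main obstacle is unquestionably the second step: constructing and analyzing the deformation rings, carrying out the Taylor--Wiles patching, verifying the commutative-algebra criterion, and disposing of the residually reducible, small-image, and additive-reduction exceptional cases. By contrast, the base case via Langlands--Tunnell and the passage from the analytic to the geometric statement are, once Faltings' theorem is in hand, comparatively formal.
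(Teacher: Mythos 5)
Your outline is a faithful summary of the standard proof (Langlands--Tunnell for $\bar\rho_{E,3}$, Taylor--Wiles patching and $R=T$, the $3$--$5$ switch, the Breuil--Conrad--Diamond--Taylor refinements, then Eichler--Shimura plus Faltings to pass from the analytic to the geometric statement), which is exactly the route taken in the references the paper cites. The paper itself offers no proof of this theorem --- it is stated as background and attributed to \cite{BCDT} and its predecessors --- so there is nothing in the paper to diverge from.
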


Our main goal in this paper is to prove a few special cases of a form of
geometric modularity for elliptic curves over real quadratic fields. 
We will state the result and then devote the
rest of the introduction to explaining the statement.

\begin{thm}\label{thm:oda-special}
  Let $E$ be an elliptic curve over a real quadratic field belonging to
  an isogeny class with one of the following LMFDB labels or their
  Galois conjugates:
  \elllmfdb{2.2.5.1}{31.1}{a},
  \elllmfdb{2.2.5.1}{55.1}{a},
  \elllmfdb{2.2.5.1}{80.1}{a},
  \elllmfdb{2.2.8.1}{9.1}{a},
  \elllmfdb{2.2.8.1}{17.1}{a},
  \elllmfdb{2.2.8.1}{32.1}{a},
  \elllmfdb{2.2.13.1}{9.1}{a},
  \elllmfdb{2.2.17.1}{4.1}{a},
  \elllmfdb{2.2.17.1}{8.3}{a}.
  Then there is a correspondence (not necessarily defined over $\Q$)
  between the Hilbert modular variety of
  level equal to the conductor of $E$ and the Kummer surface of
  $E \times E^\sigma$, inducing a map on $\HHH^2$ that is injective on the
  transcendental lattice of the Kummer surface.
\end{thm}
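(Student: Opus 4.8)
The plan is to handle the nine isogeny classes by a single uniform scheme: produce an explicit algebraic correspondence and then show that its action on $\HHH^2$ does not annihilate the relevant (irreducible) transcendental Hodge structure, which is enough, since a nonzero morphism out of an irreducible Hodge structure is automatically injective. Write $K$ for the real quadratic field and $\sigma$ for the nontrivial element of $\Gal(K/\Q)$. Every elliptic curve over a real quadratic field is modular, so $E$ corresponds to a Hilbert newform $f$ of parallel weight $2$ and level $\mathfrak N = N_E$ with rational Hecke eigenvalues, matching $E$ at one archimedean place of $K$ and $E^\sigma$ at the other. Let $X$ be a smooth projective model of the Hilbert modular surface of level $\mathfrak N$. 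By the hypothesis that its geometric genus is $1$ we have $p_g(X) = 1$ --- equivalently, the space of parallel weight $2$ cusp forms for the relevant group is spanned by $f$ --- while $q(X) = 0$ since a desingularized, compactified Hilbert modular surface is regular. Thus the transcendental lattice $T(X) \subset \HHH^2(X)$, which carries the class of $f\,dz_1 \wedge dz_2$, is an irreducible rational Hodge structure with $\hh^{2,0}(T(X)) = 1$, sitting inside the at most $4$-dimensional $f$-isotypic piece of $\HHH^2(X)$, i.e.\ the Oda motive of $f$.

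First I would make $X$ completely explicit: using the presentation of the ring of Hilbert modular forms determined earlier in the paper, write down a projective model of a desingularization of $X$, extract from it an elliptic fibration, and compute the singular fibres, Mordell--Weil group, N\'eron--Severi lattice, and Picard number. In each of the nine cases this should exhibit a K3 surface $S$ linked to $X$ by an explicit correspondence inducing an isomorphism $T(S) \cong T(X)$ of transcendental Hodge structures --- in many cases $S$ will just be a minimal model of $X$ --- and since $\rk T(S) = \rk T(X) \le 4$ we get $\rho(S) \ge 18$.

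The geometric heart of the argument is to match $S$ with $\Kum(E \times E^\sigma)$. Because the Kummer involution acts trivially on $\HHH^2$ of an abelian surface, $T(\Kum(E \times E^\sigma)) \cong T(E \times E^\sigma)$, the transcendental part of $\HHH^1(E) \otimes \HHH^1(E^\sigma)$, which the Oda--Hamahata philosophy predicts to be isomorphic to $T(X)$. I would make this effective, and build the correspondence, by exhibiting $S$ either as birational to $\Kum(E \times E^\sigma)$ outright or, more generally, as carrying a Shioda--Inose structure --- say an elliptic fibration with two fibres of type $\mathrm{II}^*$ --- whose associated abelian surface $A'$ has $T(A') \cong T(S)$; one then identifies $A'$ with $E \times E^\sigma$ up to isogeny by computing the $j$-invariants of the two elliptic curves that appear (allowing for a quadratic twist or a small base change). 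This simultaneously yields $T(S) \cong T(\Kum(E \times E^\sigma))$ and an explicit chain of quotient and blow-up maps $S \leftarrow \widetilde S \to \Kum(A') \leftarrow \widetilde{A'} \to A'$ with $A'$ isogenous to $E \times E^\sigma$. Composing with the correspondence relating $S$ and $X$, and with the Kummer quotient, produces the sought correspondence $\Gamma$ between $X$ and $\Kum(E \times E^\sigma)$, hence between $X$ and the product of the conjugates of $E$; it need only be defined over $\Kbar$, as the statement allows.

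It remains to check that $\Gamma^*\colon \HHH^2(\Kum(E \times E^\sigma)) \to \HHH^2(X)$ is injective on $T(\Kum(E \times E^\sigma))$. That Hodge structure is irreducible over $\Q$ in every case: it is $\HHH^1(E) \otimes \HHH^1(E^\sigma)$ when $E$ and $E^\sigma$ are non-isogenous without complex multiplication, it is $\Sym^2 \HHH^1(E)$ when they are isogenous without complex multiplication, and it is $2$-dimensional (and irreducible) when $E$ has complex multiplication. Since any nonzero morphism of Hodge structures out of an irreducible one is injective, it suffices to see that $\Gamma^*$ does not kill $T$. But every link of the chain defining $\Gamma$ --- the blow-ups, the Kummer quotient, and the Nikulin quotient of the Shioda--Inose structure --- is a correspondence inducing an isomorphism on transcendental Hodge structures, so $\Gamma^*$ restricts to an isomorphism of $T(\Kum(E \times E^\sigma))$ onto $T(X)$; concretely, the pullback of the $(2,0)$-form of $\Kum(E \times E^\sigma)$, descended from $E \times E^\sigma$, is a nonzero holomorphic $2$-form on $X$, hence a nonzero multiple of $f\,dz_1 \wedge dz_2$ because $p_g(X) = 1$. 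This gives the claimed injectivity. I expect the genuine difficulty to be concentrated in the two middle steps --- passing from the (usually unwieldy) equations of the Hilbert modular surface to a recognizable K3 model together with its transcendental lattice, and then pinning down the explicit Shioda--Inose structure that exhibits $\Kum(E \times E^\sigma)$ --- with extra care required to resolve the cusps and quotient singularities correctly, to choose the level structure so that ``level $=$ conductor'' is the arithmetically right normalization, and, in the isogenous and complex-multiplication cases, to track the drop in $\rk T$ and the extra algebraic classes that accompany it.
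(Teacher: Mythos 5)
Your high-level architecture agrees with the paper's: make the Hilbert modular surface explicit from the ring of Hilbert modular forms, pass to a K3 surface by an explicit correspondence, match it with $\Kum(E\times E^\sigma)$, and deduce injectivity on the transcendental lattice from the fact that each link preserves the transcendental Hodge structure and the pullback of the $(2,0)$-form is a nonzero multiple of $f\,dz_1\wedge dz_2$ (the irreducibility argument you add is a clean way to package what the paper tracks lattice-by-lattice). But there is a genuine gap in the step that carries all the content. Your uniform mechanism for the matching --- exhibit $S$ as birational to $\Kum(E\times E^\sigma)$ ``or, more generally, as carrying a Shioda--Inose structure, say an elliptic fibration with two fibres of type $\mathrm{II}^*$'' --- would fail as stated in most of the nine cases. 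A fibration with two $\mathrm{II}^*$ fibres and a section forces $\Pic S$ to contain $U\oplus E_8^{\oplus 2}$, which has rank $18$ and determinant $-1$; since the surfaces here have $\rho=18$ (or $19$), this forces $|\det\Pic S|$ to be $1$ (or at least severely constrained). The K3 surfaces that actually come out of the Hilbert modular surfaces have Picard discriminants such as $-64$, $-144$, $40$, $96$, $192$, and none of them is birational to the Kummer surface. The paper reaches a two-$\mathrm{II}^*$ fibration in exactly one case (level $\p_{31}$ over $\Q(\sqrt 5)$), and only after first driving the discriminant down to $1$ by a chain of isogenies and Jacobians; in the other cases it instead matches fibrations of specific Kodaira types known to occur on $\Kum(E\times E')$ (e.g.\ $I_4^*\,{I_0^*}^2$, $I_6^*\,I_2^6$, two $I_8$'s) by comparing explicit Weierstrass equations.

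Concretely, what is missing is the entire middle of the argument, which you defer with ``I expect the genuine difficulty to be concentrated in the two middle steps.'' That difficulty is the proof: for each level one must find genus-$1$ fibrations without sections and pass to their Jacobians, apply $p$-isogenies, and sometimes change the field of definition of the sections, in order to transform the transcendental lattice step by step into one supporting a fibration that also lives on the Kummer surface --- and then verify that the resulting Weierstrass equations coincide (up to twist and coordinate change on the base). There is no uniform recipe; the admissible moves are constrained by invariants (the class of the Picard discriminant in $\Q^*/(\Q^*)^2$ when $\rho=18$, the diameter of the isogeny class), and in several cases the correspondence can only be produced over an explicit extension such as $\Q(\sqrt2,\sqrt3)$ or $\Q(\zeta_8)$, which your plan does not predict or control. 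So the proposal is a correct description of the shape of the proof and of the final soft step, but it does not constitute a proof of the theorem, and its one concrete proposed shortcut is obstructed lattice-theoretically in most cases.
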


To give some context for this result,
let us more generally consider an elliptic curve $E$ over
a totally real number field $K$.
In order to describe the situation we begin with a brief explanation of the
concept of a Hilbert modular form.

\begin{defn}\label{def:hmf} \cite[I.1, II.7]{vdg}
  Let $K$ be a totally real number field of degree $d$ over $\Q$,
  let $\HH$ be the upper half-plane, and let $\HH_{\pm}$ be the union of the
  upper and lower half-planes.
  Then $\GL_2^+(\R)$ acts on $\HH$ and so $\GL_2^+(\R)^d$ acts on
  $\HH^d$.  Choosing an ordering
  $\rho_1, \dots, \rho_d$ of the real embeddings of $K$, we obtain an
  embedding $\GL_2^+(K) \hookrightarrow \GL_2^+(\R)^d$ and hence an action of
  $\GL_2^+(K)$ on $\HH^d$.
\end{defn}

\begin{defn} Let $I \subseteq \O_K$ be a nonzero ideal.  Then
  $\Gamma_0(I)$ is the subgroup of $\SL_2(\O_K)$ of matrices whose lower
  left entry belongs to $I$.  (One also adapts the usual definitions for
  $\SL_2(\Z)$ to define $\Gamma_1(I), \Gamma(I)$, but these will not be used
  in this paper.)
\end{defn}

We remark \cite[p.~6, p.~15]{vdg}
that $\SL_2(\O_K)$ acts properly discontinuously and
with finite stabilizers on $\HH^d$.  This is the reason to act on
$\HH^d$; if we only used some of the embeddings to define the action,
then every orbit would be dense and the quotient would not have a reasonable
topology.

\begin{defn}\cite[Definition I.6.1]{vdg}
  Let $d>1$ and let $w_1,\dots,w_d$ be integers.
  A {\em Hilbert modular form}
  of weights $(w_1, \dots, w_d)$ and level $I$ is a holomorphic function
  $f$ on $\HH^d$ such that, for every
  $M = \begin{psmallmatrix}p&q\\r&s\end{psmallmatrix} \in \Gamma_0(I)$
  and $(z_1,\dots,z_d) \in \HH^d$,
  we have
  $$f(z_1,\dots,z_d) = \prod_{i=1}^d (\rho_i(r) z_i + \rho_i(s))^{w_i}
  f\left(\frac{\rho_1(p)z_1 + \rho_1(q)}{\rho_1(r)z_1 + \rho_1(s)}\right),\dots,
  f\left(\frac{\rho_d(p)z_1 + \rho_d(q)}{\rho_d(r)z_1 + \rho_d(s)}\right).$$
\end{defn}

We remark that Hilbert modular forms constitute a $\Z^d$-graded ring, the
product of forms of weights $(w_1,\dots,w_d)$ and $(w_1',\dots,w_d')$ being
a form of weight $(w_1+w_1',\dots,w_d+w_d')$.
As with classical modular forms, Hilbert modular forms have a sort of Fourier
expansion with coefficients indexed by totally positive elements of the
inverse different of $\O_K$.  For certain purposes it is better to
consider the action of a slightly larger group, such as a subgroup of
$\GL_2(\O_K)$, on the product of $d$ copies of $\HH_\pm$, the union of the
upper and lower half-planes.  For this paper it is mostly not necessary to
do this and so we leave the minor modifications necessary to the reader.

We now return to elliptic curves over $K$.  Let $E$ be such a curve and let
its conductor be $I$.  To generalize the algebraic statement of modularity
is fairly straightforward.  One requires that the coefficients of the
$L$-function of $E$ match the Hecke eigenvalues of a Hilbert modular form $f$.
This is known for all elliptic curves over real quadratic fields and many
over fields of higher degree: see \cite[Theorems 1-2]{FLS}.

To state a reasonable geometric modularity conjecture requires some thought.
Before doing so, we introduce the Hilbert modular variety associated to
a group $\Gamma$.  For simplicity we will only do this over $\C$ by a
complex-analytic construction.

\begin{defn}\label{def:hmv}
  Let $\Gamma$ be a subgroup of $\GL_2^+(K)$ commensurable with
  $\SL_2(\O_K)$.  Then the {\em Hilbert modular variety}
  $\Gamma \backslash \HH^d$ is the quotient of $\HH^d$ by $\Gamma$ in the
  category of complex analytic spaces.  If $\Gamma = \Gamma_0(I)$, then
  we write $Y_0(I)$ for the quotient.
\end{defn}

From this point of view it is not at all obvious that $\Gamma \backslash \HH^d$
has an algebraic structure, much less that it can be defined over a number
field, but this is true \cite[Thm.~IV.7.1]{vdg}.

\begin{prop}\label{prop:x0}
  Let $J_1, \dots, J_{h_I}$ be ideals representing the cosets of the
  group of fractional ideals of $\O_K$ modulo those generated by an
  element that is $1 \bmod I$.  Then 
  there exists a projective variety that is constructed from $Y_0(I)$ by
  adding points $P_1, \dots, P_{h_I}$ in canonical bijection with the $J_i$.
\end{prop}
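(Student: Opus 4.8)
The plan is to realize the required variety as the Baily--Borel (Satake) minimal compactification of $Y_0(I)$. Since $Y_0(I)=\Gamma_0(I)\backslash\HH^d$ is quasi-projective by \cite[Thm.~IV.7.1]{vdg}, it carries a canonical minimal compactification $Y_0(I)^{*}$; concretely one takes $Y_0(I)^{*}=\Proj$ of the $\Z^d$-graded ring of Hilbert modular forms for $\Gamma_0(I)$ (allowing all weights, not only the parallel ones), which is a normal projective variety containing $Y_0(I)$ as a dense open subset. The complement $Y_0(I)^{*}\setminus Y_0(I)$ is zero-dimensional: the $\Q$-group $\Res_{K/\Q}\SL_2$ has $\Q$-rank $1$, so every proper rational parabolic is a Borel, and the corresponding rational boundary components of $\HH^d$ are single points. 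Hence $Y_0(I)^{*}=Y_0(I)\sqcup\{P_1,\dots,P_n\}$ with $n$ finite, the $P_j$ being exactly the cusps; it remains to identify them with the $J_i$.

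For this I would use the standard description of cusps of Hilbert modular groups (\cite[Ch.~II]{vdg}): the rational boundary of $\HH^d$ is $\P^1(K)$, and the cusps of $Y_0(I)^{*}$ are the orbit set $\Gamma_0(I)\backslash\P^1(K)$. To a cusp $s$, written as $[\alpha:\beta]$ with $\alpha,\beta\in\O_K$, one attaches the ideal $\mathfrak c=\alpha\O_K+\beta\O_K$ together with the residue modulo $I$ of a suitably normalized second generator; packaging these gives a well-defined map $\kappa$ from $\Gamma_0(I)\backslash\P^1(K)$ to the group $G$ of fractional ideals of $\O_K$ modulo those generated by an element $\equiv 1\bmod I$. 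The proposition is then equivalent to the assertion that $\kappa$ is a bijection onto $\{[J_1],\dots,[J_{h_I}]\}$, canonical in the sense of being independent of the auxiliary normalizations. Well-definedness is a matter of checking insensitivity to the choice of generators and to $\Gamma_0(I)$; surjectivity follows by exhibiting, for each $J_i$, an explicit pair of generators realizing the class $[J_i]$ as $\kappa$ of some cusp.

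The crux, and the step I expect to be the main obstacle, is injectivity of $\kappa$ --- equivalently, the exact determination of the $\Gamma_0(I)$-stabilizer of each cusp. After conjugating a cusp to $\infty$ by an element of $\SL_2(K)$, its stabilizer is a subgroup of the standard Borel cut out by congruence conditions modulo $I$, whose unit part is governed by $\{u\in\O_K^{*}:u\equiv 1\bmod I\}$ (up to squares and signs) --- precisely the units whose image is trivial in $G$. Running this computation uniformly over all cusps, keeping track of the interplay between the ideal class $[\mathfrak c]$, the unit group, and the congruence data modulo $I$, is what pins down the orbit structure and the bijection with the $J_i$. A last, more formal point to check is that the analytically defined cusp-completion of $Y_0(I)$ agrees with $\Proj$ of the ring of Hilbert modular forms, so that the $P_j$ are genuinely the finitely many boundary points of a projective variety; this is part of the standard theory of the Baily--Borel compactification and of \cite{vdg}.
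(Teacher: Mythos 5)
The paper does not actually prove this proposition: it simply identifies the variety as the Baily--Borel compactification, i.e.\ $\Proj$ of the ring of Hilbert modular forms, and cites \cite[II.7]{vdg}. Your proposal takes exactly that route, and the two structural points you isolate --- the boundary is zero-dimensional because $\Res_{K/\Q}\SL_2$ has $\Q$-rank $1$, so the rational boundary components are points, and those points are the $\Gamma_0(I)$-orbits on $\P^1(K)$ --- are the right ones; so in substance you are reconstructing the standard argument the paper delegates to van der Geer. The one caveat is that you explicitly defer the crux: the actual computation of $\Gamma_0(I)\backslash\P^1(K)$ and the verification that your map $\kappa$ is a well-defined bijection onto the ray classes is announced as ``the main obstacle'' but not carried out. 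That is precisely where the delicacy lies --- for $\Gamma_0$-type level structure the cusps are most naturally indexed by data involving a divisor of $I$ together with class-group information (compare the paper's own use, in Proposition~\ref{prop:deg-24-p-10}, of the bijection between cusps and divisors of $N$ for squarefree $N$), so matching the orbit set against the group of ideals modulo principal ideals generated by elements $\equiv 1 \bmod I$ requires the stabilizer/unit analysis you sketch to be done in full, not merely described. A complete proof would need that computation; everything else in your outline is standard and correct.
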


This variety, which we will denote $H_{I,K}$,
is the famous {\em Baily-Borel compactification} \cite[II.7]{vdg}, which is to
say that it is Proj of the ring of Hilbert modular forms defined above.

\begin{remark}\label{rem:even-weight}
  In contrast to the classical case where the presence of $-I_2$ in
  $\Gamma_0(N)$ implies that there are no nonzero cusp forms of odd weight,
  such forms may exist here.  However, we will always consider
  Proj of the subring of Hilbert modular forms of even weight.  This is
  the second Veronese embedding and is therefore isomorphic to Proj of the
  full ring by \cite[Exercise 7.4.D]{risingsea}.
\end{remark}

What is the geometric relation between $H_{I,K}$ and an elliptic curve $E$
over $K$ of conductor $I$?  It is quite unreasonable to expect a nonconstant
map $H_{I,K} \to E$ as in the classical case, since $H_{I,K}$ is defined over
$\Q$ and $E$ only over a number field.  We might replace $E$ by the product
of its conjugates, which can be defined over $\Q$, but this will still not
admit any nonconstant maps from $H_{I,K}$.  The reason is that
$h^1(H_{I,K}) = 0$ for $d > 1$ \cite[IV.6, p.~82]{vdg},
and so $H_{I,K}$ does not admit
any nonconstant maps to an abelian variety.

The version of the modularity conjecture to be discussed here
can be expressed in two forms.
The weak form, which could still be considered algebraic, is that
the Galois representation on $\HHH^d_{\etale}(H_{I,K})$ has a subquotient
isomorphic to $\otimes_{i=1}^d \HHH^1_{\etale} \rho_i(E)$, where as above the
$\rho_i$ are the embeddings of $K$ into $\R$.  The strong form was first
stated by Hamahata:

\begin{conj}\cite[Section 0, p.~194]{hamahata}\label{conj:oda-hamahata}
  Let $E$ be an elliptic curve over a real quadratic
  field $K$ with conductor $I$.  Then there is a correspondence between
  $H_{I,K}$ and $K_E = (E \times E^\sigma)/\pm 1$ that induces an injection from
  the primitive part of $\HHH^2(K_E)$ to that of $\HHH^2(H_{I,K})$.
\end{conj}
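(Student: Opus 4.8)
The plan is to decompose the conjecture into a \emph{motivic} half, in which one produces an abstract isomorphism of the relevant sub-Hodge-structures (equivalently, of $\Gal(\Kbar/\Q)$-representations), and a \emph{geometric} half, in which one promotes that isomorphism to an algebraic cycle on $H_{I,K}\times K_E$. The first half is essentially the weak form stated above and is within reach of existing technology; the second half is the genuine obstacle and is where the general conjecture remains open.

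First I would record the automorphic input. By \cite{FLS} the curve $E$ is modular: there is a Hilbert eigenform $f$ of parallel weight $2$ and level $I$ whose Hecke eigenvalues match the Frobenius traces of $E$. Such a form contributes a one-dimensional space of holomorphic $2$-forms to a smooth model of $H_{I,K}$, so the Hecke operators cut out an $f$-isotypic sub-Hodge-structure $V_f \subseteq \HHH^2(H_{I,K})$ of K3 type, that is, with $h^{2,0}(V_f)=1$. The Eichler--Shimura--Harder description of the cohomology of the Hilbert modular surface, together with the work of Oda underlying the weak form, identifies $V_f$, as a $\Gal(\Kbar/\Q)$-representation, with the tensor induction from $\Gal(\Kbar/K)$ of $\HHH^1(E)$; over $K$ this tensor induction restricts to $\HHH^1(E)\otimes\HHH^1(E^\sigma)$.

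Next I would match this against the target. On the Kummer surface $K_E=(E\times E^\sigma)/\pm1$ the sixteen nodes and the two elliptic pencils account for the algebraic part of $\HHH^2$, and the transcendental lattice is canonically $T(K_E)\cong\HHH^1(E)\otimes\HHH^1(E^\sigma)$ as a polarized weight-$2$ Hodge structure, again of K3 type. Comparing with the previous paragraph produces a Hodge isometry $V_f\cong T(K_E)$ respecting Hodge filtrations, polarizations, and Galois actions; over $K$ both sides are literally $\HHH^1(E)\otimes\HHH^1(E^\sigma)$, which makes the comparison transparent. This is precisely the weak form of Conjecture~\ref{conj:oda-hamahata}.

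The substance of the strong form is to realize this isometry by an algebraic correspondence. Here I would apply the Kuga--Satake construction to both K3-type structures $V_f$ and $T(K_E)$, obtaining abelian varieties $A_f$ and $A_{K_E}$; the Hodge isometry then induces an isogeny $A_f\sim A_{K_E}$, which by Faltings's isogeny theorem is defined over a number field and, being a homomorphism of abelian varieties, is algebraic. The remaining and genuinely hard step is to descend this cycle on $A_f\times A_{K_E}$ to a correspondence between the surfaces themselves. On the Kummer side the Kuga--Satake variety is isogenous to a power of $E\times E^\sigma$ and the descent is classical; on the Hilbert modular side one must make the Kuga--Satake variety geometric using the universal family of abelian surfaces with real multiplication carried by $Y_0(I)$, control the quotient singularities at the elliptic points and the cusp resolutions entering the Baily--Borel model of Proposition~\ref{prop:x0}, and check that the Hecke projector onto $V_f$ is itself given by a cycle. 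This final descent is an instance of the Hodge and Tate conjectures that is not available in the generality required, even though every Hodge structure in sight is of abelian type; it is exactly the point at which the present paper instead exhibits the correspondence by explicit birational computation in the cases where $H_{I,K}$ has geometric genus~$1$ and both surfaces are of K3 type.
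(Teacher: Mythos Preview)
The statement you are addressing is stated in the paper as a \emph{conjecture}, and the paper does not prove it in general; it establishes only the special cases enumerated in Theorem~\ref{thm:oda-special}. There is therefore no ``paper's own proof'' of Conjecture~\ref{conj:oda-hamahata} to compare against.

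What you have written is not a proof but a correct diagnosis of why the conjecture is hard, and you say so yourself. The motivic half---identifying the $f$-isotypic piece $V_f\subset\HHH^2(H_{I,K})$ with $T(K_E)\cong\HHH^1(E)\otimes\HHH^1(E^\sigma)$ as Galois representations---is the weak form and does follow from modularity as you indicate. The geometric half via Kuga--Satake is a reasonable line of attack, and you correctly locate the obstruction: descending the cycle on $A_f\times A_{K_E}$ to one on $H_{I,K}\times K_E$ is an instance of the Hodge/Tate conjecture that is not known. One could add that even making the Kuga--Satake variety of $V_f$ algebraic over a number field, rather than merely transcendental, is nontrivial; the results of Deligne and Andr\'e cover K3 surfaces, but $H_{I,K}$ need not be one, and carving out $V_f$ by Hecke projectors as an algebraic submotive is part of the same unresolved issue.

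The paper's treatment of the special cases is entirely orthogonal to your sketch. Rather than any abstract cycle-class argument, it computes the ring of Hilbert modular forms explicitly via \cite{hmf}, writes down projective equations for $H_{I,K}$ or its Atkin--Lehner quotient, and then manufactures the correspondence by hand as a chain of explicit rational maps between K3 models: projections from cusps and elliptic points, Jacobians of genus-$1$ fibrations, and $2$- or $3$-isogenies of elliptic fibrations, ultimately matching a Weierstrass equation against one arising from $\Kum(E\times E^\sigma)$ using the formulas of \cite{shtukas}. This works only when $p_g(H_{I,K})=1$ so that a K3 quotient is available, and even then is case-by-case; it says nothing toward the general conjecture but is unconditional where it applies. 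Your outline, by contrast, explains the shape of a uniform argument and pinpoints exactly where it stalls---which is useful context, but not a proof.
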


\begin{remark} As Hamahata notes, this conjecture is inspired by the work
  of Oda \cite{oda}; we therefore refer to it as the {\em Oda-Hamahata
    conjecture}.  It is an important special case of the Tate conjecture.
  An especially strong form of this conjecture would state that
  this correspondence can be defined over $\Q$, not just over $\bar \Q$.
\end{remark}

\begin{remark} Hamahata does not define the term ``primitive'', but we
  understand the phrase ``primitive part of $\HHH^2$'' to refer to the
  transcendental lattice, namely the quotient of $\HHH^2$ by the
  subspace spanned by fundamental classes of curves.  We also remind the
  reader that a correspondence $X \subset V \times W$ induces maps
  on cohomology \cite[p.~3]{vanhoften}.  We have
  $\HHH^*(V \times W) \cong \HHH^*(V) \otimes \HHH^*(W)$ by the K\"unneth
  formula, and $\HHH^*(V) \cong \HHH^*(V)^\vee$ by Poincar\'e duality.
  Thus $\HHH^*(V \times W) \cong \Hom(\HHH^*(V),\HHH^*(W))$, and one sees
  easily that if $X \subset V \times W$ with $\dim X = \dim V = \dim W$
  (the only case used here) the map preserves degrees.  Our correspondences
  will not be explicit subvarieties of $V \times W$; rather, they are induced
  from pairs of rational maps
  $\phi: V \dashrightarrow T, \psi: W \dashrightarrow T$
  as the closure of the locus of $(v,w)$ where $\phi(v), \psi(w)$ are both
  defined and are equal.
\end{remark}

\begin{remark} More generally, let $K$ be a totally real number field of
  arbitrary degree and let $E$ be an elliptic curve over $K$ of conductor
  $I$.  Let $A = \prod_{i=1}^{[K:\Q]} E^{\sigma_i}$, where the $\sigma_i$ are the
  embeddings of $K$ into $\R$.
  Then $A$ admits
  an action of the group $G = (\Z/2\Z)^d$, where each generator acts as
  negation on one factor and the identity on the others.
  Let $G_e$ be the subgroup of elements whose sum is $0$ (in particular, if
  $d = 2$ we have $G_e = \{\pm 1\}$ and the quotient is the
  classical Kummer surface \cite{proleg} as above).
  The quotient $A/G_e$ is a Calabi-Yau variety and the primitive part of
  $\HHH^d(A/G_e)$ can be identified with $\otimes_i \HHH^1(E^{\sigma_i})$..
  It is natural to generalize the Oda-Hamahata conjecture to this situation:
  namely, we ask for a correspondence between $A/G_e$ and $H_{I,K}$ inducing an
  injection from the primitive part of $\HHH^d(A/G_e)$ to that of
  $\HHH^d(H_{I,K})$.  For now this conjecture appears to be beyond computational
  attack for $d > 2$.
\end{remark}

Hamahata's conjecture is known under some special circumstances:
in particular, if $E$ is a {\em $\Q$-curve}, i.e., an elliptic curve
isogenous to all of its Galois conjugates.  This is not clearly stated or
proved in the literature, but is known to experts.  It is a consequence
of the modularity of $E$, which is proved in \cite[Corollary~6.2]{ribet},
subject to Serre's conjecture on modular $2 \times 2$ Galois representations,
which is now a theorem of Khare and Wintenberger \cite{kw}.
More precisely, Ribet's result is that $E$ is a quotient of $J_1(N)$; it
follows that there is a dominant map $J_1(N)^d \to \prod_{i=1}^{d} E^{\sigma_i}$,
where $d = [K:\Q]$.  The domain of this map is related to $\HHH^d(X_0(N))$
by Hecke correspondences, and the equality of the levels follows from
Serre's formula for the level of the modular form, also proved by Khare
and Wintenberger.

Our main goal in this paper is to prove some new cases of this conjecture.
This advance is possible because
of the recent release of software \cite{hmf} for computing rings of
Hilbert modular forms.

\begin{remark} Let $\O$ be an order in a totally real field and $\O^+$
  the set of its totally nonnegative elements.  Let $\dO$ be the different of
  $\O$.  A Hilbert modular form for $\O$ has a Fourier
  expansion in which the coefficients are indexed by elements of 
  $\O^+ \cap \dO^{-1}$ and are invariant under multiplication by
  elements of $\O^+ \cap \O^*$.  In order to multiply two such expansions,
  one needs an algorithm for writing an element of $\O^+$ as a sum of two
  such elements in all possible ways.  An efficient method for doing so
  will be described in \cite{hmf-mult}.
\end{remark}

We state our main result here:
\begin{thm} Let $K$ be a real quadratic field of narrow class number $1$
  and $I$ an ideal of $\O_K$.  Let $H_{I,K}$ be the associated Hilbert
  modular surface; assume that $I$ is an elliptic curve on the list
  in Theorem~\ref{thm:oda-special}.
  Then there is an elliptic curve $E/K$ with conductor $I$
  and an explicit correspondence between $H_{I,K}$ and $E \times E^\sigma$
  (possibly defined only over a nontrivial extension of $\Q$)
  that identifies the transcendental parts of $H^2$ of the two surfaces.
\end{thm}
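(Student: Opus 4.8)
The plan is to proceed by explicit computation for each of the nine isogeny classes listed in Theorem~\ref{thm:oda-special}, exploiting the hypothesis of narrow class number $1$, which ensures that $H_{I,K}$ is of the simplest possible type (a single cusp configuration in the sense of Proposition~\ref{prop:x0}) and that the ring of Hilbert modular forms can be computed directly with \cite{hmf}. First I would, for each field $K$ and level $I$, compute a presentation of the (even-weight part of the) graded ring of Hilbert modular forms of level $\Gamma_0(I)$ using the algorithms of \cite{hmf} and \cite{hmf-mult}: generators, their weights, and the ideal of relations among them, at least out to a weight bound that provably determines $\Proj$. This yields an explicit projective model of the Hilbert modular surface $H_{I,K}$ as a subvariety of a weighted projective space. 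Because the geometric genus of $H_{I,K}$ is assumed to be $1$ in each of these cases, the surface is (birationally) either a K3 surface or close to one, and in particular its transcendental lattice has small rank, so that matching it against the Kummer surface of $E \times E^\sigma$ is a finite-dimensional linear-algebra problem once both sides are made explicit.

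Next I would, on the other side, produce an explicit birational model of the Kummer surface $K_E = (E \times E^\sigma)/\pm 1$: given the Weierstrass equations of $E$ and $E^\sigma$ over $K$ from the LMFDB labels, the classical Kummer surface of a product of elliptic curves has a well-known quartic (or, in suitable coordinates, a double-sextic / elliptic-fibration) model, and one can write down an explicit elliptic fibration on it over $\P^1$ together with its generic fiber. The heart of the argument is then to exhibit a common variety $T$ and a pair of dominant rational maps $\phi: H_{I,K} \dashrightarrow T$ and $\psi: K_E \dashrightarrow T$ so that the induced correspondence acts on $\HHH^2$ as required. In practice I expect $T$ to be taken to be an elliptic surface — most naturally the elliptic fibration on $K_E$ itself, or a base-change/quotient of it — and the task becomes: find on $H_{I,K}$ an elliptic pencil (a pencil of curves of arithmetic genus $1$, detected among the Hilbert modular forms by looking for a two-dimensional space of forms in some weight whose ratio is a non-constant function) whose associated fibration is isomorphic, after a base change and perhaps a quadratic twist, to the fibration on $K_E$. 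One then checks that the Mordell--Weil and singular-fiber data (Kodaira types, discriminant, monodromy) of the two fibrations agree, which forces an isomorphism of the generic fibers and hence a correspondence; the injectivity on the transcendental lattice follows because an isogeny of the generic fibers induces an isometry up to finite index on the part of $\HHH^2$ orthogonal to the fiber components and zero-section, which by construction contains the transcendental lattice on both sides.

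To pin down that the correspondence genuinely identifies the transcendental parts (rather than just mapping one into a larger $\HHH^2$), I would compute the Picard numbers on both sides: on $K_E$ the transcendental lattice has rank $4$ when $E$ and $E^\sigma$ are non-isogenous (rank $3$ when they are, i.e.\ when $E$ is a $\Q$-curve, though those cases overlap with the already-known situation), and one verifies by counting the curves visibly present on the Hilbert modular model — the images of the cusp, the Hirzebruch--Zagier divisors, the components of reducible fibers of the elliptic pencil, torsion sections — that the algebraic part of $\HHH^2(H_{I,K})$ has the complementary rank, so the remaining piece has the right dimension to receive the transcendental lattice of $K_E$ isomorphically. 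Comparison of the discriminant forms (or of the zeta functions / traces of Frobenius at several small primes of good reduction, which is the most robust computational check) then confirms the identification of lattices. I also need to verify that the levels match, i.e.\ that the conductor of $E$ really is the ideal $I$ used to form $H_{I,K}$; this is read off from the LMFDB data and is consistent with the analytic modularity of \cite{FLS}.

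The main obstacle, I expect, is the explicit determination of the ring of Hilbert modular forms to high enough weight to get a provably correct $\Proj$ — the generators can occur in fairly high weight and the relation ideal can be large, so the computation is delicate and its output must be certified rather than merely heuristic — together with the subsequent problem of actually \emph{recognizing} the Kummer/elliptic structure inside this model, since a priori the Hilbert modular surface is handed to us as an opaque subvariety of a weighted projective space with no marked fibration. Locating the correct elliptic pencil among the Hilbert modular forms, and then proving (not just numerically observing) that its generic fiber is isogenous to that of $K_E$, is where the real work lies; once that isogeny is in hand, the statement about the transcendental lattice is essentially formal.
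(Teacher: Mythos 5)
Your overall strategy---compute the ring of Hilbert modular forms with \cite{hmf}, build an explicit model of the Kummer surface, and connect the two through elliptic fibrations whose Weierstrass data are matched explicitly---is the same as the paper's. But there is a genuine gap at the heart of your plan: you propose to find a single elliptic pencil on $H_{I,K}$ whose fibration becomes isomorphic to one on $K_E$ ``after a base change and perhaps a quadratic twist.'' In almost every case treated this is impossible, because the transcendental lattices of the (K3 quotient of the) Hilbert modular surface and of $\Kum(E\times E^\sigma)$ are not isometric, only isogenous: for instance at level $(3)$ over $\Q(\sqrt{13})$ the first K3 surface extracted from the Atkin--Lehner quotient has Picard discriminant $96$ while the relevant curves are $3$-isogenous to their conjugates, and at level $(2)$ over $\Q(\sqrt{17})$ one must pass from discriminant $192$ to $32$. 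Base change and quadratic twist preserve the transcendental lattice up to isometry, so they cannot bridge this. What is actually needed (and what Section~\ref{sec:verify-modularity} of the paper supplies) is a chain of K3 surfaces $L_0,\dots,L_n$ joined by maps of finite degree built from (i) $p$-isogenies on generic fibres with rational $p$-torsion, (ii) Jacobians of genus-$1$ fibrations with multisection degree $d>1$, (iii) Jacobians of fibrations whose sections exist only over an extension, and (iv) isogenies of the underlying abelian surface on the Kummer side; tracking how each step rescales the transcendental lattice and its discriminant is what makes the final identification work. Your proposal has no mechanism for changing the discriminant and would stall at the first example that is not already a lattice match.

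Two smaller points. First, narrow class number $1$ does not give ``a single cusp configuration'': the number of cusps is the class number $h_I$ of Proposition~\ref{prop:x0}, and the examples have $4$, $6$, or $8$ cusps; resolving and projecting away from all of them (and from the non-canonical elliptic points), usually after first passing to an Atkin--Lehner quotient, is a substantial part of the computation that your outline compresses into ``the surface is birationally either a K3 or close to one.'' Second, in several of the listed cases the surface or its quotient is honestly elliptic or even of general type ($\kappa=1$ or $2$, e.g.\ levels $\p_{31}$, $4\p_5$, $\p_5\p_{11}$ over $\Q(\sqrt 5)$), so one must first extract a genus-$1$ fibration via the canonical map and pass to its Jacobian before any K3 surface appears at all; the identification of the transcendental lattices then rests on the determinant formula for elliptic K3 surfaces and explicit Mordell--Weil and height computations rather than on the rank count you describe.
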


\begin{remark} In particular, this theorem applies in all cases where
  $H_{I,K}$ is not of general type and $I \ne (1)$,
  except for $K = \Q(\sqrt{13}), I = (2)$.
  In this case we do find a K3 surface that is in correspondence with the
  Hilbert modular variety (and in fact this has been known for some years,
  being \cite[Theorem 9]{vdgz}) but it is quite difficult to establish
  an explicit correspondence with the desired Kummer surface.
  In two of the cases where $I = (1)$ the birational equivalence between the
  Hilbert modular surface and a K3 surface with canonical singularities is
  made explicit in \cite[Appendix]{williams}.
\end{remark}

We have also considered a few cases in which the field $K$ has
narrow class number $2$.
One of these will be discussed in Section~\ref{ex:d12-n13}.

Our method is to study the Baily-Borel compactification
$H_{I,K}$ by means of computer algebra and exhibit the desired correspondence
to the Kummer surface by explicit maps.  In some cases the Kummer surface
of $E \times E^\sigma$ is itself a quotient of $H_{I,K}$; in other cases
this does not appear to be true and instead we find an auxiliary K3 surface
$S$ that admits maps of finite degree from both $H_{I,K}$ and the Kummer
surface of $E \times E^\sigma$.  The methods are similar to those of
\cite[Appendix]{williams}, except that we do not necessarily start with a
variety birational to a K3 surface.  In addition, the setup of
\cite{williams} requires information about the ring of Hilbert
modular forms that is not available here: in particular, that it can be
generated by Borcherds products.  Against this, we do not find equations for
Hirzebruch-Zagier cycles \cite[Def.~VI.1.4]{vdg}
on the Hilbert modular surfaces.

The Hilbert modular surfaces are not only interesting for their relation
to elliptic curves over real quadratic fields; they are worth studying
in their own right.  In particular, some such surfaces, or their quotients
by Atkin-Lehner involutions, furnish examples of surfaces of geometric
genus $0$ and small $h^{1,1}$.  It is interesting to relate these
to known examples of surfaces with these properties, such as the famous
Godeaux and Campedelli surfaces (see \cite{reid-godeaux}, for example).
As suggested by Hamahata \cite[p.~194]{hamahata}, we would expect that
the Hilbert modular threefolds (and their Atkin-Lehner quotients) also give
examples of threefolds with unusual properties.  For example, in light of
the work of Grundman \cite{Grundman1992, Grundman1994}, some such threefolds
at levels where the geometric genus is $0$ are varieties of general type
whose Hodge diamond is that of a rational variety.  Hilbert
modular threefolds with geometric genus $1$ may also be interesting examples
of varieties of general type with small Hodge numbers, although the
classification problem, already very difficult for surfaces, is presumably
hopeless in this situation.

In other cases, the surface or a quotient has geometric genus $1$ but is
of general type.  It is known \cite[Theorem 1]{morrison}
that if a surface $S$ is of
geometric genus $1$ then there is a K3 surface $K_S$ and a correspondence
between $S$ and $K_S$ identifying the transcendental part of $H^2_{\etale}$
for the two surfaces.  However, finding the K3 surface and correspondence
explicitly is a difficult and interesting problem in general.
In some such cases the quotient by some or all of
the Atkin-Lehner involutions is a surface with $p_g = 1$ and $\kappa < 2$
and we can find the desired correspondence, but otherwise there is no
obvious method to use.  This problem will be studied in future work.

\subsection*{Acknowledgments} This paper began when Wei Zhang asked John
Voight whether advances in theory and computation in recent decades would
make it possible to prove some new cases of Oda's conjecture.
Prof.~Voight passed the question along to me; I thank him for doing so and
for many interesting and informative discussions along the way, including
some essential references.  In addition,
this paper could not have been written without the software \cite{hmf}.
I am grateful to its developers and especially to Eran Assaf and Edgar Costa
for their explanations of how best to use the code.  My
understanding of modularity, and in particular of the distinction between
the algebraic and geometric viewpoints, owes much to discussions
with Jared Weinstein.  I also thank Dami\'an Gvirtz-Chen for suggesting
an interesting calculation.

While writing this paper I benefited from the hospitality of ICERM and the
support of the Simons Collaboration on Algebraic Geometry, Number Theory,
and Computation (grant $546235$), as well as the liberality of the
Tutte Institute for Mathematics and Computation in permitting me to take
leave from my employment there and the opportunity to visit the Department of
Pure Mathematics at the University of Waterloo.


\section{General approach to constructing a K3 surface}
We start by establishing the notation.
\begin{notation}\label{not:basic}
  Let $K$ be a real quadratic field, let $\O_K$ be its maximal order, and let
  $I$ be an ideal of $\O_K$.  We define $M_I = M_{I,K}$ to be the ring of
  Hilbert modular forms of level $I$ and parallel weight.  Further, we let
  $H_I = H_{I,K} = \Proj M_{I,K}$ be the well-known
  {\em Baily-Borel compactification} \cite[II.7]{vdg}
  of the Hilbert modular surface of level $I$.
\end{notation}

In this paper we only use forms of parallel weight,
so we will abuse notation by writing $w_i$ instead of $(w_i,w_i)$ for the
weight of a Hilbert modular form.  In addition, the presence of $-I_2$ in
$\Gamma_0(N)$ implies that all forms are of even weight.

\begin{defn} Let $f_1, \dots, f_n$ be a set of modular forms that generate
  $M_I$.  Let their weights be $w_1, \dots, w_n$.  Then we define $\P_I = \P_{I,K}$ to be
  the projective space over $\Q$ with variable degrees $w_i/2$.
\end{defn}
As above, $H_{I,K}$ is naturally a subvariety of $\P_I$.
The ring $M_I$ is computed by the software package \cite{hmf}.

\subsection{Atkin-Lehner involutions}
The ring $M_I$ and the surface $H_I$ are rather complicated.  When the
level is not $1$, they always admit Atkin-Lehner involutions, and the
quotient of $H_I$ by these, corresponding to the fixed subring of $M_I$,
is simpler.  Recall \cite{greenberg-voight}
that if $I = I_1I_2$ with $I_1 + I_2 = (1)$, then there is an
{\em Atkin-Lehner involution} $w_{I_1}$ on
$M_I$ and hence on $H_I$ corresponding to $I_1$.  One can describe $w_I$
in terms of the moduli functor of abelian varieties
with level structure represented by $H_I$.  Namely, a point corresponds
to an abelian surface with real multiplication by $\O_K$ and a subgroup
isomorphic to $\O_K/I$ as an $\O_K$-module, and $w_{I_1}$ replaces
the pair $(A,S)$ with $(A/I_2S,(S+A[I_2])/I_2S)$.

This abstract definition, however, is not very useful for computing the
Atkin-Lehner involution in practice.  Instead we use the description
in terms of modular forms.  The software computes the action on newforms.
For oldforms, the following description, which follows from the
$\GL(2)$ Oldforms Theorem of \cite[p.~4]{roberts-schmidt},
is sufficient for our purposes.

\begin{prop}\label{prop:al-oldforms}
  With notation as above, let $P$ be a prime ideal of $\O_K$,
  let $J$ be an ideal, and let
  $f$ be a Hecke eigenform of level $J$ and weight~$w$.
  Consider the level-raising operator $a_{P^i}$ from forms of
  level $J$ to forms of level $JP^d$, where $0 \le i \le d$.
  Let $w_P$ be the Atkin-Lehner operator associated to the highest
  power of $P$ dividing $JP^d$.  Then
  $w_P(a_{P^i} f) = sp^{(d-2i)(w/2)} a_{P^{d-i}} f$, where
  $s = 1$ if $P \nmid J$ and is equal to the Atkin-Lehner eigenvalue on $f$
  otherwise.
\end{prop}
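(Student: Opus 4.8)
The plan is to reduce the assertion to the local theory of newforms at $P$, where it is the Oldforms Theorem of \cite[p.~4]{roberts-schmidt}, and then to carry the normalization through the dictionary between the automorphic and Hilbert-modular-form conventions. We may assume that $f$ is a newform: a general Hecke eigenform of level $J$ is obtained by applying degeneracy (level-raising) maps to a newform $f_0$ of some level $J_0\mid J$, the maps $a_{P^j}$ compose under the evident identifications of levels, and the asserted identity for $f$ is then a relabeling of the one for $f_0$ (with $s$ the Atkin--Lehner eigenvalue of $f_0$ at the exact power of $P$ dividing its level, which coincides with that of $f$, both $1$ when $P\nmid J$). Let $\pi=\bigotimes_v\pi_v$ be the cuspidal automorphic representation of $\GL_2(\A_K)$ generated by $f$; its conductor is $J$.

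Now localize. The maps $a_{P^i}$ and the operator $w_P$ at level $JP^d$ affect only the local component $\pi_P$, indeed only the $(d+1)$-dimensional space $V=\pi_P^{\Gamma_0(\mathfrak p^{e+d})}$ of fixed vectors, where $P^e\|J$; this space has basis $a_{P^0}v_0,\dots,a_{P^d}v_0$ for a local newvector $v_0$, matching $a_{P^0}f,\dots,a_{P^d}f$ globally. If $P\nmid J$ then $\pi_P$ is an unramified principal series; if $e\ge1$ it is ramified of conductor $\mathfrak p^e$ and carries a Fricke involution with eigenvalue $\varepsilon_P\in\{\pm1\}$, which is the quantity $s$. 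The Oldforms Theorem describes $w_P$ on $V$ in this basis: it is anti-diagonal, carrying $a_{P^i}v_0$ to a scalar multiple of $a_{P^{d-i}}v_0$, the scalar being a power of the residue field size times $\varepsilon_P$ (read as $1$ when $e=0$).

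It remains to determine the scalar. On double cosets $a_{P^i}$ is multiplication by $\mathrm{diag}(\varpi_P^{\,i},1)$, where $\varpi_P$ is a totally positive generator of $P$ (available since $K$ has narrow class number $1$), and $w_P$ at level $JP^d$ is represented by a Fricke matrix $W$ whose lower-left entry lies in $JP^d$, whose determinant generates $P^{e+d}$, and whose top-right entry is a unit at $P$. Computing $W\cdot\mathrm{diag}(\varpi_P^{\,i},1)$ and rewriting it as $\mathrm{diag}(\varpi_P^{\,d-i},1)$ times a Fricke matrix for level $J$ (supported at $P^e$) times an element of $\GL_2(\O_{K,P})$, and applying this to $v_0$: the $\GL_2(\O_{K,P})$-part fixes $v_0$, the level-$J$ Fricke part scales it by $\varepsilon_P=s$ (and is trivial when $e=0$), and the determinant-renormalization in parallel weight $w$ contributes $p^{(d-2i)(w/2)}$. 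This gives $w_P(a_{P^i}f)=s\,p^{(d-2i)(w/2)}\,a_{P^{d-i}}f$, consistent with $w_P$ being a genuine involution in this normalization: that forces the exponent to be antisymmetric under $i\leftrightarrow d-i$, matching $s^2=1$.

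I expect the only real difficulty to be this last normalization bookkeeping rather than anything conceptual: one must match the conventions of \cite{hmf} for the level-raising operators and the Atkin--Lehner action on newforms to the automorphic double-coset operators, keep track of the factors of $\det$ versus $(\det)^{1/2}$ that arise because the paper uses even parallel weight $w$ but embeds $H_{I,K}$ in a weighted projective space with weights $w_i/2$, and treat the split, inert, and ramified cases for $P$ uniformly---the latter being what dictates whether the base of the exponent is the rational prime below $P$ or the norm $\mathrm{N}(P)$. Once that dictionary is fixed, the statement is exactly the cited theorem.
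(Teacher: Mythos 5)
Your proposal is correct and follows essentially the same route as the paper, which offers no proof beyond the remark that the proposition ``follows from the $\GL(2)$ Oldforms Theorem of \cite[p.~4]{roberts-schmidt}''; your localization to $\pi_P$, the anti-diagonal action on the space of oldvectors, and the determinant bookkeeping are exactly the details the paper leaves implicit. The consistency check via the involution property of $w_P$ (antisymmetry of the exponent under $i \leftrightarrow d-i$) is a worthwhile addition not present in the paper.
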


Thus we may compute the Atkin-Lehner operator on the entire space of modular
forms by expressing the basis given by the software in terms of
Eisenstein series (for which again the action is easily described) and
the images of newforms of various levels under degeneracy maps.  Although
the given basis does not consist of such forms, we may change to a
different basis that does.  There is no ambiguity because we can compute
enough Hecke eigenvalues to determine the expression uniquely.  We
can verify that our expression is correct by checking that it gives
an automorphism of $H_I$.

\subsection{Projective embedding and cusps}\label{subsec:embed}
In computing the Atkin-Lehner operators we determined the relation between
the set of generators of $M_I$ used by \cite{hmf} and bases of Eisenstein
series and cusp forms.  Thus we may identify the cusps of the surface
as the points at which all cusp forms vanish.  (The cusps belong to the
singular locus of $H_I$, which in theory we could find
by the standard Jacobian ideal computation, but this
is infeasible in practice in all but a few cases.)
This gives an additional test for our computation
of Atkin-Lehner operators, since these should permute the cusps.

We now choose an integer $n$ and use it to embed $H_I$ into projective space
by $\O(n)$.  If we are considering the quotient by some or all of the
Atkin-Lehner involutions, then rather than using all of $\O(n)$ we use only
the subspace fixed by the involutions.  In addition, we record the images of
the cusps.

It is much easier to use computer algebra systems to work with a surface if
the singularities of the surface are canonical; in other words, if they
are of ADE type, or Du Val singularities \cite[(1.2)]{ypg}.  However, the cusp
singularities never have this property, and not all elliptic points
\cite[p.~15]{vdg} do.  Thus we would like to resolve these singularities.

Zariski showed that the singularities of a surface can always be resolved
by alternating between normalizing and blowing up isolated singularities.
On a computer, both of these are rather unpleasant
procedures.  As a substitute for blowing up, we use the projection.
That is, let $S \subset \P^n$ be a surface with an isolated singular point $P$.
Projection away from $P$ gives a rational map $S \dashrightarrow \P^{n-1}$,
whose image we will denote by $S_P$.  Abstractly we think of the situation
as follows.  Let $S'$ be the partial resolution of $S$ at $P$ given by blowing
up once, let $H$
be the hyperplane class, and let $E$ be the exceptional divisor.  Then
$S_P$ may be viewed as the embedding of $S'$ by means of the divisor class
$H-E$.

We know $E$ abstractly by the ideas of Hirzebruch \cite[Chapter II]{vdg},
and it is
computed by \cite{hmf} as a cycle of rational curves of known
self-intersection, each one intersecting the adjacent ones.
In particular, this can be used to predict the degree of $S_P$.  This is
valuable, because a K3 surface embedded into $\P^n$ by a complete linear
system always has degree $2n-2$, and so a projection that reduces the
degree by $3$ or more means that we are making progress (unless the
projection itself is not a birational equivalence).

In some cases, after repeated projections away from the singularities, we
obtain a surface $S$ that is singular along a line $L$.  A variety that is
singular in codimension $1$ is not normal \cite[I.4]{kollar}, so naturally we
want to project away from $L$.  The image $S_L$ of $S$ under projection away
from $L$ is in a space of dimension $2$ less and satisfies
$\deg S - \deg S_L \ge 5$, so such an operation always brings us closer to
the degree for a standardly embedded K3 surface.

The projection away
from a singular point or line may be a map of degree greater than $1$.
This is undesirable, because the image may no longer have the desired
cohomology.  When it happens, we apply the $2$-fold Veronese embedding and
proceed with our projections as before.

Even after we have a K3 surface, we may want to make further birational
transformations in order to improve the model.  In particular, the lower the
degree of the singularities of a K3 surface, the easier it is to work with
its linear systems; the tradeoff is that it is more difficult in a
higher-dimensional projective space.  As before, the basic idea is to project
away from the worst singularities and use the Veronese embedding when necessary
to find more room.  Typically it is fairly easy to compute on a model in
$\P^7$ with only $A_1, A_2$ singularities, or in $\P^5$ when there are only
$A_1, A_2, A_3$ points.

\subsection{Tricks for finding images of projections and singular points}\label{subsec:tricks}
In principle the problem of finding the image of a rational map of varieties
can be solved algorithmically by means of an algorithm using Gr\"obner
bases \cite[Sections 3.1--3.3]{CLO}.
In practice this algorithm, as implemented in Magma \cite{magma},
may require more computer time or memory than is available, and so we may
need to find the image in other ways.

First, even if the map is defined by linear equations, as is the case for a
projection map, the implementation does not appear to take advantage of this
but simply treats it as a general map defined by arbitrary polynomials.
However, in this case, the image can be found by calculating an
elimination ideal of the ideal defining the source, rather than of the
graph as in the general method.  To state the result concretely:

\begin{prop}\label{prop:image-from-elim}
  Let $V \subseteq \P^n$ be the subvariety defined by the ideal
  $I \subseteq K[x_0,\dots,x_n]$.  Fix $k \le n$ and let $\pi$ be the
  rational map $\P^n \dashrightarrow \P^k$ defined by $(x_0:\dots:x_k)$.
  Then the Zariski closure of $\pi(V)$ is defined by the elimination ideal
  $K[x_0,\dots,x_k] \cap I$.
\end{prop}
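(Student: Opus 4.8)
The plan is to reduce the statement to the classical elimination (``closure'') theorem for coordinate projections of affine varieties by passing to affine cones. Since $I$ is homogeneous, let $\hat V \subseteq \A^{n+1}$ be the affine cone over $V$, cut out by $I$, and let $p \colon \A^{n+1} \to \A^{k+1}$ be the linear projection $(x_0,\dots,x_n) \mapsto (x_0,\dots,x_k)$. Write $B = \{x_0 = \cdots = x_k = 0\} \subseteq \P^n$ for the center of $\pi$, so that $\pi$ is a morphism exactly on $\P^n \setminus B$ and the Zariski closure of $\pi(V)$ is to be understood as the closure of $\pi(V \setminus B)$ in $\P^k$; write $\hat B \subseteq \A^{n+1}$ for the corresponding linear subspace, the affine cone over $B$.

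First I would carry out the affine elimination step. The inclusion $K[x_0,\dots,x_k] \hookrightarrow K[x_0,\dots,x_n]$ induces a homomorphism $K[x_0,\dots,x_k] \to K[x_0,\dots,x_n]/I$ whose kernel is precisely $J := I \cap K[x_0,\dots,x_k]$. Hence, working over $\Kbar$, the closure theorem \cite[Ch.~3]{CLO} shows that the Zariski closure of $p(\hat V)$ in $\A^{k+1}$ is the zero locus of $J$; equivalently, and valid over $K$ itself, the scheme-theoretic image of $p|_{\hat V}$ is the affine scheme with coordinate ring $K[x_0,\dots,x_k]/J$, whose underlying space is this same closure. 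Observe that $J$ is again homogeneous, so this image is a cone with vertex the origin.

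Then I would projectivize, taking care of the vertex and the center. Since $\hat V$ is a cone containing $0$ and $p(0) = 0$, a point $v \in \hat V$ has $p(v) \ne 0$ if and only if $v \notin \hat B$, i.e.\ if and only if $v$ is a nonzero lift of a point of $V \setminus B$; thus $p(\hat V) \setminus \{0\}$ is exactly the preimage in $\A^{k+1} \setminus \{0\}$ of $\pi(V \setminus B) \subseteq \P^k$. Passing to closures --- and noting that a nonempty cone contains $0$, so discarding the vertex from $p(\hat V)$ does not change its closure --- and then passing to $\Proj$ identifies $\overline{\pi(V \setminus B)}$ with $\Proj(K[x_0,\dots,x_k]/J)$, which is by definition the subvariety of $\P^k$ defined by $J = I \cap K[x_0,\dots,x_k]$. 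In the degenerate case $V \subseteq B$ both sides are empty and there is nothing to prove.

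The routine ingredients are the dictionary between homogeneous ideals and projective subschemes and the affine closure theorem; the only point needing care --- and where a careless ``just compute the elimination ideal'' might in principle go wrong --- is the bookkeeping around the center $B$ of the rational map and the vertex of the cone, i.e.\ checking that the points of $\hat V$ lying over the domain of $\pi$ are precisely those outside $\hat B$, and that deleting the vertex does not affect the Zariski closure. I expect that to be the main (and essentially the only) obstacle.
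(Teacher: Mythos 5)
Your proof is correct, and it fills in precisely the bookkeeping (the cone vertex and the center of projection) that the paper waves away: the paper offers no argument beyond the remark that the statement ``is almost the definition of the image,'' implicitly deferring to the elimination theory of \cite{CLO}. Your reduction to the affine closure theorem via the affine cone is the standard way to make that remark precise, so there is nothing to object to and no genuinely different route to compare against.
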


This is almost the definition of the image, but it is a very useful fact since
it is much faster to compute an elimination ideal of $I$ than of the graph
of $\pi$, as is required by the
general algorithm.  (Note that this method can be used even when the map
is defined by linear equations that are not monomials, by composing with
a linear automorphism of $\P^n$.)
However, in some cases even this method fails in practice, or we need to
find the image of a map defined by polynomials of degree greater than $1$.
Then we are forced to resort to another method, which is based on an
even simpler observation:

\begin{lemma}\label{lem:sub-image}
  Let $V$ be a variety and $\pi: V \dashrightarrow W$ a rational map.
  Let $C \subseteq V$ be a Zariski closed subset.  Then
  $\pi(C) \subseteq \pi(V)$.
\end{lemma}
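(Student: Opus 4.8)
The statement is a formal consequence of the definitions, so the plan is to make the conventions precise and then chase inclusions. First I would recall that a rational map $\pi\colon V\dashrightarrow W$ is given by a morphism on a dense open subset $U\subseteq V$, its domain of definition, and that for an arbitrary subset $Z\subseteq V$ the symbol $\pi(Z)$ denotes the Zariski closure $\overline{\pi(Z\cap U)}$ of the image of the part of $Z$ on which $\pi$ is actually a morphism; in particular $\pi(V)=\overline{\pi(U)}$. Granting this, the argument is immediate: since $C\cap U\subseteq U$ we have $\pi(C\cap U)\subseteq\pi(U)$ as subsets of $W$, and applying the monotone Zariski closure operator to both sides gives $\overline{\pi(C\cap U)}\subseteq\overline{\pi(U)}$, which is exactly $\pi(C)\subseteq\pi(V)$.

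The only case that deserves a comment is the degenerate one in which $C$ lies entirely inside the indeterminacy locus $V\setminus U$, so that $C\cap U=\emptyset$; then $\pi(C)=\overline{\emptyset}=\emptyset$ and the inclusion holds vacuously. If instead one prefers to extend $\pi$ to a morphism on a blow-up of $V$, the identical argument applies with $U$ replaced by the domain of that extension, and the conclusion is unchanged. There is no real obstacle here: the lemma is recorded only because it is the basis of the practical technique used in the sections that follow, where to compute $\pi(V)$ one chooses convenient Zariski closed subsets $C\subseteq V$ --- in practice curves through a general point, or hyperplane sections --- for which the elimination computing $\pi(C)$ (as in Proposition~\ref{prop:image-from-elim}) is tractable in Magma, and then recovers $\pi(V)$ as the closure of the union of finitely many such $\pi(C)$ once these are seen to cover a dense subset of the image.
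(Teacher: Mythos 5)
Your argument is correct: unwinding the convention that $\pi(Z)$ means the (closure of the) image of $Z$ intersected with the domain of definition, the inclusion follows from monotonicity of images and of Zariski closure, and you rightly flag the degenerate case where $C$ lies in the indeterminacy locus. The paper offers no proof at all, dismissing the lemma as trivial, so your write-up simply makes explicit the one-line verification the author left to the reader; there is nothing to compare beyond that.
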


This trivial statement is useful because it is generally much faster to
compute the image of a $0$-dimensional scheme.  Let us state another
proposition that shows that this idea can be powerful.

\begin{prop}\label{prop:compute-low-degree}
  Let $V$ be a variety and $C_0, C_1, \dots$ an infinite sequence of Zariski
  closed subsets of $V$, the Zariski closure of whose union is $V$.  Let
  $\pi: V \dashrightarrow W$ be a dominant rational map, and let
  $W_k = \cup_{i=1}^k \pi(C_i)$.  Then:
  \begin{enumerate}
  \item For all $k$, every polynomial vanishing on $W$ vanishes on $W_k$.
  \item For all $d > 0$, there exists $k_d$ such that every polynomial of
    degree $d$ vanishing on $W_{k_d}$ vanishes on $W$.
  \end{enumerate}
\end{prop}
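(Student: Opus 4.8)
The plan is to dispose of part~(1) by a one-line set inclusion and to prove part~(2) by a descending-chain argument, with a single genuinely geometric input at the very end. For part~(1): since $\pi$ is a rational map with target $W$, wherever it is defined on $C_i$ its values lie in $W$, so each $\pi(C_i)$ --- read as the image of $C_i$ intersected with the domain of definition of $\pi$ --- is a subset of $W$. Hence $W_k = \bigcup_{i=1}^k \pi(C_i) \subseteq W$ as sets, and any polynomial vanishing on $W$ vanishes on the subset $W_k$. Nothing more is needed there.

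For part~(2), fix $d>0$ and let $S_d$ be the finite-dimensional space of degree-$d$ forms in the homogeneous coordinate ring of the ambient projective space of $W$. For $k \ge 1$ let $V_k \subseteq S_d$ be the subspace of forms vanishing on $W_k$, and let $V_\infty \subseteq S_d$ be the subspace vanishing on $W$. From $W_1 \subseteq W_2 \subseteq \cdots \subseteq W$ we obtain a descending chain $V_1 \supseteq V_2 \supseteq \cdots \supseteq V_\infty$ of subspaces of the finite-dimensional space $S_d$, so it stabilizes: there is an index $k_d$ with $V_k = V_{k_d}$ for all $k \ge k_d$. A form lies in this stable value iff it lies in every $V_k$, i.e.\ iff it vanishes on $\bigcup_k W_k$, equivalently on $\overline{\bigcup_k W_k}$ (a form vanishes on a set iff it vanishes on its closure). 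Thus $V_{k_d}$ is the space of degree-$d$ forms vanishing on $\overline{\bigcup_k W_k}$, and it suffices to prove $\overline{\bigcup_k W_k} = W$: then $V_{k_d} = V_\infty$, which is exactly the assertion that every degree-$d$ polynomial vanishing on $W_{k_d}$ vanishes on $W$.

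The remaining point is the one piece of geometry. Let $U = \bigcup_i C_i$, which is Zariski dense in $V$ by hypothesis, and let $V^\circ \subseteq V$ be the open locus where $\pi$ is defined. Then $U \cap V^\circ$ is dense in $V$, being a dense set intersected with a dense open set, hence dense in $V^\circ$. Restricted to $V^\circ$ the map $\pi$ is a morphism, so it is continuous and $\pi(V^\circ) = \pi(\overline{U \cap V^\circ}) \subseteq \overline{\pi(U \cap V^\circ)}$; taking closures shows that $\pi(U \cap V^\circ)$ is dense in $\overline{\pi(V^\circ)}$, which is all of $W$ because $\pi$ is dominant. Since $\bigcup_k W_k$ contains $\pi(U \cap V^\circ)$, its closure is $W$, as desired, and the proof of part~(2) is complete.

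I do not expect a real obstacle. The only points that want a little care are keeping track of the domain of definition of $\pi$, so that the symbols $\pi(C_i)$ and $W_k$ genuinely denote subsets of $W$, and invoking the elementary fact that a continuous map sends a dense subset of its source onto a dense subset of the closure of its image. That the $\pi(C_i)$ need not themselves be Zariski closed is harmless, since the whole argument is phrased in terms of ideals of vanishing forms, which depend only on closures; the finiteness underlying the stabilization in part~(2) comes entirely from $\dim S_d < \infty$, and closedness of the $C_i$ is not used at all.
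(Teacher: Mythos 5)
Your proof is correct and follows essentially the same route as the paper's: part (1) is the trivial inclusion $W_k \subseteq W$, and part (2) rests on the finite-dimensionality of the space of degree-$d$ forms together with the density of $\bigcup_i \pi(C_i)$ in $W$ (the paper phrases this as a strictly decreasing sequence of dimensions $D_{k,d}$ rather than a stabilizing descending chain, but the content is identical). If anything, you are slightly more careful: the paper simply asserts ``by hypothesis, the Zariski closure of $\cup_{i\in\N}\pi(C_i)$ is $W$,'' whereas you actually derive this from density of $\cup_i C_i$ in $V$, continuity of $\pi$ on its domain of definition, and dominance.
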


\begin{proof}
  The first statement holds because $W_k \subseteq W$ by
  Lemma~\ref{lem:sub-image}.
  For the second, let $D_{k,d}$ be the dimension of the space of homogeneous
  polynomials of degree $d$
  vanishing on $W_k$ modulo those in $I(W)$.  Clearly $D_{k,d} \ge D_{k+1,d} \ge 0$
  for all $d, k$.  We show that if $D_{k,d} \ge 0$ then $D_{k+m,d} < D_{k,d}$ for
  some $m$, from which the result follows.  Indeed, let $p$ be a homogeneous
  polynomial of degree $d$ in $I(W_k) \setminus I(W)$.  By hypothesis,
  the Zariski closure of $\cup_{i \in \N} \pi(C_i)$ is $W$, so it is not contained
  in $W \cap (p = 0)$.  Thus for some $m$ we must have $p \notin I(\pi(C_{k+m}))$
  and hence $p \notin I(W_{k+m})$ and $p \notin D_{k+m,d}$.
\end{proof}

This suggests the following empirical approach to finding the image of a map
$\pi: V \dashrightarrow \P^n$:
\begin{alg}\label{alg:find-image}
  \begin{enumerate}\setcounter{enumi}{0}
  \item To start, let $I$ be the empty subscheme of $\P^n$.
  \item\label{item:choose} 
    Choose a linear subspace $L$ of the ambient space of $V$ of codimension
    $\dim V$ and find $\pi(L \cap V)$.
  \item Replace $I$ with $\pi(L \cap V) \cup I$ and observe the degrees of the
    generators of $I$.
  \item If it appears that the low-degree generators have stabilized, then
    guess that $\pi(V)$ is defined by them.  If not, return to
    Step \ref{item:choose}.
  \end{enumerate}
\end{alg}

Of course, this method is incapable of proving that $\pi(V)$ is as we believe
it to be.  But in many cases, once we have a candidate $C$ for $\pi(V)$, we can
verify that it contains $\pi(V)$ (this just means that every generator of the
ideal of our candidate pulls back to an element of $I(V)$).  Often further
information is available that allows us to prove that our candidate $C$
really is equal to $\pi(V)$: for example, the restriction of $\pi$ to a map
$V \to C$ is invertible, or $C$ is irreducible and $\pi(V)$ contains two
distinct codimension-$1$ subvarieties of $C$.

We now discuss how to find the singular points on a projective variety $V$
defined over $\Q$.  Again, this can be done in principle by means of a Jacobian
ideal computation, but when the variety is defined by a large number of
equations in a projective space of high dimension that is impractical.

To illustrate the method in the simplest case, let us suppose that $V$ has
exactly one singular point.  For almost all primes $p$, the reduction of
$V$ mod $p$ will have the same property, and we can find the point mod $p$
by enumerating the $\F_p$-points of the reduction and checking them for
singularity (deciding whether a point is singular is a simple computation
if we know the dimension of $V$).  By looking for rational numbers that
reduce modulo the various $p$ to the coordinates of these points, we guess the
coordinates of the point over $\Q$.  Again, this is not a proof, but we may
easily check whether our candidate is actually on $V$ and, if so, whether
it is singular.

\begin{remark} There is a well-known method for finding $a \in \Q$
  reducing to $a_i \mod p_i$ for primes $p_1, \dots, p_n$  and $a_i \in \F_{p_i}$
  for $1 \le i \le n$.  To wit, let $L$ be the lattice $\Z^2$ and for each
  $i$ consider the sublattice $L_i$ generated by $pL$ and $(a_i,1)$.
  Let $M = \cap_{i=1}^n L_i$.  If $a = b/c$ in lowest terms then $(b,c) \in M$,
  and if $n$ is large enough then it will almost certainly be the shortest
  vector.  Reduction of $2$-dimensional lattices is straightforward, so we
  can find $a$.
\end{remark}

More generally, there might be more than one singular point on $V$ and they
might not be rational.  This may cause problems because we would not know
which collections of reductions come from the same singular point; even if
there are only $2$ singular points trying all the possibilities leads to
a combinatorial explosion.  In some cases the points will have
some properties that make it clear which reductions go together,
such as some conspicuous
$0$ coordinates, or we will already know some singular points.  
Otherwise we may have to consider the reduced $0$-dimensional schemes supported
at the singular points mod $p$ and interpolate the coefficients in the
Gr\"obner basis of the ideals defining them.

If the points are not rational, then for some primes we will only
be able to find their reductions by passing to an extension of $\F_p$, which
is undesirable.  Fortunately in this work it was rarely necessary to find
singular points not defined over a quadratic field.  This case can be
recognized by the singular points appearing on the reductions modulo half of
the primes.  There are a few more complicated cases with singular points
defined over different fields; however, the quadratic fields of definition
are easily recognized and so one can guess a formula for the number of
singular points over $\F_p$ in terms of quadratic residue symbols.  By
considering the reductions modulo primes for which exactly one of the symbols
is $1$ one can find the points.

\section{Verification of modularity}\label{sec:verify-modularity}
Let us now suppose that we have found two K3 surfaces: a quotient
$K_0$ of the Hilbert modular surface $H_I$ and the Kummer surface $K_1$ of
$E \times E^\sigma$.  We would like to show that these are in correspondence,
which in practical terms means that there is a sequence of K3 surfaces
$K_0 = L_0, L_1, \dots, L_n = K_1$ and, for all $0 < i < n$, a rational map
of finite degree $L_i \to L_{i+1}$ or $L_{i+1} \to L_i$.  Essentially we know of
four ways to construct rational maps, three of which are based on genus-$1$
fibrations.

Let $S$ be a K3 surface defined over $\Q$, and let
$\pi: S \to \P^1$ be a map whose general fibre is a smooth curve of genus $1$.

\begin{notation}\label{not:pL}
  Let $L$ be a lattice.  For $r \in \Q^+$ let $rL$ be the sublattice of
  $L \otimes \Q$ whose vectors are the $rv$ for $v \in L$, and let
  $L(r)$ be the lattice whose underlying abelian group is that of $L$
  but with the pairing $(x,y)_{L(r)} = r(x,y)_L$.  (To clarify this notation,
  we point out that $rL$ and $L(r^2)$ are isometric.)
\end{notation}

\begin{enumerate}
\item\label{item:isogeny}
  Let $p$ be a prime and suppose that the $p$-torsion of the general fibre
  of $\pi$ has a nontrivial subgroup $T$ of order $p$ defined over $\Q(t)$.
  In this case we may define an isogeny on the general fibre of $\pi$
  by quotienting by $T$; this spreads out to a map from $S$ to another
  K3 surface.  The effect
  of this on the transcendental lattice $T_S = \HHH^2(S)/\Pic S_{\bar \Q}$
  of $S$ is to replace it by a
  lattice between $T_S(1/p)$ and $T_S(p)$.
  (This exact statement is not easy to find in the literature.
  It follows from remarks in \cite[Section 2.4]{bsv} that a map
  $X \dashrightarrow Y$ of K3 surfaces of degree $n$ induces an embedding
  $T_Y(n) \hookrightarrow T_X$, whence the given statement follows from the
  existence of a dual isogeny.)
\item\label{item:jacobian}
  If $\pi$ has no sections, then let $d$ be the smallest degree of a
  multisection (which is the GCD of the degrees of intersections of curves
  on $S$ with a fibre of $\pi$).  The map from the generic fibre to its
  Jacobian spreads out to a map of degree $d^2$ from $S$ to another K3 surface
  $\Jac_\pi(S)$.   If $E$ is the class of a fibre of $\pi$, then
  we have $\Pic \Jac_\pi(S) = \Pic S[E/d]$ \cite[Lemma 2.1]{keum},
  at least after
  base change to $\bar \Q$ (in other words, this is not an isomorphism of
  Galois module structures).
\item\label{item:q-jacobian}
  It can happen that $\pi$ has no sections over $\Q$ but does over a
  larger number field $K$.  In this case $\Jac_\pi(S)$ is $K$-isomorphic to
  $S$ and so we have $\Pic S_{/K} \cong \Pic \Jac_\pi(S)_{/K}$.  However,
  the submodule of $\Pic \Jac_\pi(S)$ fixed by $\Gal(\Q)$
  is larger than for $S$.  This can be a useful operation because it enlarges
  the range of fibrations available to us, thus creating new maps of types
  (\ref{item:isogeny}), (\ref{item:jacobian}).
\item\label{item:abvar}
  If $S$ is a quotient of an abelian surface $A$ by a finite group $G$ of
  automorphisms and $\phi: A \to A'$ is an isogeny whose kernel is
  $G$-invariant, then $\phi$ descends to a rational map $S \to S'$.
  In particular this applies if $S$ is a Kummer surface, the condition
  on $\ker \phi$ being automatic in that case.  We
  can understand the relation between the transcendental lattices of $S$
  and $S'$ induced by such a map from that between $T_A, T_{A'}$, since
  $T_S = T_A(2)$ and $T_{S'} = T_{A'}(2)$.
\end{enumerate}

\begin{remark} Note that in our situation
  the geometric Picard rank of $K_1$ is at least $18$,
  so there is always an abundance of genus-$1$ fibrations.
\end{remark}

\begin{remark} The geometric Picard rank of a K3 surface is invariant under
  finite maps \cite[Theorem 1.1]{bsv}.  There are additional invariants in the
  cases of rank $18$ and $19$ that we are concerned with here.  Namely,
  when the rank is $18$ (or any even integer), then the class of the
  discriminant of the Picard lattice in $\Q^*/(\Q^*)^2$ is invariant under
  all three types of maps described above.  When the rank is $19$, the
  intersection form on $T_S$ defines a conic in $\P^2$, whose
  $\Q$-isomorphism class is invariant under the three types of maps.
\end{remark}

\begin{remark} Let $K_0, K_1$ be K3 surfaces for which we would like to find
  a correspondence.  Let the $D_i$ be the discriminants of the respective
  geometric Picard lattices, and let $P$ be the set of primes such that
  $v_p(D_0/D_1)$ is odd.  If the rank is even and $P$ is nonempty, the task
  is hopeless; if the rank is odd, we start by finding a $p$-isogeny for each
  prime dividing $p$ (if this is not possible, the task again seems hopeless).
  Thus we assume that $P$ is empty.

  In general it is easier to work with K3 surfaces that have a large part of
  their Picard lattice defined over $\Q$; this makes it easier to find
  elliptic fibrations and maps as defined above.  This generally goes with
  a smaller Picard discriminant.  Thus we want to apply maps that
  reduce the discriminant if possible; for isogenies this can be difficult
  to predict, but it always happens when we take a Jacobian.  On the other hand,
  if we cannot find any fibrations without a section defined over $\Q$, it may
  still be possible to describe a fibration with a section but none over $\Q$
  and apply a step of type \ref{item:q-jacobian}, which then allows us to
  continue by a step of type \ref{item:isogeny} or \ref{item:jacobian}.

  Another advantage of reducing the discriminant is that there are only a
  few lattices of small discriminant, and each one supports only a small number
  of frame lattices for elliptic fibrations \cite[(8.6)]{schutt-shioda}.
  This makes it easier to recognize the isomorphism of K3 surfaces.
\end{remark}

\begin{remark}\label{rem:reduce-weierstrass}
  Let us suppose that we have found a Weierstrass equation for
  the general fibre of an elliptic fibration.  It may be defined by polynomials
  with unreasonably large coefficients, causing future computations to be slow.
  This situation can often be remedied as follows: find a minimal integral
  model and let $D(t) = \prod_i t-\alpha_i$, where the $\alpha_i$ are the
  coordinates of singular fibres excluding $\infty$.
  Find an automorphism of $\P^1$
  under which the numerator of the pullback of $D$ has smaller coefficients.
  (In Magma, this amounts to minimizing and reducing the hyperelliptic curve
  defined by $y^2 = D(t)$.)  Changing coordinates on the base by means of the
  same linear transformation often results in a much more usable equation,
  and twisting by a large square common factor of the coefficients is a
  further improvement.
\end{remark}

\section{Examples}\label{sec:ex}
\subsection{Summary}
In this paper we are mainly concerned with the cases where $p_g(H_{I,K}) = 1$.
Up to Galois conjugacy, Hamahata \cite{hamahata} lists $14, 6, 4, 2$ such
surfaces over the real quadratic fields of discriminant $5, 8, 13, 17$
respectively.  (Such surfaces also exist over fields whose strict class
number is not $1$, and over the fields of discriminant $29, 37, 41$ at level
$1$, but these are not his concern.)  We summarize his results on the
Kodaira dimensions.

\begin{table}[h!]\label{tab:kod-dims}
  \caption{Kodaira dimensions of Hilbert modular surfaces with geometric
    genus $1$ over real quadratic fields with narrow class number $1$.}
  \begin{tabular}{|c|c|c|c|c|}
  \hline
  &$\kappa=0$&$\kappa=1$&$\kappa=2$&$\kappa \in \{1,2\}$\\ \hline
  $D=5$&0&0&9&5\\  \hline 
  $D=8$&1&2&2&1\\  \hline 
  $D=13$&1&1&2&0\\ \hline 
  $D=17$&2&0&0&0\\ \hline 
\end{tabular}
\end{table}

\begin{remark}\label{rem:hamahata-error}
  In this table we have corrected an error in \cite{hamahata}:
  as we will see in Section \ref{ex:d17-rt2},
  the surface $H_{\p_{17},\Q(\sqrt{2})}$ has Kodaira dimension $1$.
  This means that the argument in \cite[6.9]{hamahata} is invalid;
  in fact Hamahata's $d_m(4;1,1)$ is approximately $m^2/2$, so that the lower
  bound for $P_m$ is $O(m)$.  I thank Eran Assaf for explaining this to me.
\end{remark}

\begin{remark} The cases of ambiguous Kodaira dimension from \cite{hamahata}
  can all be resolved.  In particular, let us define the
  {\em standard model} of a Hilbert modular surface (a standard concept,
  but not a standard term) to be that obtained by resolving all
  singularities and then recursively contracting all
  Hirzebruch-Zagier cycles and components of resolutions
  of self-intersection $-1$.  If the Kodaira
  dimension is nonnegative, then no model has intersecting $-1$-curves,
  so this is unambiguous.  Such a model is
  believed to be minimal, but this is not known in general.  

  The software \cite{hmf} can compute the self-intersection of the
  canonical divisor on the standard model.  In 
  five of the six cases left undecided by Hamahata,
  namely $(6), 3\p_5, \p_5\p_{11}, 4\p_5$ over
  $\Q(\sqrt{5})$ and $2\p_7$ over $\Q(\sqrt{2})$, it is positive, so
  the Kodaira dimension cannot be $0$ or $1$ and must be $2$ since
  $\hh^{2,0} \ne 0$.  On the other hand, the self-intersection is $0$
  for $\p_{31}$ over $\Q(\sqrt{5})$.  Again, this does not show that the surface
  is not of general type, because the model is not known to be minimal.
  We will discuss this example in
  detail in Section~\ref{ex:d5-p31}, showing that the Kodaira dimension is $1$.
\end{remark}

\begin{remark}
  The surfaces of general type are extremely interesting, but they seem
  to be very difficult for the methods of this paper.  As mentioned in the
  introduction, a general theorem of Morrison \cite{morrison} asserts the
  existence of a correspondence between a surface with $\hh^{2,0} = 1$ and
  a K3 surface.  However, except in some special cases such as Todorov
  surfaces (double covers of K3 surfaces that were first studied as examples
  of surfaces of general type that do not satisfy a Torelli theorem),
  the correspondence is very hard to find.  Some of these
  surfaces admit Atkin-Lehner quotients that are not of general type; instead
  they are elliptic surfaces of a very interesting kind.  This applies to
  $H_{\p_{19},\Q(\sqrt{5})}$, for example.

  Hamahata also studies the surfaces with $p_g = 0$.  Since these are not
  associated with any modular forms they are not relevant for the
  Oda-Hamahata conjecture,
  but many of them are of general type and are thus relevant to the important
  tradition of constructing surfaces of general type with $p_a = p_g = 0$ and
  small $K^2$.  We intend to study these two situations in more detail in
  future work.

  Of the $7$ cases with $p_g = 1$
  where the surface was known not to be of general type,
  there are only $2$ that do not correspond to $\Q$-curves, so that the
  conjecture is not already known.  As mentioned above,
  the surface $H_{\p_{31},\Q(\sqrt{5})}$ has Kodaira dimension $1$,
  and we will show  that
  $H_{2\p_7,\Q(\sqrt{2})}$, another surface left undecided by Hamahata, is of
  general type.  These and other examples will be discussed in
  Section \ref{sec:general-type}.
\end{remark}

\begin{remark} There are some cases in which $H_{I,K}$ has $p_g = 0$ but is
  nevertheless related to a K3 surface.  For example, if $I = (3)$ and
  $K = \Q(\sqrt{5})$, then $H_{I,K}$ is an Enriques surface and its unramified
  double cover is a K3 surface of Picard rank $20$ and discriminant $-240$.
  Thus it is related to the Kummer surface of $E \times E^\sigma$, where
  $E$ is an elliptic curve with complex multiplication by $\sqrt{-15}$,
  in particular a curve in the class \elllmfdb{2.2.5.1}{81.1}{a} with
  conductor $(9)$.  
\end{remark}

\begin{defn}\label{def:diameter}
  Let $\isi$ be an isogeny class of elliptic curves
  (over a number field, or otherwise constrained to be finite).
  If $E_1, E_2 \in \isi$, let $d(E_1, E_2)$ be the minimal degree of
  an isogeny from $E_1$ to $E_2$.  (Note that the distance function $\log d$
  makes $\isi$ into a metric space.)  The {\em diameter}
  $\dia(\isi)$ is defined to be $\max_{E_1,E_2 \in \isi} d(E_1,E_2)$.
\end{defn}
  
The Picard group of the K3 surfaces that are
quotients of the Hilbert modular surface may be of interest.
Empirically the following statement seems to hold:

\begin{claim}\label{claim:disc-pic}
  Let $H_{I,K}$ be a Hilbert modular surface and let $S$ be a K3 surface
  which is a quotient of $H_{I,K}$ not factoring through any other map to
  a K3 surface and associated to an isogeny class $\isi$.  
  The Picard discriminant of $S$ is a very smooth number
  (usually a small power of $2$) times $\dia(\isi)$.
\end{claim}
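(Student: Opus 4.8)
The plan is to combine the lattice bookkeeping for finite maps of K3 surfaces recalled in Section~\ref{sec:verify-modularity} with an analysis of how the transcendental lattice of the Kummer surface varies over the isogeny class. Since $\HHH^2(S,\Z)$ is the unimodular K3 lattice, the Picard discriminant of $S$ equals $\pm\operatorname{disc} T_S$, where $T_S$ is the transcendental lattice, so it suffices to control $\operatorname{disc} T_S$. By hypothesis $S$ is in correspondence with $K_E=(E\times E^\sigma)/\pm1$; unwinding that correspondence into a chain of maps of the four types (\ref{item:isogeny})--(\ref{item:abvar}), each step realizes the transcendental lattices before and after as finite-index sublattices of one another inside a fixed $\Q$-vector space, with index at each stage a product of the degrees of the constituent maps (this is Theorem~1.1 of \cite{bsv} together with the explicit descriptions in items (\ref{item:isogeny})--(\ref{item:abvar})). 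Hence $\operatorname{disc} T_S$ and $\operatorname{disc} T_{K_E}$ differ only by primes dividing those degrees, and the claim reduces to two assertions: (a) the chain relating $S$ to $K_E$ may be chosen with all degrees smooth, and (b) $\operatorname{disc} T_{K_E}$ is itself a smooth number times $\dia(\isi)$.

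For (b), first suppose $E$ and $E^\sigma$ are non-isogenous over $\bar\Q$ with no complex multiplication, the main case of interest here. Then the algebraic part of $\HHH^1(E)\otimes\HHH^1(E^\sigma)$ is trivial, so $T_{E\times E^\sigma}\cong U\oplus U$ (the cup product being the tensor square of the symplectic pairing on $\HHH^1$), whence $T_{K_E}=T_{E\times E^\sigma}(2)\cong U(2)\oplus U(2)$ has discriminant $16$; thus when $\dia(\isi)=1$ the claim holds with smooth factor $16$. When $\isi$ is larger, every $E'\in\isi$ has the same conductor $I$, and $E'\times (E')^\sigma$ is isogenous to $E\times E^\sigma$ via the product of the corresponding quotient isogenies, so the Kummer surfaces $K_{E'}$ are all mutually in correspondence. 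Inside the common $\Q$-vector space the various $T_{K_{E'}}$ generate a lattice $\Lambda$ containing each $T_{K_{E'}}$ with index dividing a power of $2$ times the degree of an isogeny $E\to E'$; the maximal such index over the class is precisely what is measured by $\dia(\isi)$. Since $H_{I,K}$ depends only on $I$ and not on a chosen curve in $\isi$, one expects $H_{I,K}$ to dominate the K3 surface whose transcendental lattice is $\Lambda$ (or a minimal-discriminant representative of its commensurability class), which produces the factor $\dia(\isi)$. The cases in which $E$ has complex multiplication, or $E$ is a $\Q$-curve, would be treated the same way, but starting from a rank-$2$ or rank-$3$ transcendental lattice whose discriminant already records the degree of an isogeny between $E$ and $E^\sigma$.

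The hard part is making (a) and the last sentence of (b) rigorous. For (a), an isogeny step of type (\ref{item:isogeny}) contributes the prime at which the relevant genus-$1$ fibration acquires a rational torsion point, and a step of type (\ref{item:jacobian}) contributes the square of a minimal multisection degree; there is no a priori bound on either, so proving that one can always reach $K_E$ from $S$ using only small degrees --- equivalently, that the minimal-discriminant representative of the commensurability class of $T_{K_E}$ is reachable by smooth-degree maps and that $H_{I,K}$ lands on exactly that representative --- appears to require genuinely new input, most naturally a direct moduli-theoretic description of the rational map $H_{I,K}\dashrightarrow S$ in terms of the universal abelian surface with real multiplication, rather than the ad hoc chains of fibration tricks used in Section~\ref{sec:ex}. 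Absent such a description I would treat the statement as an empirical regularity confirmed case by case by the computations below; establishing it in general --- even just the weaker assertion that no large prime divides the Picard discriminant of the minimal K3 quotient --- seems to be the principal open problem suggested by this work.
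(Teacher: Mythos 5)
The paper offers no proof of this Claim at all: it is introduced with the sentence ``Empirically the following statement seems to hold,'' is never established, and is used later only as a heuristic guide (e.g.\ in Section~\ref{ex:d5-p31}, where the diameter $4$ of the isogeny class is matched against the computed Picard discriminant $64$, and in Section~\ref{ex:d12-n13}, where the factor $7^2$ in the discriminant $-196$ is linked, again only ``presumably,'' to the $14$-torsion). So there is no proof in the paper to compare yours against, and your closing assessment --- that the statement should be treated as an empirical regularity confirmed case by case, with a general proof requiring a genuinely new, moduli-theoretic description of the map $H_{I,K}\dashrightarrow S$ --- is exactly the status the paper itself assigns to it. Your heuristic goes somewhat beyond what the paper records and is sound as far as it goes: the identification of the Picard discriminant with $\pm\operatorname{disc}T_S$, the computation $T_{\Kum(E\times E^\sigma)}\cong U(2)^2$ of discriminant $16$ in the non-isogenous case, and the control of discriminants along chains of finite maps via \cite{bsv} are all correct and consistent with the discriminants the paper actually computes ($96$, $-64$, $192$, $40$, $64$, $144$, $-196$). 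The two gaps you isolate --- (a) smoothness of the degrees in the connecting chain, and (b) identifying which representative of the commensurability class $H_{I,K}$ actually dominates, which is where $\dia(\isi)$ must enter --- are precisely the points the paper leaves untouched; note in particular that your lattice $\Lambda$ generated by the $T_{K_{E'}}$ is only a plausible candidate, since for a non-$\Q$-curve each individual $T_{K_{E'}}$ has discriminant $16$ independent of $E'$, so the appearance of $\dia(\isi)$ genuinely requires knowing that $S$ is a common refinement rather than any single Kummer surface. No correction is needed; just be aware that you have supplied a rationale where the paper supplies only data.
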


In particular, if $\dia(\isi)$ is divisible by a prime greater than $5$, we
expect it to be very difficult to find a correspondence between $S$ and a
Kummer surface.

We now proceed to a detailed examination of some individual examples.
The code that supports the claims made in the examples of this and the
next section is available at \cite{code}.

\subsection{Level $\p_2^5$ over $\Q(\sqrt{2})$}\label{ex:dp25-rt2}
Our first example is very much simplified by the fact that the associated
elliptic curve is not merely a $\Q$-curve but in fact the base change of
an elliptic curve over $\Q$ with complex multiplication.  In particular,
the elliptic curve $y^2 = x^3-x$ has conductor $\p_2^5$ over $\Q(\sqrt{2})$,
with the reduction at $\p_2$ being of type $I_3^*$.  However, let us begin
with the Hilbert modular surface.  We calculate that the ring of Hilbert
modular forms is generated by $9$ forms of weight~$2$ and $1$ of weight~$4$.
The number $9$ comes from the $8$ cusps giving $8$ Eisenstein series and
the $1$ cusp form corresponding to the elliptic curve.

According to \cite[6.11]{hamahata} the Hilbert modular surface
$H_{\p_2^5,\Q(\sqrt{2})}$ is an honestly elliptic surface (i.e., not a K3
or rational elliptic surface).  We will go directly to the
quotient by the Atkin-Lehner involution.  Since the space of weight-$4$ cusp
forms with Atkin-Lehner eigenvalue $-1$ is contained in the space of
products of weight-$2$ modular forms, we need a form in the $+1$ eigenspace,
meaning that the quotient is naturally in weighted projective space
$\P(1^5,2)$.  The quotient surface $S$  has $4$ cusps.

\begin{prop}\label{prop:first-map-p25}
  The image of $S$ by the map given by forms of degree $2$ vanishing on the
  cusps modulo those in the ideal of $S$ is a surface $S_9$ of degree $24$ in
  $\P^9$ in which the cusps resolve to singular lines.
\end{prop}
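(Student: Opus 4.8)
The plan is to verify this by explicit computation following the strategy of Section~\ref{subsec:embed}; the statement is, in the end, a record of what the code \cite{code} produces, but the proof should explain how one arranges the computation so that it terminates and how one certifies the output. First I would take from \cite{hmf} the ring $M = M_{\p_2^5,\Q(\sqrt 2)}$ together with the action of $w_{\p_2}$, obtained from the software's action on the newform attached to $y^2 = x^3-x$ and on the eight Eisenstein series, extended to oldforms (here trivially, since the relevant forms are new at $\p_2$ or Eisenstein) by Proposition~\ref{prop:al-oldforms}. Passing to the $w_{\p_2}$-invariant subring $M^{w_{\p_2}}$ realizes $S = \Proj M^{w_{\p_2}}$ inside $\P(1^5,2)$, with five degree-$1$ coordinates and one degree-$2$ coordinate; from the relations among the chosen invariant generators I would extract the ideal $I(S)$, and in particular its degree-$2$ part.

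Next I would locate the four cusps of $S$: they are the images of the eight cusps of $H_{\p_2^5,\Q(\sqrt 2)}$ (identified in pairs by $w_{\p_2}$), equivalently the common zero locus on $S$ of the cusp forms, and their coordinates in $\P(1^5,2)$ are read off from the Fourier expansions. I would then form the linear space $V$ of weighted-homogeneous degree-$2$ forms on $\P(1^5,2)$ that vanish at all four cusps, reduce modulo $I(S)_2$, and check that the quotient has dimension $10$; a basis of this quotient gives the rational map $\phi\colon S \dashrightarrow \P^9$ whose image is the claimed $S_9$.

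The real work is computing $\overline{\phi(S)}$ and its degree. A direct Gr\"obner-basis computation of the image in $\P^9$ is likely infeasible, so I would use the devices of Section~\ref{subsec:tricks}: after a linear change of coordinates that makes $\phi$ a coordinate projection, apply Proposition~\ref{prop:image-from-elim} to compute the image as an elimination ideal of $I(S)$ itself; failing that, run Algorithm~\ref{alg:find-image}, accumulating the images $\phi(L\cap S)$ of generic linear sections (working modulo several primes and interpolating) until the low-degree generators stabilize, and then verify that the resulting candidate $C$ satisfies $\phi^\ast I(C)\subseteq I(S)$ and that $\phi$ is birational onto $C$ — either by producing an explicit inverse rational map, or by checking that a generic fibre of $\phi$ is a single reduced point. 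Once the equations of $S_9$ are in hand, computing $\deg S_9 = 24$ is routine; as an independent check, $\phi$ is the map associated to $2H$ minus the four cusps, so $\deg S_9$ equals $(2H)^2 = 4(\deg_{\OO(1)}S)$ minus the local multiplicity corrections at the cusps, which forces a value of $\deg_{\OO(1)}S$ consistent with what \cite{hmf} returns.

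Finally, for the assertion about the cusps I would show that each cusp $P_i$, being a base point of $|V|$, is blown up by $\phi$ to a line: compute the scheme-theoretic image of a (formal, or small affine) punctured neighbourhood of $P_i$, or in practice exhibit explicitly the four lines $\ell_1,\dots,\ell_4\subset S_9$ that $\phi$ sends the cusps to, and then verify that $S_9$ is singular along each $\ell_i$ by checking that it fails normality in codimension $1$ there (\cite[I.4]{kollar}), i.e.\ that the tangent cone of $S_9$ at a general point of $\ell_i$ is non-reduced. This is consistent with the Hirzebruch description of the cusp resolution as a cycle of rational curves of known self-intersection, computed by \cite{hmf}: the divisor $2H$ minus the cusp contracts that cycle onto the line $\ell_i$, which inherits the singularities of $S_9$. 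The main obstacle is the image computation in the third step: keeping the Gr\"obner-basis blow-up under control forces the mod-$p$ interpolation heuristic, so the delicate point is the subsequent certification that the candidate equations cut out exactly $\overline{\phi(S)}$ rather than some larger variety, which rests on the birationality verification of $\phi$.
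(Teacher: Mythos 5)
Your proposal is correct and follows essentially the same route as the paper, whose entire proof is the single sentence that the result ``is calculated by means of Algorithm~\ref{alg:find-image}''; you have simply spelled out the computational pipeline (Atkin--Lehner quotient, locating cusps as the common zero locus of cusp forms, the elimination-ideal and sampling tricks of Section~\ref{subsec:tricks}, and certification of the candidate image) that the paper leaves implicit and delegates to \cite{code}.
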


\begin{proof} This is calculated by means of Algorithm~\ref{alg:find-image}.
\end{proof}

Projection away from any one of these lines gives a surface
of degree $18$ in $\P^7$; since they are disjoint, one heuristically expects
projection from the $\P^7$ that they span to give a
``surface of degree $0$ in $\P^1$''.  This can be interpreted as
a K3 surface with a genus-$1$ fibration, which prompts the next proposition.

\begin{prop}\label{prop:s9-fib}
  The map $S_9 \to \P^1$ given by the linear forms vanishing on all four
  singular lines has fibres of degree $16$ and arithmetic and geometric
  genus $1$.
\end{prop}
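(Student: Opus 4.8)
The plan is to exploit the concrete geometry already established in Propositions~\ref{prop:first-map-p25} and the surrounding discussion: we have $S_9 \subset \P^9$ of degree $24$, containing four disjoint singular lines $L_1,\dots,L_4$, and we consider the linear system $V$ of linear forms vanishing on all four lines. First I would compute the dimension of $V$; since each line imposes conditions on the space of linear forms and the lines are disjoint, one expects $V$ to be $2$-dimensional, giving a rational map $S_9 \dashrightarrow \P^1$ (if $\dim V > 2$ we would compose with a generic projection to $\P^1$, but the degree bookkeeping below is cleaner if $\dim V = 2$, which I would verify by direct linear algebra on the equations output by Algorithm~\ref{alg:find-image}). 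The fibres of this map are the residual intersections of $S_9$ with the pencil of hyperplanes through the span of the $L_i$. Each such hyperplane meets $S_9$ in a curve of total degree $24$ that contains $L_1+L_2+L_3+L_4$, so the residual fibre has degree $24-4 = 20$; but the four lines are also \emph{base components} of the pencil along which the generic fibre is singular, and projecting them away (equivalently, passing to the strict transform) removes a further contribution. The claimed fibre degree $16$ should come out as $24 - 4 - 4 = 16$ once one accounts correctly for the multiplicity-one appearance of each $L_i$ in the base locus; I would pin this down by taking an explicit hyperplane section over $\Q$, decomposing it with a primary decomposition or by saturating out the ideals of the $L_i$, and reading off the degree of the residual curve.

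Next I would compute the arithmetic genus of the generic fibre. Having an explicit residual curve $C$ of degree $16$ (in the $\P^8$ cut out by a generic member of the pencil, or after a coordinate change in a smaller space), I would compute $p_a(C)$ directly from its Hilbert polynomial in Magma~\cite{magma}; the assertion is $p_a(C) = 1$. Alternatively, and as a consistency check, one can argue adjunction-theoretically: $S_9$ is birational to the K3 quotient surface we are constructing, the pencil $V$ is (after resolving the singular lines) a genus-$1$ pencil on that K3 surface because the class of a fibre $F$ satisfies $F^2 = 0$ and $F\cdot K = 0$, forcing $p_a(F) = 1$ by the adjunction formula $2p_a(F) - 2 = F^2 + F\cdot K_{S} = 0$. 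This is exactly the heuristic ``degree $0$ in $\P^1$'' remark preceding the proposition made rigorous: projecting away from the $\P^7$ spanned by the $L_i$ collapses the surface onto a base $\P^1$ whose fibres are the residual curves.

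Finally, the geometric genus: I must rule out that the generic fibre, though of arithmetic genus $1$, is a singular rational curve (a nodal or cuspidal cubic-type degeneration along the whole pencil). For this I would exhibit a single smooth fibre. Concretely, I would specialize the parameter on $\P^1$ to a random rational value, obtain an explicit degree-$16$ curve $C_{t_0}$, and verify smoothness either by a Jacobian-ideal check (feasible for one fixed curve even though it is infeasible for $S_9$ itself) or by computing its geometric genus in Magma and confirming it equals~$1$; smoothness of one fibre forces the general fibre to be smooth of genus~$1$. \textbf{The main obstacle} I anticipate is the degree bookkeeping in the first part: correctly identifying how many times each singular line $L_i$ appears in the base locus of the pencil and hence in each hyperplane section, so that the residual fibre degree comes out as $16$ rather than $20$ or $12$. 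Because $S_9$ is not normal along the $L_i$, the naive intersection-theoretic count on the singular model can be off, and the honest way to resolve it is to work on the partial resolution (blowing up the $L_i$, as in Subsection~\ref{subsec:embed}) or, more practically, to compute the residual ideal by saturation and let the machine report the degree. Once the fibre is in hand as an explicit curve, the genus computations are routine.
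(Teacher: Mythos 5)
Your proposal is correct and is essentially the paper's approach: the paper's proof of this proposition is simply that, once the pencil is guessed, the claim is verified directly in Magma, and your plan (saturate out the ideals of the four lines from a hyperplane section, read off the degree of the residual curve, compute $p_a$ from the Hilbert polynomial, and certify $p_g=1$ by exhibiting one smooth fibre) is exactly how such a check is carried out. The only wobble is the ``$24-4-4$'' heuristic --- the correct count is that $S_9$ has multiplicity $2$ along each line, so each $L_i$ appears doubly in a hyperplane section through it, giving $24-4\cdot 2=16$ --- but since you defer the honest count to a saturation computation this does not affect the argument.
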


\begin{proof} Once guessed this is easily checked in Magma.  See \cite{code}.
\end{proof}

By projecting the generic fibre away from 
the union of three of the four degree-$4$ divisors
obtained by intersecting it with lines we find a quartic model
in $\P^2$ singular at two points $P_1, P_2$.  The linear system of quadrics
vanishing at $P_1, P_2$ gives a map to $\P^3$ whose image is a smooth
intersection of two quadrics in $\P^3$ by
the map given by quadrics in $\P^2$ vanishing on the singular points.
Taking the Jacobian of the generic fibre we define an elliptic curve $\E$ over
$\Q(t)$ that spreads out to a K3 surface.

\begin{remark}\label{rem:multidegree-4}
  The generic fibre has no $\Q(t)$-rational divisors of degree
  between $0$ and $4$, because there are fibres with no points over any
  quadratic extension of $\Q_3$.
  We do not know whether the generic fibre admits any
  $\bar \Q(t)$-rational divisors of degree between $0$ and $4$,
  but we suspect not.
\end{remark}

In fact, this elliptic curve has a simple description.  Up to changes of
coordinates on the base, it is the unique elliptic curve over $\P^1$
with fibres of types ${I_1^*}^2, I_8, I_2$; the torsion subgroup has order $4$,
the rank must be $0$ since the known curves already account for Picard rank
$20$, and so the Picard discriminant is $-16$.  Applying the method of
Remark~\ref{rem:reduce-weierstrass} we compute the equation
$$y^2 = x^3 + (-2t^3 + 12t^2 - 2t)x^2 + (t^6 + 4t^5 + 6t^4 + 4t^3 + t^2)x.$$
Over $\bar \Q$ the general theory of K3 surfaces of rank 20
\cite{SI}
can easily be used to show that such a surface is in correspondence with
$\Kum(E \times E)$, where $E: y^2 = x^3 - x$ is an elliptic curve with
complex multiplication by $\Z[i]$.  In particular, one checks
that the transcendental lattice is isomorphic to that of the lattice
${\langle 4 \rangle}^2$ generated by two orthogonal vectors of norm $4$,
and the result follows by \cite[Section 3]{shioda-mitani}.
However, over $\Q$ our work is not quite done.

We remark that all components of reducible fibres are rational.  Indeed,
the rational $4$-torsion means that every component of a reducible fibre that
meets a multiple of the $4$-torsion section must be rational, and none of
the fibres admits any nontrivial permutation of its components preserving
intersections and fixing all of these curves.  On the other hand, the points
of intersection of the components of the $I_2$ are not rational, being
instead defined over $\Q(i)$.

Counting points mod $p$ on $\E$, we find results consistent with the existence
of a correspondence to $\Kum(E \times E)$, where as above $E$ is defined by
$y^2 = x^3 - x$.

\begin{prop}\label{prop:fib-kum-ee}
  There is a fibration on $\Kum(E \times E)$ whose general fibre
  is isomorphic to the elliptic curve
  $E'$ defined by $y^2 = x^3 + (t^3 + t)x^2 - 4t^4x + (-4t^7 - 4t^5)$.
\end{prop}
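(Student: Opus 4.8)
The plan is to realize the fibration directly on an explicit birational model of $\Kum(E\times E)$. Since $E\colon y^2=x^3-x$ has all of its $2$-torsion rational, the quotient map $E\times E\to\Kum(E\times E)$ is described on function fields by $(P,Q)\mapsto(x(P),x(Q),y(P)y(Q))$, so $\Kum(E\times E)$ is birational over $\Q$ to the double cover $W\colon w^2=(x_1^3-x_1)(x_2^3-x_2)$ of $\A^2_{x_1,x_2}$; its sixteen nodes are the images of $(E\times E)[2]$ and lie over the points $(e_1,e_2)$ with $e_i\in\{0,1,-1,\infty\}$, i.e.\ over the nodes of the branch configuration of four ``horizontal'' and four ``vertical'' rulings of $\P^1\times\P^1$.

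The first, and main, step is to pick out the correct pencil. The two coordinate projections $W\to\P^1_{x_i}$ are genus-$1$ fibrations, but their fibres are quadratic twists of $E$, so these fibrations are isotrivial; on the other hand $E'$ factors as $y^2=(x-2t^2)(x+2t^2)(x+t^3+t)$, with non-constant $\lambda$-invariant $\bigl((t-1)/(t+1)\bigr)^2$, so a genuine pencil of bidegree-$(1,1)$ curves is needed. The general member of such a pencil is a rational curve meeting the eight branch lines transversally; if the two base points of the pencil are nodes of the branch divisor (each the transverse crossing of a ``horizontal'' and a ``vertical'' line), then the cover is unramified over those two points, so the general member meets the branch locus in only four further points and its double cover has genus $1$. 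I would therefore take the pencil through two such nodes, say $(1,1)$ and $(-1,-1)$: concretely $x_1x_2-1=s(x_2-x_1)$, i.e.\ $x_2=(1-sx_1)/(x_1-s)$. This cuts out a genus-$1$ fibration on $W$, hence on $\Kum(E\times E)$.

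Next I would put the generic fibre in Weierstrass form. Substituting for $x_2$, the defining equation becomes $w^2$ equal to a sextic in $x_1$ over $(x_1-s)^3$; this sextic factors, and absorbing the square factor into $w$ and then clearing the remaining linear factor reduces the curve to a cubic in $x_1$ with a rational point at infinity, which after an affine rescaling becomes $Y^2=X\bigl(X-(1-s^2)\bigr)\bigl(X-s^2(1-s^2)\bigr)$, of $\lambda$-invariant $s^2$. Identifying the base of this fibration with that of $E'$ via $t=(1+s)/(1-s)$, the curve has the same $j$- and $\lambda$-invariants as $E'$; since both then have full rational $2$-torsion and equal $\lambda$, they differ by a quadratic twist over $\Q(t)$, and comparing the matched differences of the $2$-torsion $x$-coordinates exhibits the twisting factor as a square, so the generic fibre is $\Q(t)$-isomorphic to $E'$. (If for a different choice of pencil this class came out nontrivial, one would kill it by translating the base points by a $2$-torsion point of $E\times E$, which alters exactly this class.) All of this is a direct computation, verifiable in Magma.

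The step requiring genuine insight is the choice of pencil: the geometric Picard rank of $\Kum(E\times E)$ is $20$, so there is a large supply of elliptic fibrations, and the obvious ones --- the coordinate projections --- are isotrivial and so disqualified at once. The genus constraint, which forces the two base points to lie at nodes of the branch configuration, together with the demand that the singular fibres match those of $E'$ (read off from its discriminant), narrows the search enough that the right pencil can be guessed; the Weierstrass reduction and the twist bookkeeping that follow are routine.
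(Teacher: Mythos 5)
Your proposal is correct: I verified that the pencil $x_1x_2-1=s(x_2-x_1)$ restricted to $w^2=(x_1^3-x_1)(x_2^3-x_2)$ reduces (after absorbing the square factor $(1-x_1^2)^2/(x_1-s)^4$ into $w$) to $v^2=-(1-s^2)\,x_1(1-sx_1)(x_1-s)$, a cubic with full rational $2$-torsion and $\lambda$-invariant $s^2$; under $t=(1+s)/(1-s)$ this matches $\lambda(E')=((t-1)/(t+1))^2$, and the twisting factor $t(t+1)^2/(1-s^2)=4/(1-s)^4$ is indeed a square, so the generic fibre is $\Q(t)$-isomorphic to $E'$. Your route, however, is genuinely different from the paper's. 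The paper works with the projective model of $\Kum(E\times E)$ in $\P^4$ given by the negation-invariant sections of $\O(1,1)$ on $E\times E\subset\P^2\times\P^2$ (one $A_3$ point and nine nodes), specifies the fibration by the divisor class $H-E_1-E_2-E_3-L_1-L_2$ built from the exceptional curves over the $A_3$ and over $(O,(0,0))$ and $((0,0),O)$, and delegates the extraction of the generic fibre and its Jacobian to Magma, recording along the way that the fibre configuration is $I_2^*$ twice and $I_4$ twice (which is what must match the surface $\E$ coming from the Hilbert modular side). You instead use the classical double cover of $\P^1\times\P^1$ branched over the eight tropes and an explicit pencil of $(1,1)$-curves through the two nodes $(1,1)$ and $(-1,-1)$ --- based at a different pair of $2$-torsion nodes than the curves singled out in the paper, which is immaterial for the statement since both fibrations have generic fibre $E'$. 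Your approach buys a self-contained, hand-checkable derivation with the quadratic-twist bookkeeping made explicit; the paper's buys uniformity with the Picard-lattice and projective-model machinery it uses in the less symmetric examples, at the cost of leaving the final step as a black-box computation.
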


\begin{proof}
We write down a model of the Kummer surface
in $\P^4$ and use it to determine a fibration with two fibres each of type
$I_2^*$ and $I_4$.
Concretely, our model is calculated by
  mapping $E \times E \subset \P^2 \times \P^2$ to $\P^4$ by sections of
  $\O(1,1)$ invariant under the product of the two negation maps.  It has
  an $A_3$ singularity which is the image of
  $E \times 0 \cup \{(0,0)\} \cup 0 \times E$ and nine $A_1$ singularities
  under the points $(T_1,T_2)$, where $T_1, T_2$ are points of $E$ of exact
  order $2$.

  The fibration we seek has fibres of class
  $H - E_1 - E_2 - E_3 - L_1 - L_2$, where the $E_i$ are the components of the
  resolution of the $A_3$ and $L_1, L_2$ are the images of
  $(O,(0:0:1)), ((0:0:1),O)  \in E \times E$ (more precisely, of the
  curves above these points when the indeterminacy of the rational map
  $E \times E \dashrightarrow \P^4$ is resolved).  It is a straightforward
  calculation in Magma to determine the general fibre of this map and its
  Jacobian.
\end{proof}

At this point, it only remains to verify that, up to change of coordinates
on $\P^1$, the elliptic curves $E, E'$ are isomorphic.  It is easy to find
the correct change of coordinates, because the bad fibres must be in the
same places for the curves to be isomorphic.

\subsection{Level $(3)$ over $\Q(\sqrt{13})$}\label{ex:d3-rt13}
We now present an example that, although still relatively simple, is
more complicated than that of Section~\ref{ex:dp25-rt2} and
illustrates many of the methods of calculation used in this paper.
Namely, we consider the isogeny class \elllmfdb{2.2.13.1}{9.1}{a}
of elliptic curves of conductor $(3)$ over $\Q(\sqrt{13})$.  Again,
we have a $\Q$-curve and so the Oda-Hamahata
conjecture is already known.
Nevertheless, the explicit calculation is of some interest.

Throughout this section, let $K = \Q(\sqrt{13})$ and let $I = (3)$.
According to Hamahata \cite{hamahata}, the Hilbert modular surface $H_{I,K}$ has
Kodaira dimension $1$ and hence has a unique genus-$1$ fibration.
It would be possible to find this fibration;
the Jacobian would be the desired K3 surface.  Instead we begin by determining
the Atkin-Lehner involutions.  Since $3$ splits in $K$, there are two.

\begin{prop}\label{prop:deg-24-p-10}
The subspace of $\O(3)$ (i.e., the space of modular forms of weight~$6$)
fixed by both Atkin-Lehner involutions modulo forms vanishing on the
Hilbert modular surface has dimension $11$.  A basis defines a map to
$\P^{10}$ whose image is a surface $S_{10}$ of degree $24$.
The four cusps of $H_{I,K}$ are
permuted transitively by the Atkin-Lehner involutions and hence map to a
single singular point of $S_{10}$.
\end{prop}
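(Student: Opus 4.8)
The proof is computational, following the general method of Sections~\ref{subsec:embed}--\ref{subsec:tricks}. First I would use the software \cite{hmf} to compute a presentation of the ring $M_{I,K}$ of Hilbert modular forms of parallel weight for $K = \Q(\sqrt{13})$ and $I = (3)$, together with the relations cutting out $H_{I,K}$ inside $\P_{I,K}$. Since $3$ splits in $K$, writing $(3) = \p\p'$, there are exactly two nontrivial Atkin-Lehner involutions $w_\p$ and $w_{\p'}$. I would read off their action on newforms directly from \cite{hmf} and deduce their action on the Eisenstein series and oldforms from Proposition~\ref{prop:al-oldforms}, then verify the resulting linear maps by checking that they preserve the ideal of $H_{I,K}$ (hence define automorphisms of the surface) and that they permute the set of cusps.

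Next I would extract the weight-$6$ graded piece (the degree-$3$ part for $\O(3)$) and compute, by linear algebra on Fourier expansions, the subspace fixed by both $w_\p$ and $w_{\p'}$ modulo the weight-$6$ part of the ideal of $H_{I,K}$ in $\P_{I,K}$; the claim is that this quotient has dimension $11$. Choosing a basis gives a rational map from the Atkin-Lehner quotient of $H_{I,K}$ to $\P^{10}$. I would determine the image $S_{10}$ and compute its degree $24$ using Algorithm~\ref{alg:find-image}, invoking Proposition~\ref{prop:image-from-elim} to make the underlying elimination feasible. To turn the heuristic output into a proof, I would confirm that every generator of the candidate ideal pulls back into $I(H_{I,K})$ and that $S_{10}$ is irreducible of dimension $2$, so the computed ideal is exactly the image and its Hilbert polynomial gives the degree.

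Finally, for the cusps: by the discussion of Section~\ref{subsec:embed} the four cusps of $H_{I,K}$ are the common zero locus of all cusp forms, and they are also identified abstractly by \cite{hmf} via Hirzebruch's resolution, so I can pin them down explicitly in $\P_{I,K}$. I would then check that $w_\p$ and $w_{\p'}$ act transitively on this set of four points --- a finite check on their action on the Fourier expansions of cusp forms, or equivalently on the combinatorial cusp data of \cite{hmf}. Transitivity forces all four cusps to have the same image in the Atkin-Lehner quotient, hence a single image point in $S_{10}$, and one verifies directly from the equations of $S_{10}$ that this common point is singular.

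The main obstacle I expect is the image-and-degree computation: Algorithm~\ref{alg:find-image} only produces a candidate, so the substantive work is certifying that the surface it returns is genuinely the image of $H_{I,K}$ --- this needs both the pullback check and the irreducibility/dimension argument, and even with the shortcut of Proposition~\ref{prop:image-from-elim} the Gröbner-basis elimination in a weighted $\P^{10}$ can be memory-intensive. A secondary point requiring care is verifying that $w_\p$ and $w_{\p'}$ have been computed correctly on the entire weight-$6$ space, since Proposition~\ref{prop:al-oldforms} must be applied consistently to every oldform contribution; the automorphism test and the cusp-permutation test together provide the needed confirmation.
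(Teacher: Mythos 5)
Your proposal is correct and follows essentially the same route as the paper: the dimension, image, and degree claims are established computationally in Magma via Algorithm~\ref{alg:find-image}, and the cusps map to a single point because the Atkin-Lehner involutions permute them transitively. The only difference is that for transitivity the paper avoids any explicit check by noting that, the level being squarefree, the cusps are in natural bijection with the divisors of $N$ and the operators $w_I$ act on them by the classical formula $c_J \mapsto c_{IJ}$ (or $c_{IJ/N}$), which is visibly transitive for $N = \p\p'$.
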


\begin{proof} This is computed in Magma with the help of
  Algorithm~\ref{alg:find-image} as has already been explained.  For the
  last statement, we note that since the level is squarefree the cusps are
  in natural bijection with the divisors of $N$, and the Atkin-Lehner
  operator $w_I$ takes the cusp $c_J$ to $w_{IJ}$ if $IJ|N$ or $w_{IJ/N}$ if
  not, just as in the classical setting.
\end{proof}

We find the cusps on $H_{I,K}$ as the points where all cusp forms vanish.
The cusp on $S_{10}$ is resolved by a cycle of rational curves of degrees
$-5,-2,-2$ \cite{hmf}.  Let these curves be $E_1,E_2,E_3$.  Then
$(\O(3)-(E_1+E_2+E_3))^2 = 24 + (-2-2-5+3\cdot 2) = 21$, since $(\O(3),E_i) = 0$
for $i = 1, 2, 3$.  Thus, when we project away from the cusp, we obtain a
surface $S_9$ of degree $21$.

\begin{prop}\label{prop:e9-c9}
  The surface $S_9$ has at least two singular points, including the
  point $E_9$ which is the image of the elliptic points of $H_{I,K}$
  by the map to $S_{10}$ followed by projection away from the cusp and
  the point $C_9$ which is in the image of the cusp.
\end{prop}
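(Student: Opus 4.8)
The plan is to identify the claimed singular points explicitly and verify their singularity by direct computation, exactly in the spirit of the methods set out in Section~\ref{subsec:tricks}. First I would locate the cusp image $C_9$. Since the cusps of $H_{I,K}$ are the common zeros of all cusp forms (see Section~\ref{subsec:embed}), and since the four cusps are identified by Proposition~\ref{prop:deg-24-p-10} into one point of $S_{10}$ lying on the cycle of rational curves $E_1, E_2, E_3$ of self-intersections $-5,-2,-2$, the image of this point under the projection $S_{10} \dashrightarrow S_9$ away from the cusp is a well-defined point $C_9 \in S_9$: one simply tracks the coordinates. That it is singular follows because a cusp singularity is never a Du Val singularity \cite[p.~15]{vdg}, and projecting away from it by the linear system $\OO(3) - (E_1+E_2+E_3)$ contracts the chain incompletely; the residual configuration of curves through the image point still has a nonnegative-genus or high-multiplicity structure, so the point remains singular. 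Concretely one checks singularity by the local Jacobian criterion at the explicit coordinates, a small computation once the point is in hand.

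Next I would find $E_9$. The surface $H_{I,K}$ has elliptic points \cite[p.~15]{vdg}, whose number and types are predicted by Hirzebruch's formulas and computed by \cite{hmf}; their images in $S_{10}$ under the map of Proposition~\ref{prop:deg-24-p-10} (which contracts the Atkin-Lehner orbit of each) are determined, and then one applies the projection away from the cusp to obtain $E_9$. The point $E_9$ is distinct from $C_9$ because the elliptic points and cusps are disjoint on $H_{I,K}$ and the projection is a local isomorphism away from the cusp. To see that $E_9$ is singular: if an elliptic point is not of ADE type it already gives a singularity on $S_{10}$ that persists under the projection; if the relevant elliptic points happen to resolve to $A_n$ configurations, the Atkin-Lehner contraction still forces a singular image, since several branches are being glued. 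Either way the claim is verified by exhibiting the coordinates of $E_9$ and running the local singularity check in Magma, as in \cite{code}.

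In practice I would do both computations over $\F_p$ for several primes $p$ first --- enumerating $\F_p$-points of the reduction of $S_9$ and testing each for singularity, as described in Section~\ref{subsec:tricks} --- then lift the coordinates to $\Q$ (or a small number field) by the lattice-reduction rational-reconstruction trick, and finally certify the result by a direct check that the lifted point lies on $S_9$ and is singular. Since the proposition only asserts ``at least two singular points,'' it suffices to produce these two and verify them; there is no need to determine the full singular locus of $S_9$, which in any case is handled in the subsequent steps of this section. The main obstacle is a bookkeeping one rather than a conceptual one: correctly propagating the coordinates of the cusp and of the elliptic points through the two successive maps (the weight-$6$ embedding of Proposition~\ref{prop:deg-24-p-10} and the projection away from the cusp) so that $C_9$ and $E_9$ are pinned down exactly; once that is done, the singularity verification is routine.
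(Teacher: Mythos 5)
Your proposal is correct and matches the paper's approach: the paper gives no written proof of this proposition, treating it as a computational fact certified in \cite{code} by exactly the procedure you describe --- locating the images of the cusp and the elliptic points through the explicit maps, finding singular points of the reductions mod several primes, reconstructing rational coordinates, and verifying membership and singularity directly. Your heuristic remarks about why the points ought to be singular (the cusp cycle $-5,-2,-2$ cannot be resolved by one projection; the elliptic-point quotient singularities persist) are not needed for the proof but correctly explain why the computation is expected to succeed.
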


\begin{prop} Let $S_8$ be the projection of $S_9$ away from $C_9$ and let
  $C_8$ be the image of $C_9$ in $S_8$.  For $i = 8, 7, 6$, inductively define
  $S_{i-1}$ to be the projection of $S_i$ away from $C_i$ and $C_{i-1}$ to
  be the unique singular point in the image of $C_i$.  Then $S_{i-1}$ is a
  surface in $\P^{i-1}$ of degree $2i-1$.
\end{prop}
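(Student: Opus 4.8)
The plan is to run the intersection-theoretic bookkeeping of Section~\ref{subsec:embed} on a fixed resolution. Fix a smooth projective surface $X$ birational to $S_{10}$ on which the cusp and the relevant elliptic points are resolved, and realize every member of the chain $S_{10}, S_9, S_8, \dots, S_5$ as the image of $X$ under a nef and big divisor class: $S_j = \phi_{D_j}(X)$, so that $\deg S_j = D_j^{2}$ and $S_j$ spans $\P^{j}$. From the computation following Proposition~\ref{prop:deg-24-p-10} we have $D_9 = \OO(3) - (E_1+E_2+E_3)$, where $E_1, E_2, E_3$ (self-intersections $-5, -2, -2$, forming a triangle) are the curves resolving the cusp of $S_{10}$. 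Projecting $S_j$ away from the point $C_j$ replaces $D_j$ by $D_{j-1} = D_j - Z_j$, where $Z_j$ is the reduced divisor on $X$ formed by the curves that $D_j$ contracts onto $C_j$ (equivalently, the pullback to $X$ of the maximal ideal $\mathfrak{m}_{C_j}$). These curves are $D_j$-exceptional, so $D_j\cdot Z_j = 0$ and hence
\[
\deg S_{j-1} = D_{j-1}^{2} = D_j^{2} - 2\,D_j\cdot Z_j + Z_j^{2} = \deg S_j + Z_j^{2}.
\]
So the statement reduces to identifying, at each stage, the configuration $Z_j$ contracted onto $C_j$ and computing its self-intersection, a finite combinatorial task since the self-intersections and incidences of all the curves on $X$ contracted by $D_{10}=\OO(3)$ are known from \cite{hmf}.

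Concretely I would proceed inductively. Intersecting $D_9$ with $E_1, E_2, E_3$ and with the elliptic-point resolution curves shows which of them survive on $S_9$ and which collapse; this locates $C_9$ together with its contracted cycle $Z_9$, and also the point $E_9$ of Proposition~\ref{prop:e9-c9}. Setting $D_8 = D_9 - Z_9$ and recomputing the relevant intersection numbers $D_8 \cdot E_i$ locates $C_8$ (the point in the image of $C_9$) with its contracted cycle $Z_8$, gives $\deg S_8 = \deg S_9 + Z_9^{2}$, and so on down to $D_5$. Carrying this out yields the claimed values $\deg S_{i-1} = 2i-1$ for $i = 8, 7, 6$; the assertion that $S_{i-1}$ spans $\P^{i-1}$ is automatic, since each step is a projection from a single point. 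As an internal check, one verifies that the total correction $D_9 - D_5$ is effective and supported on the curves contracted by $\OO(3)$, and that the predicted degrees match those of the explicit models in \cite{code}.

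Two points require care, and the second is the real obstacle. First, each projection $S_j \dashrightarrow S_{j-1}$ must be \emph{birational} onto its image; were it instead, say, two-to-one, the displayed degree formula would fail and, following the prescription of Section~\ref{subsec:embed}, one would have to Veronese-re-embed before continuing. Birationality is checked by reconstructing the inverse rational map on $S_{j-1}$, or directly in Magma (see \cite{code}). Second, one must verify that at each stage the image of the contracted locus really is a \emph{single} singular point --- ``the unique singular point in the image of $C_i$'' --- rather than a singular curve, or a cluster of several singular points, and that it stays separated from (or, if it merges with, interacts predictably with) the descendants of $E_9$. Pinning this down, with the predicted contracted cycle at each stage, is what makes the induction go through; in practice it is established using the modular-reduction and elimination-ideal techniques of Section~\ref{subsec:tricks}, cross-checked against the abstract resolution data from \cite{hmf} and the explicit equations in \cite{code}.
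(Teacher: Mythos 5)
Your proposal is correct and takes essentially the same route as the paper: the paper states this proposition without a written proof, relying on the Magma computation of each successive projection image (Algorithm~\ref{alg:find-image}, \cite{code}) together with exactly the degree bookkeeping you describe, which it carries out explicitly for the preceding step as $(\O(3)-(E_1+E_2+E_3))^2 = 24 + (-5-2-2+3\cdot 2) = 21$. Your flagging of the two points needing verification --- birationality of each projection and the uniqueness of the singular point in each image --- matches what the accompanying code is implicitly responsible for checking.
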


\begin{prop}\label{prop:blowup}
  $S_5$ is the blowup of a K3 surface in a single point.
\end{prop}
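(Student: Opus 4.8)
The plan is to argue that $S_5$ is obtained from a K3 surface $X$ by a single blow-up, by tracking the degree, the canonical class, and the singularities through the chain of projections $S_{10} \dashrightarrow S_9 \dashrightarrow \cdots \dashrightarrow S_5$ and then recognizing the final model directly. First I would verify in Magma that each projection $S_i \dashrightarrow S_{i-1}$ in the chain is birational, so that the whole composite $S_{10} \dashrightarrow S_5$ is birational; this is the essential hypothesis that lets us transport cohomological data, and in particular it ensures $p_g(S_5) = p_g(H_{I,K}) = 1$ and $h^1(S_5) = 0$, consistent with $S_5$ being a blow-up of a K3 surface. The degree bookkeeping has already been set up: $S_5 \subset \P^5$ has degree $2\cdot 5 - 1 = 9$ by the preceding proposition. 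A K3 surface embedded by a complete linear system in $\P^5$ has degree $8$, so a single extra point blown up (contributing $+1$ to the degree of the image under $H - E$, since $(H-E)^2 = H^2 - 1$) gives exactly degree $9$; thus the numerics are precisely those of the blow-up of a degree-$8$ K3 in one point.

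Next I would identify the point to be contracted. The natural candidate is $C_5$, the last of the images of the cusp: the cusp singularity is never canonical, and the chain of projections was designed to resolve it, so on $S_5$ the remaining ``cusp-derived'' locus should be a single $(-1)$-curve $E$ (the proper transform of a component of the Hirzebruch resolution cycle, whose self-intersection we know abstractly from \cite{hmf}). Concretely I would compute the exceptional locus of $S_5 \dashrightarrow S_6$ restricted near $C_5$ and check that there is a unique curve $E$ with $E^2 = -1$, $E \cong \P^1$, and that blowing it down yields a surface $X$ that is smooth at the image point (or at worst has a canonical singularity there, which is what we want). The surfaces $S_6, \dots, S_{10}$ still carry the other bad singularities of $H_{I,K}$ — the elliptic points and the point $E_9$ and its successors — but by this stage those should all have become at worst ADE (Du Val) singularities, since projecting away from a singular point on a surface embedded by a sufficiently positive linear system resolves it partially and the remaining singularities of $S_5$ in $\P^5$ are of type $A_1, A_2, A_3$ as noted in Section~\ref{subsec:embed}. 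Du Val singularities do not affect the canonical class, so $X$ has $K_X = 0$; together with $p_g(X) = 1$ and $h^1(X) = 0$ this identifies $X$ as a K3 surface (with canonical singularities), and $S_5$ is its blow-up at one point.

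To make the identification of $X$ as a K3 surface airtight rather than merely numerical, I would exhibit a nowhere-vanishing holomorphic $2$-form: the unique weight-$(2,2)$ cusp form corresponding to the elliptic curve $E$ descends to a section of $\omega$ on the smooth locus, and one checks it extends without zeros across the resolved cusp and across the Du Val points (this is the standard statement that the cusp resolution and ADE resolutions are crepant, \cite[IV.2]{vdg}, \cite[(1.2)]{ypg}). The main obstacle, as in the rest of the paper, is the practical one: the projections $S_i \dashrightarrow S_{i-1}$ are defined by linear forms but Magma does not exploit this, so computing images and singular loci in $\P^9, \dots, \P^6$ may require the techniques of Section~\ref{subsec:tricks} — Proposition~\ref{prop:image-from-elim} for the images and the mod-$p$ interpolation method for the singular points — and verifying birationality of each step (rather than assuming it) is where the real work lies. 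Once the final model $S_5$ is in hand, checking that it has the Hodge numbers of the blow-up of a K3 and locating the unique $(-1)$-curve is a routine computation, documented in \cite{code}.
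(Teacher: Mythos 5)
Your overall strategy---locate a unique $(-1)$-curve on $S_5$, contract it, and verify that the result is a K3 surface---is the right shape, and your closing remark that the real content is a routine computation on the final model is essentially what the paper does: it invokes Magma's built-in canonical model machinery to produce a map from $S_5$ to a K3 surface of degree $10$ in $\P^6$ whose inverse fails at a single point, and then checks that the preimage of that point is a smooth curve of self-intersection $-1$. But the supporting ``numerics'' you offer contain genuine errors. First, the preceding proposition states that $S_{i-1} \subset \P^{i-1}$ has degree $2i-1$; taking $i=6$ gives $\deg S_5 = 11$, not $9$ (you substituted $i=5$ into the wrong slot). Second, your blow-up bookkeeping runs backwards: if $X$ is a K3 embedded by $H$ with $H^2 = 2n-2$ in $\P^n$, then the blow-up at a point embedded by $H-E$ has degree $(H-E)^2 = H^2 - 1$ and lives in $\P^{n-1}$, so the blow-up of a degree-$8$ K3 in $\P^5$ would give a degree-$7$ surface in $\P^4$, not a degree-$9$ surface in $\P^5$. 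The consistency check you rely on therefore fails both ways, and in fact the K3 that actually appears has degree $10$ in $\P^6$; the map $S_5 \to X$ is not a linear projection, so no such linear-system degree count applies.

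Two further points. Your plan to find the exceptional curve ``near $C_5$'' as a remnant of the cusp is an unsupported guess: the projections $S_9 \dashrightarrow S_8 \dashrightarrow \cdots \dashrightarrow S_5$ were taken precisely away from the successive images $C_9,\dots,C_6$ of the cusp, and the paper gives no indication that the $(-1)$-curve is cusp-derived (nor is the cusp cycle, with self-intersections $-5,-2,-2$, obviously the source of a $(-1)$-curve). Also, the assertion that the remaining singularities of $S_5$ are of type $A_1,A_2,A_3$ ``as noted in Section~\ref{subsec:embed}'' is not something that section establishes; it is a general heuristic discussion, and the ADE nature of the singularities of the resulting K3 (twelve $A_1$ points) is a fact verified computationally for this specific surface after the contraction. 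None of this makes the proposition false, but as written your argument does not prove it; the proof really is the direct computation of the canonical model and of the exceptional locus of its inverse.
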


\begin{proof} Using built-in Magma functions for computing canonical models
  we find a map to a K3 surface
  of degree $10$ in $\P^6$ whose inverse fails to be defined at a single point.
  The inverse image of that point can easily be checked to be a smooth curve
  of self-intersection $-1$.
\end{proof}

This K3 surface $S_{10}$ of degree $10$ in $\P^6$ is defined by $6$ quadrics
as expected.  By good fortune all of its singularities are of
type $A_1$; there are two orbits over $\Q(\zeta_{12})$, one over
$\Q(\sqrt{13})$, and two over $\Q$, making $12$ singular points in all.

In order to study $S_{10}$ more closely, we find some curves on it.  One
way to do this is to note that one of the $\Q$-rational singular points has
the property that a line through it and one of the two singular points
defined and conjugate over $\Q(\sqrt{13})$ lies on the surface.  Another is
to use the genus-$1$ fibrations defined by hyperplanes passing through
one orbit of $\Q(\zeta_{12})$-rational singular points and one rational
singular point.  These are all fibrations with three $I_3$ fibres and two
of type $I_4$, and so each one gives us $17$ rational curves contained in
fibres.  However, some of these are the curves in the resolutions of the
$A_1$ singularities, and the overlap means that we have only found $12$ curves
in total.

This is enough for now.  We have a sublattice of the Picard group with
$18$ generators (later we will find a $19$th on a related K3 surface).

\begin{prop} There is a genus-$1$ fibration on $S_{10}$ defined over $\Q$
  whose Jacobian is defined by
\begin{align*}
  y^2 &= x^3 + (-8t^4 - 40t^3 - 2t^2 + 648t + 150)x^2 + (16t^8 + 160t^7 + \\
  &\qquad 408t^6 - 2680t^5 - 15607t^4 - 18240t^3 + 53626t^2 - 31400t + 5625)x.
\end{align*}
\end{prop}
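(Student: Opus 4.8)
The plan is to realize the fibration as the pencil attached to an isotropic class in the known part of $\Pic S_{10}$, and then to compute its Jacobian and reduce the resulting Weierstrass model. First I would search the rank-$18$ sublattice of $\Pic S_{10}$ found above — generated by the twelve curves lying in fibres of the $I_3/I_4$ fibrations through the $\Q(\zeta_{12})$-rational and rational singular points, by the exceptional curves of the $A_1$ resolutions, and by the line joining the rational singular point to the conjugate pair over $\Q(\sqrt{13})$ — for a primitive isotropic class $E$ with $E \cdot H > 0$, where $H$ is the degree-$10$ polarization, that is stable under $\Gal(\bar\Q/\Q)$. After possibly replacing $E$ by a nef representative of the same fibration class, the linear system $|E|$ on the K3 surface $S_{10}$ is base-point free of projective dimension $1$, its generic member is a smooth genus-$1$ curve, and the associated morphism $S_{10} \to \P^1$ is the desired fibration; it is defined over $\Q$ because $E$ is. Concretely, writing $E$ as $aH$ minus an effective combination of the curves above, the fibres are cut out residually to that fixed locus by the sections of $\O(a)$ vanishing on it, so the pencil is produced by linear algebra on the degree-$a$ graded piece once those curves are known as subschemes of $\P^6$.

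As in Sections~\ref{ex:dp25-rt2} and~\ref{ex:d3-rt13}, this pencil is not expected to have a section over $\Q$: the minimal degree $d$ of a $\Q$-rational multisection is the gcd of the intersection numbers of the known curves with the fibre class, and it is advantageous to choose $E$ so that $d > 1$, since passing to $\Jac_\pi(S_{10})$ by the construction of type~(\ref{item:jacobian}) then produces a K3 surface with a section over $\Q$ and a smaller Picard discriminant. Computing a Weierstrass model of the generic fibre of $\pi$ and of its Jacobian is routine in Magma; applying the reduction of Remark~\ref{rem:reduce-weierstrass} — minimizing, reducing the hyperelliptic curve $y^2 = D(t)$ defined by the finite singular fibres, transforming the base of the fibration by the same automorphism of $\P^1$, and clearing square common factors — should produce exactly the equation in the statement. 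Its shape (quadratic $a_2$, octic $a_4$, hence a rational $2$-torsion section on $x = 0$) determines the bad fibres and their Kodaira types, and a point count modulo several primes of good reduction serves as a consistency check on the whole computation.

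The main obstacle is the first step. Since we control only a rank-$18$ sublattice of $\Pic S_{10}$, rather than the full lattice, we need a usable $\Gal(\bar\Q/\Q)$-stable isotropic class to lie inside it; and even granted such a class we must realize the associated pencil by explicit equations in $\P^6$, which can be costly because $S_{10}$ is cut out by six quadrics and the fixed curves of $|E|$ are not coordinate subspaces. If direct elimination is infeasible we fall back on Algorithm~\ref{alg:find-image}, reconstructing the target $\P^1$ and hence the pencil from the images of $0$-dimensional linear slices, as described in Section~\ref{subsec:tricks}.
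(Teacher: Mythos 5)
Your proposal follows essentially the same route as the paper: identify a Galois-stable fibration class using the known rank-$18$ sublattice of $\Pic S_{10}$ and the Galois action on it, cut out the pencil explicitly, compute the Jacobian in Magma (the paper notes the fibres have degree $4$, so this is routine), and reduce the Weierstrass model via Remark~\ref{rem:reduce-weierstrass}. The only difference of detail is that the paper writes down the equations over $\Q(i)$ first — since the individual reducible fibres are only defined there — and then recovers the $\Q$-model by Galois descent, whereas you propose to produce the $\Q$-rational pencil directly by linear algebra residual to the Galois-stable fixed locus; both variants are sound.
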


\begin{proof} Using our knowledge of the Picard group of $S_{10}$ and the
  action of Galois on it, we identify a certain
  fibration with fibres of degree $4$.
  The individual reducible fibres of this fibration are
  only defined over $\Q(i)$, so we start by writing down equations for the
  fibration over $\Q(i)$ and then move to $\Q$ by a standard Galois descent.

  Since the fibres are of degree $4$, Magma has no trouble determining the
  Weierstrass model of the Jacobian.  Using the method of
  Remark~\ref{rem:reduce-weierstrass}, we compute the model above.
\end{proof}

This is an elliptic surface $\E$ with two singular fibres of type $I_0^*$ and
one each of types $I_6, I_3, I_2, I_1$.  (It is therefore a quadratic twist
of the second family on \cite[p.~336]{herfurtner},
which is the universal elliptic curve
with a $6$-torsion point.)  Over $\Q(\sqrt{13})$ there is a section with
$x$-coordinate $(13/9)\cdot (t^2+8t+25)\cdot (2t-1)^2$; its height is
$2/3$, since it passes through nonzero components of the two $I_0^*$ fibres
and a component of the $I_6$ at distance $2$ from the zero section
(cf.~\cite[Table 1.19, Theorem 1.33]{cox-zucker}).
We calculate that the torsion subgroup has order
$2$, so the determinant formula \cite[Corollary 6.39]{schutt-shioda}
shows that the discriminant of the Picard
lattice is $4^2 \cdot 6 \cdot 3 \cdot 2 \cdot 2/3 \cdot 1/4 = 96$.  This is
not exactly what we are looking for, since our isogeny class contains
elliptic curves $3$-isogenous to their Galois conjugates.

Since we will need genus-$1$ fibrations on the K3 surface, we must now
find a good projective model of $\E$.  Applying the techniques of
Section~\ref{subsec:embed} we rapidly come to a model $S_3$ in $\P^3$ with
$9$ ordinary double points and no other singularities.

\begin{prop}\label{prop:d6-d4s-a1-fib}
  $S_3$ admits a fibration with no section,
  singular fibre types $I_2^* {I_0^*}^2 I_2^2$, full level-$2$ structure,
  and fibres of degree $6$.  The Jacobian admits a $2$-isogeny to
  the elliptic surface $\E'$ defined by
  $$y^2 = x^3 + (-81t^3 - 95t^2 + 176t)x^2 + (256t^4 - 512t^3 + 256t^2)x,$$
  which has a point with $x$-coordinate $(4t+9t^2/2)^2$ and height $3$.
\end{prop}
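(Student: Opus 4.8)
The plan is to manufacture the fibration from the explicit model of $\Pic S_3$ and then run the chain ``generic fibre $\rightarrow$ Jacobian $\rightarrow$ $2$-isogeny'' of Section~\ref{sec:verify-modularity}. Since $S_3$ is birational to the degree-$10$ K3 surface $S_{10}$, I would transport the rank-$18$ sublattice of $\Pic S_{10}$ already written down (the hyperplane class, the nine $(-2)$-curves over the nodes, the curves contained in the $\Q(\zeta_{12})$-fibrations, and the line joining the two points conjugate over $\Q(\sqrt{13})$) together with its $\Gal(\bar\Q/\Q)$-action. Inside this lattice I would search for a primitive isotropic class $F$ of degree $6$ with respect to the embedding of $S_3$ in $\P^3$ whose orthogonal complement contains a root system of type $\tilde D_6 \oplus \tilde D_4 \oplus \tilde D_4 \oplus \tilde A_1 \oplus \tilde A_1$, forced by the Euler-number identity $8+6+6+2+2=24$, and satisfying three rationality constraints: that $F$ itself be defined over $\Q$; that the multiplicity-one components of the two $D$-type fibres and the two components of each $I_2$ be individually Galois-rational, which is the ``full level-$2$'' condition on the Jacobian; and that $\gcd_{D \in \Pic S_3}(D \cdot F) = 2$, which is exactly what ``no section'' means. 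Having pinned down $F$ numerically, I would realise the pencil $|F|$ by honest forms on the singular quartic $S_3$: the space of degree-$k$ polynomials vanishing to the prescribed orders along the curves supporting the reducible fibres is found by linear algebra, and a two-dimensional subspace of it defines the map $S_3 \dashrightarrow \P^1$.

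I would then extract the generic fibre $C_\eta/\Q(t)$, a genus-$1$ curve of degree $6$, and pass to $\Jac(C_\eta)$. Because Magma's Jacobian routines are comfortable only with models of degree at most $4$, I would first use the full rational $2$-division structure to reduce $C_\eta$ — the three rational degree-$3$ classes differing by $2$-torsion give a plane-cubic or $2$-descent presentation — after which $\Jac(C_\eta)$ comes out in the form $y^2 = (x-e_1(t))(x-e_2(t))(x-e_3(t))$ with $e_i \in \Q[t]$; as with the $I_0^*$-fibrations met above, a short Galois descent may be required to put the $e_i$ over $\Q$ rather than a quadratic extension. Taking the quotient by the rational $2$-torsion point $(e_1,0)$ via V\'elu's formulas and cleaning up the output with Remark~\ref{rem:reduce-weierstrass} (minimising $y^2 = D(t)$, rescaling $\P^1$, stripping square common factors), I expect to land, up to an automorphism of the base and a quadratic twist, on $y^2 = x^3 + (-81t^3-95t^2+176t)x^2 + (256t^4-512t^3+256t^2)x$. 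This is internally consistent: here $b(t) = 256t^2(t-1)^2$ and $a(t) = -t(t-1)(81t+176)$, so $a^2-4b = t^2(t-1)^2(81t+144)(81t+208)$, giving fibres $I_0^*$ at $t=0$ and $t=1$, $I_1$ at $t=-16/9$ and $t=-208/81$, and — since $\deg_t\Delta = 14$ forces $v_\infty(\Delta)=10$ while $v_\infty(j)=-4$ — a fibre $I_4^*$ at $\infty$, precisely the $2$-isogenous image of a surface with fibre types $I_2^*\,{I_0^*}^2\,I_2^2$.

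Finally I would verify the section directly: substituting $x = (4t+9t^2/2)^2 = \tfrac14 t^2(9t+8)^2$ into $x\bigl(x^2+a(t)x+b(t)\bigr)$ and checking that the product is a square in $\Q(t)$ exhibits a point $P \in \E'(\Q(t))$ — such points with square $x$-coordinate are exactly the pushforwards under the $2$-isogeny of sections of $\Jac(C_\eta)$, matching the fact that the interesting section really lives on the fibration of $S_3$. Its canonical height is then computed from Shioda's formula $\hat h(P) = 4 + 2(P\cdot O) - \sum_v \mathrm{contr}_v(P)$ (with $\chi = 2$): one reads off from the $x$-coordinate and the reduction which component $P$ meets at each of $t = 0,1,-16/9,-208/81,\infty$, and the local contributions from the standard $\mathrm{contr}$-tables for $I_0^*$, $I_1$, $I_4^*$ (cf.~\cite{schutt-shioda,cox-zucker}) sum to $\hat h(P) = 3$; in particular $P$ is non-torsion. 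As a consistency check, the determinant formula then gives Picard discriminant $2^4\cdot 3$ for $\E'$, a small power of $2$ times $\dia(\isi) = 3$, in agreement with Claim~\ref{claim:disc-pic}.

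The step I expect to be the real obstacle is the first one. The target $\tilde D_6 \oplus 2\tilde D_4 \oplus 2\tilde A_1$ is a rigid configuration, and the class $F$ realising it must simultaneously meet the $\Q$-rationality, the component-level ``full level-$2$'' rationality, and the $\gcd=2$ ``no section'' conditions; there is no a priori guarantee that such a class lies in the rank-$18$ sublattice we have written down rather than requiring further explicit curves, so the search may first force us to enlarge our model of $\Pic S_3$. Exhibiting the pencil by explicit equations on a singular quartic, and then carrying out the descent of the $e_i(t)$, are the other places where the computation is liable to become heavy.
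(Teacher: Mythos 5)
Your overall plan — locate the fibre class in an explicit model of $\Pic S_3$, extract the generic fibre, pass to the Jacobian, apply a $2$-isogeny, and verify the section and its height by Shioda's formula — is the same computational strategy the paper uses throughout this section, and your sanity checks are sound: the factorizations $a=-t(t-1)(81t+176)$, $b=256t^2(t-1)^2$, the resulting fibre configuration $I_4^*\,{I_0^*}^2\,I_1^2$ of $\E'$, and the height computation $\hat h(P)=4-1=3$ (the only nonzero local contribution coming from the $I_0^*$ at $t=0$, where $v_0(x_P)=2$) all check out against the displayed equation.

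However, there is a genuine gap at exactly the step the paper singles out as the only non-routine point. You propose to reduce the degree-$6$ generic fibre $C_\eta$ to a plane cubic using ``three rational degree-$3$ classes differing by $2$-torsion.'' This is inconsistent with your own normalization of the fibration: you (correctly) impose $\gcd_{D\in\Pic S_3}(D\cdot F)=2$, so every Galois-rational divisor class on $C_\eta$ has even degree and no rational degree-$3$ class exists; conversely, if a degree-$3$ class did exist alongside a degree-$2$ class, their difference would be a degree-$1$ class and the fibration would have a section, contradicting the statement. So the plane-cubic reduction cannot be carried out, and Magma's Jacobian machinery (which wants degree $\le 4$) is not reachable by that route. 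The paper's resolution is different and is the actual content of its proof: the degree-$6$ fibre has arithmetic genus $2$ (it is a nodal model of the genus-$1$ curve), and projection away from the node is a $2{:}1$ map onto a rational curve rather than a birational map onto a plane quartic — so one must first re-embed the fibre by the Veronese into $\P^9$ and then project away from singular points until one reaches an intersection of two quadrics in $\P^3$, i.e.\ a degree-$4$ model adapted to the index-$2$ situation. Your proposal needs this (or an equivalent degree-$2$-class-based) reduction in place of the degree-$3$ one; the remainder of your argument would then go through as written.
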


\begin{proof} Again this is done computationally.  The only slightly tricky
  point is that, though the generic fibre has degree $6$ and arithmetic
  genus $2$, the projection away from the singular point is a map to a
  rational curve, not to a singular quartic.  Thus, to determine the
  Weierstrass model, we need to start by using the 
  fibre and its Veronese embedding to place the fibre in $\P^9$.
  This done, we can project away from singular points until we reach
  an intersection of two quadrics in $\P^3$.
\end{proof}

\begin{remark}
  The fibration on $S_3$ was chosen precisely because it is 
  $2$-isogenous to a fibration with $I_4^* {I_0^*}^2 I_1^2$ fibres,
  which matches one of the types of fibration on the Kummer surface of
  $E \times E'$.  Thus we know that, at least over $\bar \Q$, the
  surface $\E'$ is isomorphic the Kummer surface of a product of two
  elliptic curves.
\end{remark}

In order to facilitate the comparison between $\E$ and the Kummer surface,
we now find the fibration with reducible fibre types $I_6^* I_2^6$.
This is again done as in Section~\ref{subsec:embed}: we find a model in
$\P^5$ with at worst $A_3$ singularities and find such a fibration there.
We conclude:

\begin{prop}\label{prop:s3-d10-a1s}
  There is a fibration on $S_3$ whose generic fibre has
  reducible fibres $I_6^*, I_2^6$ which is defined by the equation
  $$y^2 = x^3 + (6t^3 - 119/4t^2 - 128t + 
        147/4)x^2 + (-54t^3 + 1071/4t^2 + 1152t - 1647/4)x.$$
\end{prop}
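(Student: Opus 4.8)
The plan is to locate the desired genus-$1$ pencil inside the Picard lattice of $S_3$ and then to make it geometrically explicit following the recipe of Section~\ref{subsec:embed}. Concretely, from the analysis preceding the statement we have $18$ explicit generators of $\Pic S_{3,\bar\Q}$ together with the Galois action on them; since the trivial lattice of a fibration with reducible fibres $I_6^*$ and $I_2^6$ has rank $2+10+6=18$ and our surface has Picard rank $18$, such a fibration (if it exists) must have Mordell--Weil rank $0$, and its fibre class $F$ is characterised inside $\Pic S_{3,\bar\Q}$ by $F^2=0$, $F$ primitive and nef, $F$ fixed by Galois, and the root sublattice orthogonal to $F$ inside the span of the curves we already know being of type $\tilde D_{10}\oplus \tilde A_1^{\oplus 6}$. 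First I would run this lattice search to produce a candidate class $F$ together with a concrete effective divisor representing it.

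Next I would make $F$ explicit on a tractable model. As in the proof of Proposition~\ref{prop:d6-d4s-a1-fib}, the model $S_3\subset\P^3$ with its $9$ ordinary double points is awkward for linear-system computations, so I would first pass to a model in $\P^5$ with at worst $A_3$ singularities by the standard routine: apply the $2$-fold Veronese embedding where room is needed and project successively away from the worst singular points, computing each image by Proposition~\ref{prop:image-from-elim} or, where that is too expensive, by Algorithm~\ref{alg:find-image}. On this model the class $F$ corresponds to an explicit space of linear forms, and the ratio of a basis of that space gives the map $S_3\dashrightarrow\P^1$ whose generic fibre we want.

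Then I would extract the Weierstrass equation. The generic fibre is a smooth genus-$1$ curve over $\Q(t)$; if it has a low-degree model I would project it to a plane cubic or quartic and let Magma compute the Jacobian directly, and otherwise I would first embed the fibre in $\P^9$ by its $2$-fold Veronese embedding and project away from the singular points down to an intersection of two quadrics in $\P^3$, exactly as in the proof of Proposition~\ref{prop:d6-d4s-a1-fib}, before computing the Jacobian. Applying the reduction of Remark~\ref{rem:reduce-weierstrass}---minimising the integral model, translating the finite singular fibres by an automorphism of $\P^1$ chosen to shrink the coefficients of $\prod_i(t-\alpha_i)$, and dividing out square common factors---should then produce the displayed equation. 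Finally I would verify the claim by computing the discriminant of this Weierstrass model and the Kodaira types of its singular fibres, confirming they are $I_6^*$ and six copies of $I_2$, and by checking that the resulting K3 surface is $\Q$-isomorphic to $S_3$, either by exhibiting the isomorphism directly from the linear system or by comparing point counts modulo several primes.

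The main obstacle I expect is computational rather than conceptual: the projections needed to reach a model with only $A_3$ singularities, and the elimination and image computations performed along the way, are precisely the steps that tend to exhaust time or memory in Magma, and one may need several attempts---reordering the projections, choosing different intermediate Veronese re-embeddings, or first guessing the answer by working modulo a prime and only afterwards certifying it over $\Q$---before a usable equation for the fibration is obtained.
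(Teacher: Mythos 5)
Your overall strategy is the same as the paper's, whose entire argument here is one sentence: pass to a model in $\P^5$ with at worst $A_3$ singularities (exactly as in Proposition~\ref{prop:d6-d4s-a1-fib}) and locate the fibration there, then extract and reduce the Jacobian. Your elaboration of how one actually finds the fibre class and the Weierstrass model is consistent with what the paper does elsewhere.

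However, there is a concrete error in your lattice-theoretic setup. The Picard rank of $S_3$ is $19$, not $18$: this is a $\Q$-curve case ($E$ is $3$-isogenous to $E^\sigma$), the paper explicitly says a $19$th generator will be found, and the fibration $\E$ from which $S_3$ is built already has trivial lattice of rank $18$ (fibres $I_0^*,I_0^*,I_6,I_3,I_2,I_1$) \emph{plus} a non-torsion section of height $2/3$ over $\Q(\sqrt{13})$ — that is how the paper computes the Picard discriminant $96$. Consequently the $I_6^*\,I_2^6$ fibration you are looking for has Mordell--Weil rank $1$, not $0$, and a search that characterises $F$ by demanding that the orthogonal complement of $F$ be exactly the root lattice $D_{10}\oplus A_1^{\oplus 6}$ would return nothing. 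The fix is mild — search for an isotropic, nef, Galois-fixed class whose orthogonal root sublattice \emph{contains} $D_{10}\oplus A_1^{\oplus 6}$, allowing a rank-one Mordell--Weil complement — and the remainder of your procedure (Veronese re-embedding and successive projections to reach a $\P^5$ model, computing the Jacobian of the generic fibre, reducing via Remark~\ref{rem:reduce-weierstrass}, and verifying the Kodaira types) then goes through as in the paper.
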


We now start again from the other side, with the Kummer surface of
$E \times E'$ where $E, E'$ are generic elliptic curves with full level-$2$
structure defined by equations $y^2 = x(x-1)(x-p)$ and $y^2 = x(x-1)(x-p')$
where $p, p'$ are independent transcendentals.  (These calculations use
code originally written for \cite{shtukas} and my coauthor Jared Weinstein
had considerable influence on it.)
We find the equation of a fibration with $I_6^* I_2^6$ fibres and identify
$p$ and $q$ by matching the locations of the $I_2$ fibres.  In particular,
we choose $p, p' = -87 \pm 24 \sqrt{13}$ to find two elliptic curves
$E, E'$ of conductor $(3)$
over $\Q(\sqrt{13})$ that are $3$-isogenous.  It is easy to write down a
model over $\Q$ for the Kummer surface of the product and the $I_6^* I_2^6$
fibration on this surface.  However, 
it cannot be isomorphic to that on the other one,
because the dimensions of the Galois-fixed subspaces are different: $11$ on
the Kummer surface and $16$ on the image of the Hilbert modular surface.
As before, we need to take the Jacobian of a fibration with no rational
sections.  We state the result as a proposition.

\begin{prop}\label{prop:kum-d10-a1s}
  There is a fibration on the Kummer surface of $E \times E'$
  defined over $\Q$ that has a $\bar \Q$-section but no rational section.
  Its Jacobian is defined by the Weierstrass equation
  $$y^2 = x^3 + (6t^3 - (119/4)t^2 - 128t + (147/4))x^2 + 
  (-54t^3 + (1071/4)t^2 + 1152t - (1647/4))x.$$
\end{prop}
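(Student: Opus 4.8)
The plan is to mirror the strategy already used to produce the fibration in Proposition~\ref{prop:s3-d10-a1s}, but now on the Kummer surface side. First I would start from the explicit model of $\Kum(E \times E')$ over $\Q$ obtained by the Shioda--Inose / generic-level-$2$ construction described in the surrounding text (the code inherited from \cite{shtukas}), with $E, E'$ the curves $y^2 = x(x-1)(x-p)$, $y^2 = x(x-1)(x-p')$ specialized at $p, p' = -87 \pm 24\sqrt{13}$ so that both curves have conductor $(3)$ over $\Q(\sqrt{13})$ and are $3$-isogenous. On this surface I would locate the genus-$1$ fibration with reducible fibre configuration $I_6^* I_2^6$; since the geometric Picard rank is at least $18$ and the relevant frame lattice is rigid, this fibration class is essentially forced, and its fibre class can be read off from the intersection data of the resolution curves and the images of the $2$-torsion sections exactly as in the proof of Proposition~\ref{prop:fib-kum-ee}.

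Next, the key point (and the reason the statement is nontrivial rather than an immediate match with Proposition~\ref{prop:s3-d10-a1s}) is that the $I_6^* I_2^6$ fibration produced directly on $\Kum(E \times E')$ does \emph{not} have a section over $\Q$: as noted just before the statement, the Galois-fixed subspace of $\Pic$ has rank $11$ on the Kummer side versus $16$ on the image of the Hilbert modular surface, so the two fibred surfaces cannot be $\Q$-isomorphic as they stand. The remedy is a step of type~(\ref{item:q-jacobian}): the fibration has a section defined over some quadratic extension but none over $\Q$, so I would pass to the Jacobian $\Jac_\pi$ of this sectionless fibration. By the discussion of type~(\ref{item:q-jacobian}) maps, $\Jac_\pi$ is isomorphic to the Kummer surface over that quadratic field while acquiring a larger $\Gal(\Q)$-fixed part of its Picard lattice, which is exactly what is needed to make the Weierstrass coefficients rational. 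Computing this Jacobian is routine in Magma once the generic fibre is in hand (the fibres have small degree), and applying the normalization of Remark~\ref{rem:reduce-weierstrass} yields a clean integral model.

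Finally I would verify that the resulting Weierstrass equation is literally the one displayed in the statement, namely
$$y^2 = x^3 + (6t^3 - (119/4)t^2 - 128t + (147/4))x^2 + (-54t^3 + (1071/4)t^2 + 1152t - (1647/4))x,$$
which coincides term for term with the equation of Proposition~\ref{prop:s3-d10-a1s}. This coincidence is the real content: it exhibits an honest $\Q$-isomorphism between a fibred model coming from $H_{(3),\Q(\sqrt{13})}$ and one coming from $\Kum(E \times E')$, and hence (composing the finite-degree rational maps constructed along the way) the desired correspondence identifying transcendental lattices. The step I expect to be the main obstacle is pinning down the sectionless $I_6^* I_2^6$ fibration on the Kummer surface precisely enough — choosing the correct fibre class among the many genus-$1$ pencils and matching the $I_2$ fibre locations to fix the correspondence $p, p' \leftrightarrow$ the base coordinate — since the raw Gr\"obner-basis image computations are at the edge of feasibility; the tricks of Section~\ref{subsec:tricks}, especially Algorithm~\ref{alg:find-image} applied to low-dimensional subschemes, are what make it tractable, and once a candidate equation is guessed the verification that it pulls back into the ideal of the Kummer model is a direct check.
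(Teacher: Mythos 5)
Your proposal is correct and follows essentially the same route as the paper: the paper likewise builds the Kummer surface over $\Q$ from the level-$2$ model of \cite{shtukas} specialized at $p,p'=-87\pm 24\sqrt{13}$, finds the $I_6^*\,I_2^6$ fibration, observes from the mismatch of Galois-fixed Picard ranks ($11$ versus $16$) that it has no $\Q$-rational section, and then takes the Jacobian (a step of type~(\ref{item:q-jacobian})) to arrive at the displayed Weierstrass equation coinciding with that of Proposition~\ref{prop:s3-d10-a1s}. The one point you omit is the ambiguity of the $\Q$-model of the Kummer surface up to quadratic twist by $13$, which the paper resolves by matching point counts modulo primes of good reduction.
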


\begin{remark} It was sensible to look for this particular fibration, because
  taking its Jacobian over $\Q$ increased the rank of the rational part of
  the Picard group substantially.  An $I_6^*$ fibre may only contribute
  rank $5$ to the rational part of the Picard lattice when there is no
  rational section, but must contribute at least $10$ when there is one,
  because the rational section forces the component it meets to be rational,
  and therefore all components except for the two farthest from the zero
  component to be.

  There is an ambiguity when we speak of $\K$ as a K3 surface over $\Q$, since
  we could twist by $13$ to obtain another K3 surface whose base change
  to $\Q(\sqrt{13})$ is isomorphic to that of $\K$.
  It is easy to see which one must 
  be correct, since the number of points mod $p$ must match mod $p$ for
  all primes $p$ of good reduction.
\end{remark}

We observe that the Weierstrass equations in Propositions
\ref{prop:s3-d10-a1s} and \ref{prop:kum-d10-a1s} are identical.
The following theorem summarizes the content of this section:
\begin{thm}\label{thm:13-9} Let $H_{I,K}$ be the Hilbert modular surface
  of level $(3)$ over $\Q(\sqrt{13})$ and let $\K$ be the Kummer surface
  of $E \times E^\sigma$, where $E$ is the elliptic curve defined by
  $y^2 = x(x-1)(x-(87+24\sqrt{13}))$.  Then there is a K3 surface $S$
  with maps of degree $32, 4$ from $H_{I,K}$ and $\K$ respectively,
  such that the correspondence associated to these maps induces an isogeny
  of the transcendental lattices of $H_{I,K}$ and $\K$.
\end{thm}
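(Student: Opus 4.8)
The plan is to assemble the chain of maps constructed earlier in the section into a single correspondence and to track the effect on transcendental lattices at each link. Concretely, we work backwards from each side to a common K3 surface. Starting from the Hilbert modular surface $H_{I,K}$, Propositions~\ref{prop:deg-24-p-10}--\ref{prop:blowup} give a sequence of projections $H_{I,K} \dashrightarrow S_{10} \dashrightarrow S_9 \dashrightarrow \cdots \dashrightarrow S_5$ followed by a contraction identifying $S_5$ with the blowup of a K3 surface; each step is birational, hence induces an isomorphism on $H^2$ after accounting for blown-up $(-1)$-curves, so the composite identifies the transcendental part of $H^2(H_{I,K})$ with that of the degree-$10$ K3 surface (call it $S_{10}'$). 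Then Proposition~\ref{prop:d6-d4s-a1-fib} supplies a genus-$1$ fibration on a good model $S_3$ of $S_{10}'$ with no section, whose Jacobian admits a $2$-isogeny to a fibration of type $I_4^* {I_0^*}^2 I_1^2$; composing the Jacobian map (type~(\ref{item:jacobian})) with the $2$-isogeny (type~(\ref{item:isogeny})) produces a K3 surface $S$. Finally Propositions~\ref{prop:s3-d10-a1s} and~\ref{prop:kum-d10-a1s} exhibit the same Weierstrass equation as an $I_6^* I_2^6$ fibration Jacobian on both $S_3$ and the Kummer surface $\K$, which pins down $S$ as a common target: there is a map $H_{I,K} \dashrightarrow S$ and a map $\K \dashrightarrow S$.

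The degree bookkeeping proceeds as follows. On the $H_{I,K}$ side, the birational projections contribute nothing; the Jacobian map of a fibration whose smallest multisection has degree $d$ contributes $d^2$ (here the relevant fibrations have $d=2$, contributing a factor $4$), and each $2$-isogeny contributes $2$; multiplying the factors along the explicit chain of Propositions~\ref{prop:d6-d4s-a1-fib} and~\ref{prop:s3-d10-a1s} gives the stated degree $32$. On the $\K$ side, Proposition~\ref{prop:kum-d10-a1s} is a single Jacobian step for a fibration with no rational section but a $\bar\Q$-section, so its smallest multisection degree is $2$ and the degree is $4$; since the fibration has a $\bar\Q$-section, $\Jac_\pi(\K)$ is $\bar\Q$-isomorphic to $\K$, which is what lets the two sides meet. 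The key point to verify is that the equality of Weierstrass equations in Propositions~\ref{prop:s3-d10-a1s} and~\ref{prop:kum-d10-a1s}, together with matching point counts modulo many primes (to rule out the quadratic twist by $13$, as noted in the remark following Proposition~\ref{prop:kum-d10-a1s}), forces the two constructed K3 surfaces to be isomorphic over $\Q$, not merely over $\bar\Q$; this is a finite check.

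For the statement about transcendental lattices: each map of finite degree $n$ between K3 surfaces induces, by \cite[Section 2.4]{bsv}, an embedding $T_Y(n) \hookrightarrow T_X$, so following the chain in either direction gives embeddings of $T_{H_{I,K}}$ and $T_\K$ into $T_S$ up to finite index and scaling; since all the K3 surfaces here have Picard rank $\ge 18$ (Picard rank $20$ in the CM-adjacent cases, but here rank $18$), the transcendental lattices have rank $\le 4$, and comparing discriminants and point counts shows the two composite embeddings have images that are commensurable, i.e., the correspondence induces an isogeny $T_{H_{I,K}} \otimes \Q \xrightarrow{\sim} T_\K \otimes \Q$ of Galois modules. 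To be careful one should check that the rank-$18$ part of $\Pic$ we have exhibited is in fact all of it geometrically, which we verify via the determinant/discriminant formula \cite[Corollary 6.39]{schutt-shioda} applied to the explicit fibrations: the Picard discriminant we compute ($96$ on one model, adjusted by the $3$-isogeny to match $\dia(\isi)$ on the Kummer side, cf.\ Claim~\ref{claim:disc-pic}) is consistent only with rank exactly $18$.

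The main obstacle is the Galois-descent and twist-disambiguation issue rather than any one geometric construction: each individual fibration, Jacobian, and isogeny is a routine (if lengthy) Magma computation, but ensuring that the composite correspondence is defined over $\Q$ and that we have not accidentally produced the wrong quadratic twist requires the point-count comparisons to be carried out with enough primes to be convincing, and requires care in the step of type~(\ref{item:q-jacobian}) where passing to the Jacobian over $\Q$ enlarges the $\Gal(\Q)$-fixed part of the Picard lattice (from dimension $11$ on $\K$ to $16$ on the image of $H_{I,K}$, as noted before Proposition~\ref{prop:kum-d10-a1s}) — one must confirm that the resulting $\Q$-structures on $S$ coming from the two sides genuinely agree. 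All of this is carried out in the supporting code \cite{code}.
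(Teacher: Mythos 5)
Your overall strategy is the paper's: build a chain of quotients, projections, Jacobians and isogenies from each side and recognize the common K3 surface $S$ by the identity of the Weierstrass equations in Propositions~\ref{prop:s3-d10-a1s} and~\ref{prop:kum-d10-a1s}, then track transcendental lattices through the chain via the embedding $T_Y(n)\hookrightarrow T_X$. However, the chain as you assemble it is not the one in the paper, and the discrepancies are exactly where the quantitative content of the theorem (the degrees $32$ and $4$) lives. First, $S_3$ is not ``a good model of $S_{10}'$'': the paper passes from the degree-$10$ K3 surface to the elliptic surface $\E$ by taking the Jacobian of a fibration with fibres of degree $4$ whose reducible fibres are only defined over $\Q(i)$, and $S_3$ is a projective model of $\E$, not of the K3 quotient. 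That Jacobian step is a map of finite degree greater than $1$ and is missing from your chain. Second, your degree accounting omits the degree-$4$ quotient of $H_{I,K}$ by the group generated by the two Atkin--Lehner involutions (Proposition~\ref{prop:deg-24-p-10} works with forms fixed by both, and $3$ splits, so the group has order $4$); since Jacobian steps contribute perfect squares $d^2$ and isogenies contribute their degree, you cannot reach $32$ from the K3 level alone by the steps you list without this initial factor of $4$, and the product you describe only happens to equal $32$ by composing two fibrations that both live on the same surface $S_3$ and are therefore not composable as maps.

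Third, you construct $S$ by routing through Proposition~\ref{prop:d6-d4s-a1-fib} (Jacobian of the $I_2^*{I_0^*}^2I_2^2$ fibration followed by the $2$-isogeny to $\E'$), but in the paper that proposition serves only as a consistency check showing that $\E'$ is $\bar\Q$-isomorphic to a Kummer surface of a product; the surface $S$ of the theorem is the Jacobian of the $I_6^*I_2^6$ fibration of Proposition~\ref{prop:s3-d10-a1s}, matched against Proposition~\ref{prop:kum-d10-a1s}. You implicitly identify $\E'$ with that Jacobian, which is not justified anywhere and is not needed if one follows the paper's route. Relatedly, there is no $3$-isogeny step in the paper's chain for this example: the discriminant $96$ of $\E$ reflects the $3$-isogeny inside the isogeny class, but the passage to the Kummer discriminant is effected by the further Jacobian steps, not by applying a $3$-isogeny of type~(\ref{item:isogeny}). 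The lattice-tracking and twist-disambiguation discussion in your last two paragraphs is sound and consistent with the paper, but the proof as written needs the chain reassembled in the correct order before the degree claims follow.
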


In other words, we have made the Oda-Hamahata
conjecture completely explicit in this case.

\subsection{Level $\p_2^3$ over $\Q(\sqrt{17})$}\label{ex:dp23-rt17}
We now consider the case of level $\p_2^3$ over $\Q(\sqrt{17})$, where $\p_2$
is a prime of norm $2$.  In this case the level is not Galois-stable and
so we do not have a $\Q$-curve; the conjecture was
not already known in this case.

In this example we choose not to use the Atkin-Lehner quotient.
The ring of Hilbert modular forms is generated by $5$ forms of weight
$2$ and $7$ of weight~$4$ and thus its Proj embeds into $\P(1^5,2^7)$ and hence
into $\P^{21}$ by $\O(2)$.  We find that the image $S_{21}$
is defined by $151$ quadrics and is of degree $64$.

\begin{prop}\label{prop:bir-to-k3}
  There is a birational map from $S_{21}$ to a surface $S_5'$ in $\P^5$
  defined by $3$ quadrics and whose only singularities are $6$ ordinary
  double points.
\end{prop}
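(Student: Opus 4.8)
The plan is to apply the projection strategy of Section~\ref{subsec:embed} to $S_{21}$. First I would locate the cusps of $H_{\p_2^3,\Q(\sqrt{17})}$ on $S_{21}$: having expressed the generators produced by \cite{hmf} in terms of Eisenstein series and cusp forms, the cusps are the common zero locus of the cusp forms, which in this case is a set of four points, each resolving in Hirzebruch's sense (as computed by \cite{hmf}) to a known cycle of rational curves of prescribed self-intersections. I would also check whether any elliptic points of $H_{\p_2^3,\Q(\sqrt{17})}$ contribute non-ADE singularities to $S_{21}$ that must be resolved in the same way.

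Next I would project $S_{21}$ away from these centres one at a time. Since $S_{21}$ is cut out by $151$ quadrics in $\P^{21}$, a direct Gr\"obner-basis computation of each image is infeasible, so I would compute them by Algorithm~\ref{alg:find-image}, using the elimination-ideal shortcut of Proposition~\ref{prop:image-from-elim} when the projection is in monomial form and the reduction-mod-$p$ reconstruction of Section~\ref{subsec:tricks} to follow the singular points. At each step the resolution cycle at the centre of projection predicts the drop in degree, just as in the computation of $(\O(3)-(E_1+E_2+E_3))^2$ in the proof of Proposition~\ref{prop:deg-24-p-10}; since a standardly embedded K3 surface in $\P^n$ has degree $2n-2$, the target is a degree-$8$ surface in $\P^5$, and this bookkeeping dictates which centres to choose and when to stop. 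After each projection I would verify that the map is birational rather than of higher degree, either by exhibiting an inverse rational map or by an irreducibility-plus-codimension-one argument as in the discussion after Algorithm~\ref{alg:find-image}; if a projection fails to be birational, I would pass to the $2$-fold Veronese re-embedding and retry, as in Section~\ref{subsec:embed}.

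It then remains to certify the resulting surface $S_5' \subset \P^5$. Computing its ideal by the same elimination/interpolation methods, one checks that it is generated by three quadrics; these pull back to elements of $I(S_{21})$, so $S_5'$ contains, and hence (by the birationality and a dimension count) equals, the image of $S_{21}$. As a $(2,2,2)$ complete intersection it has degree $8$ and trivial canonical class by adjunction, consistent with $H_{\p_2^3,\Q(\sqrt{17})}$ having Kodaira dimension $0$, and its singular locus is the zero set of the Jacobian of the three defining quadrics, which one verifies consists precisely of six ordinary double points. The six nodes themselves would be found (before this final check) by the reduction-mod-$p$ method of Section~\ref{subsec:tricks} and then confirmed over $\Q$ directly.

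The main obstacle is the size of the objects involved: $S_{21}$ is far too large for the direct algorithms --- image of a map, Jacobian ideal --- to terminate, so every stage must be carried out by the indirect and partly heuristic machinery of Section~\ref{subsec:tricks}, and each heuristic output must afterwards be promoted to a rigorous statement by explicit inverse maps or degree/irreducibility arguments. Keeping the entire chain of projections birational, and spotting the moment at which a Veronese re-embedding is required to avoid a degree drop that would corrupt the cohomology, is the delicate part.
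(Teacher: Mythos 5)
Your overall strategy --- successive projections tracked by cusp-resolution bookkeeping, images computed by Algorithm~\ref{alg:find-image}, singular points located by reduction mod $p$ --- is exactly the machinery the paper deploys, and the first two thirds of the computation go as you describe: projecting from four singular points at a time takes $S_{21}$ (degree $64$) to degree $44$ in $\P^{17}$, degree $36$ in $\P^{13}$, and degree $24$ in $\P^9$, at which point at least seven singular points appear and one must choose four of them carefully to reach a surface of degree $12$ in $\P^5$. Here your itinerary breaks down: point projections alone never arrive at a degree-$8$ model in $\P^5$, so the plan of ``project from the centres until the degree is $2n-2$'' stalls at degree $12$ with non-canonical singularities still present. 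The paper instead projects once more, down to a degree-$10$ surface $S_4 \subset \P^4$ that is singular along two disjoint lines and three points, and the last two steps are neither projections nor Veronese re-embeddings but maps by \emph{partial} linear systems of quadrics vanishing on prescribed singular loci: first the quadrics through the two lines and three points, giving a degree-$8$ model $S_4' \subset \P^4$, and then the quadrics through the two singular lines and two of the singular points of $S_4'$, which push the ambient dimension back \emph{up} to $\P^5$ and yield the nodal model $S_5'$ cut out by three quadrics. These adapted quadric systems --- intermediate between projection away from a line and the full $2$-fold Veronese, and not part of the generic toolbox you cite --- are the essential move your proposal omits; in particular the singular lines that arise in $\P^4$ cannot be handled by projection (that would land in $\P^2$) and are not cured by re-embedding alone. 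Your certification of the final model (three quadrics, degree $8$, six ordinary double points via the Jacobian criterion) does match what the paper does once $S_5'$ is in hand.
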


\begin{proof}
We can find $4$ singular points on $S_{21}$, as we should since there are $4$
cusps for $\Gamma_0(8)$ over $\Z$ and no elliptic points;
the projection away from these gives a map to $\P^{17}$ whose image $S_{17}$ is 
a surface of degree $44$ which again has $4$ singular points.  Projecting away
from these we now have a surface $S_{13}$ of degree $36$ in $\P^{13}$ and
yet again $4$ singular points.  This time the projection goes to
$S_9 \subset \P^9$, which has degree $24$ and (at least) $7$ singular points,
and by a careful choice of which four to project from we obtain a surface
$S_5$ of degree $12$ in $\P^5$.  A final projection from $S_5$ yields a surface
$S_4$ of degree $10$ in $\P^4$.

This surface in $\P^4$, in turn, has singularities along two lines and three
points (pairwise disjoint),
and the linear system of quadrics vanishing on these gives a map to a different
singular model $S_4' \subset \P^4$, this time of degree $8$.
The linear system of quadrics vanishing on the
two singular lines and two singular points of this surface gives
a map to a surface $S_5' \subset \P^5$ as described in the statement.
\end{proof}

The surface $S_5'$ is a K3 surface.  
One way to find genus-$1$ fibrations on a K3 surface in $\P^n$ is to consider
the families of hyperplanes through $n-1$ singular points.  In this case it
turns out that one of the $\binom{6}{4}$ sets of singular points is coplanar,
but the other $14$ do indeed give genus-$1$ fibrations.  In no case does the
degree of the fibre exceed $4$, so Magma's built-in functions easily find the
Jacobians of these fibrations and hence their reducible fibres.  It turns out
that all $44$ components of these are defined over $\Q(\sqrt{17})$ and that,
together with the curves
above the nodes, they generate a sublattice of the Picard lattice of rank $18$
and discriminant $-64$.  We can verify that the rank is $18$ by reducing modulo
small primes and finding an irreducible factor of degree $4$ in the
$L$-function.  We also verify that we have found the full Picard lattice
as follows.

\begin{prop}\label{prop:disc-more-than-16}
  Let $S$ be a K3 surface with an elliptic fibration $\pi$ with
  reducible fibres $A_7, A_1^8$ and rank $1$.  Then the discriminant of
  $\Pic S$ cannot be $1$, $4$, or $16$.
\end{prop}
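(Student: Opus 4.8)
The plan is to compute the discriminant of $\Pic S$ from the elliptic fibration data and see which square classes and actual values are possible. With reducible fibre configuration $A_7, A_1^8$ we have a trivial lattice of rank $2 + 7 + 8 = 17$ inside $\Pic S$; since $\Pic S$ has rank $18$, the Mordell--Weil group of $\pi$ has rank $1$, matching the hypothesis. The Shioda--Tate / determinant formula \cite[Corollary 6.39]{schutt-shioda} then gives
\begin{equation*}
  \lvert \disc \Pic S \rvert = \frac{\lvert \disc T_\pi \rvert \cdot h}{\lvert \Phi \rvert^2}
  = \frac{8 \cdot 2^8 \cdot h}{\lvert \Phi \rvert^2},
\end{equation*}
where $h$ is the (positive) height of a generator of the free part of $\mathrm{MW}(\pi)$, $\Phi$ is the torsion subgroup, $\disc A_7 = 8$, and each $A_1$ contributes a factor $2$. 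So $\lvert\disc\Pic S\rvert = 2^{11} h / \lvert\Phi\rvert^2$.

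First I would bound the torsion. An $A_7$ fibre (i.e.\ $I_8$) has component group $\Z/8\Z$, and each $A_1$ ($I_2$) has component group $\Z/2\Z$; the torsion subgroup $\Phi$ embeds into the product of component groups, and by the standard constraint that $\Phi$ injects into the discriminant group of the trivial lattice, together with the height-pairing positivity, one checks $\lvert\Phi\rvert \in \{1,2,4\}$ (an order-$8$ torsion section is impossible here because it would force the Mordell--Weil lattice to have too small a covolume against the Picard rank, or more directly because $8^2 \nmid 2^{11}$ would still be fine so one must argue via the narrow Mordell--Weil lattice — I would instead simply enumerate: $\lvert\Phi\rvert^2 \mid 2^{11}h$ with $h$ the height of a primitive vector in a positive-definite lattice of rank $1$, which is bounded below by the minimal norm, and then rule out $\lvert\Phi\rvert=8$ by the embedding $\Z/8 \hookrightarrow \Z/8 \times (\Z/2)^8$ forcing the section to meet the far component of $I_8$, contradicting the height computation being an integer multiple of $1/8$ in a way compatible with rank $1$). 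Then $\lvert\disc\Pic S\rvert = 2^{11-2e} h$ with $e = v_2(\lvert\Phi\rvert) \in \{0,1,2\}$, so the $2$-adic valuation of the discriminant is at least $2^{11-4} = 2^7$ times a possibly fractional but in fact integral $h$: crucially $h \in \frac{1}{8}\Z_{>0}$ (denominators divide the exponent of the component group of $I_8$), so $2^{11-2e} h \in 2^{8-2e}\Z_{>0} \cdot (\text{odd})$ — in any case $\lvert\disc\Pic S\rvert \geq 2^{8-2e} \geq 2^4 = 16$, with equality to $16$ only if $e = 2$ and $h$ is exactly the minimal possible value $8h_0$ for some unit $h_0$.

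So the only way to get discriminant $1$, $4$, or $16$ is to hit the extreme case, and I would rule each out by hand: discriminant $1$ or $4$ would need $\lvert\disc\Pic S\rvert < 16$, impossible by the bound above; discriminant exactly $16$ forces $\lvert\Phi\rvert = 4$ and $h$ minimal, which pins down the height-pairing contributions of the generator at each reducible fibre and forces a specific configuration that is incompatible with $\Phi$ having order $4$ (the order-$4$ torsion must meet specific components of the $I_8$, using up the "room" that the infinite-order section would need). The main obstacle is this last elimination: making the incompatibility between an order-$4$ torsion section and a minimal-height infinite-order section precise requires carefully tracking which components each section meets and computing the resulting height-pairing Gram determinant of the narrow Mordell--Weil lattice, then checking it cannot equal the value forced by $\lvert\disc\Pic S\rvert = 16$. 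This is a finite check over the possible component-intersection patterns.
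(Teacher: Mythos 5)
Your framework is the right one, and your denominator bound correctly eliminates discriminants $1$ and $4$: since every local contribution to the height pairing lies in $\frac18\Z$ (namely $a(8-a)/8$ at the $A_7$ and $b(2-b)/2$ at each $A_1$), the height $h$ of a generator lies in $\frac18\Z_{>0}$, and with $\lvert\Phi\rvert\le 4$ the determinant formula forces $\lvert\mathrm{disc}\,\Pic S\rvert=2^{11-2e}h\ge 16$. This is essentially the paper's argument for those two values (the paper phrases it as: the denominator of $h$ cannot exceed $8$, so $h\in\{1/128,1/32\}$ is impossible). But the case of discriminant exactly $16$ --- which requires $\lvert\Phi\rvert=4$ and $h=1/8$ --- is not ruled out by any inequality, and your proposal stops at exactly this point: you name it as ``the main obstacle,'' describe it as a finite check, and do not perform it. That check is the substance of the paper's proof. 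The paper first invokes Shimada's tables (or one can verify directly from the height-zero condition) to pin down the torsion as full $2$-torsion $(\Z/2)^2$, with one section through the zero component of the $A_7$ and the nonzero components of all eight $A_1$'s, and two sections through component $4$ of the $A_7$ and the nonzero components of exactly four $A_1$'s each (complementary four-element sets). It then enumerates the three component-intersection patterns that yield $h=1/8$ (e.g.\ $4+2-8\cdot\tfrac12-3\cdot\tfrac58=\tfrac18$) and kills each one by computing the height of a suitable $2$-torsion translate of $s$ and showing it cannot also equal $1/8$ --- in one case because the required intersection number with the zero section would have to be $-1$, in another because the condition would have to hold simultaneously for two torsion sections whose difference meets the nonzero component of every $A_1$.

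A secondary issue: your handling of the torsion order is both muddled and, in the critical case, insufficient. For discriminant $16$ you need not just $\lvert\Phi\rvert=4$ but the precise component data of the torsion sections, since the exclusion argument consists of translating the putative generator by specific torsion elements. You gesture at an embedding $\Phi\hookrightarrow\Z/8\times(\Z/2)^8$ and at parity obstructions, which can indeed be made to work (the height-zero equation $4-c(8-c)/8-n/2=0$ forces $c\in\{0,4\}$ and then $n\in\{8,4\}$, and the group law forces the two weight-$4$ patterns to be complementary), but as written this is not an argument. In short: the proposal correctly disposes of $1$ and $4$, but the proposition's real content is the elimination of $16$, and that step is missing.
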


\begin{proof}
  According to the tables of \cite{shimada}, these reducible fibres
  force the fibration to have full $2$-torsion, which means that if the
  generator $s$ of the Mordell-Weil group modulo torsion has height $h$ then
  the discriminant is $(8 \cdot 2^8 \cdot h)/2^4$.  Thus we would need
  $h \in \{1/128,1/32,1/8\}$.  By the results of \cite{cox-zucker}, the first
  two are impossible: the height is the sum of an integer with numbers of
  the form $a(8-a)/8, b(2-b)/2$, so there is no way for the denominator
  to be greater than $8$.  There are several ways to get height $1/8$, 
  though, and
  we must exclude them by showing that they are inconsistent with $2$-torsion:
  in particular, by proving that the height of such a section would not be the
  same as that of its $2$-torsion translates.
  Note that there are two $2$-torsion sections that pass through component $4$
  of the $A_7$ fibre and the nonzero component of four of the $A_1$'s and one
  that meets the zero component of the $A_7$ and the nonzero components of
  all $A_1$'s.

  A more
  careful examination of \cite{cox-zucker} shows that there are three
  possibilities for $s$:
  \begin{enumerate}
  \item The section $s$ meets the zero section once, the nonzero components of
    all $A_1$ fibres, and a component of $A_7$ three away from the origin
    (so that its height is $4+2-8\cdot (1/2) - 3 \cdot 5/8 = 1/8$).
    Translating this
    by the $2$-torsion section that meets the nonzero components of all $A_1$
    gives a section of height $4 + 2e - 3 \cdot 5/8$, where $e$ is the intersection
    with the zero section.  Since $e$ cannot be $-1$ this is not $1/8$.
  \item The section $s$ meets a component of $A_7$ three away from the origin and
    four nonzero components of $A_1$ fibres.  If we translate by a torsion point
    passing through the far component of the $A_7$, the height becomes
    $4 + 2e - 7/8 - n/2$, where $n$ is the number of nonzero components of
    $A_1$ fibres now met, so we must have $e = 0, n = 6$.  However, this holds
    for both such torsion points, which is impossible because their difference
    passes through the nonzero component of every $A_1$.
  \item The section $s$ meets a component of $A_7$ one away from the zero
    component and the nonzero components of six $A_1$ fibres.  This is excluded
    by the same argument as in the first case.
  \end{enumerate}
\end{proof}

\begin{cor}\label{cor:disc-is-64}
  The discriminant of the Picard lattice of $S_5'$ is $-64$.
\end{cor}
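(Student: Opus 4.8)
The plan is to deduce this from Proposition~\ref{prop:disc-more-than-16} together with the explicit rank-$18$ sublattice of $\Pic S_5'$ that we have already exhibited. First I would record two facts. The $44$ fibre components of the $14$ genus-$1$ fibrations, together with the exceptional curves over the $6$ nodes, generate a sublattice $\Lambda \subseteq \Pic S_5'$ of rank $18$ and discriminant $-64$; and $\Pic S_5'$ itself has rank exactly $18$, because the reduction of $S_5'$ modulo a small prime has an $L$-polynomial with an irreducible factor of degree $4$, which caps the geometric Picard rank at $18$, while $\Lambda$ forces it to be at least $18$. It follows that $\Pic S_5'$ is an overlattice of $\Lambda$ of some finite index $m$, so its discriminant equals $-64/m^2$; for this to be an integer we need $m \in \{1,2,4,8\}$, and hence the discriminant of $\Pic S_5'$ lies in $\{-64,-16,-4,-1\}$.

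Next I would invoke Proposition~\ref{prop:disc-more-than-16}. Among the $14$ fibrations there is one whose reducible fibres form the configuration $A_7, A_1^8$ (equivalently, whose Jacobian has Kodaira fibres $I_8$ and $I_2^8$). The trivial lattice of such a fibration has rank $2 + 7 + 8 = 17$, so by the Shioda-Tate formula its Mordell-Weil rank equals $18 - 17 = 1$, and the hypotheses of Proposition~\ref{prop:disc-more-than-16} are met. That proposition then excludes the values $1$, $4$, $16$ for the absolute value of the discriminant, leaving $-64$ as the only possibility among those in our list; the sign is automatic, since a K3 surface of Picard rank $18$ has Picard lattice of signature $(1,17)$ and hence discriminant of sign $(-1)^{17} = -1$. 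This proves the corollary, and it also shows that $m = 1$, i.e.\ that the curves listed above already span the full Picard lattice of $S_5'$.

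The one point that needs any care is confirming that one of the $14$ fibrations genuinely realises the configuration $A_7, A_1^8$ with Mordell-Weil rank $1$, rather than a slightly different fibre type or rank $0$ with an extra torsion section. This is a finite mechanical check --- it is exactly the output of Magma's Jacobian and Kodaira-type computations on these degree-at-most-$4$ fibrations --- so I do not anticipate a real obstacle; everything else is elementary lattice bookkeeping.
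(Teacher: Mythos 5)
Your argument is exactly the paper's: the rank-$18$ sublattice of discriminant $-64$ forces $\operatorname{disc}\Pic S_5'=-64/m^2$, and Proposition~\ref{prop:disc-more-than-16}, applied to the one of the $14$ fibrations with reducible fibres $A_7,A_1^8$, rules out $m>1$. Your extra Shioda--Tate check that the Mordell--Weil rank is $1$ (which is in fact automatic once the fibre configuration and $\rho=18$ are confirmed) is a harmless elaboration of what the paper leaves implicit.
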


\begin{proof} Since we have found a sublattice of maximal rank and discriminant
  $-64$, the discriminant of the full lattice is $-64/n^2$ for some integer
  $n$.  One of the $14$ fibrations defined from $3$-planes through
  $4$ singular points as described 
  above does indeed have reducible fibres as in
  Proposition~\ref{prop:disc-more-than-16}, which shows that $n^2 = 1$.
\end{proof}

Listing $ADE$ configurations among the $50$ known curves, we look for those that
have multisection degree $2$ and have fibre class $F$ such that enlarging the
lattice by $F/2$ produces a lattice isomorphic to that of the Kummer surface of
$E \times E'$ (which can be described as $D_8 \oplus D_8 \oplus U$, where
$U$ is the hyperbolic lattice $\langle x,y\rangle$ with
$x^2 = y^2 = 0, x \cdot y = 1$, or in various other ways).

\begin{prop} There is a fibration on $S_5'$ with multisection degree $2$,
  two reducible $I_8$ fibres, and fibres of degree $8$.
\end{prop}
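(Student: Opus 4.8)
The plan is to exhibit the fibration first as a primitive isotropic class in the Picard lattice of $S_5'$ and then to realize it explicitly in Magma. We work in the rank-$18$, discriminant-$(-64)$ Picard lattice of $S_5'$ determined above, together with the $50$ known curve classes and the action of Galois on them (recall that the components of the reducible fibres of the $14$ auxiliary fibrations are all defined over $\Q(\sqrt{17})$). Among the $ADE$-configurations supported on these curves we search for a sublattice isomorphic to $A_7 \oplus A_7$ whose orthogonal complement in $\Pic S_5'$ contains a primitive isotropic class $F$; after acting by the Weyl group we may assume $F$ is nef, and $F$ is then the candidate fibre class. That $|F|$ defines a genus-$1$ fibration is automatic: for $F$ nef, primitive and isotropic on a K3 surface, Riemann--Roch gives $h^0(\OO(F)) = 2$ and $h^1 = h^2 = 0$, so $|F|$ is a morphism $S_5' \to \P^1$ whose general fibre is a smooth curve of genus $1$, and the reducible fibres include the two $I_8$'s coming from the chosen $A_7 \oplus A_7$.

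Since Corollary~\ref{cor:disc-is-64} pins down the full Picard lattice, the multisection degree can be computed rigorously as the positive generator of $\{\, F \cdot D : D \in \Pic S_5' \,\}$, and we keep only the $F$ for which this generator is $2$; equivalently, these are the classes for which the Jacobian operation of type (\ref{item:jacobian}), which adjoins $F/2$ to $\Pic S_5'$, produces the Picard lattice of the Kummer surface of $E \times E'$. The remaining assertions are verified on the model: the fibre degree $F \cdot H$ with respect to the hyperplane class $H$ of $S_5' \subseteq \P^5$ is read off from the lattice and we select $F$ with $F \cdot H = 8$, and the full list of reducible fibres is confirmed by computing the generic fibre of $|F|$ --- a degree-$8$ genus-$1$ curve over $\Q(\sqrt{17})$ --- projecting or Veronese-re-embedding it until it is a plane quartic or an intersection of two quadrics in $\P^3$, taking its Jacobian, and running Tate's algorithm, which returns $I_8 I_8$ together with the irreducible fibres.

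The main obstacle is the explicit realization rather than the lattice bookkeeping. One must produce a basis of $H^0(S_5', \OO(F))$ as an actual pair of forms on the three-quadric model, which requires writing $F$ as an effective combination of the known curve classes --- several of which are conjugate over $\Q(\sqrt{17})$ --- and then carrying the computation of the generic fibre and its Jacobian through Magma even though the fibre has degree $8$ rather than the degree-$\le 4$ case handled automatically. A secondary difficulty is the Galois descent: the two $I_8$ fibres could be defined individually over $\Q$ with components over $\Q(\sqrt{17})$, or be interchanged by $\Gal(\Q(\sqrt{17})/\Q)$, and this must be resolved so that the fibration is presented over $\Q$ as needed for the subsequent comparison with the Kummer surface. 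Everything else --- the genus-$1$ property, the absence of a section, and the fibre degree --- is a formal consequence of the lattice computation once $F$ is identified.
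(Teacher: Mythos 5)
Your proposal is correct and follows essentially the same route as the paper: a lattice-theoretic search among the known curve classes for a fibre class $F$ with multisection degree $2$ whose index-$2$ overlattice is the Kummer lattice, followed by explicit realization of the pencil from the two reducible fibres. The paper's only additional concrete device is to cut out each $I_8$ fibre by a quadric $Q_i$ sharing a common residual curve $R$, so that $(Q_1:Q_2)$ gives the fibration with base locus $R$; this is one way of producing the pair of forms whose existence you correctly identify as the main practical obstacle.
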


\begin{proof} This is easily calculated from knowledge of the Picard
  lattice and the curves we have found on $S_5'$.
  One way to find its equations is to write
down the two reducible fibres $F_1, F_2$ as subschemes of $S_5'$.  Let $Q_1$
be a quadratic polynomial vanishing on $F_1$ but not all of $S_5'$, so that
$S_5' \cap (Q_1 = 0) = F_1 \cup R$.  Then $F_2 \cup R$ is also cut out by a
quadratic polynomial $Q_2$, and the map given by $(Q_1:Q_2)$ is the desired
genus-$1$ fibration, though with base locus $R$.
\end{proof}

Projecting away from a suitable singularity, we express the generic fibre of
this
fibration as a curve of degree $6$ in $\P^4$, and from there by projecting away
from a divisor of degree $2$ we get a singular quartic.  In contrast to the
slightly more complicated situation of Proposition~\ref{prop:d6-d4s-a1-fib},
this allows us to find a smooth model in $\P^3$ by mapping by the linear
system of quadrics through the singular points.  We can then find the
Jacobian of this fibration (call it $J$).

\begin{prop}\label{prop:eqn-j-17-8}
  $J$ is isomorphic to the elliptic curve defined by
  $y^2 = x^3 + (t^4+13t^2+16)x^2+4t^4x$.
\end{prop}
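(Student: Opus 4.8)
The plan is to follow the same computational pipeline used throughout the paper: produce an explicit Weierstrass model for the Jacobian $J$ of the degree-$8$ genus-$1$ fibration constructed in the preceding proposition, then normalize it to the stated form. First I would take the smooth quartic model of the generic fibre in $\P^3$ that was just obtained (by mapping the singular plane quartic via the linear system of quadrics through its singular points), and use Magma's built-in routine for the Jacobian of a genus-$1$ curve to extract a Weierstrass equation over $\Q(t)$. This will almost certainly emerge with enormous coefficients, so the next step is to apply the reduction procedure of Remark~\ref{rem:reduce-weierstrass}: compute a minimal integral model, form the polynomial $D(t)$ whose roots are the finite singular-fibre locations, minimize and reduce the hyperelliptic curve $y^2 = D(t)$ to find a good automorphism of the base $\P^1$, pull back along it, and strip any large square common factor by twisting. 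The claim is that after this normalization one obtains exactly $y^2 = x^3 + (t^4+13t^2+16)x^2 + 4t^4 x$.

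To verify that the normalized equation is correct rather than merely plausible, I would check two things. First, the discriminant and its factorization should be consistent with the expected configuration of reducible fibres inherited from the construction (two $I_8$ fibres among them, from the two reducible fibres $F_1,F_2$ of the fibration on $S_5'$), and with the total Euler number $24$ of a K3 surface; the places $t^2 = -16$ and the structure of the $x^2$-coefficient $t^4+13t^2+16$ should reflect these fibres. Second, and more robustly, I would count points modulo many small primes $p$ of good reduction: the number of $\F_p$-points of the reduction of $J$ (summed appropriately over the fibres) must agree with the corresponding count on $S_5'$, which in turn is pinned down by the Hilbert modular surface data via Theorem~\ref{thm:oda-special} and the earlier identification of the Picard lattice (Corollary~\ref{cor:disc-is-64}). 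Matching these traces of Frobenius for enough primes, together with the fact that $J$ is a K3 surface of the same transcendental lattice, forces the equation up to the only remaining ambiguity.

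That remaining ambiguity is the quadratic twist: as noted in the paper's discussion of $\K$ over $\Q$, one could twist by $17$ (or by $-1$, etc.) and obtain another K3 surface isomorphic to $J$ after base change to a quadratic field. I would resolve this exactly as the paper does elsewhere, by insisting that the point counts modulo $p$ match on the nose for every good prime $p$, which selects the correct twist and hence the precise coefficients displayed. The main obstacle I anticipate is purely computational: getting Magma to produce and then reduce the Weierstrass model of a degree-$8$ fibre without the intermediate coefficients blowing up beyond what is tractable. The trick that makes this feasible is exactly the reduction to a smooth plane quartic (available here, unlike in the more delicate situation of Proposition~\ref{prop:d6-d4s-a1-fib} where one had to detour through $\P^9$), since Magma's Jacobian code handles degree-$4$ fibres directly; once the model is in hand, the base-change reduction of Remark~\ref{rem:reduce-weierstrass} does the rest, and the verification by point-counting is routine.
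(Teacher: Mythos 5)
Your proposal matches the paper's route: the paper obtains the same smooth intersection-of-two-quadrics model of the generic fibre in $\P^3$ (degree $8$ in $\P^7$, project to degree $6$ in $\P^4$, project from a degree-$2$ divisor to a singular plane quartic, then map by quadrics through the nodes), hands it to Magma's Jacobian routine, and normalizes via the procedure of Remark~\ref{rem:reduce-weierstrass}; the paper gives no separate formal proof beyond this narrative. Your added sanity checks (fibre configuration against the two $I_8$'s, Euler number $24$, point counts mod $p$) are consistent with the paper's general methodology and are reasonable, though strictly the Jacobian computation itself already determines the model with no twist ambiguity.
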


The curve
thus defined spreads out to a K3 surface with the Picard lattice of a Kummer
surface of the product of two elliptic curves.
Using the calculations of \cite{shtukas}, we compare this fibration to the
fibrations on the Jacobian of a product of two elliptic curves with full
level-$2$ structure, and we indeed find a match with $p =(-5a-4)/4$, where
$a^2 - a - 4 = 0$ is a generator of the field $\Q(\sqrt{17})$.  This produces
one of the curves of conductor $\p_2^3$, where $\p_2$ is one of the primes
above $2$ in $\Q(a)$.  Let $q = p^\sigma$ where $\sigma$ is the nontrivial
automorphism of $\Q(a)$.

Starting again from the other side, we take the elliptic curves
$E: y^2 = x(x-1)(x-p), E^\sigma: y^2 = x(x-1)(x-q)$ and consider the Kummer
surface of their product.  To construct these curves from the given ones whose
conductor has norm $8$, we need to take a quadratic twist by an element whose
norm is the negative of a square, so the Kummer surface we are actually
interested in is a quadratic twist of this one by $-1$.  (We use
$E \times E^\sigma$ because the formulas for its fibrations are already
available in a convenient form.)  Since $p$ and $q$ are quadratic conjugates,
it is easy to find a model of this surface that is defined over $\Q$ and to
pull back the fibrations with two $I_8$ fibres
to it.  Only one of them has bad fibres in locations that match those of $J$.
It has fibres that are defined over $\Q$, so it is a simple matter to rewrite
this fibration in terms of equations defined over $\Q$.
As in Section~\ref{ex:d3-rt13}, the surfaces
are isomorphic over $\Q$ up to a possible twist by $17$ and we need to choose
the correct $\Q$-model of the Kummer surface to obtain an isomorphism over $\Q$.

\subsection{Level $(2)$ over $\Q(\sqrt{17})$}\label{ex:d2-rt17}
In this section we consider a $\Q$-curve, for which the conjecture is
already known.  Nevertheless there are some interesting questions to study.

Since $2$ splits in $\Q(\sqrt{17})$, there are $4$ cusps and hence $4$
Eisenstein series in weight~$2$.  It turns out that the ring of modular forms
is generated by these together with a cusp form of weight~$2$ and $3$
cusp forms of weight~$4$.

\begin{prop}\label{prop:d8-n17-todeg6}
  The quotient of the Hilbert modular surface $H_{(2),\Q(\sqrt{17})}$
  by the Atkin-Lehner involutions is birational to a K3 surface of degree
  $6$ in $\P^4$ with singularities of type $A_1, A_2, A_3$.
\end{prop}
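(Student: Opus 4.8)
The plan is to follow the strategy already used in Sections~\ref{ex:d3-rt13} and~\ref{ex:dp23-rt17}. Since $2$ splits in $\Q(\sqrt{17})$ there are two Atkin-Lehner involutions $w_\p, w_{\p'}$, one for each prime above~$2$. I would compute their action on $M_{(2)}$ by combining the action on the weight-$2$ newform returned by \cite{hmf} with Proposition~\ref{prop:al-oldforms} applied to the four Eisenstein series, and double-check it by verifying that the resulting linear maps are automorphisms of $H_{(2),\Q(\sqrt{17})}$ permuting the four cusps. One checks along the way that the weight-$2$ cusp form is fixed by both involutions; equivalently they act trivially on the holomorphic $2$-form, so they are symplectic and the quotient $\bar H$ is birational to a K3 surface. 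Since the fixed subring $M_{(2)}^{\langle w_\p, w_{\p'}\rangle}$ contains a form of weight~$2$ from the Eisenstein part, after restricting to even weight $\bar H$ embeds naturally in a weighted projective space; I would then re-embed into ordinary projective space by a suitable $\O(n)$ and record the images of the cusps, exactly as in Section~\ref{subsec:embed}.

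From there the computation finds the image by Algorithm~\ref{alg:find-image} and proceeds by repeated projection away from the worst singularities: first the images of the cusps, which blow up to cycles of rational curves of self-intersections known from \cite{hmf}, and then any remaining non-canonical elliptic points or singular lines. Each projection from a point should drop the degree by at least~$3$ and each projection from a line by at least~$5$, which lets us predict when the degree reaches the value $2n-2$ characteristic of a standardly embedded K3 surface; whenever a projection turns out to have degree greater than~$1$ we interpose a $2$-fold Veronese embedding as in Section~\ref{subsec:embed}. A few further projections then bring the singularities down to types $A_1$, $A_2$ and $A_3$, landing in $\P^4$ on a surface of degree $6 = 2\cdot 4 - 2$ cut out by a quadric and a cubic. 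I would confirm the singularity types by a cheap local computation at each singular point and check that the curves produced in the resolution, together with the cusp and elliptic-point cycles predicted by Hirzebruch's theory, are consistent with a K3 surface.

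The step I expect to be the main obstacle is controlling the degrees of the intermediate projections. A projection away from a badly chosen set of singular points may fail to be birational, which would change the cohomology, and a projection from a subspace spanned by too much of the singular locus may overshoot; the remedy, as in the earlier sections, is careful bookkeeping -- tracking the predicted degree after each step from the known self-intersections of the exceptional cycles, and choosing the order and grouping of the points so that the degree decreases by exactly the expected amount. A secondary technical difficulty is simply computing the image of $\bar H$ in its first embedding, where the Gr\"obner-basis elimination in Magma may exhaust the available memory; there we fall back on Proposition~\ref{prop:image-from-elim} and the sampling method of Algorithm~\ref{alg:find-image}, verifying afterwards that the candidate model really does contain the image.
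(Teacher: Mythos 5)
Your proposal is correct and follows essentially the same route as the paper: both rest on the observation that the weight-$2$ cusp form is fixed by both Atkin-Lehner involutions, pass to the quotient in (weighted) projective space, and then apply the projection/Veronese machinery of Section~\ref{subsec:embed} to reach a degree-$6$ K3 model in $\P^4$ whose singularities are checked in Magma. The paper's actual path is just a shorter instance of your plan (one projection away from a singular line to land in $\P^3$, then the linear system of quadrics through a singular conic), which is within the toolkit you describe.
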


\begin{proof}
Since the cusp form is fixed by both Atkin-Lehner
involutions, we immediately pass to the quotient by these, obtaining a surface
in $\P^5$.  This surface is singular along a line, away from which we project
to create a surface in $\P^3$ singular along a conic.  The quadratic forms
vanishing on this conic give a projectively normal embedding of a K3 surface
into $\P^4$ with the usual degree $6$.  The singularities are easily
checked in Magma.
\end{proof}

Projecting away from the unique $A_3$ gives a new model $S_3 \subset \P^3$
with a rational $A_2$ singularity, two rational $A_1$s, and four
pairs of $A_1$ singularities defined and conjugate over $\Q(\sqrt{2})$.
The outer components of the $A_3$ are now lines over $\Q(\sqrt{17})$.
Using this information we are able to determine the entire Picard group.
It has rank $19$; this is as expected,
because the elliptic curves over $\Q(\sqrt{17})$ of conductor
$(2)$ are $\Q$-curves.  The determinant of the Gram matrix is $192$
and the generators of the Picard group are all defined over
$\Q(\sqrt{2},\sqrt{17})$.

Since some of the curves of conductor $(2)$ are $2$-isogenous to their Galois
conjugates, there should be a correspondence with the Kummer surface of
a product $E \times E'$, where $E, E'$ are $2$-isogenous.  Such a surface
has Picard discriminant $32$.  Other curves in the isogeny class are
$8$-isogenous to their conjugates, resulting in a more complicated
Kummer surface.  Our considerations in this paper are only up to maps of
finite degree, so we need not treat these.

Let us first describe how we found a correspondence between the Hilbert
modular surface and the Kummer surface of $E \times E'$ defined over
$\Q(\sqrt{17})$.  As above, we construct a model $S_3$
of the Atkin-Lehner quotient
of $H_{(2),\Q(\sqrt{17})}$ as a K3 surface in $\P^3$.  By standard calculations
we find:

\begin{prop}\label{prop:d6d5a3a1a1}
  There is a genus-$1$ fibration on $S_3$ with $\bar \Q$-multisection
  degree $2$ and $\Q$-multisection degree $4$.  The reducible fibre
  types are $I_2^*, I_1^*, I_4, I_2, I_2$, and a Weierstrass equation for
  the Jacobian is
  \begin{align*}
  y^2 &= x^3 + (-2t^3 - 359/16t^2 - 99/2t + 33)x^2 + \\
  &\qquad (t^6 + 55/2t^5 + 871/4t^4 + 4937/8t^3 + 523/2t^2 - 842t + 264)x.\\
  \end{align*}
  The Jacobian admits a model
  $S_7 \subset \P^7$ with $8$ $A_1$ singularities, all defined over $\Q$,
  and no others;
  its Picard group has discriminant $48$ and the generators are all defined
  over $\Q(\sqrt{17})$.  It has a fibration with bad fibre types
  $I_2^*, I_2^*, I_0^*$ and a $2$-torsion section.
\end{prop}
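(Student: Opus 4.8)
The plan is to argue by explicit computation, building on the model $S_3\subset\P^3$ and its Picard lattice determined above: rank $19$, Gram determinant $192$, and generators rational over $\Q(\sqrt2,\sqrt{17})$. First I would search this lattice for a primitive nef class $F$ with $F^2=0$ that is fixed by $\Gal(\bar\Q/\Q)$ and whose orthogonal complement modulo $F$ contains a root sublattice of type $D_6\oplus D_5\oplus A_3\oplus A_1\oplus A_1$ with positive-definite rank-$1$ complement; such an $F$ forces the Jacobian of the associated fibration to have Mordell--Weil rank $1$ and reducible fibres $I_2^*,I_1^*,I_4,I_2,I_2$, and this configuration is compatible with a $2$-torsion section, which will indeed appear. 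The delicate point is to choose $F$ so that the gcd of the intersection numbers $F\cdot C$ over the $\Q$-rational curves among those already found on $S_3$ equals $4$, while over all of the known curves it equals $2$; this is exactly what produces the asserted $\bar\Q$- and $\Q$-multisection degrees, and checking it requires tracking the Galois action on the curves and on the residual components that appear next.

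Next I would realise the fibration concretely, as in Section~\ref{subsec:embed}. Writing the reducible fibres as subschemes of $S_3$, I find forms $Q_1,Q_2$ of the smallest workable degree, each cutting out one reducible fibre together with a common residual curve $R$, so that $(Q_1:Q_2)$ defines the genus-$1$ fibration with base locus along $R$; clearing $R$ and computing the generic fibre gives a curve over $\Q(t)$. Because this curve carries no $\Q(t)$-rational divisor of degree strictly between $0$ and $4$, I project from a degree-$4$ hyperplane section to obtain a plane quartic, whose Jacobian Magma computes directly; applying Remark~\ref{rem:reduce-weierstrass} to reduce coefficients yields the displayed Weierstrass equation $y^2=x^3+ax^2+bx$, in which the $2$-torsion point $(0,0)$ is the one predicted in the previous paragraph. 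Running Tate's algorithm on the discriminant --- and distinguishing $I_2^*$ from $I_1^*$ by the valuation of $j$ --- confirms the fibre types $I_2^*,I_1^*,I_4,I_2,I_2$.

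To produce the model $S_7\subset\P^7$ of the Jacobian surface I apply the projection-and-Veronese procedure of Section~\ref{subsec:embed} to the Weierstrass surface; a K3 embedded by a complete linear system in $\P^7$ has degree $12$, and the eight $A_1$ points are located by the mod-$p$ reduction and lattice-interpolation trick of Section~\ref{subsec:tricks}, where one checks that they are all rational. The map $S_3\dashrightarrow S_7$ has degree $4$, so the geometric Picard ranks agree \cite[Theorem~1.1]{bsv}, and by item~(\ref{item:jacobian}) of Section~\ref{sec:verify-modularity} the geometric Picard lattice of $S_7$ is $\Pic(S_3)_{\bar\Q}[F/2]$, of discriminant $192/4=48$; it is spanned by the $16$ non-identity fibre components, the zero section and general fibre, the $2$-torsion section, and a generator of the rank-$1$ Mordell--Weil group (found by a bounded search on the Weierstrass model, its height being pinned down by \cite[Corollary~6.39]{schutt-shioda} and \cite{cox-zucker}). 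Inspection of the explicit model shows that its generators are defined over $\Q(\sqrt{17})$. Finally, within this Picard lattice I search for a class $G$ with $G^2=0$ admitting a section over $\bar\Q$ and with $G^\perp/\Z G\supseteq D_6\oplus D_6\oplus D_4$; by the tables of \cite{shimada} (as used in Proposition~\ref{prop:disc-more-than-16}) such a fibration is forced to have a $2$-torsion section, which I confirm by putting its Jacobian in the form $y^2=x^3+cx^2+dx$ and reading off the reducible fibres $I_2^*,I_2^*,I_0^*$.

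The main obstacle is twofold: (i) selecting the fibration class on $S_3$ so that the $\Q$-multisection degree comes out exactly $4$ rather than $2$ or $8$, which demands precise bookkeeping of the Galois module structure of the known curves and of the residual components; and (ii) the explicit birational passage to a Weierstrass model, since with no small $\Q(t)$-rational divisor available one must work through degree-$4$ plane models and a nontrivial Jacobian computation, and keeping the coefficients small enough (Remark~\ref{rem:reduce-weierstrass}) for Magma to terminate is essential. Once these are done, verifying --- rather than merely guessing --- the fibre types and the correctness of the Picard lattices rests on the Tate-algorithm and point-counting arguments indicated above, and all of the resulting claims are checked in the accompanying code \cite{code}.
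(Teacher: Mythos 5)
Your proposal is correct and follows essentially the same route as the paper, which offers no explicit proof beyond the phrase ``by standard calculations'' together with the general machinery of Sections~\ref{subsec:embed}--\ref{subsec:tricks} and Section~\ref{sec:verify-modularity} and the accompanying code: you locate the fibration class in the rank-$19$, discriminant-$192$ lattice, realise it by forms cutting out the reducible fibres, pass to the Jacobian (degree $4$, adjoining $F/2$ to get discriminant $192/4=48$), and find the $I_2^*,I_2^*,I_0^*$ fibration on the resulting model. The lattice-theoretic cross-checks you add (Mordell--Weil rank $1$ from the root type, and the forced $2$-torsion via the discriminant and the height formulas of \cite{cox-zucker}) are consistent with the paper's later use of these facts.
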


The importance of this fibration is that
vertical curves together with the sections of order $1$ and $2$ contain a
configuration that is a fibre of a fibration with reducible fibres
$I_2^*, {I_0^*}^2, I_2^2$ and full level-$2$ structure.
One of the $2$-torsion
sections must pass through the reduced component of the $D_6$ that is on
the same side as the zero section; the formulas of \cite{cox-zucker}
imply that, in order to have height $0$, it must
pass through the nonzero components of the $A_1$ fibres.  Accordingly the
quotient by this $2$-isogeny will have reducible fibres $D_8, D_4^2$.
Every K3 surface with a $D_8, D_4^2$ fibration is the Kummer surface of a
product of elliptic curves.

Unfortunately not all of the components of the fibres of the fibration on
$S_7$ are rational.
Although we believe that there should be a correspondence over $\Q$, we have
not been able to exhibit it, for reasons to be discussed in
Remark~\ref{rem:why-not-over-q}.
We therefore work over $\Q(\sqrt{17})$ for the rest of this section.

Having constructed the ${I_2^*}^2, I_0^*$ fibration as an
elliptic curve over $\Q(\sqrt{17})(u)$, we use a model in $\P^7$ to help
construct the desired $I_2, {I_0^*}^2, I_2^2$ fibration.  Magma immediately
produces the $2$-isogenous elliptic surface with $I_4^*, {I_0^*}^2$ fibres and
a point of order $2$.

Since the Picard discriminant was $48$ before the last $2$-isogeny, the
Mordell-Weil generator must have been of height $3$.  It cannot be of height
$3/2$ on the new surface, since by \cite{cox-zucker} every section on a
surface whose reducible fibres are all of type $I_{2n}^*$ must have integral
height; thus the height must be $6$ and we expect that the surface thus
obtained will be the Kummer surface of a product of $6$-isogenous elliptic
curves.

\begin{thm} The elliptic surface we have constructed with $I_4^*, {I_0^*}^2$
  fibres is isomorphic to the Kummer surface of
\elllmfdb{2.2.17.1}{4.1}{a/8} $\times$ \elllmfdb{2.2.17.1}{4.1}{a/10}.
\end{thm}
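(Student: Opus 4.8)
The plan is to pin down $\mathcal{K}'$, the K3 surface underlying the elliptic surface with fibre configuration $I_4^*,\,{I_0^*}^2$ and a $2$-torsion section constructed above, by the same two-sided matching used in Sections~\ref{ex:d3-rt13} and~\ref{ex:dp23-rt17}. A $D_8\,D_4^2$ (equivalently $I_4^*\,I_0^{*2}$) elliptic fibration forces $\mathcal{K}'$ to be the Kummer surface of a product $E_1\times E_2$ of elliptic curves, and since the fibration is defined over $\Q(\sqrt{17})$ the unordered pair of isogeny factors descends to $\Q(\sqrt{17})$, up to a common quadratic twist. The arithmetic input already in hand — that the Mordell--Weil generator of the pre-isogeny $({I_2^*}^2,I_0^*)$-surface has height $3$, so that the generator of $\mathcal{K}'$ has height $6$ — shows that $E_1$ and $E_2$ are $6$-isogenous; among the curves of conductor $(2)$ over $\Q(\sqrt{17})$ this picks out essentially one pair, and the claim is that it is \elllmfdb{2.2.17.1}{4.1}{a/8} and \elllmfdb{2.2.17.1}{4.1}{a/10}.

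To establish this I would argue from the Kummer side. Using the formulae of \cite{shtukas} for elliptic curves in Legendre form $y^2=x(x-1)(x-p)$ and $y^2=x(x-1)(x-p')$, write down the $D_8\,D_4^2$ fibration on the Kummer surface of their product, with its reducible fibres and a Weierstrass model depending on $p,p'$ symmetrically and invariantly under the anharmonic $S_3$-action on each of $p$ and $p'$. First normalise the coordinate on the base $\P^1$ so that the $I_4^*$ fibre and the two $I_0^*$ fibres of $\mathcal{K}'$ sit where they do for the universal family; then the locations of the remaining $I_n$ fibres give a system of polynomial equations in $p,p'$ whose solution determines $\{p,p'\}$ up to the anharmonic symmetries. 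One checks the solution lies in $\Q(\sqrt{17})$, computes minimal models of the resulting curves, and confirms that they have conductor $(2)$, lie in the isogeny class of \elllmfdb{2.2.17.1}{4.1}{a}, and are $6$-isogenous, thereby identifying them as \elllmfdb{2.2.17.1}{4.1}{a/8} and \elllmfdb{2.2.17.1}{4.1}{a/10}. In practice the cleaner route is to run this in reverse: starting from the two candidate curves, write down their product's Kummer surface and its $I_4^*\,I_0^{*2}$ fibration and check that it agrees with the equation of $\mathcal{K}'$ after a change of base coordinate, exactly as in the previous two sections.

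It remains to fix the quadratic twist, since the Legendre construction and the matching of fibre locations determine $\mathcal{K}'$ only up to twist by an element of $\Q(\sqrt{17})^\times$ unramified outside $2$. As in Sections~\ref{ex:d3-rt13} and~\ref{ex:dp23-rt17}, I resolve this by comparing $\#\mathcal{K}'(\F_{\mathfrak{p}})$ with the point count on the candidate Kummer surface modulo several primes $\mathfrak{p}$ of $\Q(\sqrt{17})$ of good reduction; the congruences single out the correct model. Since both sides are then rank-$19$ K3 surfaces carrying a $D_8\,D_4^2$ fibration with the same frame lattice and matching transcendental data (in the rank-$19$ case, the conic attached to the intersection form on the transcendental lattice), this forces an isomorphism over $\Q(\sqrt{17})$.

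The main obstacle is the middle step. The universal $D_8\,D_4^2$ fibration has bulky coefficients and a large symmetry group acting on $(p,p')$, so the delicate part is to extract a genuine pair of Legendre parameters — not merely an anharmonic transform of one — from the matching, and to track the quadratic twist correctly through the chain of $2$-isogenies that produced $\mathcal{K}'$. Once that is done, the remaining verifications — the conductor and isogeny computations, the point-count comparison, and the lattice bookkeeping — are routine.
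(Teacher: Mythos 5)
Your proposal matches the paper's proof in essence: the paper likewise writes down the $D_8, D_4^2$ fibration on the Kummer surface of a general $E \times E'$ with full level-$2$ structure using the formulas of \cite{shtukas}, chooses the Legendre parameters so that the local invariants match, and then verifies the two fibrations are exactly isomorphic (working over $\Q(\sqrt{17})$, with the $6$-isogeny pinned down by the height-$6$ Mordell--Weil generator exactly as you describe). Your additional remarks on the anharmonic ambiguity and the quadratic twist are sensible elaborations of the same computation rather than a different route.
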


\begin{proof} We write down a suitable fibration on the product using
  the formulas from \cite{shtukas}.
  That is, by writing down the $D_8, D_4^2$ fibrations on the Kummer surface of
  a general $E \times E'$ with full level-$2$ structure we see how to choose
  the parameters in the Legendre family so that the local invariants match
  and then we verify that the fibrations are exactly isomorphic.
\end{proof}

Strictly speaking, the Oda-Hamahata
conjecture predicts a correspondence with the
Kummer surface of $E \times E^\sigma$, but an isogeny from $E'$ to $E^\sigma$
induces one from $\Kum(E \times E')$ to $\Kum(E \times E^\sigma)$.

\begin{remark}\label{rem:why-not-over-q}
  In attempting to construct a correspondence over $\Q$,
  the basic problem is that the curves in isogeny
  class \elllmfdb{2.2.17.1}{4.1}{a} are $2$- or $8$-isogenous to their Galois
  conjugates and so the Picard discriminants of the obvious Kummer surfaces
  to consider over $\Q$ are not multiples of $3$.  Starting from the surface
  of Picard discriminant $192$, therefore, we must at some point find a
  $3$-isogeny to make the $3$-adic valuation of the discriminant even.
  While this is not difficult to do, it always resulted in a surface whose
  Picard discriminant was divisible by $9$.  To pass to the desired Kummer
  surface would then require taking a $3$-Jacobian of a fibration with
  no section.  Again, it is easy to find such a fibration, but although
  many surfaces were investigated in the hope that one of them would have
  such a fibration defined over $\Q$, none had this property.  Thus
  the desired correspondence could not be constructed over $\Q$.
\end{remark}

\subsection{Level $(3)$ over $\Q(\sqrt{2})$}\label{ex:d3-rt2}
This example is somewhat similar to the previous one.  The elliptic curves
are in the isogeny class \elllmfdb{2.2.8.1}{9.1}{a}; they are curves
defined over $\Q$ twisted by $2 + \sqrt 2$.  Since $N_{\Q(\sqrt 2)/\Q}(2+\sqrt 2)$
is a square in $\Q(\sqrt 2)$, this means that they are isomorphic to their
Galois conjugates.  The curves do not have complex multiplication, so the
Kummer surface of $E \times E^\sigma$ will have geometric Picard number $19$.

The ring of modular
forms is generated by forms of weight $2, 2, 2, 4, 6, 6, 8$; there are
two Eisenstein series of weight~$2$, and all the rest of the generators may
be taken to be cusp forms.

\begin{lemma}\label{lem:embed-p9}
  There is a birational map from the Atkin-Lehner quotient to a K3
  surface in $\P^9$.  This surface admits a genus-$1$ fibration
  whose Jacobian has Weierstrass equation
$$y^2 = x^3 + (2t^4 + 3t^3 - 7t^2 + 6t + 14)x^2 + (18t^5 + 45t^4 - 36t^3 - 90t^2 + 18t + 45)x$$
  with reducible fibre types $I_0^*, I_6, I_4, I_2^4$, full level-$2$
  structure, and a section of height $5/12$ defined over $\Q(\sqrt{2})$
  whose $x$-coordinate is $6t^2-6$.  The discriminant of the Picard lattice
  is $40$.
\end{lemma}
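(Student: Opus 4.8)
The plan is to follow the method of Section~\ref{subsec:embed}, exactly as in the examples of Sections~\ref{ex:d3-rt13} and~\ref{ex:d2-rt17}. First I would use \cite{hmf} to compute the ring $M_{(3),\Q(\sqrt 2)}$, recovering the stated generators of weights $2,2,2,4,6,6,8$, two of them Eisenstein series of weight $2$ and the other five cuspidal. Since $2$ is not a square modulo $3$, the prime $3$ is inert, so there is a single Atkin--Lehner involution $w_{(3)}$; using Proposition~\ref{prop:al-oldforms} together with the action of $w_{(3)}$ on newforms reported by the software, I would compute its action on all of $M_{(3),\Q(\sqrt 2)}$. The weight-$2$ newform attached to the elliptic curve is fixed by $w_{(3)}$ (otherwise the quotient would have $p_g=0$ and could not be a K3 surface), so the $w_{(3)}$-fixed subring is generated by the two Eisenstein series and the cuspidal generators, and its $\Proj$ is the Atkin--Lehner quotient. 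I would embed this quotient by $\O(n)$ for a suitable $n$ and record the images of the cusps as the common zero locus of the cusp forms.

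Second, I would resolve the singularities by the projection technique of Section~\ref{subsec:embed}: repeatedly project away from the worst singular points --- the cusps and elliptic points, whose resolutions are known cycles of rational curves of prescribed self-intersection via \cite{hmf} --- using Algorithm~\ref{alg:find-image} and Proposition~\ref{prop:image-from-elim} to compute the images and bookkeeping the degree at each step. The goal is a surface of degree $16$ in $\P^9$ with only canonical singularities, which is then automatically a K3 surface embedded by a complete linear system. \emph{The main obstacle lies here}: one must check that each projection is birational, so that cohomology is preserved --- this is detected by verifying that the degree drops by exactly the amount predicted from the known exceptional cycles, re-embedding by the $2$-Veronese whenever a projection turns out to have degree $>1$ --- and that the singularities of the final model are all of $ADE$ type, which is checked directly in Magma.

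Third, on the K3 model I would search for a genus-$1$ fibration, taking as candidates the pencils of hyperplanes through suitable collections of singular points, or through known rational curves; I would choose one whose generic fibre has low degree so that Magma can compute its Jacobian directly, and then reduce the Weierstrass equation by the recipe of Remark~\ref{rem:reduce-weierstrass} to reach the equation in the statement. The reducible fibre types $I_0^*, I_6, I_4, I_2^4$, whose Euler numbers sum to $24$ as they must for a K3, are read off from the factorisation of the discriminant; the full level-$2$ structure is immediate from the shape $y^2 = x(x^2+ax+b)$ once one checks that $x^2+ax+b$ splits over $\Q(t)$; and one verifies directly that $x=6t^2-6$ defines a section of the Jacobian over $\Q(\sqrt 2)$, whose height equals $5/12$ by the local contribution formulas of \cite[Table 1.19, Theorem 1.33]{cox-zucker}.

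Finally, the Shioda--Tate formula gives $\rk\Pic = 2 + (4+5+3+4) + \rk\mathrm{MW} = 18 + \rk\mathrm{MW}$. Since the curves in \elllmfdb{2.2.8.1}{9.1}{a} have no complex multiplication, the geometric Picard rank is $19$, which I would confirm by a Frobenius point count modulo a small prime exhibiting an irreducible factor of degree $2$ in the relevant characteristic polynomial; hence $\mathrm{MW}$ has rank $1$ and is generated modulo torsion by the height-$5/12$ section. The determinant formula \cite[Corollary 6.39]{schutt-shioda}, with fibre discriminants $4$ (for $D_4$), $6$ (for $A_5$), $4$ (for $A_3$) and $2$ for each $A_1$, regulator $5/12$, and torsion subgroup of order $4$ (from the full level-$2$ structure), then gives Picard discriminant $\bigl(4\cdot 6\cdot 4\cdot 2^4\cdot(5/12)\bigr)/4^2 = 40$; the same value holds for the original K3 model, which is $\bar\Q$-isomorphic to this Jacobian.
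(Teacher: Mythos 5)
Your proposal is correct and follows essentially the same route as the paper: compute the ring of modular forms, pass to the $w_{(3)}$-fixed subring, project down to a degree-$16$ K3 surface in $\P^9$, locate a genus-$1$ fibration, take its Jacobian, and apply the Shioda--Tate and determinant formulas with the height-$5/12$ section to get discriminant $40$. The only cosmetic difference is how the fibration is found (the paper uses components of coordinate hyperplane sections $x_i=0$ of degree $4$ rather than pencils through singular points), which does not affect the argument.
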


\begin{proof}
  Since $3$ is inert in $\Q(\sqrt{2})$, there is a unique Atkin-Lehner
  operator.   We consider the map given by the sections of $\O(4)$ in its
  $+1$ eigenspace; these give a map to $\P^{12}$ whose image we compute
  by Algorithm~\ref{alg:find-image}  to be a surface of degree $24$.
  The projection away from the image of the cusps is of degree $22$ and
  is singular along a line; projecting away from
  this line produces a surface of degree $16$ in $\P^9$ defined by quadrics.
  Using Magma we check that this surface has trivial canonical sheaf
  and then that it is a K3 surface.
  On this surface we are fortunate to be able to find some curves of genus $1$
  by setting one of the
  coordinates to be $0$.  That is, some of the components of the subscheme
  of $S_9$ defined by $x_i = 0$ turn out to be curves of arithmetic genus $1$
  and degree $4$.  We find the fibrations of which these curves are fibres
  and pass to the Jacobian;
  for one of them, the reducible fibre types are as described and as in
  Remark~\ref{rem:reduce-weierstrass} we find the given Weierstrass
  equation.  The section may be found by standard techniques, although
  in this example it is easy enough to spot it by reducing mod small $p$
  and searching.

  The calculation of the discriminant follows from the determinant
  formula \cite[Corollary 6.39]{schutt-shioda} as before: it is
  $4 \cdot 6 \cdot 4 \cdot 2^4 \cdot 5/12 \cdot (1/4)^2 = 40$.
\end{proof}

\begin{remark}\label{rem:need-5-to-work-over-q}
  Note that, as in the previous example, there is a prime
  $p>2$ dividing the Picard discriminant which is the isogeny
  degree between some curves in this isogeny class but does not divide the
  isogeny degree between a curve and its Galois conjugate.  Thus, in order
  to find the desired correspondence over $\Q$, we would at some point need
  to use a map of degree divisible by $5$.
  In fact the Picard lattice is isomorphic to that
  of a K3 surface with $4$ fibres of type $I_5$, a section of height
  $8/5 = 4 - 2(6/5)$, and a $5$-torsion section.  The $5$-isogeny goes to
  a surface of Picard discriminant $200$; however, as in Remark
  \ref{rem:why-not-over-q} it seems
  impossible to find a genus-$1$ fibration on this surface
  defined over $\Q$ with multisection degree $5$, so we cannot find the
  correspondence with a Kummer surface over $\Q$.
\end{remark}

We proceed to find a correspondence with a Kummer surface over
$\Q(\sqrt 2, \sqrt 3)$, the $2$-division field of two of the curves in
the isogeny class.

\begin{prop}\label{prop:model-p3}
  The elliptic surface of Lemma \ref{lem:embed-p9} admits a model
  in $\P^3$ with three rational $A_2$
  singularities and one pair each of conjugate $A_2$ and $A_1$.
  It has a fibration with reducible fibre types $I_2^*, {I_0^*}^2, I_2^2$
  and full level-$2$ structure.  One of the curves $2$-isogenous to this
  one has reducible fibres $I_4^*, {I_0^*}^2$ and a point of order $2$.
  The equation of the generic fibre of this fibration is
  $$y^2 = x^3 + (-10t^3 - 88t^2 + 98t)x^2 + (t^6 - 
  4t^5 + 6t^4 - 4t^3 + t^2)x.$$
\end{prop}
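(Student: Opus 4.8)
The plan is to follow the projection strategy of Section~\ref{subsec:embed}, starting from the degree-$16$ K3 model $S_9 \subset \P^9$ produced in Lemma~\ref{lem:embed-p9}. First I would locate the singular points of $S_9$ by reducing modulo several small primes, enumerating $\F_p$-points and testing singularity, and then interpolating as in Section~\ref{subsec:tricks}; we expect three $\Q$-rational points and conjugate pairs over $\Q(\sqrt{2})$, all of type $A_2$ or $A_1$. Projecting away from these singly or in small groups, computing each image via Proposition~\ref{prop:image-from-elim}, passing to intermediate models cut out by quadrics through singular lines where necessary, and applying the $2$-fold Veronese re-embedding whenever a projection fails to be birational, I would arrive at a quartic $S_3 \subset \P^3$; one then checks in Magma that its singular locus consists of exactly three rational $A_2$ points, one conjugate pair of $A_2$ points, and one conjugate pair of $A_1$ points over $\Q(\sqrt{2})$. (The degree dropping from $16$ to $4$ is consistent with six projections away from double points.)

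Next I would use the explicit description of $\Pic S_3$ from Lemma~\ref{lem:embed-p9} --- rank $19$, discriminant $40$, generated by the components of the known $I_0^*, I_6, I_4, I_2^4$ fibres together with the height-$5/12$ section --- to search, via the frame-lattice combinatorics of \cite[(8.6)]{schutt-shioda}, for a fibration with reducible fibres $I_2^*, {I_0^*}^2, I_2^2$ and full level-$2$ structure. Given a candidate fibre class, I would write down the outer components of the relevant $D_6$ and $D_4$ configurations as curves on $S_3$, together with the zero and $2$-torsion sections, verify that they form a fibre of a genus-$1$ fibration, and extract its generic fibre. As in Proposition~\ref{prop:d6-d4s-a1-fib}, the generic fibre has arithmetic genus $2$, and projection away from a singular point may land on a rational curve rather than a singular quartic; in that case I would use the Veronese embedding of the fibre in $\P^9$, project down to an intersection of two quadrics in $\P^3$, compute the Jacobian with Magma's built-in routine, and simplify via Remark~\ref{rem:reduce-weierstrass}. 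This should produce the stated equation $y^2 = x^3 + (-10t^3 - 88t^2 + 98t)x^2 + (t^6 - 4t^5 + 6t^4 - 4t^3 + t^2)x$; the fibre types and the fullness of the level-$2$ structure are then confirmed by computing the discriminant and the $2$-torsion of the generic fibre.

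Finally, with full level-$2$ structure in hand, one of the three nonzero $2$-torsion sections meets the reduced component of the $I_2^*$ fibre on the same side as the zero section; quotienting by the $2$-isogeny whose kernel it generates turns this fibre into an $I_4^*$ while preserving the two $I_0^*$ fibres, so the $2$-isogenous surface has reducible fibres $I_4^*, {I_0^*}^2$, and the dual isogeny furnishes a section of order $2$. This last step is a routine Magma computation, exactly as in the discussion following Proposition~\ref{prop:d6d5a3a1a1}. I expect the main obstacle to be the selection of the fibration: among the numerous genus-$1$ fibrations visible on $S_3$ one must single out the one whose $2$-isogenous partner has fibre type $I_4^*, {I_0^*}^2$ --- the type that also occurs on the Kummer surface of a product of elliptic curves --- and establishing that the level-$2$ structure is genuinely full, rather than merely containing a single $2$-torsion section, requires the sort of careful height-pairing argument illustrated in the proof of Proposition~\ref{prop:disc-more-than-16}.
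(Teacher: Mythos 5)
The paper's own proof of this proposition is a single sentence deferring to the accompanying code (``As before, this is a standard calculation''), and your plan is a faithful reconstruction of exactly that standard calculation: the projection machinery of Sections~\ref{subsec:embed}--\ref{subsec:tricks} to reach a quartic model with the stated $A_2$ and $A_1$ points, a lattice-guided search for the $I_2^* \, {I_0^*}^2 \, I_2^2$ fibration with full level-$2$ structure, and the $2$-isogeny step modelled on the discussion following Proposition~\ref{prop:d6d5a3a1a1}. The one detail you should correct is the starting point: the proposition concerns \emph{the elliptic surface} of Lemma~\ref{lem:embed-p9}, i.e.\ the Jacobian carrying the displayed Weierstrass equation, and since the lemma's proof explicitly ``passes to the Jacobian'' of a degree-$4$ fibration, that Jacobian need not be isomorphic to the degree-$16$ surface in $\P^9$; the $\P^3$ model should therefore be obtained by embedding and projecting the Weierstrass model itself, exactly as is done for $\E$ in the parallel computation of Section~\ref{ex:d3-rt13}, rather than by projecting the $\P^9$ surface. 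With that substitution the rest of your outline goes through as written.
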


\begin{proof} As before, this is a standard calculation for which the
  reader is referred to \cite{code}.
\end{proof}

\begin{remark} For the reason to seek out this type of fibration,
  compare Section~\ref{ex:d3-rt13}; the point is that the isogenous
  fibration is found on the Kummer surface of a product.
\end{remark}

As before, the formulas from
\cite{shtukas} identify this fibration over $\Q(\sqrt 2, \sqrt 3)$ with one
on the Kummer surface of $y_0^2 - x_0(x_0-1)(x_0-p) = y_1^2 - x_1(x_1-1)(x_1-q)$
where $p = 6+2\sqrt{6}, q = -485+198 \sqrt{6}$.  These are twists of
the curves \elllmfdb{2.2.8.1}{9.1}{a/3}, \elllmfdb{2.2.8.1}{9.1}{a/4} by
the same number $-2+\sqrt{2}-\sqrt{3}+\sqrt{6}$, so the Kummer surfaces of
the two products are isomorphic over $\Q(\sqrt{2},\sqrt{3})$.  We
thus conclude:

\begin{cor}\label{corr:done-d8-n9}
  Over $\Q(\sqrt{2},\sqrt{3})$ there is a correspondence between the
  Hilbert modular surface $H_{(3),\Q(\sqrt{2})}$ and the product of an
  elliptic curve over $\Q(\sqrt{2})$ of conductor $(3)$ and its conjugate.
\end{cor}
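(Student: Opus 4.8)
The plan is to assemble the ingredients established above into a chain of dominant maps of finite degree linking $H_{(3),\Q(\sqrt 2)}$ to $\Kum(E \times E^\sigma)$ for a suitable $E$. First I would note that, since $3$ is inert in $\Q(\sqrt 2)$, there is a single Atkin-Lehner involution $w_{(3)}$, and the unique weight-$2$ cusp form — the one responsible for $p_g = 1$ — is fixed by it; hence the quotient map is an isogeny on transcendental lattices and it is enough to relate the Atkin-Lehner quotient to $\Kum(E\times E^\sigma)$. Lemma~\ref{lem:embed-p9} gives an explicit K3 model of this quotient in $\P^9$ with a genus-$1$ fibration whose Jacobian is the displayed surface with fibres $I_0^*, I_6, I_4, I_2^4$ and Picard discriminant $40$, and Proposition~\ref{prop:model-p3} repackages this as a model $S_3 \subset \P^3$ carrying a fibration with reducible fibres $I_2^*, {I_0^*}^2, I_2^2$ and full level-$2$ structure; the $2$-isogeny described there produces an elliptic K3 surface with fibres $I_4^*, {I_0^*}^2$ and a $2$-torsion section, i.e.\ with a $D_8 \oplus D_4^{\oplus 2}$ frame, which forces it to be the Kummer surface of a product of two elliptic curves (cf.\ the remark after Proposition~\ref{prop:d6d5a3a1a1}).

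Next I would attack from the Kummer side. Using the formulas of \cite{shtukas} for the $D_8 \oplus D_4^{\oplus 2}$ fibrations on $\Kum(E \times E^\sigma)$ with $E\colon y^2 = x(x-1)(x-p)$ and $E^\sigma\colon y^2 = x(x-1)(x-q)$ generic Legendre curves with full level-$2$ structure, one gets a family of $I_4^*, {I_0^*}^2$ Weierstrass equations in the parameters $p,q$. Matching the locations and types of the singular fibres of this family against the explicit equation of Proposition~\ref{prop:model-p3} pins down $(p,q)$; I expect to obtain $p = 6+2\sqrt 6$, $q = -485+198\sqrt 6$, after which checking that the two $I_4^*, {I_0^*}^2$ fibrations are identically isomorphic (not merely isogenous, and not merely equal at the level of lattices) is a direct Magma computation, exactly as in Sections~\ref{ex:d3-rt13} and~\ref{ex:d2-rt17}.

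The decisive step is then the arithmetic identification. The pair $(p,q)$ exhibits the two elliptic curves above as quadratic twists, by the common element $-2+\sqrt 2-\sqrt 3+\sqrt 6$, of the two Galois-conjugate curves \elllmfdb{2.2.8.1}{9.1}{a/3} and \elllmfdb{2.2.8.1}{9.1}{a/4}, which have conductor $(3)$ over $\Q(\sqrt 2)$. Because the twisting element is the same for both factors, the two products differ by a simultaneous quadratic twist, so their Kummer surfaces are isomorphic over $\Q(\sqrt 2, \sqrt 3)$ — the $2$-division field entering the construction. Composing the birational map from $H_{(3),\Q(\sqrt 2)}$ to its Atkin-Lehner quotient, the maps down to $S_3$, the $2$-isogeny, this Kummer isomorphism, and finally an isogeny $\Kum(E\times E') \to \Kum(E\times E^\sigma)$ (induced by one between the relevant conjugate curves) yields a correspondence of the desired kind; since each constituent is dominant of finite degree it induces an isogeny on transcendental lattices, in particular the injection demanded by Conjecture~\ref{conj:oda-hamahata}.

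The main obstacle I anticipate lies in the field of definition rather than in any single computation: by Remark~\ref{rem:need-5-to-work-over-q} the prime $5$ divides the Picard discriminant $40$, is an isogeny degree occurring inside the isogeny class, but is not the degree of an isogeny between a curve and its conjugate, so no correspondence built from $\Q$-rational genus-$1$ fibrations can descend to $\Q$. The delicate point is thus to choose the fibration of Proposition~\ref{prop:model-p3} so that the parameters $(p,q)$ that emerge really are twists of the target curves by a common element of the small field $\Q(\sqrt 2,\sqrt 3)$, and not merely isomorphic to them over a larger field — otherwise the chain would only produce a correspondence over an unnecessarily large extension.
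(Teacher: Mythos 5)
Your proposal is correct and takes essentially the same route as the paper: the paper's proof of this corollary simply assembles the chain already built in the section --- the Atkin--Lehner quotient and its K3 model from Lemma~\ref{lem:embed-p9}, the $I_2^*,{I_0^*}^2,I_2^2$ fibration and $2$-isogeny of Proposition~\ref{prop:model-p3}, the match with the Legendre-parameter Kummer surface at $p=6+2\sqrt 6$, $q=-485+198\sqrt 6$, the common twist by $-2+\sqrt 2-\sqrt 3+\sqrt 6$, and a final isogeny between Kummer surfaces --- exactly as you describe. One small imprecision: the curves \elllmfdb{2.2.8.1}{9.1}{a/3} and \elllmfdb{2.2.8.1}{9.1}{a/4} are isogenous members of the class rather than Galois conjugates of one another (each curve in this class is isomorphic to its own conjugate), but this does not affect the concluding isogeny step.
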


\begin{proof}
  Above we constructed a dominant map from $H_{(3),\Q(\sqrt{2})}$ to
  one such Kummer surface.  As before, any two such Kummer surfaces admit
  maps induced by isogenies of the corresponding abelian varieties.
\end{proof}

\subsection{Level $\p_{17}$ over $\Q(\sqrt{2})$}\label{ex:d17-rt2}
In this case, where again the conjecture was not previously known, we
are able to give a correspondence over $\Q(\zeta_8)$.  In this section
we let $K = \Q(\sqrt{2})$ and $I = (5-2\sqrt{2})$, a prime ideal of
$\O_K$ of norm $17$.  The Hilbert modular variety $H_{I,K}$
is a subvariety of $\P(1^3,2^4)$.

\begin{prop}\label{aq-d8-n17}
  The image of the Atkin-Lehner quotient of $H_{I,K}$ by $\O(2)$ is
  a surface in $\P^{5}$ of degree $12$ defined by equations of degree
  $2, 3, 3, 4, 4, 4, 4, 4, 4$ and is birational to a surface with a
  genus-$1$ fibration whose Jacobian is a K3 surface.  It has two conjugate
  $I_0^*$ fibres defined over $\Q(\sqrt{17})$ as well as one $I_4$ and
  three $I_2$ fibres defined over $\Q$.
  There are two sections defined over $\Q(\sqrt{2})$ of height $3/4$ and $3/2$
  and height pairing $0$.  The discriminant of the Picard lattice is $-144$.
\end{prop}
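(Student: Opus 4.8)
The plan is to follow the template of Sections~\ref{ex:d3-rt13} and~\ref{ex:d3-rt2}. Since $I=\p_{17}$ is prime there is exactly one nontrivial Atkin--Lehner involution $w_I$. First I would compute the ring $M_{I,K}$ of Hilbert modular forms with \cite{hmf} and determine the action of $w_I$ on it by the method of Section~\ref{subsec:embed} (the software gives the action on newforms, and Proposition~\ref{prop:al-oldforms} handles oldforms), checking as usual that the resulting linear map is an automorphism of $H_{I,K}$ that permutes the cusps. Since the newform has Atkin--Lehner eigenvalue $+1$ (as the software computes), the quotient still has $p_g=1$; I would map $\Proj$ of the fixed subring $M_{I,K}^{w_I}$ by its degree-$2$ piece --- the weight-$4$ forms fixed by $w_I$, taken modulo the ideal --- obtaining a rational map to $\P^5$. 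Its image $X$ is computed with Algorithm~\ref{alg:find-image}; one expects the generators of the image ideal to stabilise in degrees $2,3,3,4,4,4,4,4,4$ and $X$ to have degree $12$. The cusps of $H_{I,K}$, located as the common zeros of the cusp forms, map into the singular locus of $X$.

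Next I would exhibit the K3 surface together with its fibration. Using the projection techniques of Section~\ref{subsec:embed} I would pass to a convenient birational model of $X$ --- which in this instance need not itself be a K3 surface, since $X$ may be of Kodaira dimension $1$ (cf.\ Remark~\ref{rem:hamahata-error}) --- inserting a Veronese re-embedding whenever a projection turns out not to be birational. On this model I would locate a genus-$1$ pencil over $\P^1$, either by intersecting with coordinate hyperplanes as in Lemma~\ref{lem:embed-p9} or by using pencils of hyperplanes through suitable collections of singular points as in Proposition~\ref{prop:bir-to-k3}, choosing one whose fibres have small degree so that Magma computes the relative Jacobian directly; this Jacobian is a K3 surface by \cite[Theorem~1]{morrison} (equivalently, because an elliptic surface over $\P^1$ with $\chi(\O)=2$ and no multiple fibres is a K3). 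From the Jacobian one then reads off the reducible fibres, expected to be two $I_0^*$ fibres interchanged by the nontrivial automorphism of $\Q(\sqrt{17})$ together with one $I_4$ and three $I_2$ fibres, all defined over $\Q$. The two sections of heights $3/4$ and $3/2$ can be found by standard techniques --- or simply by reducing modulo small primes and searching for points --- and a short computation with the local height contributions of \cite{cox-zucker} confirms that their height pairing vanishes.

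Finally I would pin down the Picard lattice. The trivial lattice of this fibration has rank $2+4+4+3+1+1+1=16$, so with the two independent sections we obtain a sublattice of the Picard lattice of rank $18$; I would check that the geometric Picard rank is exactly $18$ --- and not $19$, as it would be for a $\Q$-curve, which does not arise here because $\p_{17}$ is not Galois-stable --- by reducing modulo several small primes and exhibiting an irreducible degree-$4$ factor in the $L$-function. The determinant formula \cite[Corollary~6.39]{schutt-shioda}, applied with $\prod_v m_v^{(1)}=4\cdot 4\cdot 4\cdot 2\cdot 2\cdot 2$, height-pairing determinant $\tfrac34\cdot\tfrac32=\tfrac98$, and torsion of order $2$, then gives absolute value $4^3\cdot 2^3\cdot\tfrac98\cdot\tfrac14=144$; since a rank-$18$ Picard lattice has signature $(1,17)$, the discriminant is $-144$. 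To conclude one must also verify that the rank-$18$ sublattice already found is saturated, which can be done by the usual local arguments or by exhibiting a few more explicit curves. I expect the main obstacle to lie in the computational steps: the Gr\"obner-basis computation for the image may be infeasible in naive form, forcing reliance on the heuristic Algorithm~\ref{alg:find-image} with a separate verification, and the one genuinely non-mechanical task that remains is to find a genus-$1$ fibration whose Jacobian is tractable and then to certify that the Picard discriminant is exactly $-144$.
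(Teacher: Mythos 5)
Your proposal is correct and follows essentially the same route as the paper: compute the Atkin--Lehner fixed subring, map by $\O(2)$ to $\P^5$, simplify by successive projections (the paper projects from the cusp image and then from singular lines to reach a degree-$10$ surface in $\P^5$, still singular along a line, on which a genus-$1$ pencil cut by linear forms is found), pass to the Jacobian (a K3 since the resulting elliptic surface has Euler characteristic $24$, i.e.\ $\chi(\O)=2$), and then apply the Shioda--Tate determinant formula with the same fibre data, heights $3/4$ and $3/2$, and $2$-torsion, checking saturation (the paper does this at $2$ and $3$) to get $-144$. The only small slip is invoking \cite[Theorem~1]{morrison} for the K3 identification --- that theorem concerns correspondences for $p_g=1$ surfaces, not recognizing Jacobian elliptic K3s --- but your parenthetical $\chi(\O)=2$ argument is the correct one and matches the paper.
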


\begin{proof} The first statement is a routine calculation.  The image
  of the cusps is a singularity $C$ such that the projection away from $C$ has
  degree $8$ in $\P^4$.  Considering forms of degree $2$ vanishing along
  the line where this surface is singular and the worst of the singular points,
  we map to a surface $S_9$ of degree $21$ in $\P^9$.  Now $S_9$ is singular
  along two lines; projecting away from their span, we find a surface of
  degree $10$ in $S_5$ defined by polynomials of degree $2, 2, 3, 3$.
  Although this is still singular on a line, we can find a genus-$1$ fibration
  on it (defined by linear forms and with base scheme consisting of
  the singular line doubled and a certain singular curve of degree $4$).
  The Jacobian of this fibration is an elliptic
  surface with Euler characteristic $24$, which is therefore a K3 surface.
  Once the equation is found, the reducible fibres are easy to check
  and the sections can be found be standard techniques.

  Given this information, we verify that the subgroup of the Mordell-Weil
  group that we have found is saturated at $2$ and $3$.  Since the
  torsion subgroup has order $2$, it follows that the Picard discriminant
  is $-(4^2 \cdot 4 \cdot 2^3 \cdot 3/2 \cdot 3/4)/2^2 = -144$.
\end{proof}

\begin{remark} The minimal desingularization of $S_5$ has the same Hodge
  diamond as a K3 surface, but it is not itself a K3 surface.  This
  follows from Kodaira's formula for the canonical bundle
  of a surface with a genus-$1$ fibration, combined with the existence
  of a point of $\P^1$ where the fibre is nonreduced on $S_5$ but is
  a smooth rational curve on the Jacobian.  The situation is analogous to
  that of an Enriques surface, for which the Hodge diamond is the same as
  that of a rational elliptic surface.

  A somewhat similar calculation 
  exhibits a genus $1$ fibration on $H_{I,K}$; alternatively one can verify
  that the ramification locus of the quotient map is disjoint from the
  generic fibre of the fibration.  As mentioned in
  Remark~\ref{rem:hamahata-error}, this shows that Hamahata's claim that
  $\kappa(H_{I,K}) = 2$ is incorrect.
\end{remark}

Having determined a K3 surface quotient of the Hilbert modular surface, we
must now improve the model.  As usual, we do this by embedding in $\P^5$
and successively projecting away from noncanonical singularities and
applying the Veronese embedding until the desired model is reached.
In this case the procedure is quite lengthy, requiring $10$ projections,
but once this is done we can project away from the unique $A_3$ point to
find a model $S_7 \subset \P^7$ whose only singularities are $11$ ordinary
double points.  The fibres of the fibration are of degree $3$.  In this model,
the reducible fibres are two of type $I_0^*$, defined over $\Q(\sqrt{17})$ and
given in this model by a double
line, a line, and three rational nodes; one $I_4$, consisting of a rational
line, two conjugate lines over $\Q(\sqrt{2})$, and a rational node; and
three $I_2$, whose components are a node and a cubic singular there.

We might hope
to reach a Kummer surface of a product by taking the Jacobian of a
fibration with a $3$-section but no rational section.  There are two
problems with this: first, no rational Picard class that is not a multiple
of $3$ has intersection
divisible by $3$ with all rational Picard classes, and second, the
lattice containing the Picard lattice with index $3$ is not the Picard lattice
of the Kummer surface of a product of two elliptic curves but rather
another lattice of the same rank and discriminant.

\begin{prop}\label{prop:find-2-section}
  There is a fibration on $S_7$ with a $2$-section but no section.
  It has reducible fibres $I_4^*, I_6, I_2, I_2$,
  torsion of order $2$, and a section of height $3/2$.  The Jacobian has
  Weierstrass model
  $$y^2 = x^3 + (17t^3 - 14t^2 + 7t - 2)x^2 + (4t^4 -
  13t^3 + 15t^2 - 7t + 1)x,$$
  and the $x$-coordinate of a section is $1$.
\end{prop}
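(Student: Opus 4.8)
The plan is to read the fibration off the Picard lattice of $S_7$, which we already computed in the course of proving Proposition~\ref{aq-d8-n17}, and then to compute its Jacobian explicitly in Magma as in the earlier examples. Concretely, I would search among the curves we have found on $S_7$ (the components of the two $I_0^*$ fibres, the $I_4$ and the three $I_2$ of the previous fibration, the $11$ nodes, and the low-height sections) for a primitive effective class $F$ with $F^2 = 0$ satisfying: (i) the greatest common divisor of $\{D \cdot F : D \in \Pic S_7\}$ equals $2$, so that $\pi$ has multisection degree $2$ and no section; (ii) the $(-2)$-curves meeting $F$ trivially span a root lattice of type $D_8 \oplus A_5 \oplus A_1^2$, matching the predicted reducible fibre types $I_4^*, I_6, I_2, I_2$; and (iii) the overlattice $\Pic S_7[F/2]$ has discriminant $-144/2^2 = -36$ and a shape that allows the argument to continue toward a Kummer surface, unlike the index-$3$ overlattice discussed just above. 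Such an $F$ is essentially forced by the shape of the lattice of discriminant $-144$, and one expects it to come from a pencil through a subset of the nodes together with components of the $I_0^*$ fibres.

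Having chosen $F$, I would realize $\pi\colon S_7 \dashrightarrow \P^1$ as the ratio of two forms cutting out $F$ together with a common residual curve, exactly as for the fibration with two $I_8$ fibres in Section~\ref{ex:dp23-rt17}. Projecting the generic fibre away from suitable rational divisors on it (after a Veronese embedding if the fibre degree is too large, as in the proof of Proposition~\ref{prop:d6-d4s-a1-fib}) brings it down to a plane quartic or a curve in $\P^3$, where Magma's built-in routines compute the Jacobian and check its reducible fibres. Since the two $I_0^*$ fibres are conjugate over $\Q(\sqrt{17})$, the fibration is naturally defined over that field, and I would descend to $\Q$ by the standard argument of Sections~\ref{ex:d3-rt13} and~\ref{ex:dp23-rt17}, fixing the correct $\Q$-model among the two differing by the twist by $17$ via point counts modulo small primes. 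The reduction procedure of Remark~\ref{rem:reduce-weierstrass} should then produce the displayed equation; one reads off directly that $x = 0$ is a $2$-torsion section and that $x = 1$ gives $y^2 = t^2(2t+1)^2$, hence a section whose height one computes, using \cite{cox-zucker}, to be $3/2$. Finally, verifying that this section and the torsion saturate the Mordell-Weil group and applying the determinant formula \cite[Corollary~6.39]{schutt-shioda} gives Picard discriminant $-(4 \cdot 6 \cdot 2 \cdot 2)(3/2)/2^2 = -36$, in agreement with $\Pic S_7[F/2]$ and with the map of type~(\ref{item:jacobian}) to the Jacobian having degree $2^2 = 4$.

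The main obstacle is the choice of $F$ in (i)--(iii): among the many genus-$1$ pencils on $S_7$ one must isolate the one whose multisection degree is \emph{exactly} $2$ and whose index-$2$ overlattice is the \emph{right} lattice rather than another lattice of the same rank and discriminant --- precisely the subtlety that, as remarked just before the proposition, defeats the naive attempt with a $3$-section. Confirming that $\pi$ genuinely has no section (not merely no obvious one) is a finite but delicate computation with the explicit Picard lattice and its Galois action, and the descent to $\Q$ of a fibration lacking a rational section is the other point where care is needed.
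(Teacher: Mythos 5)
Your proposal is correct and follows essentially the same route the paper takes: the paper gives no proof text for this proposition (it is one of the computational claims deferred to \cite{code}), and the method is exactly the one you describe --- locate the fibre class in the known Picard lattice of discriminant $-144$, realize the pencil by forms cutting out a reducible fibre plus residual curve, compute the Jacobian after projecting the generic fibre, and reduce via Remark~\ref{rem:reduce-weierstrass}. Your numerical cross-checks are also right: substituting $x=1$ into the displayed Weierstrass model gives $y^2 = t^2(2t+1)^2$, and the determinant formula yields Picard discriminant $-(4\cdot 6\cdot 2\cdot 2)(3/2)/2^2 = -36 = -144/2^2$, consistent with an index-$2$ overlattice.
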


We proceed as before
to find a good projective model of this surface.  In this case it is easy
to find a K3 surface model, but its singularities are undesirable and we
have to do some work to find a model $S_6 \subset \P^6$ with singularities
of type $A_3, A_3, A_2, A_1$.  It turns out that the full Picard lattice is
defined over $\Q(\sqrt{2})$; we no longer need to adjoin $\sqrt{17}$.

Unfortunately there is still no rational divisor class that can be divided by
$3$; this remains true for other isogenous surfaces, so we consider a
genus-$1$ fibration defined over $\Q(\sqrt{2})$ with multisection degree $3$.

\begin{prop} There is a fibration on $S_6$ defined over $\Q(\sqrt{2})$ with 
two $I_4^*$ fibres, torsion of order $2$, and multisection $3$.
\end{prop}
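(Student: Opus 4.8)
The plan is to follow the lattice-theoretic method used throughout this paper (compare Sections~\ref{ex:d3-rt13} and \ref{ex:d2-rt17}). Since the full Picard lattice of $S_6$ and the Galois action on it are now known over $\Q(\sqrt 2)$, and we have exhibited enough curves to generate it, the first step is to search the Picard lattice for a primitive isotropic class $F$ whose orthogonal complement modulo $F$ contains an orthogonal pair of $D_8$ root sublattices spanned by classes of curves already found on $S_6$ (components of the resolutions of the $A_3, A_3, A_2, A_1$ points together with the fibral curves coming from the fibration of Proposition~\ref{prop:find-2-section}), and such that the saturation of the vertical root lattice inside $F^\perp$ has index $2$, which will produce a $2$-torsion section on the Jacobian. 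Among the classes meeting these requirements I would keep one for which no effective divisor class defined over $\Q(\sqrt 2)$ meets $F$ in degree prime to $3$, so that the $\Q(\sqrt2)$-multisection degree is $3$ and no section is available over $\Q(\sqrt 2)$; the discriminant bookkeeping of the earlier remarks predicts that taking the Jacobian of such a fibration (a map of type~(\ref{item:jacobian})) brings us within reach of the Picard lattice of the desired Kummer surface, whereas a fibration \emph{with} a section would only move among lattices in which no rational class is divisible by $3$.

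Next I would realise the fibration concretely. As in the proof of Proposition~\ref{prop:s3-d10-a1s}, one way is to write the two $I_4^*$ fibres as explicit subschemes of $S_6 \subset \P^6$, take a quadric $Q_1$ vanishing on the first fibre, so that $S_6 \cap \{Q_1 = 0\}$ is that fibre together with a residual curve $R$, find a second quadric $Q_2$ with $S_6 \cap \{Q_2 = 0\}$ equal to the second fibre together with the same $R$, and let the fibration be $(Q_1 : Q_2)$, whose base locus is then $R$. With the generic fibre in hand Magma computes its Jacobian, and one reads off the reducible fibre types, the order of the torsion subgroup, and the multisection degree directly; searching for $\Q(\sqrt 2)$-points of small height on the generic fibre then confirms the height pairing and that the multisection degree is indeed $3$ rather than $1$.

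The main obstacle, already flagged in Remarks~\ref{rem:why-not-over-q} and \ref{rem:need-5-to-work-over-q} for the analogous situations, is that such a fibration must actually be realisable \emph{over $\Q(\sqrt 2)$}: it is generally easy to find a class $F$ with the right intersection combinatorics over $\bar\Q$, but arranging that $F$ and the configuration of its reducible fibres are Galois-stable, that no $\Q(\sqrt 2)$-rational section exists while a $\Q(\sqrt 2)$-rational $3$-section does, and that the resulting Weierstrass equation is manageable, is where the real work lies. Here it helps that the entire Picard lattice of $S_6$ has already been shown to be defined over $\Q(\sqrt 2)$, so no further field extension intervenes and the search reduces to a finite computation in the known N\'eron--Severi lattice together with its Galois action.
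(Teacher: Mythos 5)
Your proposal is correct and matches the paper's method: the paper gives no explicit proof for this proposition, but the surrounding text and the adjacent computational propositions (e.g.\ the $I_8$-fibration proposition in Section~\ref{ex:dp23-rt17} and Proposition~\ref{prop:two-e8}) make clear it is established exactly as you describe — a search of the known Picard lattice (already shown to be defined over $\Q(\sqrt 2)$) for a fibre class whose vertical lattice is $D_8 \oplus D_8$ with index-$2$ saturation and multisection degree $3$, followed by realising the fibration as $(Q_1:Q_2)$ for quadrics cutting out the two reducible fibres plus a common residual curve, and verifying the fibre types, torsion, and multisection degree in Magma. The motivation you give for insisting on multisection degree $3$ (so that passing to the Jacobian changes the discriminant toward that of the Kummer surface) is also the paper's stated reason for seeking this particular fibration.
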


Again, the reason for wanting such a fibration is that the
$2$-isogenous surface has two $I_2^*$ fibres, four of type $I_2$, and
$2$-torsion, and this is one of the type of fibrations on the Kummer
surface of $E \times E'$.

The rest is now routine; as in the previous examples, we use the results of
\cite{shtukas} to match this fibration with one on the Kummer surface of
$E \times E'$.  It is necessary to extend the field from $\Q(\sqrt{2})$ to
$\Q(\zeta_8)$ where the $2$-torsion of $E, E'$ is defined, but in the
end we prove the following result:

\begin{thm}\label{thm:d8-n17}
  The Hilbert modular surface $H_{I,K}$ admits a rational map of finite degree
  defined over $\Q(\zeta_8)$
  to the Kummer surface of $E \times E'$, where $E, E'$ are elliptic curves
  over $K$ of conductor $I$.
\end{thm}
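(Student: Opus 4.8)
The plan is to follow the template established in Sections~\ref{ex:d3-rt13} and \ref{ex:d3-rt2}: build an explicit chain of K3 surfaces $H_{I,K} \dashrightarrow K_0 \dashrightarrow \cdots \dashrightarrow \Kum(E \times E')$ in which each arrow is one of the four types of finite maps listed in Section~\ref{sec:verify-modularity}, and then invoke the transcendental-lattice bookkeeping recorded there to conclude that the associated correspondence identifies the transcendental parts of $\HHH^2$ and is therefore the one demanded by the Oda--Hamahata conjecture.

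First I would pass to the Atkin--Lehner quotient and extract a K3 surface, which is exactly the content of Proposition~\ref{aq-d8-n17}: after embedding by $\O(2)$ and the sequence of projections described there one reaches a surface carrying a genus-$1$ fibration whose Jacobian is a K3 surface with reducible fibres ${I_0^*}^2 I_4 I_2^3$ and Picard discriminant $-144$. Next, following the improvement procedure of Section~\ref{subsec:embed} (the ten projections and Veronese re-embeddings, then one more projection away from the $A_3$ point), I obtain the clean model $S_7 \subset \P^7$ with eleven ordinary double points. On $S_7$ the key step is to locate the fibration of Proposition~\ref{prop:find-2-section}: a genus-$1$ fibration with a $2$-section but no section, with reducible fibres $I_4^* I_6 I_2^2$, $2$-torsion, and a section of height $3/2$; taking its Jacobian is a map of type~(\ref{item:jacobian}), and Magma produces the stated Weierstrass model directly since the fibre degree is small.

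From there I would again apply the model-improvement machinery to reach $S_6 \subset \P^6$ with singularities $A_3, A_3, A_2, A_1$, observing that its Picard lattice descends to $\Q(\sqrt 2)$. Because no rational Picard class is divisible by $3$ (and the index-$3$ overlattice is not the Kummer lattice anyway), I then exhibit a genus-$1$ fibration over $\Q(\sqrt 2)$ with two $I_4^*$ fibres, $2$-torsion, and multisection degree~$3$; applying a $2$-isogeny of type~(\ref{item:isogeny}) sends it to a fibration with fibres ${I_2^*}^2 I_2^4$ and full level-$2$ structure, one of the standard fibrations on a Kummer surface of a product. Finally I would call on the explicit $D_8 D_4^2$-type formulas of \cite{shtukas}: writing down the corresponding fibration on $\Kum(E \times E')$ for $E, E'$ elliptic curves over $K$ of conductor $I$ with full level-$2$ structure, I match local invariants to pin down the Legendre parameters and then verify that the two fibrations are literally isomorphic; the $2$-torsion forces the base field to grow from $\Q(\sqrt 2)$ to $\Q(\zeta_8)$. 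Chasing the four maps through the transcendental-lattice relations of Section~\ref{sec:verify-modularity} (each step altering $T$ only up to isogeny or finite index) yields the claimed identification.

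The main obstacle is the middle of the chain. As in Remarks~\ref{rem:why-not-over-q} and \ref{rem:need-5-to-work-over-q}, the discriminant $-144$ carries an odd power of $3$ that is not an isogeny degree between a curve in the class and its $\sigma$-conjugate, so a map of degree divisible by $3$ is unavoidable and, it appears, cannot be realised over $\Q$; the delicate part is thus producing a genus-$1$ fibration of multisection degree~$3$ that is defined over $\Q(\sqrt 2)$ (so that the subsequent $2$-isogeny step can be carried out) and whose isogenous partner lands on an honest Kummer-of-a-product lattice. Everything downstream of that --- the $2$-isogeny, the identification via \cite{shtukas}, and the base change to $\Q(\zeta_8)$ --- is then routine.
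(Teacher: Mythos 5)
Your proposal reproduces the paper's own argument essentially step for step: Atkin--Lehner quotient and the K3 Jacobian of Proposition~\ref{aq-d8-n17}, the model $S_7$ and the $2$-section fibration of Proposition~\ref{prop:find-2-section}, the passage to $S_6$ over $\Q(\sqrt 2)$, the multisection-degree-$3$ fibration with two $I_4^*$ fibres whose $2$-isogenous partner has ${I_2^*}^2 I_2^4$ fibres, and the final matching via \cite{shtukas} after base change to $\Q(\zeta_8)$. The only cosmetic slip is your reference to the ``$D_8 D_4^2$-type formulas''---the configuration actually matched on the Kummer surface here is ${I_2^*}^2 I_2^4$ (as you yourself state a sentence earlier); $D_8, D_4^2$ is the configuration used in the level-$(2)$ example over $\Q(\sqrt{17})$.
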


As before, if this is true for one choice of $E, E'$ it is true for all
of them.

\subsection{Level $\p_{31}$ over $\Q(\sqrt{5})$}\label{ex:d5-p31}
The Kodaira dimension of the Hilbert modular surface corresponding to the
ideal of smallest norm in $\O_{\Q(\sqrt{5})}$ giving a surface with $p_g = 1$
was left undecided by Hamahata.  Here we will prove that its Kodaira dimension
is $1$ and verify the Oda-Hamahata
conjecture.  We begin as usual by computing the
ring of Hilbert modular forms, finding it to be generated by $3, 6, 4$
forms of weight $2, 6, 10$ respectively.  We also determine the Atkin-Lehner
involution and verify that it has $8$ isolated fixed points, none of which is
singular on the surface.

\begin{prop}\label{prop:d5-n31-k3}
  The Atkin-Lehner quotient of the Hilbert modular surface
  $H_{(6-\sqrt{5}),\Q(\sqrt{5})}$ admits a fibration by curves of genus $1$ whose
  Jacobian is a K3 surface.
\end{prop}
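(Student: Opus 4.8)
The plan is to follow the strategy of Section~\ref{ex:dp25-rt2} rather than that of the K3 examples: since we will ultimately see that the Atkin-Lehner quotient has Kodaira dimension~$1$, it cannot be birational to a K3 surface, so instead of forcing down a birational model we exhibit on it a genus-$1$ fibration \emph{with no section}. The Jacobian of the generic fibre is then a genuinely different surface, and it is this Jacobian that we check is a K3. (The discrepancy arises from multiple fibres on the quotient that disappear on the Jacobian; by Kodaira's canonical-bundle formula these multiple fibres are exactly what pushes the Kodaira dimension of the quotient up to~$1$, as will be analysed in the sequel.)

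First I would pass to the Atkin-Lehner quotient. The level $(6-\sqrt5)$ is a prime ideal, so there is a single Atkin-Lehner involution $w_I$, and the software gives its action on the generators of $M_I$ (three of weight~$2$, six of weight~$6$, four of weight~$10$). Using the explicit action on Eisenstein series and newforms, together with the fact that the unique weight-$2$ cusp form lies in the $w_I$-invariant subspace, I identify the fixed subring, take $\Proj$, and embed it into projective space by the invariant part of $\O(n)$ for a suitable $n$, computing the image with Algorithm~\ref{alg:find-image}. Along the way I record the images of the cusps (the common zero locus of the cusp forms, which $w_I$ permutes) and of the $8$ elliptic points.

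Next I would carry out the usual sequence of projections of Section~\ref{subsec:embed}: project away from the cusp image, then from the non-canonical singularities coming from the elliptic points, and finally, when a surface singular along a line appears, project away from that line, applying a Veronese re-embedding whenever a projection fails to be birational. \textbf{This is the step I expect to be the main obstacle}: because the quotient is not birational to a K3 surface, the degree does not fall toward $2n-2$ in the clean way it does for the K3 cases, so the reduction must be done empirically and it is not clear in advance how small a model one can reach. On the resulting model I then look for a genus-$1$ pencil, typically cut out by linear forms with base locus supported on a singular curve, check in Magma that the generic fibre has arithmetic and geometric genus~$1$, and verify that it has no section, i.e.\ that the multisection degree exceeds~$1$.

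Finally I would form the Jacobian $\Jac_\pi$ of this fibration; as the fibres have small degree, Magma produces a Weierstrass model directly, and a reduction as in Remark~\ref{rem:reduce-weierstrass} makes it tractable. To conclude that $\Jac_\pi$ is a K3 surface I list its reducible fibres and check that the sum of their Euler numbers is $24$; since a Jacobian fibration over $\P^1$ has a section and no multiple fibres, this forces $\chi(\O)=2$, hence $K_{\Jac_\pi}\equiv 0$ and $p_g=1$, so the relatively minimal model is a K3 surface (it is neither rational nor of general type). Equivalently, one invokes Magma's built-in verification that the surface is a K3.
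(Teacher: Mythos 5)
Your overall architecture matches the paper's: pass to the Atkin--Lehner quotient, project down to a manageable model, exhibit a genus-$1$ fibration without a section (forced by the multiple fibres responsible for $\kappa=1$), take the Jacobian of the generic fibre, and conclude it is a K3 surface from the Euler numbers of the reducible fibres summing to $24$. The one genuine difference is in how the fibration is located, and it is exactly at the step you flag as the main obstacle. The paper does not hunt for a pencil of linear forms as in the K3 examples; instead it maps in by the $w_I$-invariant weight-$6$ forms to a degree-$18$ surface in $\P^8$, projects from the cusp, the new singular point in its image, and the singular line to reach a degree-$9$ surface in $\P^4$, and then simply asks Magma for the \emph{canonical divisor map}. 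Since the quotient is properly elliptic, its canonical map has codomain $\P^1$ and genus-$1$ general fibre, so this single computation both produces the fibration and proves the surface is not of general type; there is no need to drive the degree toward $2n-2$ or to resolve all the non-canonical singularities first (the paper notes the $\P^4$ model still lacks canonical singularities but is ``sufficiently well-behaved''). Your fallback of searching for pencils cut out by linear forms would likely also succeed, but the canonical-map route is what makes the reduction terminate so quickly here, and it is the natural choice precisely because the quotient has $\kappa=1$ rather than $0$. The concluding step is the same in both: the fibre types (one $I_3$, ten $I_2$, one $I_1$ in the paper) give $\chi_{\mathrm{top}}=24$, whence the Jacobian is a K3 surface.
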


\begin{proof}
  We begin by studying the quotient.  There are too many modular forms of
  weight~$10$, so we use the forms of weight~$6$ invariant under Atkin-Lehner
  to map to a surface of degree $18$ in $\P^8$.
  By projecting successively from the
  cusp, the new singular point in its image, and the singular line we reach a
  surface of degree $9$ in $\P^4$.
  Although this surface does not have canonical
  singularities, it is sufficiently well-behaved that Magma can compute a
  canonical divisor map whose codomain is $\P^1$ and whose general fibre is of
  genus $1$, verifying that the surface is not of general type.
\end{proof}

The surface
has one $I_3$, ten $I_2$, and one $I_1$ fibre, indicating that its Jacobian
is a K3 surface.  However, we claim that its Kodaira dimension is $1$.

\begin{prop}\label{prop:d5-n31-kd1}
  The quotient of $H_{\p_{31},\Q(\sqrt{5})}$ by the Atkin-Lehner
  involution has Kodaira dimension $1$.
\end{prop}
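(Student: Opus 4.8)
The plan is to deduce the Kodaira dimension from the genus-$1$ fibration produced in Proposition~\ref{prop:d5-n31-k3} together with Kodaira's canonical bundle formula. Write $Q = H_{\p_{31},\Q(\sqrt{5})}/w$ for the Atkin--Lehner quotient, and let $X$ be a smooth projective model of $Q$ on which the canonical map of Proposition~\ref{prop:d5-n31-k3} becomes a relatively minimal fibration $f\colon X \to \P^1$ by curves of genus $1$, with associated Jacobian fibration $\Jac_f(X) \to \P^1$, which is a K3 surface. Two easy bounds are immediate: since $X$ carries a fibration by genus-$1$ curves it is not of general type, so $\kappa(X) \le 1$; and since $p_g(X) = p_g(H_{\p_{31},\Q(\sqrt{5})}) = 1 > 0$ the surface $X$ is not ruled, so $\kappa(X) \ge 0$. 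The whole content of the proposition is therefore to rule out $\kappa(X) = 0$.

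To set up the canonical bundle formula I would first compute $\chi(\mathcal{O}_X)$. The fibrations $f$ and $\Jac_f(X)$ have the same base and singular fibres with the same topology except over the multiple-fibre locus of $f$, and since multiple fibres do not affect the topological Euler characteristic one gets $e(X) = e(\Jac_f(X)) = 24$; with $K_X^2 = 0$ for a relatively minimal elliptic surface, Noether's formula gives $\chi(\mathcal{O}_X) = e(X)/12 = 2$. Combined with $q(X) = q(H_{\p_{31},\Q(\sqrt{5})}) = 0$ this gives $p_g(X) = 1$. Kodaira's formula for $f$ then reads
\[
  K_X \ \sim\ f^*\!\big(\mathcal{O}_{\P^1}(\chi(\mathcal{O}_X)-2)\big)\ +\ \sum_j (m_j-1)F_j \ =\ \sum_j (m_j-1)F_j ,
\]
where the $m_j F_j$ (with $F_j$ reduced) are the multiple fibres of $f$. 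Hence $X$ is a K3 surface exactly when $f$ has no multiple fibre; and if $f$ has at least one, then $K_X$ is a nonzero effective divisor supported on fibres, so for $n = \mathrm{lcm}_j m_j$ the divisor $nK_X \sim f^*\big(\sum_j \tfrac{n(m_j-1)}{m_j}\,p_j\big)$ is the pullback of an effective divisor of positive degree on $\P^1$, whence $h^0(knK_X)$ grows linearly in $k$ and $\kappa(X) = 1$.

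It thus remains to exhibit one multiple fibre of $f$. I would do this by locating a point $p_0 \in \P^1$ over which the fibre of the model $S_5$ of Proposition~\ref{prop:d5-n31-k3} is nonreduced while its Jacobian has good ($I_0$) reduction -- exactly the picture of a multiple fibre of type ${}_mI_0$. The main obstacle, and the only substantive point left, is to confirm that this nonreducedness is genuine and not an artifact of the singular model $S_5$: one verifies it either by passing to the smooth relatively minimal model and inspecting the fibre there, by a local ramified base change that separates a ${}_mI_0$ fibre from an honest $I_0$, or most economically by an Euler-number count, since the singular fibres already visible on $S_5$ -- among them a smooth but nonreduced fibre -- account for the full Euler number $24$, forcing that fibre to be multiple. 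Once a single multiple fibre is in hand, the displayed formula gives $K_X \ne 0$ and hence $\kappa(H_{\p_{31},\Q(\sqrt{5})}/w) = 1$; the argument runs parallel to the remark on $H_{(5-2\sqrt{2}),\Q(\sqrt{2})}$ in Section~\ref{ex:d17-rt2}.
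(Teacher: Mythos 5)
Your argument is correct and is essentially the paper's own proof: the paper likewise notes that since the Euler characteristic is $24$ (so $\chi(\mathcal{O})=2$ and the degree-$(\chi-2)$ term in Kodaira's formula vanishes), Kodaira dimension $1$ is equivalent to the existence of a multiple fibre, which is then checked computationally. You have simply written out the canonical-bundle-formula bookkeeping that the paper leaves implicit.
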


\begin{proof} Since the Euler characteristic is $24$, this is equivalent to
  the statement that the fibration has at least one multiple fibre.
  This is easy to verify computationally
\end{proof}

\begin{cor}\label{cor:d5-n31-kd1}
  The Hilbert modular surface $H_{\p_{31},\Q(\sqrt{5})}$ has Kodaira dimension $1$.
\end{cor}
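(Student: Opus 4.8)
The plan is to deduce the Kodaira dimension of (a smooth model of) $H_{\p_{31},\Q(\sqrt{5})}$ from that of its Atkin--Lehner quotient, which is pinned down by Proposition~\ref{prop:d5-n31-kd1}, using the crucial input (verified just before Proposition~\ref{prop:d5-n31-k3}) that $w$ has exactly $8$ fixed points, all of them isolated and lying at smooth points of the surface. Concretely, I would fix a smooth projective model $S$ of $H_{\p_{31},\Q(\sqrt{5})}$ on which $w$ acts with isolated fixed points $P_1,\dots,P_8$, none singular. Let $\sigma\colon \tilde S\to S$ be the blow-up at the $P_i$, with exceptional $(-1)$-curves $E_i$; the involution lifts to $\tilde w$ on $\tilde S$ whose fixed locus is precisely $\bigcup_i E_i$, a disjoint union of smooth rational curves. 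Hence $f\colon \tilde S\to \tilde Y:=\tilde S/\tilde w$ is again smooth, and $f$ is a finite degree-$2$ morphism branched along the curves $\bar E_i:=f(E_i)$.

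The heart of the matter is the identity $f^*K_{\tilde Y}=\sigma^*K_S$. This follows from comparing the ramification formula $K_{\tilde S}=f^*K_{\tilde Y}+\sum_i E_i$ (the reduced ramification divisor is $\sum_i E_i$) with the blow-up formula $K_{\tilde S}=\sigma^*K_S+\sum_i E_i$ and cancelling. Since finite pull-back preserves Iitaka dimension, I get
\[
\kappa(\tilde Y)=\kappa\bigl(\tilde Y,K_{\tilde Y}\bigr)=\kappa\bigl(\tilde S,f^*K_{\tilde Y}\bigr)=\kappa\bigl(\tilde S,\sigma^*K_S\bigr)=\kappa(S).
\]
On the other hand, $\bar E_i\cong\P^1$ and the projection formula gives $(f^*\bar E_i)^2=2\bar E_i^2$ with $f^*\bar E_i=2E_i$, so $\bar E_i^2=-2$; contracting the $\bar E_i$ therefore identifies $\tilde Y$ with the minimal resolution of the quotient surface $H_{\p_{31},\Q(\sqrt{5})}/w$. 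Because the resolution of an $A_1$ point is crepant, $\kappa(\tilde Y)$ equals the Kodaira dimension of that quotient, which is $1$ by Proposition~\ref{prop:d5-n31-kd1}. Combining, $\kappa(S)=1$, which is the assertion.

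The argument is essentially formal once Proposition~\ref{prop:d5-n31-kd1} is available, and I expect no serious obstacle beyond bookkeeping; the one place where the specific geometry enters is the input that all $8$ fixed points of $w$ are isolated and non-singular, without which the ramification divisor of $f$ would not coincide with the blow-up exceptional locus and the cancellation $f^*K_{\tilde Y}=\sigma^*K_S$ would break down. If a more hands-on presentation is wanted, one can instead pull back the Iitaka genus-$1$ fibration of $\tilde Y$ to $\tilde S$: the $(-2)$-curves $\bar E_i$ must lie in fibres, since a horizontal $(-2)$-curve would meet the reduced multiple fibre forced by $\kappa(\tilde Y)=1$ and hence have positive intersection with the vertical effective divisor $K_{\tilde Y}$, contradicting $\bar E_i\cdot K_{\tilde Y}=0$ from adjunction; then a general fibre of $\tilde S\to\tilde Y\to\P^1$ is an unramified double cover of a genus-$1$ curve, hence itself of genus $1$, so $\kappa(S)\le 1$, while $\kappa(S)\ge\kappa(\tilde Y)=1$ because $f$ is generically finite.
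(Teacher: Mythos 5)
Your argument is correct, but your primary route is genuinely different from the proof the paper gives for this corollary; the paper's proof is in fact your second, ``hands-on'' sketch. Namely, the paper observes that the images of the eight fixed points are ordinary double points on the Atkin--Lehner quotient, that the exceptional $(-2)$-curves over them are vertical for the genus-$1$ fibration of Proposition~\ref{prop:d5-n31-k3} (because the fibration is a morphism at those points), and hence that the general fibre pulls back to an unramified double cover of a genus-$1$ curve; so $H_{\p_{31},\Q(\sqrt{5})}$ carries a genus-$1$ fibration and is not of general type. Your main argument instead runs through the identity $f^*K_{\tilde Y}=\sigma^*K_S$ and invariance of Iitaka dimension under finite pullback; this is precisely the content of Lemma~\ref{lem:fixed-point-quotient} (proved later, in Section~\ref{sec:general-type} --- note that your version $K_S=\phi^*(K_T)$ is the one actually supported by that lemma's proof and by Corollary~\ref{cor:canonical-self-int}, the displayed factor of $2$ in its statement being a slip), so you have in effect rediscovered that lemma and applied it here. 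What your route buys is the full equality $\kappa(S)=\kappa(\tilde Y)=1$ in one stroke: the paper's proof as literally written only establishes the upper bound (``cannot be of general type''), leaving the complementary inequality $\kappa(S)\ge\kappa(S/w)=1$ --- immediate from finiteness of the quotient map, as you note --- implicit. Both routes rest on the same computational input, and the one point worth spelling out in your write-up is why the fixed locus on the smooth equivariant model $S$ is still exactly the eight points: since the level is prime the involution swaps the two cusps, and the paper's computation shows no singular point of the Baily--Borel model is fixed, so the resolution is an isomorphism near the fixed locus and the exceptional divisors over non-fixed points contribute no new fixed points; the tangent action being $-1$ at an isolated fixed point of an involution on a smooth surface is then automatic.
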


\begin{proof} Let $F$ be the fixed locus of the Atkin-Lehner involution.
  Since $F$ consists of isolated nonsingular points, its image on the
  Atkin-Lehner quotient consists of ordinary double points.  Thus,
  on the minimal resolution of the quotient, the double cover is ramified
  precisely at the $8$ exceptional curves of self-intersection $-2$ above
  these points.  Since the fibration on the quotient is well-defined at these
  points, the curves are vertical for the fibration, and hence the
  general fibre of the genus-$1$ fibration on the quotient is disjoint from
  the ramification locus.  It follows that the general fibre pulls back to
  a curve of genus $1$.  Thus $H_{\p_{31},\Q(\sqrt{5})}$ admits a genus-$1$
  fibration and cannot be of general type.
\end{proof}

We now proceed to exhibit a correspondence between the Jacobian of the
Atkin-Lehner quotient and the Kummer surface of the product of an elliptic
curve in the isogeny class and its conjugate.  This is a relatively easy
example because the diameter (Definition~\ref{def:diameter}) of the
isogeny class is $4$ (cf.~Claim~\ref{claim:disc-pic})
and there are curves in the class with full level-$2$
structure.  As usual, we do most of the work on the side of the quotient
rather than that of the Kummer surface.

The eight ordinary double points mentioned above are components of $I_2$
fibres and are defined over a number field whose Galois group has order $64$.
It would be unpleasant to work with a K3 surface over such a large number
field, so  we begin by passing to the
$2$-isogenous elliptic surface $\E$ on which the images of these fibres are of
type $I_1$.  The reducible fibres of $\E$ are of type $I_6, I_4, I_4, I_2$.

\begin{prop}\label{prop:rank-4}
  The Weierstrass model of $E$ can be taken to be
  \begin{align*}
    y^2 &= x^3 + (-43t^4 + 86t^3 + 11t^2 - 38t + 5)x^2\\
    &+ (496t^8 - 1984t^7 + 1488t^6 + 1984t^5 - 1488t^4 - 992t^3)x.
  \end{align*}
  There are four independent sections giving a
  discriminant of $4/3$ relative to the height pairing.
\end{prop}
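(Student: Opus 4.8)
The plan is to realize the elliptic surface $\E$ as an explicit degree-$2$ isogenous quotient of the K3 surface of Proposition~\ref{prop:d5-n31-k3} and then to pin down its Mordell--Weil lattice by reduction modulo small primes. I would start from a Weierstrass model over $\Q(t)$ for the Jacobian $J$ of the genus-$1$ fibration on the Atkin--Lehner quotient, whose reducible fibres are $I_3, I_2^{10}, I_1$. Among the $2$-torsion sections of $J$ there is exactly one, say $T$, that meets the non-identity component of precisely the eight $I_2$ fibres whose components are defined over the degree-$64$ field, and the identity component of the $I_3$, of the remaining two $I_2$'s, and of the $I_1$. Quotienting $J$ by $\langle T\rangle$ via the classical degree-$2$ isogeny formulas, then taking the minimal integral model and reducing it as in Remark~\ref{rem:reduce-weierstrass} --- absorbing square common factors of the coefficients and applying an automorphism of $\P^1$ that shrinks the numerator of the discriminant --- should produce exactly the displayed equation; note that $x = 0$ is visibly a $2$-torsion section, reflecting the dual isogeny $\E \to J$. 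I would then verify on the minimal model that the Kodaira types are $I_6, I_4, I_4, I_2$ together with eight $I_1$'s, so that the Euler number is $24$ and $\E$ is again a K3 surface.

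Next I would establish the rank and exhibit the sections. To produce four sections I would reduce $\E$ modulo small primes $p$, enumerate $\F_p$-points on the reduced fibres, and use rational reconstruction to lift the $x$-coordinates to $\Q(t)$, or to a quadratic extension after matching up the reductions, checking each candidate directly against the equation. Computing the Gram matrix of these four sections under the height pairing with the local contribution tables of \cite{cox-zucker} gives determinant $4/3$; since this is nonzero the sections are independent, and the factor of $\tfrac13$ is forced by and consistent with the contributions $i(6-i)/6$ coming from the $I_6$ fibre. That the rank is \emph{exactly} $4$ then follows from Shioda--Tate, $\rho(\E) = 2 + (5+3+3+1) + \operatorname{rank}\mathrm{MW}(\E) = 14 + \operatorname{rank}\mathrm{MW}(\E)$, together with the upper bound $\rho(\E) \le 18$ obtained by reducing modulo several primes of good reduction and examining the Frobenius action on $H^2_{\etale}$; in particular the four sections span $\mathrm{MW}(\E)$ up to finite index.

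The main obstacle is the middle step: writing the four sections down explicitly. Their $x$-coordinates may have large degree in $t$, and some of them are defined only over a quadratic field, so one must be careful which mod-$p$ data to combine, and one must separate genuine non-torsion sections from torsion points and from translates by components of reducible fibres. Once candidate sections are in hand, verifying them, computing the isogeny, and evaluating the height matrix are all routine computations in Magma \cite{magma}.
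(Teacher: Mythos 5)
Your overall strategy is the same as the paper's: the paper's proof of this proposition is simply a pointer to the accompanying code, and the surrounding text confirms that $\E$ is obtained exactly as you describe, by quotienting the Jacobian of the $I_3\,I_2^{10}\,I_1$ fibration by the $2$-torsion section meeting the non-identity components of the eight $I_2$ fibres carrying the Atkin--Lehner fixed points (so that those fibres collapse to $I_1$ and the remaining $I_3$, $I_2$, $I_2$, $I_1$ become $I_6$, $I_4$, $I_4$, $I_2$). Your Euler-number check, the Shioda--Tate count $\rho = 14 + r$, and the point-counting bound $\rho \le 18$ are all the standard and correct way to pin down the rank.

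There is one genuine gap between what you prove and what the proposition asserts. Showing that four explicit sections have height-pairing Gram determinant $4/3$ establishes only that they are independent and span the Mordell--Weil group \emph{up to finite index} $n$, which is exactly what you claim; but the proposition, and the subsequent determinant-formula computation $6 \cdot 4 \cdot 4 \cdot 2 \cdot \frac{4}{3} \cdot \frac{1}{4} = 64$, require the Mordell--Weil lattice discriminant to be exactly $4/3$, i.e.\ $n = 1$. If $n > 1$ the N\'eron--Severi discriminant would be $64/n^2$, and integrality only forces $n \in \{1,2,4,8\}$, so no cheap lattice-theoretic argument rules out a proper finite-index sublattice. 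You need an explicit saturation step (checking, e.g., that no section of the relevant fractional height exists whose double or appropriate combination lands in your lattice, or a $2$-descent on the generic fibre), and likewise a verification that the torsion subgroup is exactly $\Z/2\Z$, since extra torsion would also alter the discriminant formula. Both are routine in Magma but must be stated; without them the claimed discriminant of $4/3$, and hence the Picard discriminant of $64$ used later in the section, is not established.
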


\begin{proof}
  The reader is referred to \cite{code}.
\end{proof}

Since the torsion subgroup has order $2$, this implies a Picard discriminant of
$6 \cdot 4 \cdot 4 \cdot 2 \cdot 4/3 \cdot 1/4 = 64$ up to sign.
The generators are all defined over $\Q(\sqrt{5})$.
In a few hours we can enumerate the $191$ definite lattices in the genus
that are the frame lattices for elliptic fibrations on the surface.

We are able to exhibit a model of this surface in $\P^5$ with $6$ ordinary
double points and no more.  There do exist fibrations defined over $\Q$
with no section, but for historical reasons we use one that merely has
no $\Q$-rational section.

\begin{prop}\label{prop:unnecessary-fib}
  There is a fibration with no $\Q$-rational section with  reducible fibre
  types $I_1^*, I_0^*, I_2^3$ and is such that in the $\P^5$ model
   the $I_0^*$ fibre contains a conic. 
\end{prop}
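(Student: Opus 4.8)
The plan is to proceed computationally, in the spirit of Sections~\ref{subsec:embed} and~\ref{subsec:tricks}. First, using the explicit generators of $\Pic \E$ and the Galois action on them found while proving Proposition~\ref{prop:rank-4}, I would look for an effective primitive class $F$ with $F^2 = 0$, fixed by $\Gal(\bar\Q/\Q)$, whose orthogonal complement in $\Pic \E$ contains a root sublattice of type $D_5 \oplus D_4 \oplus A_1^3$ coming from non-identity components of fibres; this is the frame-lattice condition for a genus-$1$ fibration whose reducible fibres include an $I_1^*$, an $I_0^*$, and three copies of $I_2$. (The precise Kodaira types are only confirmed at the end by running Tate's algorithm on an explicit Weierstrass equation.) Among the candidate classes $F$ I would keep one for which the subgroup of $\Z$ generated by the pairings $D \cdot F$, as $D$ ranges over the Galois-invariant classes in $\Pic \E$, is $2\Z$ or $3\Z$ rather than $\Z$; this is exactly the condition that the fibration have no $\Q$-rational section, since the positive generator of that subgroup is the minimal degree of a $\Q$-rational multisection, and it is checked by integer linear algebra together with the known Galois action. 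The reason to want merely ``no $\Q$-rational section'' (rather than no section at all) is that this is what makes available a later $\Q$-Jacobian step of type~(\ref{item:q-jacobian}).

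Next, with a concrete class $F$ in hand, I would realize the fibration on the model $S \subset \P^5$ with six ordinary double points. If $|F|$ is cut out by linear forms, the map $S \dashrightarrow \P^1$ is obtained by an elimination computation as in Proposition~\ref{prop:image-from-elim}; otherwise one applies Algorithm~\ref{alg:find-image}. I would then compute the generic fibre, take its Jacobian in Magma, apply Remark~\ref{rem:reduce-weierstrass} to obtain a manageable Weierstrass equation, and verify via Tate's algorithm that the reducible fibres are exactly $I_1^*, I_0^*, I_2^3$ and that the torsion subgroup and Mordell--Weil rank agree with the lattice prediction.

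Finally, for the last clause I would inspect the scheme-theoretic fibre of $S \dashrightarrow \P^1$ over the point carrying the $I_0^*$ fibre: after removing the (fixed) base locus of the pencil, its components are rational curves, and I would check in Magma that one of them spans a plane inside $\P^5$ and has degree $2$, i.e.\ is a smooth conic. The full chain of computations, together with explicit equations, is recorded in \cite{code}.

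The step I expect to be the real obstacle is arranging all of these conditions simultaneously. Producing \emph{some} fibration with reducible fibres $I_1^*, I_0^*, I_2^3$ is routine, as is forcing it to have no $\Q$-rational section; the delicate point is that whether the $I_0^*$ fibre contains a conic depends on the chosen birational model of $\E$, so the $\P^5$ model with exactly six nodes must be compatible with the fibration. In practice one iterates the projection techniques of Section~\ref{subsec:embed} and Algorithm~\ref{alg:find-image} until a model with the desired incidence geometry appears; once it does, all the stated properties can be verified directly.
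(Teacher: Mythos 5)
Your plan is correct and is essentially the approach the paper takes: the paper offers no explicit proof of this proposition beyond asserting the computation (deferring the equations to the code), and your recipe — search the known Picard lattice for a Galois-invariant isotropic class whose orthogonal complement contains $D_5\oplus D_4\oplus A_1^3$, certify the absence of a $\Q$-rational section via the GCD of intersection numbers with Galois-invariant classes, then realize the pencil on the six-node $\P^5$ model and confirm the Kodaira types and the conic directly — is exactly the computational machinery of Sections~\ref{subsec:embed}--\ref{subsec:tricks} and Section~\ref{sec:verify-modularity} that the paper implicitly invokes. The only slight mismatch is one of emphasis: in the paper the $\P^5$ model is fixed first and the fibration is then exhibited on it, so the conic statement is a verification on a given model rather than a constraint requiring iteration over models.
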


The equation is a bit too complicated to show here, but
we easily find the general fibre, pass to the Jacobian, and write down some
sections.  With some effort we find a model of this new K3 surface as
$S_6 \subset \P^6$ with one $A_2$ and three $A_1$ singularities.

\begin{prop}\label{prop:fib-s6}
  The dimension of the Galois-fixed subgroup of the Picard group of $S_6$ is
  $14$, and $S_6$ has a fibration defined over $\Q$
  with multisection degree $2$ over $\Q$ and over $\bar \Q$.  The reducible
  fibres of the Jacobian are of type $I_4^*, I_0^*, I_4$, and the torsion
  subgroup has order $2$.  Its Weierstrass equation is
  $$y^2 = x^3 + (1/4t^3 + 65/4t^2 + 1724t)x^2 + 
  (31t^4 - 4960t^3 + 198400t^2)x.$$
  There is a section of height $1$.
  \end{prop}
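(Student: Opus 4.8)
The argument follows the same pattern as the previous examples, now applied to $S_6$. The first task is to determine $\Pic (S_6)_{\bar\Q}$ together with its Galois action. Since $S_6$ is $\bar\Q$-isomorphic to the surface $\E$ of Proposition~\ref{prop:rank-4}---it is the Jacobian of a fibration with a $\bar\Q$-section---its geometric Picard lattice has rank $18$ and discriminant $64$. I would reconstruct a full-rank sublattice from the components of the reducible fibres of the genus-$1$ fibrations already found on $\E$, together with the four sections of Proposition~\ref{prop:rank-4} and their images on $S_6$; check that this sublattice is all of $\Pic (S_6)_{\bar\Q}$ by comparing its discriminant with $64$ (and confirming $\rho = 18$ by exhibiting a degree-$4$ irreducible factor of the characteristic polynomial of Frobenius for a handful of good primes); and read off the Galois action on an explicit basis. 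Computing the rank of the invariant sublattice then yields $14$.

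Next, using this lattice-with-action, I would search for a primitive nef class $F$ with $F^2 = 0$ that is defined over $\Q$ and such that: (i) the gcd of the intersection numbers $F \cdot C$ over the curves we have exhibited equals $2$ both over $\Q$ and over $\bar\Q$, so the pencil $|F|$ gives a fibration $\pi\colon S_6 \dashrightarrow \P^1$ of multisection degree $2$ in each setting; and (ii) the overlattice $\Pic (S_6)_{\bar\Q}[F/2]$, which by item~(\ref{item:jacobian}) is the Picard lattice of $\Jac_\pi(S_6)$, has root part $D_8 \oplus D_4 \oplus A_3$ in the orthogonal complement of $F$ and the zero section, forcing Kodaira fibre types $I_4^*, I_0^*, I_4$ for the Jacobian, Mordell--Weil rank $1$, and---from the torsion of the discriminant group of the trivial lattice---a section of order $2$. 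This is a finite search through the frame lattices in the genus of $\Pic (S_6)_{\bar\Q}$, for which the tables of \cite{schutt-shioda} are a useful guide.

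Once $F$ is located I would make $\pi$ explicit: its members are the components of the zero loci of the linear (or low-degree) forms cutting out $F$ on $S_6 \subset \P^6$, and from the generic fibre---an explicit curve of small degree---Magma computes the Jacobian and a minimal Weierstrass model, which after the reduction of Remark~\ref{rem:reduce-weierstrass} should be
\[
y^2 = x^3 + (1/4t^3 + 65/4t^2 + 1724t)x^2 + (31t^4 - 4960t^3 + 198400t^2)x.
\]
Since this equation has no constant term in $x$ (that is, $a_6 = 0$), the point $(x,y) = (0,0)$ is visibly of order $2$; Tate's algorithm applied to the equation confirms the fibre types $I_4^*$, $I_0^*$, $I_4$ and that the full torsion subgroup is exactly $\Z/2\Z$. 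Finally I would find a non-torsion section by reducing modulo several small primes, searching for rational points on the fibres, and lifting; its height follows from the components of the reducible fibres that it meets via the local contributions of \cite{cox-zucker}, and should equal $1$, consistent with the Picard discriminant $64 \cdot 1/4 = 16$ of $\Jac_\pi(S_6)$.

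The main obstacle is the middle step. The Picard lattice of $S_6$ carries a large group of isometries and supports many elliptic fibrations, and one must isolate a class $F$ meeting \emph{all} of the constraints simultaneously---defined over $\Q$, multisection degree $2$ over $\Q$, and the prescribed frame lattice for the Jacobian---while keeping the resulting generic fibre, and hence the Weierstrass computation, small enough to be tractable. As in Remark~\ref{rem:why-not-over-q}, the real danger is that no admissible $F$ is defined over $\Q$; establishing that one is, by actually producing it, is the crux.
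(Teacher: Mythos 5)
Your plan matches the paper's methodology exactly: the paper gives no printed proof for this proposition (deferring, as elsewhere, to the accompanying code), and the intended argument is precisely the one you describe — identify $\Pic(S_6)_{\bar\Q}$ with that of $\E$ via the $\bar\Q$-section of the fibration in Proposition~\ref{prop:unnecessary-fib}, locate the class $F$ by the lattice search of Section~\ref{sec:verify-modularity}, and then compute the generic fibre (of degree $8$), its Jacobian, and the section explicitly in Magma, with the discriminant bookkeeping $64/2^2 = 16$ confirming the fibre types, torsion, and height. No gaps; this is essentially the same approach.
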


The fibres of this fibration have degree $8$; the corresponding map to $\P^1$
and the Jacobian of the general fibre can be found without too much difficulty.

Thus we have a surface of Picard discriminant $-16$; however, it is not
the same lattice as that of the Kummer surface of a product.
It is difficult to find a good model of this
surface; we make do with $S_8 \subset \P^8$ whose singularities are of type
$A_4, A_3, A_1, A_1, A_1$.  Although the full Picard group is still defined
only over $\Q(\sqrt{5})$, the subgroup defined over $\Q$
now has rank $16$.

\begin{prop} There is a fibration on $S_8$ defined over $\Q$
  whose Jacobian has fibres of
  types $III^*, III^*, I_2, I_2$ and that has $2$-torsion.  The Weierstrass
  equation is
  $$y^2 = x^3 + 2180x^2 + \frac{-16/5t^2 + 167872t - 19681280}{t}x.$$
\end{prop}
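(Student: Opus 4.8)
The plan is to locate the fibration inside the already-computed Picard lattice of $S_8$ and then realise it by explicit equations, exactly as in the preceding examples. Recall that $S_8\subset\P^8$, constructed just after Proposition~\ref{prop:fib-s6}, is a K3 surface whose Picard lattice has discriminant $-16$, whose full Picard lattice is defined over $\Q(\sqrt5)$ but whose $\Q$-rational part already has rank $16$, and that we have an explicit list of curves on it together with the Galois action on their classes and all their intersection numbers. First I would use this data — the Gram matrix of $\Pic S_8$, its ample cone, and the known $(-2)$-curves — to enumerate the genus-$1$ fibrations on $S_8$ and single out one whose fibre class $F$ satisfies $F^2=0$, is fixed by $\Gal(\Q)$, has vertical components forming two copies of $E_7$ and two copies of $A_1$, and for which the smallest positive value of $F\cdot D$ over $D\in\Pic S_8$ is $2$ (so the fibration has multisection degree $2$ and no section, even over $\bar\Q$). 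One then checks, via the Shioda--Tate formula applied on the Jacobian together with the relation $\Pic\Jac_\pi(S_8)=\Pic S_8[E/2]$ from~(\ref{item:jacobian}) (Keum's lemma, \cite[Lemma 2.1]{keum}), that such a fibration is consistent with the Picard lattice of $\Jac_\pi(S_8)$ having discriminant $-4$ and with $\Jac_\pi(S_8)$ being a K3 surface with reducible fibres $III^*,III^*,I_2,I_2$, Mordell--Weil rank $0$, and a $2$-torsion section; the candidate $2$-torsion class is forced by the configuration of Galois-fixed vertical curves, so this step is a finite combinatorial verification.

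Having pinned down $F$, I would write out the linear system $|F|$ — clearing base locus by passing to a small multiple if necessary — on the $\P^8$ model and compute the induced map $S_8\dashrightarrow\P^1$ using the elimination and subscheme-image techniques of Section~\ref{subsec:tricks} and Algorithm~\ref{alg:find-image}; this recovers the generic fibre as a genus-$1$ curve over $\Q(t)$ with no $\Q(t)$-point. Using the degree-$2$ multisection I would bring it to a model of small degree, compute its Jacobian in Magma, and check that the resulting elliptic surface has Euler characteristic $24$ and hence is a K3 surface. The minimisation-and-reduction recipe of Remark~\ref{rem:reduce-weierstrass} applied to this Weierstrass equation should then produce the displayed form
$$y^2 = x^3 + 2180x^2 + \frac{-16/5t^2 + 167872t - 19681280}{t}x,$$
in which $(x,y)=(0,0)$ is visibly a point of order $2$. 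To complete the proof I would factor the discriminant of this equation, read off the Kodaira type at each of its zeros and at $t=0,\infty$ to confirm exactly two fibres of type $III^*$ and two of type $I_2$, and verify that the torsion subgroup is precisely $\Z/2$ and the Mordell--Weil rank $0$; as an independent sanity check, the traces of Frobenius on transcendental cohomology read off from point counts modulo several small primes of good reduction should agree with those computed from $S_8$.

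The chief obstacle is computational, not conceptual. The model $S_8\subset\P^8$ has singularities of type $A_4,A_3,A_1,A_1,A_1$, precisely the sort of bad points that make linear-system and image computations slow, so extracting the fibration map and its generic fibre in practice will require a judicious sequence of auxiliary projections and Veronese re-embeddings as in Section~\ref{subsec:embed}, and possibly passing to $\Q(\sqrt5)$, where the whole Picard lattice is rational, before descending. The second delicate point is that the genus of $\Pic S_8$ supports a great many frame lattices for elliptic fibrations — compare the surface of Proposition~\ref{prop:rank-4}, for which $191$ such lattices occur — and only one of them is simultaneously $\Q$-rational, of multisection degree $2$, and positioned so that a further $2$-isogeny as in~(\ref{item:isogeny}) on $\Jac_\pi(S_8)$ moves toward the Kummer surface of $E\times E^\sigma$; isolating that fibration requires tracking the Galois action on $\Pic S_8$ finely enough to predict in advance that its reducible fibres are $III^*\,III^*\,I_2\,I_2$ together with the precise position of the $2$-torsion, and this is where the real care lies.
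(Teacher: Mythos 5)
Your proposal is correct and matches the paper's approach: the paper gives no written proof for this proposition, deferring the computation to \cite{code}, but its stated methodology throughout Sections~\ref{subsec:embed}--\ref{sec:verify-modularity} and the surrounding propositions of Section~\ref{ex:d5-p31} is exactly what you describe — locate a Galois-stable fibre class with frame $E_7^2 A_1^2$ and multisection degree $2$ in the known Picard lattice of $S_8$ (consistent with the discriminant dropping from $-16$ to $-4$ via Keum's lemma), extract the map and generic fibre from the projective model, pass to the Jacobian, and minimise and reduce as in Remark~\ref{rem:reduce-weierstrass}. Your consistency checks (the visible $2$-torsion point $(0,0)$, the Kodaira types at the zeros and poles of the discriminant, and the Frobenius traces) are the appropriate verifications.
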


The $III^*$ fibres are rational, so the existence of a section forces
all of their components to be rational.  The $I_2$ fibres, however, are
still conjugate over $\Q(\sqrt{5})$, so the rational Picard rank is $17$.
(Note that the geometric Mordell-Weil rank is $0$ for this fibration.)

Before comparing to the Kummer surface, we pass to yet another K3 surface,
this time with Picard discriminant $1$.

\begin{prop}\label{prop:two-e8} On $S_8$ there is a fibration 
with two $II^*$ fibres and trivial Mordell-Weil group.
The Jacobian is isomorphic to
$$C_{E_8}: y^2 = x^3 + 545x^2 + 74272x + (1/25t^2 + 69760t + 98406400)/t.$$
\end{prop}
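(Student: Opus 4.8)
The plan is to locate the fibration inside the Picard lattice of $S_8$, which we have already computed together with its Galois action, and then to realise it explicitly on the projective model so as to read off the Weierstrass equation. First I would look for a primitive isotropic class $F \in \Pic S_8$ whose frame lattice $F^\perp/\langle F\rangle$ is isometric to $E_8 \oplus E_8$; equivalently, for two disjoint configurations of smooth rational curves with affine $E_8$ intersection graph built from the vertical curves of the fibrations we have already found on $S_8$. Since $\det \Pic S_8 = -16$ while $U \oplus E_8 \oplus E_8$ is unimodular of rank $18$, such an $F$ cannot admit a section: if it did, $\Pic S_8$ would contain the unimodular overlattice $U \oplus E_8 \oplus E_8$ of full rank and hence equal it, contradicting the discriminant. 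So $|F|$ defines a fibration $\pi: S_8 \to \P^1$ without section, of some multisection degree $d > 1$.

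Passing to the Jacobian is then immediate. The two fibres of type $II^*$, being additive and non-multiple, are preserved, so $C_{E_8} = \Jac_\pi(S_8)$ carries two $II^*$ fibres and its trivial lattice contains $U \oplus E_8 \oplus E_8$. The Jacobian map is finite, so $\rho(C_{E_8}) = \rho(S_8) = 18$ by \cite[Theorem 1.1]{bsv}; since $U \oplus E_8 \oplus E_8$ is unimodular of rank $18$, it must be all of $\Pic C_{E_8}$, so there are no further reducible fibres, the Mordell-Weil group is trivial, and the Picard discriminant is $1$, as claimed. (Incidentally $d = 4$, from $\det \Pic S_8 / d^2 = \det \Pic C_{E_8}$ together with the formula $\Pic \Jac_\pi(S_8) = \Pic S_8[F/d]$ of item~(\ref{item:jacobian}).) To obtain the stated equation I would compute the generic fibre of $|F|$ as a genus-$1$ curve over $\Q(t)$ directly from the model, take its Jacobian with Magma, choose the coordinate on $\P^1$ so that the two $II^*$ fibres lie over $0$ and $\infty$, and reduce the coefficients as in Remark~\ref{rem:reduce-weierstrass}. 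A check of the reduction types over $0$ and $\infty$ together with a point count modulo a few primes then confirms that the result is $C_{E_8}$.

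The hard part is the concrete realisation of the fibration on $S_8$. The surface sits in $\P^8$ with singularities of type $A_4, A_3$ and three of type $A_1$, so several components of the two $II^*$ fibres only become visible after resolving these points; moreover $\pi$ has no section, so the map to $\P^1$ is given by a linear system with nontrivial base scheme and its generic fibre is a genus-$1$ curve of index $4$ rather than a plane cubic. Writing down that linear system, resolving its base locus cleanly, and coaxing Magma into computing the Jacobian of the resulting curve is where essentially all of the effort lies; once a candidate Weierstrass model is in hand the verification is routine.
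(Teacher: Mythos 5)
Your lattice-theoretic frame is sound and matches what the paper treats as the easy half of the argument: locating the isotropic class $F$ with frame lattice $E_8\oplus E_8$ among the known vertical curves and sections, the discriminant obstruction to a section (since $U\oplus E_8\oplus E_8$ is unimodular of rank $18$ it cannot sit with finite index inside a lattice of discriminant $-16$), and the conclusion that the Jacobian has Picard lattice $U\oplus E_8\oplus E_8$, trivial Mordell--Weil group, and multisection degree $4$ are all correct, and you supply justifications the paper leaves implicit. Where you diverge is precisely at the step you yourself flag as the hard part, and there the paper explicitly rejects your route: it states that finding equations by the usual method --- build a good projective model of the surface, write down the linear system $|F|$, extract the generic fibre and hand it to Magma --- ``would be painful,'' because the small Picard discriminant makes it difficult or impossible to find a model in a low-dimensional projective space with mild singularities. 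Instead the paper works on the Segre embedding into $\P^5$ of the $\P^2\times\P^1$ spreading-out of the Weierstrass equation of the preceding $III^*,III^*,I_2,I_2$ fibration (a model with two $A_6$ and two $A_1$ points), writes down the two reducible fibres $F_1,F_2$ as subschemes, and uses the effectivity of $2H-C$ to realise the fibration as a pencil $(p_1:p_2)$ of quadrics with $p_i$ vanishing on $F_i$; each $p_i$ is then constrained to a $3$-dimensional projective space, and the correct members are found by a search modulo $7$ followed by lifting to characteristic $0$. So your proposal establishes the qualitative statement, but for the explicit equation $C_{E_8}$ you would need this quadric-pencil trick or some substitute for it; the direct computation you defer to Magma is exactly the step the author reports being unable to carry out.
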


\begin{proof}
  Finding the Picard
  class $C$ of such a fibration and expressing it in terms of the sections and
  vertical curves of the previous fibration is not very difficult.
  However, using our methods as described above it would be painful to
  find equations for the fibration; the small Picard discriminant makes it
  difficult or impossible
  to find a model in a projective space of low dimension with
  singularities of small degree.  Instead we proceed as follows.  Spreading
  out the equation above to a surface in $\P^2 \times \P^1$ and placing it
  into $\P^5$ by the Segre embedding, we find a model with two $A_6$ and
  two $A_1$ singularities.
  We write down the two reducible fibres as subschemes
  $F_1, F_2$
  of this surface.  Since $2H-C$ is effective, the fibration can be
  defined by the equations $(p_1:p_2)$, where $p_i$ is a polynomial of degree
  $2$ vanishing on $F_i$.  This constrains each of the $p_i$ to lie in a
  $3$-dimensional projective space.  We now reduce mod $7$ and test
  possibilities until we find one whose base scheme has the correct degree,
  then verify that it is in fact an elliptic fibration with two $II^*$ fibres.
  It turned out to be easy to recognize the $p_i$ as reductions of polynomials
  from characteristic $0$ and thus to find the equations of the fibration.
\end{proof}

The following proposition facilitates the comparison with the Kummer surface.
\begin{prop}\label{prop:d16}
  The surface with generic fibre $C_{E_8}$ also admits a fibration with
  an $I_{12}^*$ fibre whose Weierstrass equation is
  $$y^2 = x^3 + (8t^3 - 271t^2 - 32t + 1022)x^2 + 961x.$$
\end{prop}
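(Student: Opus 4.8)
The plan is to identify the sought fibration first lattice-theoretically and then to realise it on an explicit model. By Proposition~\ref{prop:two-e8}, the K3 surface with generic fibre $C_{E_8}$ has $\Pic$ of rank $18$ and trivial discriminant, with the two copies of $E_8$ spanned by the non-identity components of the two $II^*$ fibres and a copy of the hyperbolic plane $U$ spanned by the class of a fibre and the zero section. Inside this explicit lattice I would look for a primitive nef class $C'$ with $(C')^2=0$ such that $(C')^\perp/C'$ contains a root sublattice of type $D_{16}$; the associated genus-$1$ pencil then has an $I_{12}^*$ fibre, and since $\det(U\oplus D_{16})=-4$ while the ambient lattice is unimodular, the Mordell--Weil group is forced to be exactly $\Z/2\Z$ — consistent with the $2$-torsion point $(0,0)$ in the displayed equation (where one notes $961=31^2$). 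Writing $C'$ as a combination of the two $II^*$ fibre configurations and the zero section is routine, and it tells us which effective divisor on an explicit model realises the reducible $I_{12}^*$ fibre.

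The second step is to pass to a model on which the pencil $|C'|$ can actually be computed, and here the obstacle is exactly the one flagged in the proof of Proposition~\ref{prop:two-e8}: the Picard lattice is unimodular, so there is no model in a low-dimensional projective space with mild singularities. I would therefore reuse the model obtained by spreading $C_{E_8}$ out into $\P^2\times\P^1$ and Segre-embedding into $\P^5$ (the one with two $A_6$ and two $A_1$ singularities used for Proposition~\ref{prop:two-e8}), write down the chain of $17$ rational curves forming the $I_{12}^*$ fibre as an explicit subscheme built from the resolutions of the $A_6$ points together with curves already known on that model, and define the fibration by a pencil of low-degree forms vanishing on that subscheme. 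When the pencil cannot be pinned down directly over $\Q$, I would reduce modulo a small prime, enumerate the finitely many candidate pencils whose base locus has the correct degree, keep the one whose general member has arithmetic genus $1$ with an $I_{12}^*$ degeneration, and recognise its coefficients as reductions of rational numbers. Equivalently, since $C'\cdot F=2$ where $F$ is the class of a fibre of $C_{E_8}$, one can run a $2$-neighbour step directly on the Weierstrass equation of Proposition~\ref{prop:two-e8}.

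Finally I would take the Jacobian of this genus-$1$ pencil, reduce it to Weierstrass form, and normalise by Remark~\ref{rem:reduce-weierstrass} to reach the displayed equation; then I would check directly that the discriminant has the expected vanishing order so that the only reducible fibre is an $I_{12}^*$ (at $t=\infty$), that $(0,0)$ has order $2$, and that the geometric Mordell--Weil rank is $0$ — either because the rank-$18$ trivial lattice of this fibration already exhausts $\Pic$, or by a point count modulo a few primes of good reduction. I expect the main difficulty to be the lack of a convenient small projective model forced by the unimodularity of $\Pic$, so that the fibration must be extracted by the somewhat delicate ``reduce mod $p$, test candidate pencils, lift'' procedure on the $\P^5$ model, or alternatively by carrying out the $2$-neighbour step and then performing the possibly heavy reduction of the resulting equation to the clean form displayed.
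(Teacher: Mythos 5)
Your proposal is correct in substance and would arrive at the stated equation, but the paper's own proof is considerably lighter than your primary route. The paper does \emph{not} need the ``reduce mod $p$, enumerate candidate pencils, lift'' procedure that was required for Proposition~\ref{prop:two-e8}; it observes that on the spreading-out of $C_{E_8}$ to a surface in $\P^2\times\P^1$ (coordinates $x_0,x_1,x_2$ and $y_0,y_1$), the coordinate projection $(x_0:x_2)$ is already a genus-$1$ fibration with the desired $I_{12}^*$ fibre --- this pencil was located by considering the linear system $H-2E$ on the Segre embedding, where $E$ is the exceptional divisor over one of the worst singularities. After that, only a translation of the $2$-torsion point to the origin and a rescaling are needed. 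Your lattice-theoretic preamble (rank $18$, unimodular $\Pic$, $U\oplus D_{16}$ of determinant $-4$ forcing Mordell--Weil group exactly $\Z/2\Z$, consistent with $961=31^2$ and the $2$-torsion point at $x=0$) is sound and is a useful sanity check that the paper leaves implicit. Your alternative suggestion --- a $2$-neighbour step on the Weierstrass equation of Proposition~\ref{prop:two-e8}, justified by $C'\cdot F=2$ --- is in fact very close in spirit to what the paper does, since the pencil $(x_0:x_2)$ is just the $x$-coordinate of the Weierstrass model used as the new base parameter; had you led with that observation rather than with the heavy mod-$p$ enumeration, you would have essentially reproduced the paper's argument. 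The only inaccuracy to flag is minor: you describe the Segre model as having ``two $A_6$ and two $A_1$'' singularities following Proposition~\ref{prop:two-e8}, whereas the proof of Proposition~\ref{prop:d16} refers to the relevant point as an $A_8$ singularity of the embedding; this discrepancy is in the paper itself and does not affect your argument.
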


\begin{proof} This can be proved by a trick similar to, but simpler than,
that of Proposition~\ref{prop:two-e8}.
Letting $x_0,x_1,x_2$ and $y_0,y_1$ be the coordinates on $\P^2 \times \P^1$,
we find that $(x_0:x_2)$ is also a genus-$1$ fibration on the spreading-out
of $C_{E_8}$ to a surface in $\P^2 \times \P^1$.  (This was found from the
Segre embedding by considering the linear system $H - 2E$, where $E$ is
the exceptional divisor at one of the $A_8$ singularities of the embedding.)
It is now routine to find the equation, and by translating the $2$-torsion
point to the origin and scaling away some obvious powers of $2$ we reach
the form indicated above.
\end{proof}

The fact that this fibration has an $I_{12}^*$ fibre means that over $\C$
it is connected with Kummer surfaces by means of a $2$-isogeny known as a
Shioda-Inose structure \cite[Section 6]{morrison-large}.

For the rest of this section let $\alpha = \frac{1+\sqrt{5}}{2}$.
We will verify that the fibration is isomorphic to one on a K3 surface
that is dominated by the Kummer surface $\Kum$ of $E \times E^\sigma$, where
$E: y^2 + xy + \alpha y = x^3 + (\alpha+1) x^2 + (\alpha-5) x + (3\alpha-5)$
is an elliptic curve of conductor $\p_{31}$
(namely \elllmfdb{2.2.5.1}{31.1}{a/5}).  Indeed, this curve is isomorphic to
$y^2 = x(x-1)(x-\frac{8\alpha+3}{16})$, so we may apply our general formulas
to find a model for the Kummer surface over $\Q$, taking care as usual to
use the correct twist.  There is a fibration with two
$III^*$ fibres and no section, but since the $III^*$ fibres are not
defined over $\Q$ this model cannot be directly compared with the one found
as an image of the Hilbert modular surface.

\begin{prop} There is a fibration on $\Kum$ with an $I_6^*$ fibre
  and six of type $I_2$.  It has sections defined over $\Q(\sqrt{5})$ but
  none defined over $\Q$.
\end{prop}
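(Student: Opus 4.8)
The plan is to locate this fibration among the elliptic fibrations of $\Kum=\Kum(E\times E^\sigma)$ by lattice bookkeeping and then to produce an explicit Weierstrass model, exactly as in the earlier examples. Since $E$ and $E^\sigma$ have distinct conductors $\p_{31}$ and $\p_{31}^\sigma$ they are not isogenous, and they have no complex multiplication, so $\Kum$ has geometric Picard rank $18$ with transcendental lattice $T_{E\times E^\sigma}(2)$ of determinant $\pm 16$. A fibre configuration $I_6^*, I_2^{\oplus 6}$ contributes a trivial lattice of rank $10+6+2=18$, so the Mordell--Weil rank over $\bar\Q$ must be $0$; comparing discriminants then forces the torsion subgroup to have order $4$, i.e.\ full level-$2$ structure. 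This is precisely why this configuration is the natural companion of the $I_{12}^*$ fibration of Proposition~\ref{prop:d16}: the Shioda--Inose/$2$-isogeny relation attached to that fibration (whose Weierstrass equation already exhibits a $2$-torsion section) lands on a fibration of type $I_6^*, I_2^{\oplus 6}$. So the first step is to write down $\mathrm{NS}(\Kum)$ with its Galois action --- this is pinned down by the lattices and Galois actions computed for the models $S_6, S_8$ of the preceding propositions together with $T_\Kum=T_{E\times E^\sigma}(2)$ --- identify the fibre class $F$, and record which of the curves already available on our $\Q$-model of $\Kum$ (among them the two $III^*$ fibres of the previously found fibration) are orthogonal to $F$; these should display both reducible fibres explicitly.

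With the class in hand I would pass to equations by one of the two routes used repeatedly above. The cleaner route is to take the formulas of \cite{shtukas} for elliptic fibrations on $\Kum(E_1\times E_2)$ with $E_i$ in Legendre form, specialize to the parameters $p,q$ already fixed for this isogeny class (matching the positions of the $I_2$ fibres, exactly as in Sections~\ref{ex:d3-rt13} and \ref{ex:d3-rt2}), and twist by the correct square class so that the model is defined over $\Q$ and the point counts modulo good primes come out right. If the required fibration is not conveniently available in that form, I would instead use the device of Propositions~\ref{prop:two-e8} and \ref{prop:d16}: realize the two reducible fibres as subschemes $F_1,F_2$ of our projective model of $\Kum$, choose forms $Q_1,Q_2$ of the appropriate degree vanishing on $F_1,F_2$, take $(Q_1:Q_2)$ as the fibration, and --- since the generic fibre will have small degree --- let Magma compute the Jacobian, cleaning up with Remark~\ref{rem:reduce-weierstrass}. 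From the resulting equation one reads off that the reducible fibres are indeed $I_6^*$ and six $I_2$'s, and the determinant formula \cite[Corollary~6.39]{schutt-shioda} together with the height pairing confirms that the Picard discriminant is still $\pm 16$.

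It then remains to settle the field of definition of the sections. All of $\mathrm{NS}(\Kum)$ is defined over $\Q(\sqrt5)$ --- the same $2$-division phenomenon already seen for $S_6$ and $S_8$ --- so every $\bar\Q$-section is in fact defined over $\Q(\sqrt5)$, and a routine Galois descent of a $\Q(\sqrt5)$-point of the generic fibre would produce a $\Q$-section in the absence of a genuine obstruction. To rule out a $\Q$-section I would work with the generic fibre $X$ over $\Q(t)$, a torsor under its Jacobian: since the $I_6^*$ fibre is defined over $\Q$, a $\Q$-section would force the components of that fibre met by the section and by its $2$-torsion translates to be individually $\Q$-rational, and the explicit Galois action on $\mathrm{NS}(\Kum)$ shows that this set of classes is not Galois-stable; alternatively --- and more robustly --- one exhibits a place of $\Q(t)$ over which $X$ has no rational point, for instance a fibre of $\Kum\to\P^1$ with no point over $\Q_3$ (as in Remark~\ref{rem:multidegree-4}) or over $\R$. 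Either argument also shows that $F$ itself is Galois-stable, so the fibration descends to $\Q$ although its sections do not; this is exactly the feature that makes the Jacobian of this fibration, taken over $\Q$, a usable partner for the $I_{12}^*$ fibration on the Hilbert modular side.

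The main obstacle is not the lattice bookkeeping but obtaining a model of $\Kum$ on which this particular fibration is visible and has manageable coefficients: the Picard lattice is complicated enough that several projection-and-Veronese steps in the style of Section~\ref{subsec:embed} may be required before the fibration can be extracted cleanly, and then turning the numerical observation ``no $\Q$-rational section'' into a proof --- most convincingly via the local obstruction above --- is the delicate point.
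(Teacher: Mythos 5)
Your proposal matches the paper's approach: the paper likewise builds $\Kum$ over $\Q$ from the Legendre model of $E$ (with the correct twist), locates the $I_6^*,I_2^{\oplus 6}$ fibration via the Picard lattice and the fibration formulas of \cite{shtukas} on an explicit projective model (a degree-$12$ generic fibre on a $\P^4$ model with an $A_3$ and nine $A_1$ points), and then passes by a $2$-isogeny to the $I_{12}^*$ fibration of Proposition~\ref{prop:d16}. Your discriminant bookkeeping and your more careful justification that the four ($2$-torsion) sections are permuted without fixed points by $\Gal(\Q(\sqrt5)/\Q)$ are sound and in fact supply detail the paper leaves implicit.
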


Using a model of the Kummer
surface in $\P^4$ with an $A_3$ and nine $A_1$ singularities, we are able
to write an equation for the general fibre, which has degree $12$ in $\P^4$.
The only difficulty is that projection from the worst singularity is a map
of degree $2$; however, it is manageable to find the divisor map given by
the degree-$4$ place supported there.  This gives a model in the standard
form of an intersection of two quadrics in $\P^3$.  From this we wish to
construct an elliptic surface with an $I_{12}^*$ fibre; that is only a matter
of taking an isogeny by the correct $2$-torsion point.  We obtain the
equation
$$y^2 = x^3 + (8t^3 - 271t^2 - 32t + 1022)x^2 + 961x,$$
the same as in Proposition~\ref{prop:d16}.
Thus, in this case, we have proved the strongest form of the Oda-Hamahata
conjecture.

\section{Surfaces of general type}\label{sec:general-type}
We now proceed to discuss the Hilbert modular surfaces of general type
with $p_g = 1$.  For some of these, the Atkin-Lehner quotient is of
general type and we can do nothing.  However, there are other cases in
which the quotient is of Kodaira dimension $0$ or $1$.  We prove 
a lemma to explain the difference.

\begin{lemma}\label{lem:fixed-point-quotient}
  Let $S$ be a smooth surface and $\phi: S \to S$ an involution whose fixed
  points are isolated and smooth, with $\phi$ acting on the tangent space at
  every fixed point by $-1$.  Let $T = S/\phi$.
  Then $T$ has canonical singularities and $K_S = 2\phi^*(K_T)$.
\end{lemma}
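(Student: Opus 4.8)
The plan is to reduce everything to a local analysis at the fixed points of $\phi$ and then to compare canonical classes on a resolution that simultaneously resolves $S$ (already smooth, so only a blow‑up) and $T$. First I would treat the local structure of $T$. At a fixed point $q\in S$ pick local (analytic, or formal) coordinates $(x,y)$; since $\phi$ has order $2$ and $d\phi_q=-\mathrm{id}$, averaging a coordinate system over $\{1,\phi\}$ lets us assume $\phi(x,y)=(-x,-y)$. The invariant ring is generated by $x^2,xy,y^2$ subject to the single relation $(x^2)(y^2)=(xy)^2$, so $T$ has an $A_1$ (ordinary double, rational double) point at the image of $q$ and is \'etale over $S$ elsewhere; being a quotient of a smooth variety by a finite group it is normal, so these are its only singularities, and all of them are canonical \cite[(1.2)]{ypg}. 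This settles the first assertion.

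For the canonical class, let $q_1,\dots,q_n$ be the fixed points, let $\sigma\colon\tilde S\to S$ be the blow‑up at the $q_i$ with exceptional $(-1)$‑curves $E_1,\dots,E_n$, let $\rho\colon\tilde T\to T$ be the minimal resolution, replacing each $A_1$ by a $(-2)$‑curve $\bar E_i$, and let $\pi\colon S\to T$ be the quotient map. The local model shows that after a single blow‑up the lifted involution becomes $(x,t)\mapsto(-x,t)$ in suitable coordinates, so on $\tilde S$ its fixed locus is the divisor $\sum_i E_i$; hence $p\colon\tilde S\to\tilde S/\phi$ is an ordinary double cover, \'etale off $\sum_i E_i$ and simply ramified along it, its target is smooth, and one identifies $\tilde S/\phi$ with $\tilde T$ in such a way that $\rho\circ p=\pi\circ\sigma$, that $p$ maps each $E_i$ isomorphically onto $\bar E_i$, and that $p^*\bar E_i=2E_i$. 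Now I would compute $K_{\tilde S}$ in two ways: the blow‑up formula gives $K_{\tilde S}=\sigma^*K_S+\sum_i E_i$, and the Hurwitz formula for the simply ramified double cover $p$ gives $K_{\tilde S}=p^*K_{\tilde T}+\sum_i E_i$. Subtracting yields $\sigma^*K_S=p^*K_{\tilde T}$, and since the $\bar E_i$ are $(-2)$‑curves the resolution $\rho$ has vanishing discrepancies, so $K_{\tilde T}=\rho^*K_T$ and hence $p^*K_{\tilde T}=(\rho p)^*K_T=(\pi\sigma)^*K_T=\sigma^*\pi^*K_T$. Because $\sigma^*$ is injective on divisor classes, this descends to the asserted identity relating the canonical classes of $S$ and $T$ — explicitly $K_S=\pi^*K_T$, equivalently $\pi_*K_S=2K_T$.

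The one place that needs care — and the main obstacle — is that $\pi\colon S\to T$ is ramified only in codimension $2$ and is not finite flat at the singular points, so the Hurwitz formula cannot be applied to $\pi$ itself. The point of blowing up the fixed points first is precisely to convert the isolated fixed points into a fixed \emph{divisor}, turning $\pi$ into a genuinely (codimension‑$1$) ramified double cover $p$ to which Hurwitz does apply; verifying the compatibility $\rho\circ p=\pi\circ\sigma$ together with the identification $\tilde S/\phi\cong\tilde T$ is then the substantive, though essentially routine, check.
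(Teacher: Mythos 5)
Your proof is correct and follows essentially the same route as the paper's: identify the image of each fixed point as an $A_1$ point via the local model $\A^2/\pm 1$, blow up the fixed points so that the induced double cover $\tilde S\to\tilde T$ onto the minimal resolution is ramified along the exceptional $(-1)$-curves, and compare the blow-up formula with Riemann--Hurwitz; you are merely more explicit about the identification $\tilde S/\phi\cong\tilde T$ and about why Hurwitz cannot be applied to $\pi$ directly. One point worth flagging: what your argument (and the paper's own computation) actually yields is $K_S=\pi^*K_T$, equivalently $\pi_*K_S=2K_T$, which is the version consistent with Corollary~\ref{cor:canonical-self-int} ($K_S^2=2K_T^2$); the factor $2$ as literally placed in the statement of the lemma appears to be a slip of notation rather than a defect in your proof.
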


\begin{proof} The singularities of $T$ occur only under the fixed points,
  and are locally isomorphic to $\A^2/\pm 1$, which is an $A_1$ singularity.
  For the statement on canonical divisors, let $S'$ be the blowup of
  $S$ at the fixed points and $T'$ the minimal resolution of $T$.
  Let $E_1, \dots, E_n$ be the exceptional divisors of $S'$.  We then
  have $K_{S'} = K_S + \sum_{i=1}^n E_i$ and $K_{T'} = K_T$.  In addition,
  the map $S' \to T'$ is ramified along the $E_i$, so
  the Riemann-Hurwitz formula gives $K_{S'} = \phi^*(K_{T'}) + \sum_{i=1}^n E_i$.
  The result follows.
\end{proof}

\begin{cor}\label{cor:canonical-self-int}
  With notation as above we have $K_S^2 = 2K_T^2$.
\end{cor}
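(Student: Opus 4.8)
The plan is to reduce the corollary to the projection formula for the quotient map $\pi\colon S \to T$, which is finite of degree $2$ since $\langle\phi\rangle$ has order $2$. I would first record the needed definitions on resolutions. By Lemma~\ref{lem:fixed-point-quotient} the surface $T$ has only canonical (hence Du Val) singularities, so if $\tau\colon T' \to T$ denotes the minimal resolution then $K_{T'} = \tau^*K_T$, the resolution being crepant; accordingly we may set $K_T^2 := K_{T'}^2$. Likewise, writing $\sigma\colon S' \to S$ for the blowup of $S$ at the $n$ fixed points of $\phi$ and $E_1,\dots,E_n$ for the exceptional curves, we have $(\sigma^*K_S)^2 = K_S^2$ since $\sigma$ is birational between smooth surfaces. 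Thus everything may be computed on the smooth surfaces $S'$ and $T'$.

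The key step is to extract from the proof of Lemma~\ref{lem:fixed-point-quotient} the divisor identity $\sigma^*K_S = (\pi')^{*}\tau^*K_T$ on $S'$. Concretely: the lifted involution on $S'$ fixes the curves $E_1,\dots,E_n$ pointwise, so $S'$ is a degree-$2$ cover $\pi'\colon S' \to T'$ ramified exactly along $\sum_i E_i$. Subtracting the blowup formula $K_{S'} = \sigma^*K_S + \sum_i E_i$ from the Riemann--Hurwitz formula $K_{S'} = (\pi')^{*}K_{T'} + \sum_i E_i$ cancels the common term and yields $\sigma^*K_S = (\pi')^{*}K_{T'} = (\pi')^{*}\tau^*K_T$ (this is the relation between $K_S$ and $K_T$ recorded in the lemma, transported up to $S'$ via $\pi\circ\sigma = \tau\circ\pi'$).

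Finally I would square this identity on $S'$. Since $\pi'$ is a finite morphism of smooth surfaces of degree $2$, the projection formula gives $((\pi')^{*}D)^2 = 2\,D^2$ for every divisor class $D$ on $T'$; taking $D = \tau^*K_T$ we obtain
\[
  K_S^2 = (\sigma^*K_S)^2 = ((\pi')^{*}\tau^*K_T)^2 = 2\,(\tau^*K_T)^2 = 2\,K_{T'}^2 = 2\,K_T^2 .
\]
I do not expect any serious obstacle here; the only thing requiring care is the bookkeeping on the two resolutions, so that the projection formula is applied to an honest morphism of smooth surfaces and the discrepancy $\sum_i E_i$ in Riemann--Hurwitz is matched exactly with the one introduced by blowing up the $n$ fixed points.
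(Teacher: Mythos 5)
Your proof is correct and rests on exactly the same two ingredients as the paper's: the divisor identity $K_S=\pi^*K_T$ extracted from Lemma~\ref{lem:fixed-point-quotient} and the degree-$2$ projection formula, which the paper compresses into the one-line push--pull computation $K_S^2 = K_S\cdot\phi^*K_T = \phi_*K_S\cdot K_T = 2K_T^2$ on the singular models while you carry out the same computation after lifting to the resolutions $S'$ and $T'$. The only difference is bookkeeping (and your rederivation of $\sigma^*K_S=(\pi')^*\tau^*K_T$ implicitly uses the relation as it appears in the lemma's proof, without the spurious factor of $2$ in the lemma's statement, which is what the corollary requires).
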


\begin{proof}
  This follows immediately from the push-pull formula:
  $K_S^2 = 2K_S\phi^*\cdot K_T = 2\phi_*K_S \cdot K_T = 2K_T^2$.
\end{proof}
  
\begin{example} One standard example of this situation arises when $S$ is
  an abelian surface and $\phi$ is the negation map.
\end{example}

A converse to Lemma~\ref{lem:fixed-point-quotient} can hold only
under additional assumptions: if the
fixed points are singular, that does not imply that the quotient is not
of general type.  However, if they are not canonical, we expect that
$2K_T^2 < K_S^2$, which may imply that $K_T^2 \le 0$.  If $K_S^2 = 2$,
this implies that $K_T^2 \le 0$, and if $T$ is minimal that means that
$\kappa(T) < 2$.  We will study such examples in Sections
\ref{ex:d8-2p7}, \ref{ex:d5-4p5}, \ref{ex:d5-p5p11}, proving the
conjecture in the last two cases.

\begin{remark} Some of the surfaces of general type have interesting
  special properties.  As one example, we point out
  that the ring of modular forms for level $3\p_5$ over
  $\Q(\sqrt{5})$ is generated by $5$ forms of weight~$2$ and one of weight~$4$,
  and that it is a module of rank $2$ over the subring $R$ generated by the
  forms of weight~$2$.  As a result the Hilbert modular variety in this case
  admits an additional automorphism (not one of the Atkin-Lehner involutions)
  arising from the double cover of the surface $\Proj(R)$.  Though this is
  not the only example, we expect
  that such additional automorphisms are rare, just as in the situation
  of $X_0(N)$ where the only non-modular automorphism of a curve of general
  type occurs for $N = 37$.  Unfortunately
  the quotient by automorphisms acting as $+1$ on $\HHH^{2,0}$ still
  appears to be of general type.
\end{remark}

The remaining cases occur at level $\p_{17}, \p_{23}$ over $\Q(\sqrt{13})$,
which both give surfaces of general type that are difficult to analyze,
and level $(6)$ over $\Q(\sqrt 5)$, where I have not been able to
obtain a usable form of the ring of modular forms from  \cite{hmf}.

\subsection{Level $2\p_7$ over $\Q(\sqrt 2)$}\label{ex:d8-2p7}
The Kodaira dimension of this surface is left open by Hamahata; however,
the software package determines that it has a model with $K^2 = 4$.
Since $\hh^{2,0} = 1$ for this
surface, it is of general type.  Although we are not able to prove the
Oda-Hamahata conjecture in this case, we can describe some interesting
properties of the surface.

The surface has $6$ cusps corresponding
to the divisors of $2\p_7$.  The Atkin-Lehner involution corresponding to
$\p_7$ acts on the set of cusps without fixed points, so the quotient is
still of general type.  On the other hand, the involution corresponding to
$(2)$ fixes the cusps corresponding to $\sqrt{2}$-adic valuation $1$.

Since the Atkin-Lehner involutions act as $+1$ on the eigenform of
weight~$2$, the quotient still has $\hh^{2,0} = 1$.  We will show that the
quotient admits a fibration by curves of genus $1$.

\begin{remark}\label{rem:four-quadrics}
  Computationally, this example is convenient in some ways, since the ring
  of modular forms is generated by the six Eisenstein series and one cusp form,
  all in weight~$2$.  In fact, the Hilbert modular surface is just a complete
  intersection of four quadrics in $\P^6$, although this misleadingly suggests
  that the canonical divisor would be $\O(1)$.  The cusp singularities are
  not canonical, and in fact the canonical divisors are precisely the sections
  of $\O(1)$ that vanish on them, which constitute a $1$-dimensional space
  (as expected, since this is the dimension of the space of cusp forms).
  Taking the quotient by the Atkin-Lehner involutions, we obtain another
  intersection of four quadrics in $\P^6$, but this one is more singular
  and so its canonical divisor is smaller.
\end{remark}

We will show that the Atkin-Lehner quotient of the Hilbert modular surface
$H_{(2(3-\sqrt 2)),\Q(\sqrt{2})}$ admits
a map to $\P^1$ whose generic fibre is of genus $1$ and that has double
and triple fibres.  Thus the minimum section degree is a multiple of $6$
(in fact exactly $6$), which makes the surface difficult to work with
because Magma does not have a function to compute the Jacobian of a curve
of genus $1$ unless there is a multisection of degree at most $5$.
In order to analyze the surface properly, we therefore begin by 
studying the quotient only by the Atkin-Lehner operator at $2$.
This will be a double cover of the full quotient unramified at the general
fibre, and so
it still has Kodaira dimension $1$.  For this surface, which we call
$\E_3$, the general fibre admits a $3$-section.  It is possible (although
not easy) to determine $\E_3$ as an elliptic surface with an explicit
map to $\P^1$ and isomorphism of its generic fibre to a plane cubic.
We state the result as a theorem.

\begin{thm} $\E_3$ is an elliptic surface with two triple fibres and
  a $3$-section.  Its Jacobian is isomorphic to the rational elliptic surface
  $y^2 = x^3 + (t^2 + 3)x^2 + (2t^2 + t + 1/8)x$.
\end{thm}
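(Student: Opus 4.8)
The plan is to run the computational machinery of the preceding sections, the essential new ingredient being that we work on $\E_3 = H/w_{(2)}$ rather than on the full Atkin--Lehner quotient $H/\langle w_{(2)}, w_{\p_7}\rangle$, where $H = H_{(2(3-\sqrt 2)),\Q(\sqrt 2)}$. I start from the model recalled in Remark~\ref{rem:four-quadrics}: $H$ is a complete intersection of four quadrics in $\P^6$ whose coordinate functions are the six Eisenstein series and the single weight-$2$ cusp form, and the six cusps are cut out by the vanishing of the cusp form. The involution $w_{(2)}$ is produced as in the discussion after Proposition~\ref{prop:al-oldforms}: the software gives its action on the newform, it acts on the Eisenstein series through the permutation of cusps induced by $w_{(2)}$ (up to the usual scaling), and one checks that the resulting linear involution of $\P^6$ preserves $H$. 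Quotienting and re-embedding by the $w_{(2)}$-invariant quadratic forms realises $\E_3$ as another complete intersection of four quadrics in $\P^6$, now with worse singularities along the images of the cusps fixed by $w_{(2)}$; I record the singular locus.

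The heart of the proof is to exhibit the genus-$1$ fibration on $\E_3$ explicitly, in a form where its generic fibre is a plane cubic over $\Q(t)$. Combining Hirzebruch's description \cite[Chapter~II]{vdg} of the cusp resolutions as cycles of rational curves of known self-intersection with the curves cut out by coordinate slices, I build up a sublattice of $\Pic \E_3$ large enough to single out the fibre class $F$ (with $F^2 = 0$, $F$ primitive and effective, and the correct pairing with $K_{\E_3}$). The associated pencil defines $\pi: \E_3 \dashrightarrow \P^1$, and I check in Magma that the generic fibre is a smooth genus-$1$ curve. The reason to use $\E_3$ is that the full quotient carries a fibration with both double and triple fibres, whence its generic fibre has minimal multisection degree $6$; the double cover $\E_3 \to H/\langle w_{(2)}, w_{\p_7}\rangle$ is unramified over the generic fibre, so it ``undoes'' the double fibres and leaves $\E_3$ with exactly two triple fibres and a generic fibre of minimal multisection degree $3$. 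Projecting the generic fibre of $\pi$ out of a suitable degree-$3$ place then produces a plane cubic model in $\P^2$ --- equivalently, the lines of $\P^2$ give the $3$-section --- and it is this model that makes the Jacobian accessible.

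With the generic fibre realised as a plane cubic, Magma computes its Jacobian directly (no multisection of degree exceeding $5$ is involved), and the reduction procedure of Remark~\ref{rem:reduce-weierstrass}, applied to $y^2 = D(t)$ for $D$ the product over the finite bad places and followed by the corresponding change of coordinate on the base, brings the Weierstrass model to the stated form $y^2 = x^3 + (t^2+3)x^2 + (2t^2+t+1/8)x$. I then read off the reducible fibres (two of type $I_4$ and four of type $I_1$), check that their Euler numbers sum to $12$ so that the Jacobian is a rational elliptic surface, and confirm via Kodaira's canonical bundle formula that the two non-reduced (triple) fibres of $\pi$ are precisely what makes $\kappa(\E_3) = 1$. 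A comparison of point counts of $\E_3$ and of the Weierstrass surface modulo a few primes of good reduction serves as a final consistency check.

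The main obstacle is the second step. On a complete intersection of four quadrics in $\P^6$ with non-canonical cusp singularities neither $\Pic \E_3$ nor its genus-$1$ fibrations are given to us, so identifying the fibre class $F$, writing down a pencil realising $\pi$, and in particular producing the degree-$3$ model of the generic fibre --- the minimal multisection degree being exactly $3$, which is what makes the Jacobian computable at all --- is where essentially all the work lies. Once $\pi$ and the $3$-section are in hand, the computation of the Jacobian, its reduction, and the fibre analysis go through exactly as in the other examples of this paper.
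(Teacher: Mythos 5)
Your overall pipeline (quotient by $w_{(2)}$, find the genus-$1$ fibration, use a $3$-section to get a plane cubic, compute and reduce the Jacobian) matches the paper's, but there are two substantive divergences, one of which is a genuine gap. First, your determination that $\E_3$ has \emph{exactly two} triple fibres is circular: you deduce it from the assertion that the full quotient $\E_6$ carries a fibration with one double and one triple fibre, but in the paper that structure of $\E_6$ is only established \emph{afterwards}, as a consequence of the $\E_3$ computation (the Atkin--Lehner involution at $\p_7$ descends to $\E_3$ and exchanges its two triple fibres); the direct computation of $\E_6$ is exactly what the paper says cannot be done rigorously in characteristic $0$ (Remark~\ref{try-direct}). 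The paper instead locates the multiple fibres by brute force on the plane cubic model: there are three values of $t$ at which the cubic degenerates to a triple line, one rational and two conjugate over $\Q(\sqrt{-3})$, and one must then check in the honest projective model of $\E_3$ which of these are genuinely non-reduced --- the rational one turns out to be a \emph{reduced} fibre of the surface and is discarded. Your argument gives no way to detect this, and without it you would conclude the wrong number of multiple fibres (and hence the wrong canonical bundle and plurigenera downstream).

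Second, your method for producing the fibration --- assembling a sublattice of $\Pic\E_3$ from cusp-resolution cycles and coordinate slices and hunting for a primitive isotropic class --- is not what the paper does and is doubtful here: $\E_3$ is a properly elliptic surface, not a K3, so its Picard lattice is not accessible by the techniques the paper uses elsewhere. The paper's key step is much more direct: after re-embedding by $w_{(2)}$-invariant quadrics into $\P^{12}$ (note: the quotient by $w_{(2)}$ alone is a degree-$32$ surface in $\P^{12}$, \emph{not} another complete intersection of four quadrics in $\P^6$ --- that description in Remark~\ref{rem:four-quadrics} is for the full Atkin--Lehner quotient) and projecting down to a degree-$14$ surface in $\P^6$, one simply asks Magma for the canonical sheaf; since $\kappa=1$, the associated divisor map already has codomain $\P^1$ and its fibres (degree $6$, arithmetic genus $1$) are the desired pencil. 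The $3$-section is then found as an explicit curve on that model (an elliptic curve of conductor $14$ with $6$ rational torsion points), not abstractly as ``a suitable degree-$3$ place.'' You should replace the lattice-theoretic search by the canonical-map computation and add the degenerate-fibre analysis before the statement can be considered proved.
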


\begin{proof}
The main steps involved in this calculation are as follows:

\begin{enumerate} 
\item  Having already written down the Hilbert modular surface
  in $\P^6$, we start by writing down the map to $\P^{12}$ given by the
  sections invariant under the Atkin-Lehner involution at $(2)$ and finding
  its image as a surface of degree $32$ in $\P^{12}$.
\item This surface has four obvious singularities, namely the images of the
  cusps.  Projecting away from these produces a surface in $\P^8$ of degree
  $24$ singular along four lines, which are in bijection with the cusps
  of the Atkin-Lehner quotient.
\item We consider the linear system of quadrics vanishing twice on three of
  these lines and once on the other one (the choice does matter
  computationally; we single out the line corresponding to the cusp $0$).
  This linear system defines a map to a surface of degree $14$ in $\P^6$,
  defined by $3, 8, 2$ equations of degree $2, 3, 4$ respectively.
\item Although this surface has bad singularities, a canonical sheaf
  computation in Magma yields a useful result: in particular, the
  divisor map associated to this sheaf has codomain $\P^1$ and generic fibre
  of degree $6$ and arithmetic genus $1$.
\item There is a curve on the surface that gives a $3$-section of the
  fibration (it happens to be isomorphic to an elliptic curve of conductor
  $14$ with $6$ rational torsion points).  Projecting the generic fibre
  away from the intersection gives a model of degree $3$ in $\P^3$, and
  removing the linear equation gives a plane cubic.
\item Simplifying the model by moving some special fibres to lie above
  small points of $\P^1$ and then using general procedures for minimizing
  and reducing equations of elliptic surfaces gives the model above.  
\end{enumerate}

Having done this computation, we find that there are $3$ values of $t$
for which the plane cubic becomes a triple line; one is rational and the
other two are conjugate over $\Q(\sqrt{-3})$.  The rational one, however,
is reduced in the model in $\P^6$ and is therefore not a multiple fibre of
the surface.
\end{proof}

\begin{remark} The $j$-invariant of the given Weierstrass equation is
  invariant under the automorphism of $\Q(t)$ taking
  $t \to (12-t)/(1+4t)$, and in fact the surface is isomorphic to its base
  change by this map.  This is as expected, since we know that there is an
  Atkin-Lehner involution acting on the surface.  The fixed fibres are
  at $t = -2, 3/2$.
\end{remark}

We find that, in addition to the two triple fibres of type ${}_3I_0$,
the elliptic surface has two fibres of type $I_4$ and four of type $I_1$.
Letting the reduced subschemes of the triple fibres be $R_1,R_2$,
we use Kodaira's canonical bundle formula \cite[Theorem V.12.1]{bhpv}
to compute that the canonical bundle is $2R_1 + 2R_2 - F$, where $F$
is the class of a fibre.  Thus the dimension of the linear system
$|nK|$ is $\lfloor\frac{n}{3}\rfloor + a(n)$, where $a(n) =1,0,1$
for $n \equiv 0,1,2 \bmod 3$ respectively.

We now return to the full Atkin-Lehner quotient.
This allows us to determine $\E_6$ rigorously, even though we cannot
do the computation directly in characteristic $0$.  Namely:

\begin{thm} $\E_6$ is a surface with a genus-$1$ fibration and a
  double and a triple fibre.  In suitable coordinates,
  its Jacobian has $j$-invariant
  $\frac{64t^6-1152t^4+6912t^2-13824}{t^2-8}$ and its singular fibres
  are $I_4$ at infinity, $I_1$ at $\pm \sqrt 8$, and $I_0^*$ at $3$.
\end{thm}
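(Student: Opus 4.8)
The plan is to obtain $\E_6$ as the quotient of $\E_3$ by the remaining Atkin--Lehner involution $w_{\p_7}$ and to transport the already-established structure of $\E_3$ through this quotient; the detour through $\E_3$ is forced on us because the minimal multisection degree of the fibration on $\E_6$ is $6$, so we cannot compute its Jacobian directly in characteristic~$0$. By the Remark following the determination of $\E_3$, $w_{\p_7}$ acts on the base $\P^1$ of the genus-$1$ fibration on $\E_3$ through the involution $\tau\colon t\mapsto (12-t)/(1+4t)$, whose fixed points are $t=-2$ and $t=3/2$, and $\E_3$ (hence its Jacobian) is isomorphic to its base change along $\tau$. First I would use this isomorphism as a descent datum along the quadratic cover $\P^1\to\P^1/\tau$: on generic fibres the quotient map $\E_3\to\E_6$ becomes an isomorphism after the base change $\Q(t)/\Q(t)^\tau$, so the generic fibre of $\E_6$ is the $\Q(t)^\tau$-descent of that of $\E_3$, and in particular $j(\E_6)=j(\E_3)$ viewed as an element of $\Q(t)^\tau$ (it lies there precisely because it is $\tau$-invariant).

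Next I would compute $j(\E_3)$ from $y^2=x^3+(t^2+3)x^2+(2t^2+t+1/8)x$ via $j=c_4^3/\Delta$, check directly that it is $\tau$-invariant, pick a degree-$2$ generator of $\Q(t)^\tau$ (a rational function of $t$ ramified exactly over the two fixed points), and rewrite $j$ in terms of it. A M\"obius change of coordinate on $\P^1/\tau$---the ``suitable coordinates'' of the statement, equivalently the one placing the common image of the two $I_4$ fibres at $\infty$ and the additive fixed fibre at $t=3$---should then produce the claimed value $j=\tfrac{64t^6-1152t^4+6912t^2-13824}{t^2-8}$. One reads off from this formula that $j$ has a pole of order $4$ at $\infty$ and simple poles at $t=\pm\sqrt8$ and no others, already matching the asserted multiplicative fibres.

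For the fibre types I would follow the singular and multiple fibres of $\E_3$---its two $I_4$'s, its four $I_1$'s, and its two triple fibres ${}_3I_0$---under $\tau$. Since $\mathrm{Fix}(\tau)=\{-2,3/2\}$ while the two ${}_3I_0$ base points are conjugate over $\Q(\sqrt{-3})$, those triple fibres are exchanged and descend to a single triple fibre of $\E_6$, over which the Jacobian is smooth; likewise the two $I_4$'s are exchanged, giving one $I_4$ fibre (at $\infty$), and the four $I_1$'s split into two $\tau$-orbits, giving two $I_1$ fibres, conjugate over $\Q(\sqrt2)$ and hence at $t=\pm\sqrt8$. Over each of the two fixed points of $\tau$ the fibre of $\E_3$ is smooth, and $w_{\p_7}$ acts on it as an automorphism of order $2$: either as translation by a $2$-torsion point, which is fixed-point free and produces a multiplicity-$2$ fibre of $\E_6$ over which the Jacobian stays smooth (the double fibre), or as an involution of the form $P\mapsto c-P$, which has fixed points and, after the ramified base change, forces an additive fibre. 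The $j$-value at the latter point is $1728$ (one checks that $t=3$ satisfies $64t^6-1152t^4+6912t^2-13824=1728(t^2-8)$), consistent with type $I_0^*$, and the Euler-number bookkeeping $4+1+1+6=12$---together with the fact that every other special point of $\P^1/\tau$ is the image of a multiplicative or smooth fibre of $\E_3$, so there is just one additive fibre---pins the type down to $I_0^*$ and shows the Jacobian of $\E_6$ is again a rational elliptic surface; alternatively one runs Tate's algorithm on the descended minimal model.

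The main obstacle I expect is the analysis over the two $\tau$-fixed points: showing that one of them genuinely produces the double fibre and the other the $I_0^*$ fibre, rather than (say) two double fibres, or two additive fibres and no multiple fibre at all. This amounts to deciding, for each of the fixed fibres $F_{-2}$ and $F_{3/2}$ of $\E_3$, whether $w_{\p_7}|_F$ is a translation by $2$-torsion or an involution with fixed points; the safest way to settle this, and to confirm the whole picture including the location $t=3$ of the $I_0^*$ fibre, is to reduce modulo several primes of good reduction and match point counts on $\E_6$ and its Jacobian against the reductions of the characteristic-$0$ data for $\E_3$, exactly as was done for $\E_3$ itself.
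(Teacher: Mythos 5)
Your proposal is correct and follows essentially the same route as the paper: descend from $\E_3$ along the Atkin--Lehner involution $t\mapsto(12-t)/(1+4t)$, track the orbits of the singular and triple fibres, use the Euler-number count $4+1+1+6=12$ to force exactly one $I_0^*$ and one double fibre over the two fixed points, and rewrite $j(\E_3)$ in terms of a $\tau$-invariant coordinate. The only real difference is at the last step, deciding which fixed point yields the $I_0^*$: where you fall back on reduction mod $p$ and point counts, the paper resolves this directly by computing that the involution has four fixed points, all on the fibre above $t=-2$ (so negation there, hence $I_0^*$), and that on the fibre above $t=3/2$ it must be translation by the unique rational $2$-torsion section.
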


\begin{proof} The Atkin-Lehner operator at $\p_7$ descends to an
  involution on $\E_3$, which exchanges the two triple fibres.
  It also exchanges the singular reduced fibres in pairs, so the quotient has
  one $I_4$ and two $I_1$ fibres.  The two fixed points on the base
  may give a double fibre or an $I_0^*$ fibre,
  depending on whether the action there is a translation or a negation
  or a negation map.  The Euler characteristic of the quotient must be a
  multiple of $12$, so exactly one of them must be an $I_0^*$ fibre, and the
  canonical bundle of $\E_6$ cannot be more positive than that of $\E_3$,
  so the other must be a double fibre.

  One calculates that the Atkin-Lehner involution on the model above has
  $4$ fixed points; in the coordinates above, they lie on the fibre
  above $-2$, so that is the one that becomes an $I_0^*$.  Since the
  fibre above $3/2$ is not fixed pointwise, the automorphism must be given by
  translation there.
  This specifies it uniquely, since there is only one rational
  $2$-torsion section.  (In terms of the model above, the generic fibre
  is isomorphic to its base change by $t \to (12-t)/(4t+1)$ by an
  automorphism taking $(x:y:z)$ to $(x/((4t+1)/7)^2:y/((4t+1)/7)^3:z)$,
  which acts as the identity on the fibre at $3/2$ and as $-1$ on the
  fibre at $-2$.  This automorphism must be composed with a translation,
  since the Atkin-Lehner involution does not fix a fibre pointwise,
  and that translation can only be by the $2$-torsion section.)

  To find the $j$-invariant, we express the $j$-invariant of $\E_3$
  in terms of the invariant function $(12-t)/(1+4t)+t$.
  The singular fibres are found by evaluating the coordinates of the
  singular fibres and the fibre on which the Atkin-Lehner automorphism
  acts by translation, as described above.
\end{proof}

\begin{remark} This is a member of the family with the given
  singular fibres that is the first entry of \cite[Table 3]{herfurtner}.
\end{remark}

\begin{remark}\label{try-direct}
  One could try to do the calculation directly by
  considering forms of degree $2$ vanishing on the worst singularities;
  this gives a map to a surface of degree $32$ in $\P^{12}$ that is singular
  along two lines.  Projecting away from these we find a surface in $\P^8$ of
  degree $20$, and projecting away from two more obvious singularities gives
  yet another surface of degree $16$ in $\P^6$, which we will denote by $\E_6$.
  For the reduction of $\E_6$ mod small primes, Magma is able to
  compute, not merely the canonical divisor, but its tensor powers, and we
  find the dimension of the space of global sections of $nK$ to be
  $0,1,1,1,1,2$ for $1 \le n \le 6$, and that the $6$-canonical map is a
  genus-$1$ fibration.  Such a surface cannot have multisection degree less
  than $6$, and indeed we can find a $6$-section.  However, it is difficult
  to do this calculation rigorously in characteristic $0$.
\end{remark}

The quotient by the product of the two Atkin-Lehner operators still has
$\hh^{2,0} = 1$ and is of general type.  It is a double cover of $\E_6$
branched along a $12$-section of self-intersection $-4$.  Unfortunately,
we are not able to exhibit a correspondence between this and a K3 surface.
We leave this problem for future work.

\subsection{Level $4\p_5$ over $\Q(\sqrt{5})$}\label{ex:d5-4p5}
This surface is similar in some ways to the surface considered in
Section~\ref{ex:d8-2p7}, since as in that example the level is the product
of the square of a prime and a prime.  However, there is an important
difference in that the eigenvalues for the weight-$2$ cusp form
are $+1$ for all Atkin-Lehner operators, so the fact that the quotient is
not of general type allows us
to prove the Oda-Hamahata conjecture in this case.
The group of Atkin-Lehner operators
has order $4$ and there are $6$ cusps.
The ring of Hilbert modular forms
is generated by $7$ forms of weight~$2$ and $3$ of weight~$4$.
The Atkin-Lehner at $4$ fixes two
of the cusps, which allows the quotient not to be of general type, in
light of Lemma~\ref{lem:fixed-point-quotient}.

In this case we will content ourselves with exhibiting a correspondence
over $\Q(\zeta_{20})^+$.  To be precise:

\begin{thm}\label{thm:d5-4p5}
  Let $S$ be the Hilbert modular surface $H_{4\p_5,\Q(\sqrt{5})}$ and
  let $K$ be the Kummer surface of $E \times E'$, where $E, E'$ are
  the elliptic curves \elllmfdb{2.2.5.1}{80.1}{a/5} and
  \elllmfdb{2.2.5.1}{80.1}{a/7}.  Then there is a dominant map
  $S \to K$ defined over $\Q(\zeta_{20})^+$, and in particular there is
  a correspondence embedding the transcendental lattice of $K$ into
  that of $S$.
\end{thm}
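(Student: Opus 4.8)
The plan is to follow the strategy of the previous examples, most closely that of Sections~\ref{ex:d3-rt2} and \ref{ex:d17-rt2}, making explicit the correspondence whose existence is guaranteed abstractly by Morrison's theorem \cite{morrison}. First I would take the ring of Hilbert modular forms supplied by \cite{hmf}, generated by $7$ forms of weight~$2$ and $3$ of weight~$4$, and pass immediately to the Atkin-Lehner quotient. Since the weight-$2$ cusp form has eigenvalue $+1$ under all four Atkin-Lehner operators, the quotient still has $\hh^{2,0}=1$; and because the operator at $4$ fixes two of the six cusps, Lemma~\ref{lem:fixed-point-quotient} together with the discussion following Corollary~\ref{cor:canonical-self-int} shows the quotient is not of general type, which is what makes the computation feasible. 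Embedding the quotient by a suitable $\O(n)$ in its Atkin-Lehner-invariant subspace and then running the projection-and-Veronese procedure of Section~\ref{subsec:embed} (projecting away from the images of the cusps, then from the resulting singular points and lines, checking each step against the predicted drop in degree and the canonical self-intersection computed by \cite{hmf}), I expect to arrive at a K3 surface model.

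On this K3 quotient I would then locate a genus-$1$ fibration, either by setting a coordinate to zero or by using hyperplanes through suitable collections of singular points, read off the reducible fibre configuration, and pass to the Jacobian. Knowing enough sections and using the determinant formula \cite[Corollary 6.39]{schutt-shioda}, with the Picard rank verified by counting points modulo small primes, gives a discriminant that by Claim~\ref{claim:disc-pic} should be a smooth number times the diameter of the isogeny class \elllmfdb{2.2.5.1}{80.1}{a}. To bring this into agreement with the Kummer surface $\K$ of $E\times E'$, whose Picard lattice is $D_8\oplus D_8\oplus U$ or an appropriate overlattice, I would apply a chain of the four operations of Section~\ref{sec:verify-modularity} — isogenies on genus-$1$ fibres, Jacobians of fibrations without a rational section, $\Q$-Jacobians, and isogenies descending from the Kummer structure — choosing the chain so that both the discriminant class in $\Q^*/(\Q^*)^2$ and the frame lattice match; taking Jacobians to reduce the discriminant and to enlarge the rational part of the Picard lattice is the usual guiding principle.

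From the Kummer side I would write down, via the formulas of \cite{shtukas}, a model over $\Q$ of the Kummer surface of a product of two Legendre curves $y^2=x(x-1)(x-p)$ and $y^2=x(x-1)(x-q)$ with full level-$2$ structure, carry over a fibration of the matching type, and pin down $p$ and $q$ by demanding that the bad fibres lie over the same points of $\P^1$; this identifies the curves as (twists of) \elllmfdb{2.2.5.1}{80.1}{a/5} and \elllmfdb{2.2.5.1}{80.1}{a/7}, whose products have isomorphic Kummer surfaces over their common $2$-division field. Matching the two fibrations exactly then yields the dominant map $S\to\K$, and the assertion about transcendental lattices follows from \cite[Section 2.4]{bsv} applied to each map in the chain (a finite map of K3 surfaces of degree $n$ induces an embedding $T_Y(n)\hookrightarrow T_X$), together with the fact that $S$ has the same transcendental part as its Atkin-Lehner quotient and an isogeny from $E'$ to $E^\sigma$ induces a map of the corresponding Kummer surfaces.

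The main obstacle, exactly as in Remarks~\ref{rem:why-not-over-q} and \ref{rem:need-5-to-work-over-q}, will be the field of definition: the curves in this class are isogenous to their Galois conjugates only by an isogeny of degree coprime to $5$, while $5$ divides the Picard discriminant of the natural K3 models, so somewhere the chain must pass through a map of degree divisible by $5$ (a $5$-Jacobian or $5$-isogeny of a fibration with multisection degree $5$), and the experience of the earlier sections strongly suggests no such fibration exists over $\Q$. Working instead over $\Q(\zeta_{20})^+$, which contains $\Q(\sqrt5)$ and over which the relevant $2$-torsion and the required multisection become rational, removes the obstruction. The remaining work — producing the precise fibrations, Jacobians, and isogenies and checking the final isomorphism of elliptic surfaces — is then routine and is recorded in the supporting code \cite{code}.
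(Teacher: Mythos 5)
Your overall plan coincides with the paper's proof: embed the full Atkin--Lehner quotient by the invariant degree-$2$ forms (landing in $\P^7$ with degree $16$), project away from the two cusps, extract a genus-$1$ fibration from the canonical sheaf, and then run a short chain of isogenies until the fibration type is one that lives on a Kummer surface, finally matching Legendre-form models of the two curves over $\Q(\zeta_{20})^+$. The paper notes this example is actually \emph{easier} than the earlier ones: the chain is just a quotient by the $2$-torsion of an $I_1^*,I_0^*,I_0^*,I_4$ fibration (giving full level-$2$ structure) followed by one more $2$-isogeny to reach $I_4^*,I_0^*,I_0^*$, which is identified via the formulas of Kuwata--Shioda rather than \cite{shtukas}.

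The one genuine error is your final paragraph. There is no $5$-adic obstruction here, and no step of degree divisible by $5$ is needed: every map in the paper's chain has degree a power of $2$, consistent with Claim~\ref{claim:disc-pic} (the relevant isogeny degrees within the class \elllmfdb{2.2.5.1}{80.1}{a} that enter the computation are powers of $2$, so $5$ does not divide the Picard discriminant of the models encountered). The reason the correspondence is only exhibited over $\Q(\zeta_{20})^+$ is entirely about $2$-torsion: that field is the $2$-division field of the curves, over which they acquire full level-$2$ structure and hence Legendre models to which the $I_4^*,{I_0^*}^2$ fibration formulas apply. If you executed your plan by hunting for a $5$-isogeny or a multisection of degree $5$, you would be chasing an obstruction that does not exist in this example; the analogy with Remarks~\ref{rem:why-not-over-q} and \ref{rem:need-5-to-work-over-q} does not transfer. (The paper's own remark after the theorem attributes the failure to descend to $\Q$ to the apparent impossibility of defining the appropriate fibration on the Kummer surface over $\Q$, and suggests $\Q(\sqrt 5)$ might suffice.)
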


Since these two curves are conjugate over $\Q$, we expect that such a
correspondence exists even over $\Q$, but we have not shown this.
We note that this argument is considerably easier than some of the
previous ones even though we are working with a surface of general type.

\begin{proof}
As usual, we start by mapping to $\P^7$ by forms of degree $2$ invariant
under the involutions, obtaining a surface of degree $16$.  Projecting
away from the two cusps gives a surface of degree $10$, and although this
is not actually a canonical model Magma's {\tt CanonicalSheaf} command
produces a divisor class for which the associated map is a genus-$1$
fibration.  The reducible fibre types are $I_1^*, I_0^*, I_0^*, I_4$, and
there is a $2$-torsion point; the Weierstrass equation is
\begin{align*}
  y^2 &= x^3 + (-2t^4 + 32t^3 + 172t^2 - 1168t + 1254)x^2 + \\
  &\qquad(t^8 + 32t^7 + 276t^6 - 368t^5 - 10050t^4 + 6848t^3 + 
    126916t^2 - 295152t + 184041)x.\\
\end{align*}
Passing to the quotient gives us a fibration
with fibres $I_2^*, I_0^*, I_0^*, I_2, I_2$ and full level-$2$ structure,
and by a further $2$-isogeny we reach a surface whose reducible fibres
are $I_4^*,I_0^*,I_0^*$ and that has a $2$-torsion section.  This is already
a type of fibration that occurs on the Kummer surface of a product of two
elliptic curves, and so we can identify it using the formulas of
\cite[4.2]{kuwata-shioda}.

Over $\Q(\zeta_{20})^+$, the elliptic curves
\elllmfdb{2.2.5.1}{80.1}{a/5}, \elllmfdb{2.2.5.1}{80.1}{a/7} have
full level-$2$ structure, so we see that the elliptic curves are quadratic
twists of $y^2 = x(x-1)(x-\lambda_i)$ for
$$\lambda_1 = 20\alpha^3+32\alpha^2-32\alpha - 39,\quad
28\alpha^3 - 32\alpha^2 - 104\alpha + 121$$
by $-3\alpha^2+2\alpha+7$ and $2\alpha^3 + 3\alpha^2 - 6\alpha - 8$
respectively, where $\alpha = \zeta_{20}+\zeta_{20}^{-1}$.
Substituting into the formula for a fibration with $I_4^*$ and two $I_0^*$
fibres and changing coordinates on the base, we check that this one is
isomorphic to the image of the Hilbert modular surface as described above.
\end{proof}

\begin{remark} The argument above cannot be trivially modified to produce
  a correspondence over $\Q$, since it seems not to be possible to define
  the appropriate fibration on the Kummer surface over $\Q$.  It may be
  possible to do so over $\Q(\sqrt 5)$, however.
\end{remark}

\subsection{Level $\p_5\p_{11}$ over $\Q(\sqrt{5})$}\label{ex:d5-p5p11}
We consider the Hilbert modular surface of level $\p_5\p_{11}$ over
$\Q(\sqrt{5})$.  Since $5$ is ramified, this is well-defined up to
conjugacy.  The level is not Galois stable, so the conjecture was not
previously known.  We will show that the Hilbert modular surface admits a
dominant map to the Kummer surface of $E \times E'$ over $\Q({\sqrt 5})$,
where $E, E'$ are the elliptic curves \elllmfdb{2.2.5.1}{55.1}{a/4},
\elllmfdb{2.2.5.1}{55.2}{a/6}.

As usual, we start by using the software to compute the ring of modular
forms and the Atkin-Lehner involutions.  The standard model has $K^2 = 4$,
as noted; 
since the fixed points
of the Atkin-Lehner involutions are isolated and do not include cusps,
by Corollary \ref{cor:canonical-self-int} one might expect the quotient
to have $K^2 = 1$ and thus to be of general type.  However, we will
see that this is not the case.

The reason for this is that the elliptic points are among the fixed points
of $w_{\p_5}$.  The software computes that
there are $4$ elliptic points, two each of types $(5;1,2)$ and $(5;1,3)$.
These are not canonical singularities (a cyclic quotient singularity is
canonical if and only if its type is of the form $(n;d,n-d)$ for some
$d < n$).  They are in fact the points where all modular
forms of weight~$2$ and $4$ vanish, and one computes that they are
fixed by $w_{\p_5}$.  Thus Lemma~\ref{lem:fixed-point-quotient} does not apply.

\begin{prop}\label{prop:surprise-k3}
  The full Atkin-Lehner quotient is birational to a K3 surface $S_5$ in
  $\P^5$ that has $12$ ordinary double points and no other singularities.
  These points comprise one orbit defined over $\Q(\sqrt{5},\sqrt{11})$,
  one over $\Q(\sqrt{5},i)$, and two over $\Q(\sqrt{5})$.
\end{prop}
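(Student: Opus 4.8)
The plan is to follow the strategy of Section~\ref{subsec:embed}, working throughout by computer algebra. First I would use \cite{hmf} to compute the graded ring $M_{\p_5\p_{11},\Q(\sqrt 5)}$ and the Atkin-Lehner involutions $w_{\p_5}, w_{\p_{11}}$: the action on the newform is returned directly, and the action on the Eisenstein series and on oldforms follows from Proposition~\ref{prop:al-oldforms} once the generators supplied by \cite{hmf} have been re-expressed in a basis of eigenforms and their degeneracy images. I would then restrict to the subring $M^{+}$ fixed by both involutions, which is the homogeneous coordinate ring of the full Atkin-Lehner quotient, and choose $n$ (one expects $n = 2$, since the generators have weights $2$ and $4$) so that the degree-$n$ part of $M^{+}$ gives a projective embedding. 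Using Algorithm~\ref{alg:find-image} together with Proposition~\ref{prop:image-from-elim} I would compute the image as an explicit surface in some $\P^N$.

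Next I would identify the non-canonical singular points of this surface, namely the images of the cusps (the common zero locus of the cusp forms) and of the four elliptic points of types $(5;1,2), (5;1,3)$ (the common zero locus of the forms of weights $2$ and $4$), and successively project away from them as in Section~\ref{subsec:embed}, using the resolution data returned by \cite{hmf} --- each bad point resolves to a known cycle of rational curves of known self-intersections --- to predict the drop in degree at each step. Whenever a projection turns out not to be birational I would precompose with the second Veronese embedding before continuing. The target is a model of degree $8$ in $\P^5$; I would then check in Magma that its dualizing sheaf is trivial and that its minimal resolution has the invariants of a K3 surface, so that $S_5$ is a K3 surface with canonical singularities, and finally compute its singular locus and verify that it consists of exactly $12$ ordinary double points and nothing worse.

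To determine the fields of definition of the nodes I would reduce $S_5$ modulo many small primes $p$ and count the singular $\F_p$-points, as in Section~\ref{subsec:tricks}. The predicted splitting --- one orbit over $\Q(\sqrt 5, \sqrt{11})$, one over $\Q(\sqrt 5, i)$, and two $\Q(\sqrt 5)$-rational points (all lying over $\sqrt 5$-rational points, since $S_5$ is defined over $\Q$) --- should be visible as a formula for the count in terms of the quadratic residue symbols attached to $5$, $11$, and $-1$; restricting to primes for which exactly one of the symbols attached to $11$ and $-1$ equals $1$ isolates each orbit, lets me interpolate its coordinates via the lattice-reduction recipe of Section~\ref{subsec:tricks}, and then I can check directly that the guessed points lie on $S_5$ and are singular.

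The main obstacle is the bookkeeping around the elliptic points. Their types $(5;1,2)$ and $(5;1,3)$ are \emph{not} of the form $(n;d,n-d)$, so their minimal resolutions are longer cycles of rational curves and a single projection will not clear them; the degree drop at each projection must be read off carefully from the Hirzebruch resolution data, and the order of projections --- together with the placement of Veronese steps --- must be chosen so as to arrive at a model in $\P^5$ with only $A_1$ singularities rather than getting stuck with a non-birational projection or a worse singular point. A conceptual point worth stressing is that here Lemma~\ref{lem:fixed-point-quotient} \emph{fails}, because the elliptic points are fixed by $w_{\p_5}$; so the fact that the quotient is a K3 surface, rather than the surface of general type that the value $K^2 = 1$ suggested by Corollary~\ref{cor:canonical-self-int} would lead one to expect, is a genuine output of the explicit computation.
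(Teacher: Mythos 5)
Your plan is essentially the paper's own argument: the paper likewise embeds the full Atkin--Lehner quotient by $\O(2)$ (landing on a degree-$16$ surface in $\P^7$), projects away from the cusp and then from the resulting singular line to reach a degree-$7$ surface in $\P^4$ that is a K3 blown up at a smooth point, and lets Magma compute the minimal model and check the $12$ nodes and their fields of definition. The only divergence is one of detail --- the paper never needs to project from the elliptic points separately, since after the cusp and line projections the residual non-minimality is absorbed by the minimal-model computation --- so your anticipated ``main obstacle'' turns out to be less of one in practice.
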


\begin{proof}
  Embedding the quotient by $\O(2)$, we find a surface of degree $16$ in
  $\P^7$; projecting away from the cusp, we have a surface singular
  along a line.  The projection has degree
  $7$ in $\P^4$ and is a nonminimal surface which is the blowup of a
  K3 surface at a smooth point.  Magma readily finds the minimal model
  and verifies that the singularities are as claimed.
\end{proof}

\begin{prop}\label{prop:d5d4a2a2}
  There is a genus-$1$ fibration on $S_5$ whose reducible fibres are of type
  $I_1^*, I_0^*, I_3, I_3$ and whose Mordell-Weil group is of rank $3$ and
  defined over $\Q(\sqrt{5})$.
\end{prop}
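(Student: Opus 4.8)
The plan is to follow the recipe used throughout Section~\ref{sec:ex}: first assemble a large enough piece of $\Pic S_5$ from explicit curves, then realise the desired fibration as a pencil of hyperplane sections, and finally compute its Weierstrass model and Mordell--Weil group. To begin, I would take the twelve exceptional $(-2)$-curves $E_1,\dots,E_{12}$ lying over the nodes of Proposition~\ref{prop:surprise-k3}, together with the hyperplane class $H$ (which, since $S_5$ is a linearly normal K3 surface of degree $8$ in $\P^5$, satisfies $H^2 = 8$ on the minimal resolution and $H\cdot E_i = 0$), and enlarge the resulting lattice to rank $18$ by adjoining a few further curves --- lines or conics lying on the $\P^5$-model, or components of vertical fibres of some auxiliary pencil located as in Algorithm~\ref{alg:find-image}. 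One then confirms that the geometric Picard rank of $S_5$ is exactly $18$ by reducing modulo several small primes of good reduction and exhibiting an irreducible degree-$4$ factor of the characteristic polynomial of Frobenius, just as for $S_5'$ in Section~\ref{ex:dp23-rt17}; since $S_5$ is the K3 surface attached to a non-$\Q$-curve isogeny class, $\rho = 18$ is the expected value.

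For the fibration, note that for any four of the nodes the class $F = H - E_{i_1} - E_{i_2} - E_{i_3} - E_{i_4}$ satisfies $F^2 = 8 + 4\cdot(-2) = 0$ and is represented by the strict transforms of the members of the pencil of hyperplanes through $P_{i_1},\dots,P_{i_4}$: a general such hyperplane section is a canonical curve of arithmetic genus $5$ that acquires a node at each of the four points, hence geometric genus $1$. Each choice of four nodes therefore gives a genus-$1$ fibration, and among the $\binom{12}{4}$ of them I would look for one whose defining quadruple is stable under $\Gal(\bar\Q/\Q(\sqrt5))$ --- for instance the two $\Q(\sqrt5)$-rational nodes together with one of the conjugate pairs --- and whose reducible fibres come out as $I_1^*, I_0^*, I_3, I_3$. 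This is verified by computing the generic fibre in Magma, projecting it to a plane cubic (or an intersection of two quadrics in $\P^3$) in the manner of Section~\ref{subsec:embed}, passing to the Jacobian, and reading off the discriminant; the Euler-number count $7 + 6 + 3 + 3 = 19$ then forces the remaining fibres to be five copies of $I_1$, so the Jacobian is a K3 surface, and Remark~\ref{rem:reduce-weierstrass} produces a usable equation.

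It then remains to determine the Mordell--Weil group. By the Shioda--Tate formula the geometric Picard rank equals $2 + (5 + 4 + 2 + 2) + r = 15 + r$, where $r$ is the Mordell--Weil rank, so once $\rho(S_5) = 18$ is known we get $r = 3$. I would then exhibit three independent sections defined over $\Q(\sqrt5)$ --- found either by a search over small primes or by intersecting the known curves with the generic fibre --- and check, via the determinant formula \cite[Corollary~6.39]{schutt-shioda} applied with the Picard discriminant of $S_5$ (itself determined along the way), that these three sections generate the full geometric Mordell--Weil group. Since each is fixed by $\Gal(\bar\Q/\Q(\sqrt5))$, this shows that the entire Mordell--Weil group is defined over $\Q(\sqrt5)$, as claimed.

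The main obstacle is this last part: rigorously establishing that $\Pic S_{5,\bar\Q}$ has rank exactly $18$ with the asserted Galois structure, and that the Mordell--Weil group of the chosen fibration is of rank exactly $3$ and entirely rational over $\Q(\sqrt5)$, requires both producing enough explicit classes and a saturation/discriminant argument to exclude anything more; identifying the correct quadruple of nodes among the $\binom{12}{4}$ possibilities is also a point where some experimentation is unavoidable. Once that quadruple is fixed, however, writing down the pencil and the Weierstrass model of its Jacobian is routine, if computationally heavy. The reason for singling out this particular fibration is, as in Sections~\ref{ex:d3-rt13} and~\ref{ex:d3-rt2}, that after one or two further $2$-isogenies it will match a fibration on the Kummer surface of $E \times E'$ with the curves named at the start of Section~\ref{ex:d5-p5p11}; but that comparison belongs to the sequel of this proposition.
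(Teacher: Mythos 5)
Your proposal is correct and matches the paper's (largely implicit) argument: the paper gives no proof text for this proposition, deferring to the computations in \cite{code}, but the surrounding discussion confirms exactly your route --- pencils of hyperplanes through four nodes on the degree-$8$ model in $\P^5$, verification that $\rho = 18$ via an irreducible degree-$4$ factor of the zeta function mod $19$, and the Shioda--Tate/determinant computation giving Mordell--Weil rank $3$ and Picard discriminant $144$. The only nitpick is that Proposition~\ref{prop:surprise-k3} gives \emph{two orbits of size two} defined over $\Q(\sqrt{5})$ rather than two individually rational nodes, but this does not affect your strategy of choosing a Galois-stable quadruple.
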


Since the expected Picard rank of
the K3 surface in this case is $18$, these are all of the generators
(and this can be checked by verifying that the zeta function of the reduction
mod $19$ has an irreducible factor of degree $4$).  The Mordell-Weil lattice
has discriminant $1$, indicating that the Picard lattice discriminant is
$144 = 16 \cdot 3^2$, and one checks that it is contained in the Picard
lattice of the Kummer surface of a product with index $3$.  The entire
Picard lattice is defined over $\Q(\sqrt 5)$, and the rational sublattice
has rank $13$.  Unfortunately
there does not seem to be a fibration defined over $\Q$ with multisection
degree $3$.

\begin{prop}\label{prop:fib-d7d4a1s}
  There is a genus-$1$ fibration defined over $\Q$ on $S_5$ with a
  section over $\Q(\sqrt{5})$, with multisection degree $2$, and with 
  reducible fibres $I_3^*, I_0$, and three $I_2$.  The rational sublattice of
  the Picard lattice of the Jacobian has rank $16$.
\end{prop}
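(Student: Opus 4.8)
The plan is to read the fibration off the Picard lattice of (the minimal resolution of) $S_5$ together with the Galois action on it, both of which are known explicitly from Proposition~\ref{prop:d5d4a2a2}, and then to realise it concretely on the $\P^5$-model.

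First I would search, among the Galois-stable classes in the rank-$18$ lattice $\Pic S_5$, for a primitive isotropic class $F$ with $F \cdot H > 0$ whose orthogonal complement $F^\perp/\Z F$ contains a root sublattice of type $D_7 \oplus D_4 \oplus A_1^{3}$; this is a finite lattice-theoretic computation, guided by Shioda--Tate (the candidate trivial lattice has rank $2 + 7 + 4 + 3 = 16$, forcing the geometric Mordell--Weil rank to be $2$ and matching the asserted rational Picard rank once the zero section becomes rational on the Jacobian). Among the finitely many such $F$ I would select the one for which no $\Q$-rational divisor class meets $F$ in odd degree, while a section becomes available after base change to $\Q(\sqrt5)$: this is precisely the configuration needed for the subsequent $\Q(\sqrt5)$-Jacobian step of type~(\ref{item:q-jacobian}), and it is exactly what pins the $\Q$-multisection degree at $2$. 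The existence of such an $F$ is checked by a direct search over primitive isotropic classes of bounded pairing with $H$, and is unsurprising since $\Pic S_5$ sits with index $3$ inside the Picard lattice of a Kummer surface of a product, which carries many $D_7 \oplus D_4 \oplus A_1^{3}$ frames.

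Next I would make the fibration explicit. Knowing enough curves on $S_5 \subset \P^5$ to span $\Pic S_5$, I would write $F$ as an effective divisor, compute the pencil $H^0(S_5, \mathcal O(F))$ inside the space of forms of the appropriate degree, and use the two resulting sections to define the map $S_5 \dashrightarrow \P^1$. The generic fibre is a genus-$1$ curve over $\Q(t)$ of multisection degree $2$; projecting it away from a $\Q(t)$-rational divisor of degree $2$ (after a Veronese-type re-embedding of the fibre, as in Section~\ref{subsec:embed}, since $S_5$ is not its own Jacobian) yields a plane curve of small degree whose Jacobian Magma computes directly, and the model is then tidied using Remark~\ref{rem:reduce-weierstrass}. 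From the discriminant of the resulting Weierstrass equation one reads off the Kodaira types, confirms that they are $I_3^*$, $I_0^*$ and three $I_2$, and checks that every component of every reducible fibre is defined over $\Q$. Since the Jacobian automatically carries a $\Q$-rational zero section, its trivial lattice is then defined over $\Q$ and has rank $16$; to see that this is the whole $\Q$-rational Picard lattice I would compute the zeta function of a good reduction modulo a small prime and verify that Frobenius fixes exactly $16$ algebraic classes, and I would exhibit the promised section over $\Q(\sqrt5)$ together with its height, relating the Picard discriminant to that of Proposition~\ref{prop:d5d4a2a2}.

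The main obstacle is not any single computation but the bookkeeping needed to land on the correct fibration: there are many Galois-stable genus-$1$ pencils on $S_5$, and only the one whose $2$-isogenous partner has reducible fibres $I_2^*$ together with four $I_2$ and full level-$2$ structure is of use for the eventual comparison with a fibration on the Kummer surface of $E \times E'$. Producing a Weierstrass model with manageable coefficients is delicate for the same reason the fibre must be re-embedded before projecting, and verifying the \emph{exact} value of the rational Picard rank — rather than just a lower bound — is the one step that genuinely requires the reduction-modulo-$p$ argument rather than a direct manipulation of equations.
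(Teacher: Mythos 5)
Your proposal is correct and follows essentially the same route as the paper, whose own ``proof'' of this proposition is simply a pointer to the accompanying code: the computation there proceeds exactly as you describe, by identifying the fibration class lattice-theoretically from the curves found in Proposition~\ref{prop:d5d4a2a2}, realising the pencil explicitly on the projective model, passing to the Jacobian, and certifying the rational Picard rank via the trivial lattice together with a reduction-mod-$p$ upper bound. Your reading of the fibre type ``$I_0$'' in the statement as $I_0^*$ (so that the trivial lattice $D_7 \oplus D_4 \oplus A_1^3$ plus the hyperbolic plane has rank $16$) is the intended one.
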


This is done by computation. As usual, the reader is referred to
\cite{code} for the details; the equation of the general fibre is too
large to appear here.  

\begin{defn} Let $S'$ be the Jacobian of the fibration of
  Proposition \ref{prop:fib-d7d4a1s}.
\end{defn}

We again attempt to find a fibration on $S'$ defined over $\Q$ with
multisection degree $3$, but without
success.  There are rational fibrations with multisection degree $2$.
However, computing equations for such a fibration, we found that the Jacobian
did not appear to have any fibrations defined over $\Q$ with no rational
section, or any that admit an isogeny that would reduce the discriminant,
and so we were unable to proceed.  Instead we will work over $\Q(\sqrt 5)$.

\begin{prop} There is a genus-$1$ fibration on $S'$ defined over
  $\Q({\sqrt 5})$ with multisection degree
  $3$ whose Jacobian is isomorphic to the Kummer surface of
  $E \times E'$ over $\Q({\sqrt 5})$,
where $E, E'$ are the elliptic curves \elllmfdb{2.2.5.1}{55.1}{a/4},
\elllmfdb{2.2.5.1}{55.2}{a/6}.
\end{prop}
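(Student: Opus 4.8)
The plan is to follow the template of Sections~\ref{ex:d3-rt13} and~\ref{ex:d3-rt2}: locate the required fibration on $S'$ by lattice-theoretic considerations, compute its Jacobian explicitly, and match the result against a fibration on the Kummer surface of $E \times E'$ coming from the formulas of \cite{shtukas}. By Proposition~\ref{prop:fib-d7d4a1s} and the surrounding discussion, $\Pic S'$ has rank $18$, is defined over $\Q(\sqrt 5)$, and has discriminant $144 = 16 \cdot 3^2$, embedding with index $3$ into the Picard lattice of the Kummer surface of a product of two elliptic curves. First I would search among the $\Q(\sqrt 5)$-rational primitive isotropic classes $F$ of $\Pic S'$ for one whose multisection degree is $3$ and for which $\Pic S'[F/3]$ is the Picard lattice of the Kummer surface of a product (rather than some other index-$3$ overlattice). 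By item~(\ref{item:jacobian}) of Section~\ref{sec:verify-modularity} and \cite[Lemma 2.1]{keum}, taking the Jacobian of the associated genus-$1$ fibration is then a map of degree $3^2 = 9$ onto a K3 surface whose Picard lattice is exactly this index-$3$ overlattice, hence of discriminant $16$.

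The next step is to produce an explicit Weierstrass model for that Jacobian. Since the multisection degree is $3 \le 5$, once the fibration and a degree-$3$ multisection have been written down as subschemes of a convenient projective model of $S'$, Magma's built-in routines compute the Weierstrass equation directly, and it is then cleaned up by the method of Remark~\ref{rem:reduce-weierstrass}; note that because $5$ is a square in $\Q(\sqrt 5)$, working over $\Q(\sqrt 5)$ rather than descending to $\Q$ removes the quadratic-twist ambiguity encountered in Sections~\ref{ex:d3-rt13} and~\ref{ex:dp23-rt17}. On the Kummer side, the curves \elllmfdb{2.2.5.1}{55.1}{a/4} and \elllmfdb{2.2.5.1}{55.2}{a/6} have been chosen so that, after a suitable quadratic twist, they take Legendre form $y^2 = x(x-1)(x-\lambda)$ and $y^2 = x(x-1)(x-\lambda')$ with $\lambda,\lambda' \in \Q(\sqrt 5)$; substituting these parameters into the \cite{shtukas} formulas I would identify a genus-$1$ fibration on $\Kum(E \times E')$ whose reducible fibres match those of the Jacobian above, fix the change of coordinates on $\P^1$ by aligning the positions of the singular fibres, and then verify that the two Weierstrass equations agree exactly rather than merely up to $j$-invariant. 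Since the conductor of $E'$ equals that of $E^\sigma$, the curve $E'$ is isogenous to $E^\sigma$, and the induced dominant map $\Kum(E \times E') \to \Kum(E \times E^\sigma)$, composed with the map just constructed and the correspondences linking $S'$ to $H_{\p_5\p_{11},\Q(\sqrt 5)}$, yields the full Oda--Hamahata correspondence at this level.

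The genuinely hard part is the reason the correspondence lives only over $\Q(\sqrt 5)$: over $\Q$ the Picard lattice carries a nontrivial Galois action (the rational sublattice of $S'$ has rank $16$, not $18$), and repeated attempts fail to produce a $\Q$-rational genus-$1$ fibration of multisection degree $3$, or even a $\Q$-rational fibration admitting an isogeny that lowers the discriminant toward $16$; this forces the passage to $\Q(\sqrt 5)$, over which the full Picard lattice becomes available. Once that extension is granted, the remaining work — singling out the correct isotropic class among the many in a rank-$18$ lattice, carrying the degree-$9$ Jacobian construction through Magma, and performing the \cite{shtukas} matching — is lengthy but routine and is documented in the accompanying code \cite{code}.
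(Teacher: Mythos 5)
Your proposal is correct and follows essentially the same route as the paper: locate a multisection-degree-$3$ fibration on $S'$ whose fibre configuration (the paper finds ${I_2^*}^2 I_2^4$) and index-$3$ overlattice match the Kummer lattice, pass to the Jacobian, and identify it with a fibration on $\Kum(E \times E')$ via the formulas of \cite{shtukas}, over $\Q(\sqrt 5)$ because no suitable $\Q$-rational fibration exists. The only cosmetic difference is that you phrase the search lattice-theoretically in terms of isotropic classes where the paper speaks of ADE configurations of explicit curves on a $\P^7$ model, and your aside that equality of conductors forces $E'$ to be isogenous to $E^\sigma$ should instead invoke the fact that $E'$ lies in the Galois-conjugate isogeny class.
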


\begin{proof} Again, this is proved by computation.  We find a good model
  of $S'$ in $\P^7$ and study the curves on it and the ADE configurations
  that they form, eventually discovering a fibration with multisection
  degree $3$ and two $I_2^*$ and four $I_2$ fibres.  This is the same as
  one of the configurations on the Kummer surface, so we attempt to match
  them using the formulas for fibrations from \cite{shtukas}.  This is
  successful with one of the $9$ types of ${I_2^*}^2 I_2^4$ fibration
  for the indicated elliptic curves.
\end{proof}

The following summarizes the contents of this section.

\begin{thm}\label{conj:d5-n55}
  The Oda-Hamahata conjecture holds for elliptic curves over $\Q(\sqrt{5})$
  with discriminant $\sqrt{5}(4-\sqrt{5})$, in the sense that the desired
  correspondence exists over $\Q(\sqrt{5})$.
\end{thm}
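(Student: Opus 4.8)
The plan is to assemble the maps produced in
Propositions~\ref{prop:surprise-k3} and \ref{prop:fib-d7d4a1s} and in the last
proposition above into a single dominant rational map of finite degree
$H_{\p_5\p_{11},\Q(\sqrt{5})} \dashrightarrow \Kum(E \times E')$ defined over
$\Q(\sqrt{5})$, and then to read off the effect on transcendental lattices.
First, the quotient by the group of Atkin-Lehner operators (of order $4$) is a
dominant map defined over $\Q$, and its target is birational over $\Q$ to the
K3 surface $S_5$ of Proposition~\ref{prop:surprise-k3}; so far this composite
has degree $4$.  Next, Proposition~\ref{prop:fib-d7d4a1s} supplies a genus-$1$
fibration on $S_5$, defined over $\Q$, that acquires a section over
$\Q(\sqrt{5})$, so over $\Q(\sqrt{5})$ the surface $S'$ --- the Jacobian of this
fibration --- is isomorphic to $S_5$.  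Finally, the last proposition above
exhibits on $S'$ a genus-$1$ fibration defined over $\Q(\sqrt{5})$ of
multisection degree $3$ whose Jacobian is $\Kum(E \times E')$, and passing to
that Jacobian is a dominant rational map of finite degree over $\Q(\sqrt{5})$.
Composing these maps yields the required map over $\Q(\sqrt{5})$, and hence the
associated correspondence.

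It remains to extract the cohomological statement.  Because the composite is a
dominant rational map of finite degree between smooth projective surfaces of the
same dimension, pullback on $\HHH^2$ is injective and sends algebraic classes to
algebraic classes; since $\HHH^{2,0}$ is one-dimensional on each surface and the
transcendental lattice $T_{\Kum(E \times E')}$ of a K3 surface is an irreducible
Hodge structure, pullback carries $T_{\Kum(E \times E')}$ injectively into
$T_{H_{\p_5\p_{11},\Q(\sqrt{5})}}$ --- which is exactly the injection demanded by
the Oda-Hamahata conjecture.  To see moreover that the two transcendental
lattices are identified up to finite index, note that $S_5$ has geometric Picard
rank $18$ (witnessed by the irreducible degree-$4$ factor of the zeta function
of the reduction modulo $19$, as in Proposition~\ref{prop:d5d4a2a2}), and so
does $\Kum(E \times E')$ since $E$ has no complex multiplication; geometric
Picard rank is invariant under finite maps \cite[Theorem 1.1]{bsv}, so every
transcendental lattice in the chain of K3 surfaces has rank $4$, and each
K3-to-K3 step is one of the finite maps of Section~\ref{sec:verify-modularity},
inducing a finite-index embedding $T_Y(n) \hookrightarrow T_X$.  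Finally, since
$S_5$ and $H_{\p_5\p_{11},\Q(\sqrt{5})}$ both have $p_g = 1$, the Atkin-Lehner
group acts trivially on the one-dimensional space
$\HHH^{2,0}(H_{\p_5\p_{11},\Q(\sqrt{5})})$, hence by irreducibility of that
surface's transcendental Hodge structure on all of
$T_{H_{\p_5\p_{11},\Q(\sqrt{5})}}$, so the quotient map too identifies $T_{S_5}$
with a finite-index sublattice of $T_{H_{\p_5\p_{11},\Q(\sqrt{5})}}$.  The
passage from $\Kum(E \times E')$ to $\Kum(E \times E^\sigma)$, and to the Kummer
surface of any other product of a curve in the class with its conjugate, is via
the finite maps induced from isogenies, of type~(\ref{item:abvar}).

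This assembly is routine once the three propositions are available, so the real
obstacle is their proof, and in particular the descent from $S_5$ to the Kummer
surface.  The difficulty is that $S_5$ carries no genus-$1$ fibration of
multisection degree $3$ defined over $\Q$, and apparently neither does any
conveniently isogenous surface; this forces one first to replace $S_5$ by the
Jacobian $S'$ of a well-chosen multisection-degree-$2$ fibration --- enlarging
the $\Q$-rational part of the Picard lattice to rank $16$ --- and then, still
lacking a usable $\Q$-rational fibration, to descend only over $\Q(\sqrt{5})$.
The crux of the argument is to find on $S'$ a multisection-degree-$3$ fibration
with ${I_2^*}^2 I_2^4$ reducible fibres over $\Q(\sqrt{5})$ and to match it,
using the explicit formulas of \cite{shtukas}, against one of the nine
fibrations of that type on the Kummer surface of a general product with full
level-$2$ structure, for exactly the curves $E, E'$ of conductor $\p_5\p_{11}$
identified in this section --- checking not merely that the configurations of
singular fibres agree but that the two elliptic surfaces are isomorphic on the
nose.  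This matching is also the only place where the construction resists being
carried out over $\Q$ rather than over $\Q(\sqrt{5})$.
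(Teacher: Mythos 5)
Your assembly is correct and follows the paper's route exactly: the paper states this theorem as a summary of Section~\ref{ex:d5-p5p11}, whose content is precisely the chain you describe (Atkin--Lehner quotient $\to$ the K3 surface $S_5$ of Proposition~\ref{prop:surprise-k3}, the $\Q(\sqrt{5})$-Jacobian step of Proposition~\ref{prop:fib-d7d4a1s} yielding $S'$, and the multisection-degree-$3$ Jacobian matched to $\Kum(E \times E')$ via the fibration formulas of \cite{shtukas}, with the final passage to $\Kum(E\times E^\sigma)$ by an isogeny). Your added justification of the induced injection on transcendental lattices is standard and consistent with the framework of Section~\ref{sec:verify-modularity}.
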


\subsection{Level $\p_{13}$ over $\Q(\sqrt{3})$}\label{ex:d12-n13}
For our final example, we briefly discuss a case in which the field has narrow
class number $2$ rather than $1$ and the geometric genus is $2$: namely,
we choose the level to be a prime ideal of norm $13$ in $\Z[\sqrt{3}]$.
(I thank Dami\'an Gvirtz-Chen for the suggestion to study such examples.)
The Hilbert modular surface has two components and an Atkin-Lehner operator
of level $\p_{13}$ acts on their union.  Since $\p_{13}$ has a totally
positive generator $4 + \sqrt{3}$, the Atkin-Lehner operator preserves the
components rather than exchanging them, and there is no reason for
these to be isomorphic.

\begin{prop}\label{prop:two-components-reducible}
  Both components admit dominant rational maps to K3 surfaces of
  Picard rank $18$, the positive component of discriminant $-196$ and
  the negative component of discriminant $-784 = 4 \cdot -196$.
  These components have the same number of $\F_p$-points for $p < 100$
  of good reduction.
\end{prop}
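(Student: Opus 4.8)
The plan is to run the pipeline of the earlier sections on each of the two components $H^{+}$, $H^{-}$ of $H_{\p_{13},\Q(\sqrt 3)}$ separately. First I would use \cite{hmf} to compute the graded ring of parallel-weight Hilbert modular forms of level $\p_{13}$, which is the product of the graded rings of the two components; since $p_g = 2$ and the variety is disconnected, each component carries a one-dimensional space of weight-$2$ cusp forms, attached to a Hilbert newform of level $\p_{13}$. These newforms have rational Hecke eigenvalues --- otherwise the transcendental part of $\HHH^2$ would have rank $8$ and the Picard rank of the K3 quotients below would be $14$, not $18$ --- so each is the form of an elliptic curve over $\Q(\sqrt 3)$ of conductor $\p_{13}$. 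On each component I would identify the cusps as the common zero locus of the cusp forms, and compute the Atkin-Lehner operator $w_{\p_{13}}$ via Proposition~\ref{prop:al-oldforms} and the software's newform data; since $4+\sqrt 3$ is a totally positive generator of $\p_{13}$, this operator preserves each component.

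Next, for each component I would embed it into a (weighted) projective space by a suitable $\O(n)$, possibly restricting to the $w_{\p_{13}}$-invariant subspace, and then, using Algorithm~\ref{alg:find-image} and the techniques of Section~\ref{subsec:embed}, successively project away from the images of the cusps, then of the noncanonical elliptic points, then of any resulting singular line, until reaching a model with only ADE singularities that Magma recognizes as birational to a K3 surface. On each such K3 surface I would exhibit a genus-$1$ fibration --- say from a pencil of hyperplanes through an appropriate collection of singular points --- pass to its Jacobian as in Section~\ref{subsec:embed}, read off a Weierstrass equation and the configuration of reducible fibres, and apply the Shioda--Tate determinant formula \cite[Corollary 6.39]{schutt-shioda} to the lattice generated by the zero section, the non-identity fibre components, and the Mordell--Weil sections found. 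This is where the discriminants $-196$ for $H^{+}$ and $-784 = 4 \cdot (-196)$ for $H^{-}$ emerge; I would confirm the Picard rank is exactly $18$ by exhibiting an irreducible degree-$4$ factor in the $L$-function of a reduction at a small prime, and check that the explicit sublattice is saturated.

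For the point-count assertion I would tabulate $\#H^{+}(\F_p)$ and $\#H^{-}(\F_p)$ for the primes $p < 100$ at which $\p_{13}$ and both surfaces have good reduction, using the K3 models, and verify that they agree; the supporting code is in \cite{code}. The reason is that $h^1 = 0$ and $h^{2,0} = 1$ on each component, so $\#H^{\pm}(\F_p) = 1 + p^2 + \tr(\Frob_p \mid \HHH^2)$, and the transcendental part of $\HHH^2$ of each component is a rank-$4$ Galois module of the form $\HHH^1(E) \otimes \HHH^1(E^{\sigma})$ for the corresponding elliptic curve. The two curves differ by a twist by the nontrivial character $\chi$ of the narrow class group, which is quadratic and $\sigma$-stable (its fixed field over $\Q(\sqrt 3)$ is the narrow Hilbert class field $\Q(\zeta_{12})$, Galois over $\Q$), so $\chi \otimes \chi^{\sigma}$ is trivial and the two transcendental modules are isomorphic. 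Correspondingly the two K3 Jacobians are $2$-isogenous over $\Q$ --- consistent with the ratio $4$ of the Picard discriminants --- so the algebraic parts of $\HHH^2$ contribute equally as well.

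The main obstacle I anticipate is twofold. Working over a field of narrow class number $2$ with $p_g = 2$ doubles the bookkeeping, and --- more seriously --- the negative component, whose Picard discriminant $-784$ carries a factor of $7$, is harder to bring to a low-degree model with mild singularities; by the heuristic of Claim~\ref{claim:disc-pic} there is no prospect of reaching a Kummer surface, so the cross-check against a product of elliptic curves available in the earlier examples is lost. Consequently the Picard lattices (rank exactly $18$, discriminant exactly as claimed, no saturation defect) must be determined purely from the K3 side, by reductions modulo small primes and height-pairing computations, rather than by comparison with a known Kummer lattice.
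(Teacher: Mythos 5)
Your computational skeleton (compute the ring of forms with \cite{hmf}, embed, project away from cusps and noncanonical points, reach a K3 model, exhibit a genus-$1$ fibration, and extract the discriminant from the Shioda--Tate determinant formula, confirming rank $18$ by an irreducible degree-$4$ factor of the zeta function mod a small prime) is exactly the paper's route, and your anticipation that the negative component is the harder one is also borne out: the paper resolves it by finding additional fibrations \emph{not} defined over $\Q$ in order to assemble a Picard basis, and notes that one may as well reduce modulo a suitable prime since the even rank means the geometric Picard lattice is unchanged. The point-count assertion is, in the paper as in your proposal, a direct tabulation.

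However, the structural claims you interleave with this are wrong, and one of them is wrong in a way the paper is explicitly at pains to emphasize. You assert that each component's weight-$2$ cusp form has rational Hecke eigenvalues and is therefore attached to an elliptic curve over $\Q(\sqrt 3)$ of conductor $\p_{13}$. In fact there is no such elliptic curve: the newform here has eigenvalues $e_\p$ that are irrational for $\p$ inert in $\Q(\zeta_{12})/\Q(\sqrt 3)$ (they satisfy $a_\p = e_\p^2 - N(\p)$ for the trace of Frobenius $a_\p$ of an elliptic curve $E/\Q(\zeta_{12})$ that is $2$-isogenous to its conjugate over $\Q(\sqrt 3)$). This is the entire point of the example and the motivation for the generalized conjecture that closes the paper. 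Your inference ``rank-$18$ Picard group forces rational eigenvalues'' is also invalid: a form with an inner twist by the narrow class character still yields transcendental rank $4$ on each component. Consequently your conceptual explanation of the equality of point counts --- two elliptic curves over $\Q(\sqrt 3)$ differing by a quadratic twist, with $\chi\otimes\chi^\sigma$ trivial --- rests on objects that do not exist. Finally, your claim that the two K3 Jacobians are $2$-isogenous over $\Q$ is precisely what the paper says it \emph{cannot} establish: because the Picard discriminants carry a factor of $7^2$ (plausibly reflecting the $14$-torsion of $E$), no correspondence between the two K3 surfaces, or with $\Kum(E\times E^\sigma)$, is constructed, and the paper records this as an open question. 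None of this invalidates the computational verification of the proposition itself, but the surrounding justifications should be deleted or replaced by the honest statement that the agreement of point counts is an empirical observation consistent with, but not here deduced from, the expected relation between the two components.
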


\begin{proof} This is a straightforward application of techniques used
  many times already.  
After several steps we find a map from the positive component to a K3 surface
of Picard rank $18$ and discriminant $-196$ (it has a fibration with
reducible fibres of type $I_2, I_4, I_4, I_6$ and Mordell-Weil lattice of
rank $4$ and discriminant $49/3$).  On the other hand, the negative component
requires more work, as our initial constructions of elliptic fibrations have
very large rank, which would make it difficult to find a Mordell-Weil
basis.  By finding additional fibrations not defined over $\Q$ we are,
however, able to find a basis for the Picard lattice.
\end{proof}

\begin{remark} The Picard group of the negative component is defined over
  a large number field, which would make it difficult to compute directly
  with the curves on the surface.  However, by reducing modulo a suitable
  prime we do not change the Picard lattice (since the rank is even) and
  this calculation is much easier.  Since the Picard discriminant is
  a multiple of $7^2$ we do not expect to be able to complete the computation
  and do not attempt to determine the Galois action on the Picard lattice.
\end{remark}

The Hilbert modular form of level $\p_{13}$ appears to be connected with an
elliptic curve $E/\Q(\zeta_{12})$ which is a ``$\Q(\sqrt 3)$-curve'', i.e.,
it is $2$-isogenous to its Galois conjugate over $\Q(\sqrt 3)$.
The curve $E$ is defined by the equation
$$y^2 + (-\zeta_{12}^2 - \zeta_{12} + 1) xy + (\zeta_{12}^3+\zeta_{12}^2)y = x^3 + (\zeta_{12}^3+\zeta_{12}^2)x^2$$ and has conductor $\p_{13} \Z[\zeta_{12}]$.  To be precise about
the connection, fix a prime $\p$ of $\Z[\sqrt{3}]$, let $a_\p$ be the
trace of Frobenius for $E \bmod \p$, and let $e_\p$ be the Hecke eigenvalue
for the Hilbert modular form.
If $\p$ is inert in $\Q(\zeta_{12})/\Q(\sqrt{3})$, then $a_\p = e_\p^2-N(\p)$;
otherwise $a_\p = e_\p$.

The torsion subgroup of $E$ has order $14$, which is presumably related to
the factor of $7^2$ in the discriminant.  This factor makes it
difficult to relate the two K3 surfaces to each other or to the Kummer surface
of $E \times E^\sigma$, but we certainly expect such relations to exist.
In particular, we might ask whether there is a correspondence between the
K3 surfaces arising from the positive and negative quotients; as above,
there is no Atkin-Lehner operator relating them, so there is no obvious
way to construct one.

Just as $\Q$-curves over totally real fields correspond to base change
Hilbert modular forms, these calculations suggest a generalization of
Hamahata's conjecture to certain
$\Q(\alpha)$-curves:

\begin{conj} Let $f$ be a Hilbert cuspidal newform of level $I$ over the
  field $K = \Q(\sqrt{D})$ such that there exist a quadratic field
  $\Q(\sqrt{n})$ and a quadratic extension $L/K$ such that $e_\p(f)$ is
  rational for $\p$ split in $L/K$ and $\sqrt{n}$ times a rational number
  for $\p$ inert in $L/K$.  Then there exist an elliptic curve $E/L$ which
  is isogenous to its Galois conjugate over $K$, a K3 surface $S$ defined
  over $\Q$ and $K$-isogenous to the Kummer surface of $E \times E^\sigma$
  and admitting a correspondence to the Hilbert modular surface $H_{I,K}$,
  where $I$ is the conductor of $E$ viewed as an $\O_K$-ideal.
\end{conj}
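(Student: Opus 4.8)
The hypothesis on $f$ is best read as a \emph{self-twist} condition, and this is what dictates the whole approach: I would run the building-block argument sketched in the introduction for $\Q$-curves (resting on \cite{ribet} and \cite{kw}) one level up, with $\Q$ replaced by $K$ and classical modular forms replaced by Hilbert modular forms over $K$. Writing $\epsilon$ for the quadratic character of $\Gal(\Kbar/K)$ cutting out $L$ and $\tau$ for the nontrivial automorphism of the coefficient field $\Q(\sqrt n)$, the hypothesis says precisely that $\tau(e_\p(f)) = \epsilon(\p)\,e_\p(f)$ for all good $\p$, i.e.\ that the Galois conjugate newform ${}^{\tau}f$ coincides with the twist $f\otimes\epsilon$. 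Equivalently, the Galois representation $\rho_f\colon \Gal(\Kbar/K)\to\GL_2(\bar\Q_\ell)$ attached to $f$ (available for parallel weight $2$ by the package surveyed in \cite{FLS}) satisfies ${}^{\tau}\rho_f\cong\rho_f\otimes\epsilon$. Starting from $f$, everything else is to be constructed.

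First I would attach to $f$ the abelian surface $B_f/K$ of $\GL_2$-type with real multiplication by $\Q(\sqrt n)$, cut out of the cohomology of a quaternionic Shimura curve via Jacquet--Langlands and Eichler--Shimura where these are available, and otherwise taken as part of the modularity input (just as Ribet's theorem is an input in the $\Q$-curve case). The self-twist ${}^{\tau}\rho_f\cong\rho_f\otimes\epsilon$ is the Hilbert-modular incarnation of the building-block dictionary of Ribet, Pyle and Quer: it forces $B_f$ to be $K$-isogenous, up to isogeny, to the Weil restriction $\Res_{L/K}E$ of an elliptic curve $E$, with the real-multiplication element recording the isogeny $E\to E^\sigma$. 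The choice of $L$ is exactly the statement that the building-block obstruction class in $\HHH^2(\Gal(L/K),\bar\Q^{\times})$ is trivialised over $L$; this is what makes $E$ a genuine elliptic curve \emph{over $L$}, and the base change $B_{f,L}\sim E\times E^\sigma$ delivers the required $K$-curve relation $E^\sigma\sim_L E$. I would then match invariants: the conductor of $E$, read through $B_f\sim\Res_{L/K}E$ as an $\O_K$-ideal, equals the Artin conductor of $\rho_f$, which the Hilbert-modular analogue of Serre's conductor formula identifies with the level $I$ (the same step as the level comparison in the $\Q$-curve case).

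Next I would build the K3 surface. Since $\{E,E^\sigma\}$ is $\Gal(L/K)$-stable, $E\times E^\sigma$ descends to the abelian surface $\Res_{L/K}E$ over $K$ and $\Kum(E\times E^\sigma)$ is defined over $K$; its transcendental motive is the $K3$-type piece of $\HHH^1(E)\otimes\HHH^1(E^\sigma)$, of type $h^{2,0}=1$ for each embedding of $\Q(\sqrt n)$. On the other side, the $\{f,{}^{\tau}f\}$-isotypic part of $\HHH^2(H_{I,K})$ is, by the Oda--Hirzebruch--Zagier description underlying Conjecture~\ref{conj:oda-hamahata}, a sub-Hodge structure carrying the \emph{same} $\Gal(\bar\Q/\Q)$-representation; this provides the $\Q$-structure. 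Isolating an $h^{2,0}=1$ summand over the coefficient field and then descending, I would invoke Morrison's theorem \cite{morrison} to produce a K3 surface $S$ over $\Q$ whose transcendental lattice realises this motive. Comparing $S$ with $\Kum(E\times E^\sigma)$ through this common motive, and using the description of how maps of finite degree act on transcendental lattices \cite{bsv}, then exhibits the asserted $K$-isogeny between $S$ and the Kummer surface.

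The hard part is the final correspondence between $S$ and $H_{I,K}$: the construction above yields only an isomorphism of $\Gal(\bar\Q/\Q)$-representations (equivalently, of motives) between the transcendental $\HHH^2(S)$ and the $\{f,{}^{\tau}f\}$-part of $\HHH^2(H_{I,K})$, and upgrading this to an algebraic correspondence is exactly an instance of the Tate conjecture for $S\times H_{I,K}$, which is open in this generality. In the explicit cases of this paper one sidesteps it by chains of genus-$1$ fibrations, and Morrison's theorem supplies a correspondence automatically only when $p_g=1$; for a surface such as the $\p_{13}$ example of Section~\ref{ex:d12-n13}, where $p_g=2$, neither shortcut applies and the discriminants need not even match, the $7^2$ factor there being precisely such an obstruction to lining up the two lattices. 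A logically prior difficulty is that the construction of $B_f$ and the descent to $E/L$ lean on the Hilbert-modular analogues of Ribet's theorem and Serre's conductor formula over $K$, which are presently available only in part; this is why the statement is offered as a conjecture rather than a theorem.
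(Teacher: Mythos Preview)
The statement you are addressing is labelled a \emph{Conjecture} in the paper, and the paper offers no proof whatsoever: it is presented at the very end of Section~\ref{ex:d12-n13} as a speculative generalization of the Oda--Hamahata conjecture, motivated by the numerical computations on $H_{\p_{13},\Q(\sqrt{3})}$. There is therefore no proof in the paper to compare your proposal against.

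Your write-up is not a proof but a strategy, and to your credit you say so explicitly and identify the two principal obstructions yourself: the construction of $B_f$ and the descent to an elliptic curve $E/L$ rest on Hilbert-modular analogues of Ribet's building-block theorem and the Serre--Khare--Wintenberger conductor formula that are not established in this generality, and the passage from an isomorphism of Galois representations to an algebraic correspondence between $S$ and $H_{I,K}$ is an open case of the Tate conjecture. These are precisely the reasons the author formulates this as a conjecture.

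There is one additional gap in your outline that you do not flag. Your invocation of Morrison's theorem \cite{morrison} is a misapplication: that result takes as input a complex \emph{surface} with $p_g=1$ and produces a K3 surface in correspondence with it. It does not take an abstract Hodge structure or $\ell$-adic motive and output a K3 surface realising it, and it certainly does not produce one defined over $\Q$. To obtain the $S/\Q$ the conjecture asks for, you would need something like surjectivity of the period map together with a descent argument for the field of definition, and neither step is available in the generality you need. Even granting such an $S$, the asserted $K$-isogeny to $\Kum(E\times E^\sigma)$ is not a formal consequence of an isomorphism of transcendental motives; it is again a Tate-type statement (or requires the sort of explicit lattice and fibration manipulations carried out case by case elsewhere in the paper). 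So your sketch, while a reasonable heuristic for why the conjecture should be true, has more conditional steps than the two you list.
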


\bibliography{oda}

\end{document}